\documentclass[12pt,a4paper]{article}
\usepackage{amsmath,amsfonts,amssymb,amsthm}
\usepackage[margin=2cm]{geometry}
\usepackage{url}
\usepackage{stmaryrd}
\usepackage{numprint}
\usepackage{verbatim}
\usepackage{enumitem}
\setlist[enumerate]{label=(\arabic*)}
\setlist[enumerate,2]{label=(\alph*)}
\setlist[enumerate,3]{label=(\roman*)}
\usepackage{appendix}
\usepackage{tabularx}
\usepackage{csquotes}
\usepackage{authblk}
\usepackage[dvipsnames]{xcolor}
\definecolor{mylinkcolor}{rgb}{0.5,0.0,0.0}
\definecolor{myurlcolor}{rgb}{0.0,.25,1}
\usepackage[colorlinks, urlcolor=myurlcolor, citecolor=myurlcolor, linkcolor=mylinkcolor,
pagebackref,
breaklinks=true
]{hyperref}
\usepackage{mathtools}
\usepackage[normalem]{ulem}

\usepackage{tikz} 
\usetikzlibrary{cd,arrows.meta,calc}
\usetikzlibrary{decorations.pathreplacing,calligraphy}

\usepackage{float}

\let\oldsection\section
\renewcommand{\section}{
\renewcommand{\theequation}{\thesection.\arabic{equation}}
\oldsection}
\let\oldsubsection\subsection
\renewcommand{\subsection}{
\renewcommand{\theequation}{\thesubsection.\arabic{equation}}
\oldsubsection}

\numberwithin{equation}{subsection}

\theoremstyle{plain}
\newtheorem{theorem}[equation]{Theorem}
\newtheorem{prop}[equation]{Proposition}
\newtheorem{cor}[equation]{Corollary}
\newtheorem{lemma}[equation]{Lemma}

\theoremstyle{definition}
\newtheorem{defi}[equation]{Definition}
\newtheorem{remark}[equation]{Remark}
\newtheorem{notation}[equation]{Notation}
\newtheorem{assumption}[equation]{Assumption}

\newtheorem*{question*}{Question}
\newtheorem{example}[equation]{Example}

\theoremstyle{remark}

\newcommand{\ol}{\overline}

\newcommand{\GLmod}[1]{\mathrm{GL}_2(\mathbb Z/#1\mathbb Z)}
\newcommand{\wt}{\widetilde}

\renewcommand{\P}{\mathbb P}
\newcommand{\C}{\mathbb C}

\newcommand{\Q}{\mathbb Q}
\newcommand{\Z}{\mathbb Z}
\newcommand{\N}{\mathbb N}
\newcommand{\F}{\mathbb F}
\newcommand{\G}{\mathbb G}
\newcommand{\HH}{\mathbb H}
\newcommand{\A}{\mathbb A}

\newcommand{\Legendre}[2]{\left( \frac{ #1 }{ #2 }\right)}

\newcommand{\pp}{\mathfrak{p}}

\newcommand{\Frob}{\operatorname{Frob}}
\newcommand{\Gm}{\G_{\mathrm{m}}}

\newcommand{\divisor}{\mathrm{div}}

\newcommand{\ns}{\mathrm{ns}}

\newcommand{\Div}{\operatorname{Div}}
\newcommand{\Eff}{\operatorname{Eff}}

\newcommand{\Lie}{\mathrm{Lie}}

\newcommand{\Norm}{\mathrm{Norm}}

\newcommand{\End}{\mathrm{End}}
\newcommand{\GL}{\mathrm{GL}}
\newcommand{\Pic}{\mathrm{Pic}}
\newcommand{\Gal}{\mathrm{Gal}}
\newcommand{\Aut}{\mathrm{Aut}}
\newcommand{\Spec}{\operatorname{Spec}}

\newcommand{\id}{\mathrm{id}}
\newcommand{\ord}{\mathrm{ord}}
\newcommand{\sm}{\mathrm{sm}}
\newcommand{\unr}{\mathrm{unr}}
\newcommand{\NS}{\mathrm{NS}}

\newcommand{\lcm}{\mathrm{lcm}}
\newcommand{\Tr}{\mathrm{Tr}}

\newcommand{\Id}{\mathrm{Id}}
\newcommand{\rank}{\operatorname{rank}}

\newcommand{\lto}{\longrightarrow}

\newcommand{\calL}{\mathcal{L}}
\newcommand{\calM}{\mathcal M}

\newcommand{\calO}{\mathcal{O}}
\newcommand{\calD}{\mathcal D}
\newcommand{\LMak}{\calL_{\operatorname{Mak}}}

\newcommand{\jbtilde}{\widetilde{j_b}}

\newcommand{\jac}{J}

\newcommand{\mum}{\mathcal{M}}
\newcommand{\Psaiguido}{\widetilde\Psi}
\newcommand{\schmorph}{h}

 \newcommand{\uniform}{w}

\newcommand{\Mx}{\mathcal{M}^\times}

\newcommand{\Lsymbol}{{\text{1}}}
\newcommand{\Rsymbol}{\text{2}}
\newcommand{\Ltimes}{\overset{\Lsymbol}{\otimes}}
\newcommand{\Rtimes}{\overset{\Rsymbol}{\otimes}}
\newcommand{\LTimes}{\bigotimes^\Lsymbol}
\newcommand{\RTimes}{\bigotimes^\Rsymbol}
\newcommand{\BLC}[3]{{}^{\top}#1 \, #2 \, #3}

\newcommand{\eqoverZ}{t^{\mathbb{Z}}_{\ol{D}_V + B, b}}

\newcommand{\NicMumfordSectGpLaws}{4}
\newcommand{\NicMumfordAlgCompare}{5}
\newcommand{\NicMumfordAlgtPQ}{21}
\newcommand{\NicMumfordMainFormula}{2.4}

\title{Equationless quadratic Chabauty for non-split Cartan modular curves} 
\author[1]{Sachi Hashimoto}
\author[2]{Guido Maria Lido}
\author[3]{Davide Lombardo}
\author[4]{Nicolas Mascot}
\author[5]{Pierre Parent}
\affil[1]{University of Colorado, Boulder}
\affil[2]{Università di Roma Tor Vergata}
\affil[3]{Università di Pisa}
\affil[4]{Trinity College Dublin}
\affil[5]{Universit\'e de Bordeaux}

\date{}
\begin{document}

\maketitle

\begin{flushright}
{\it Al contadino\footnote{Quadratic Chabauty} non far sapere \\ quanto \`e buono il formaggio\footnote{Makdisi's algorithms} con le pere\footnote{Modular regular models}...
\it }
\end{flushright}
\begin{abstract}
The aim of this article is to  describe an equationless method for determining the rational points on the non-split Cartan curve $X_{\ns}^+(N)$ of prime level $N \geq 13$. 
Instead of using a projective model for the modular curve, our method uses the moduli interpretation of the curve, namely we work directly with elliptic curves and Cartan level structures.
To accomplish this, we use the geometric version of the quadratic Chabauty method. 
We show that this can be combined with algorithms for divisor arithmetic developed by Makdisi and Mascot so as to apply to modular curves.
As an illustration, we rederive the set of rational points on the curve $X_{\ns}^+(13)$.
\end{abstract}
\tableofcontents

\section{Introduction} 
Let $N \geq 13$ be a prime number,  and let $X_\ns^+(N)$ be the modular curve  associated to the normalizer of a non-split Cartan subgroup of level $N$.  The aim of this article is to describe an \emph{equationless} method for the determination of the set $X_\ns^+(N)(\Q)$ of $\Q$-rational points on this curve, working directly with its moduli interpretation, without choosing an embedding of $X_{\ns}^+(N)$ in a projective space. 
As an illustration of our method, we will rederive the famous result from \cite{BDMTV}:
\begin{theorem}
The $\Q$-rational points of $X_{\ns}^+(13)$ are precisely the 7 known CM points. 
\end{theorem}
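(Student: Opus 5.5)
The plan is to run geometric quadratic Chabauty on $X = X_{\ns}^+(13)$, a curve of genus $3$ whose Jacobian $J$ has Mordell--Weil rank $3$ and Néron--Severi rank $\rho(J)=3$ (real multiplication by the cubic field $\Q(\zeta_{13})^{+,\text{deg }3}$). Since $r = g < g + \rho - 1 = 5$, the hypothesis of geometric quadratic Chabauty is met. Concretely, we would work with the $\Gm$-torsor $T$ over $J$ attached to a trace-zero class in $\NS(J)$ coming from the real multiplication, pulled back along the Abel--Jacobi map $j_b\colon X \to J$ based at a rational (CM) cusp-free point $b$. Rational points of $X$ then lift to integral points of this torsor over a regular model of $X$ over $\Z_p$ for a suitable prime $p$ of good reduction. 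We would take $p=17$ or so, since $13$ must be avoided. Their image in $T(\Z_p)$ lies in the intersection of the $p$-adic closure of $T(\Z)$, which has dimension $r = 3$, with $\wt{X}(\Z_p)$, which has dimension $1$, inside a space of dimension $g+1 = 4$. Finiteness follows, and the point is to make this computable.

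The key steps, in order, are as follows.

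First, we would construct $X$ and $J$ equationlessly. Points of $X$ are represented via the moduli interpretation, namely elliptic curves with non-split Cartan structure. Divisor classes are handled by Makdisi--Mascot arithmetic, i.e.\ via evaluations of modular forms of level $\Gamma_{\ns}^+(13)$ at many points, using $q$-expansions and Galois-representation data.

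Second, we would produce generators of a finite-index subgroup of $J(\Q)$. The natural choice is differences of the $7$ CM points together with cusp-type divisors. We would also establish the extensions to the torsor, using a regular model of $X$ at the bad prime $13$ to control the local contributions there.

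Third, we would compute $p$-adic points of the torsor. This means computing the group law on $T$ and the map $\Z^3 \to T(\Z_p)$, which we approximate by $p$-adically convergent power series. From these we derive, in each residue disk of $X(\F_p)$, a $p$-adic analytic function on the disk whose zeros contain $X(\Q)\cap$ disk.

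Fourth, we would bound and identify the zeros. Using Strassman or Newton-polygon bounds after reduction mod $p^k$, we would show that each residue disk contains at most as many zeros as known rational points. This gives exactly the $7$ CM points.

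The main obstacle is the third step: computing the $\Gm$-torsor, i.e.\ Mumford-style theta/biextension arithmetic, in the Makdisi representation to sufficient $p$-adic precision, including the correct treatment of the regular model at $13$. Without this we cannot turn the abstract intersection into explicit power series. I would first try working modulo $p^2$, where the geometric method often already separates points: each disk then gives a linear-algebra condition mod $p^2$, and a Hensel-type argument shows uniqueness of the lift. Only if that fails would I increase precision.
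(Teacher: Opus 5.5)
Your framework is the right one: geometric quadratic Chabauty, Makdisi--Mascot arithmetic, Heegner-point generators and a regular model at $13$. But there is a genuine gap in the final step. You impose only \emph{one} condition, namely a single $\Gm$-torsor $T$ attached to one trace-zero class.

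The dimension count $3+1=4$ gives finiteness of $\overline{T(\Z)}\cap\wt{X}(\Z_p)$. It does not give that this finite set equals $X(\Q)$, and generically the set is strictly larger. On each residue disc your function reduces mod $p$ to a polynomial of degree at most $2$. So a disc containing one rational point typically has a second root mod $p$. If that root is simple, Hensel's lemma produces a genuine $p$-adic zero, that is, a genuine non-rational point of the intersection. No Strassmann or Newton-polygon argument, and no increase in precision, can remove it.

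The paper's own data at $p=5$ show this concretely:
\begin{itemize}
\item With $T_3-T_2$ alone, discs $2,5,7,9,10$ each carry an extra simple root besides the Heegner point. For example, disc $5$ gives $3x(x-4)$, with the Heegner point at $x\equiv 0$.
\item With $2T_7-3T_2$ alone, discs $2,4,5,9,10$ do.
\end{itemize}
So your claim that Strassmann bounds will show each disc has at most as many zeros as known rational points would fail. Switching to $p=17$ does nothing to address this.

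The missing idea is to use $\rho=3$ fully. Take \emph{two} independent trace-zero endomorphisms; the paper uses $f_1=T_3-T_2$ and $f_2=2T_7-3T_2$. Integral points must then satisfy two independent analytic conditions: the closure of the integral points has dimension $3$ inside a $\Gm^2$-torsor of dimension $5$. The common zero locus is then expected to be exactly $X(\Q)$.

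Concretely, the paper works as follows:
\begin{itemize}
\item In each of the $10$ residue discs of $X(\F_5)$, it computes both $\theta^{f_1}_{2g+1}$ and $\theta^{f_2}_{2g+1}$ mod $p$, by interpolating $\jbtilde$ at three points in the coordinates $\Psi'$ on $\Mx$.
\item It takes their gcd.
\item In discs $1,2,3$, where the gcd still has an unexplained root, it passes to higher residue discs. There it solves the resulting linear congruences for both endomorphisms modulo $p^3$ and $p^4$ until they become incompatible.
\end{itemize}
A Mordell--Weil sieve would be another way to eliminate extra $p$-adic points, but you propose neither.

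Smaller points:
\begin{itemize}
\item The real multiplication field is $\Q(\zeta_7)^+$, not a subfield of $\Q(\zeta_{13})$. Only $\rho=3$ matters for the argument.
\item The paper avoids $q$-expansions and cusps altogether.
\item You need more than finite index for $\Gamma$. It must surject onto $J(\F_p)$, and $\ker(\Gamma\to J(\F_p))$ must surject onto $J(\Z/p^2\Z)_0$. Otherwise some residue discs of integral points are not parametrized.
\end{itemize}
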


To help illustrate what it means to have an equationless method, we give the following example. 
Suppose one wants to find $X_{\ns}^+(13)(\F_p)$. 
Traditionally, with equations, one first represents the curve $X_{\ns}^+(13)$ with an equation, such as $ - x^{3} + x^{2} y^{2} + x y^{3} + 2 x y^{2}  + 2 x y  + 2 x  + y^{3}  - y =0 $, and then for each $x \in \F_p$,  solves for $y$.
In contrast, in an equationless approach, one instead first enumerates elliptic curves $E$ over $\F_p$ by iterating over $j \in \F_p$, and then enumerates Cartan level structures defined over $\F_p$ on these elliptic curves. 
In the first approach, the outcome is the data of points $(x,y)$ on the model, where in the second approach, by focusing on the moduli interpretation and without introducing an explicit model of the modular curve, one ends up with the data of pairs $(E, [\phi])$ of  $E/\F_p$  an elliptic curve and $\phi$ a Cartan structure on $E[13]$.

A motivation to study the rational points on the curves $X_\ns^+(N)$ is given by Serre's uniformity question \cite{serre1972proprietes}, which, after 
\cite{SerChebo, MazurIso, BiluParentCartan, BiluParentRebolledo},  
reduces to the following. 

\begin{question*}
Is there an integer $B$ such that, for all primes $N>B$, all the $\Q$-rational points on $X_\ns^+(N)$ are associated to CM elliptic curves?
\end{question*}

For all primes $N \ge 13$, the curve $X_\ns^+(N)$ has genus greater than $2$, and consequently, by Faltings's theorem, $X_\ns^+(N)(\Q)$ is finite. 
However, its determination is considered to be a challenge, even for a single value of $N$.

To put things in perspective, consider a curve $X/\Q$ of genus $g \geq 2$, let $J$ be its Jacobian, $r$ be the rank of $J(\Q)$, and $\rho$ be the rank of the N\'eron--Severi group of $J$, and suppose we know a point $b \in X(\Q)$. When applicable,  variants of the Chabauty method are commonly used to bound $X(\Q)$. These methods consist of embedding $X$ into $J$ via $x \mapsto [x-b]$, picking an auxiliary prime $p \in \N$, and attempting to construct obstructions to single out the $x \in X(\Q_p)$ for which $[x-b]$ lies in $J(\Q)$.

The classical Chabauty--Coleman method  \cite{Chab,ColemanDuke,McCallumPoonen}
applies when the $p$-adic closure of $J(\Q)$ has dimension less than $g$. This condition holds if $r < g$. 
However, for~$X = X_\ns^+(N)$ one knows that $r \ge g$, making it extremely likely that $J(\Q)$ is dense in an open of $J(\Q_p)$ and thereby precluding this method from working. 

Because of this limitation, Kim \cite{Kim1,Kim2} developed a non-Abelian version of the method. The quadratic\footnote{In \cite{dogra2025generalised}, Dogra renamed it to \emph{skimmed} quadratic Chabauty, so as to distinguish it from the \emph{full} quadratic Chabauty method in Kim's non-abelian program.} Chabauty method~\cite{QCIntegral, QC1, QC2} is based on Kim's program, and relaxes the condition $r<g$ to $r<g+\rho-1$. Dogra and Le Fourn \cite{DograLeFourninfamily} prove that this quadratic method 
is in principle applicable (possibly by working with a quotient of the Jacobian) to the family of curves $X_{\ns}^+(N)$.  It has indeed been successful in tackling several cases: \cite{BDMTV} settles the case $N=13$ as already mentioned, \cite{BDMTVmodular17} solves the case $N=17$, and the case $N=19$ has recently been announced. 

These computations used the \emph{cohomological} version of the quadratic Chabauty method, constructing obstructions from  $p$-adic heights. The current  implementation requires a plane model of $X$ such that, in each affine patch, the equation of $X$ is monic in one variable, has $p$-integral coefficients, and satisfies the technical assumptions \cite[Assumption~1]{Tuitman2}.

In contrast, in this article we use the \emph{geometric} version of the quadratic Chabauty method introduced in~\cite{Edil}.
In \cite{DRHS}, it is shown that the geometric and cohomological methods are essentially equivalent. See also recent work of Betts \cite{betts2025nonabeliananalogueconjecturemichael}
for an intriguing new proof of the equivalence in the rank $0$ case. 
In this geometric method, the obstruction that allows us to distinguish whether a divisor class $[x-b] \in J(\Q_p)$ comes from $J(\Q)$ 
is (related to) the liftability of well-chosen points in $J(\Z) \times J(\Z)$ to a certain $\mathbb{G}_m$ torsor, namely the Mumford torsor $\Mx$. More precisely, 
using the hypothesis that $\rho > 1$, for each $x\in X(\Q_p)$ one constructs a divisor class $[D_x]\in J(\Q_p)$ such that if $x \in X(\Q)$, then the pair $(x-b, D_x)\in J(\Z) \times J(\Z)$ can be lifted to a point in $\Mx(\Z)$, cf.~Figure~\ref{fig:artist_view} in Section~\ref{sec:review-geometric-quadratic-chabauty} below. 

The purpose of this work is thus to demonstrate that in the case of non-split Cartan modular curves, these divisor computations can be reduced to computations with modular forms and elliptic curves $E/\ol \Q$ or $E/\ol{\Q}_p$ equipped with a level structure $\phi$ on $E[N]$, thereby freeing us from the need for explicit models of $X_\ns^+(N)$. 
Indeed, such pairs $(E, [\phi])$ correspond to points on $X_\ns^+(N)$, and can thus be used to represent divisors. Furthermore, one can also represent divisors using subspaces of spaces of modular forms, using the techniques in \cite{Mak1,Mak2,NicMod}, which yields efficient algorithms to compute in $J$, and these algorithms were recently extended~\cite{NicMumford} to $\Mx$.

\subsubsection*{Relation to other recent work}

There has also been recent interest in equationless quadratic Chabauty methods in the non-geometric case. 
For example, Chen, Kedlaya, and Lau \cite{ChenKedlayaLau} gave an algorithm to compute Coleman integrals of holomorphic differentials on modular curves without equations.
In \cite{HashimotoGZ}, Hashimoto showed how to relate the global $p$-adic height to special values of $p$-adic $L$-functions using $p$-adic Gross--Zagier formulas, for certain quotients of $X_0(N)$.
On the other hand, in the case of $X_0^+(N)$, Rendell \cite{Rendell} considered the local $p$-adic height and gave an equationless algorithm to compute the Hodge filtration on a certain unipotent vector bundle, one of two key quantities needed for the local height.
See also Xu's recent PhD thesis~\cite{ChrisXu} where an equationless approach to the classical Chabauty--Coleman method for modular curves is proposed.

\subsubsection*{Structure of this article}

This article is structured as follows. We start by setting definitions and reviewing general notions that are used in the article in Section~\ref{sec:basicdef}. In Section \ref{sec:review-geometric-quadratic-chabauty} we recall how the geometric quadratic Chabauty method works on general curves. 
In Section \ref{sec: general Mumford} we still work on a general, possibly not modular curve, and we describe algorithms to compute in the Mumford torsor and give some formulas that describe both the $p$-adic closure $\ol{\Mx(\Z)}$ inside $\Mx(\Q_p)$ and the map $\jbtilde : X \longrightarrow \Mx$. We then go into greater detail,  
in order to apply these formulas in the case of non-split Cartan curves.  
In Section \ref{sec:some_modular_theory} we fix a regular model for $X_\ns^+(N)$ and study some properties that are used to deal with the bad reduction of the curve and to describe $\jbtilde$. In Section \ref{sect: model-free computations for Xnsp} we describe how we perform each step of the geometric quadratic Chabauty method on $X_\ns^+(N)$, in a modular, equation-free way.
Finally, in Section \ref{sec: summary} we give a summary of the algorithm, and in Section \ref{sec: example} we apply it to the case $N=13$ as an example.

\subsection*{Acknowledgments}

We thank Jonathan Love, \'Elie Studnia and Jan Vonk for 
sharing their work~\cite{LSV26} which is used at several places in this text. The proof of Proposition~\ref{prop:flatness}, in particular, comes from an idea they explained to us.  
We thank Eran Assaf and Edgar Costa for advice on coding and computations.
We thank Kamal Khuri-Makdisi for inspiring crucial parts of this work and  participating in many of our brainstorming sessions. We are also indebted to Bas Edixhoven for proposing many ideas that led to the present work.

Hashimoto is supported by the US National Science Foundation under Award No.\ DMS-2501658 and an AMS-Simons Travel Grant.
Lido is supported by the MIUR Excellence Department Projects MathMod@TOV awarded to the Department of Mathematics, University of Rome Tor Vergata,  and by the PRIN PNRR 2022 ``Mathematical Primitives for Post Quantum Digital Signatures''.
 Lombardo is supported by MUR grant PRIN-2022HPSNCR (funded by the European Union project Next Generation EU).  
 Lido and Lombardo are members of the INdAM group GNSAGA.
Parent is supported by AAP grants from Universit\'e de 
Bordeaux and 
ANR $j$-invariant.

\subsection*{AI Statement}
We did not use AI tools to aid in proving theorems nor to write this article.
The PARI/GP code to compute with modular Jacobians and with the Poincar\'e torsor was also composed without AI.
Several AI tools were used in the creation of the Magma code that produces representative points to generate $J(\Z)$ and produces deformations of points in each residue disc and acts by Hecke operators on these points.
We used ChatGPT-4o and ChatGPT-5 to help produce several auxiliary Magma functions in the code related to isogenies of elliptic curves and writing to files, and also used it to convert from the original codebase of Sage code to Magma code.
We used Cursor and Claude Opus 5 to refactor the (author written) Magma code, fix bugs in this code, and generate tests for correctness of the aforementioned code.
We (the authors) take full responsibility for the paper, the code and its output, and their mathematical correctness.

\subsection{Notation}

Here we collect notation that is used throughout the paper. This is only meant to help the reader navigate the paper more easily, so we do not always include full details; precise definitions will be given in the main body of the text.

\vspace{2mm}

Numbers.
\begin{itemize}[itemsep=2pt, topsep=1pt, parsep=1pt, partopsep=1pt]
\item $N$ the prime level of the non-split Cartan curve 
\item $p$ the Chabauty prime
\item $m$ the lcm of the exponents of $(J_{\F_\ell}/J^0_{\F_\ell})(\ol{\F}_\ell)$ as $\ell$ ranges over all (integer) primes
\item $g$ the genus of the curve $X$
\item $r$ the Mordell--Weil rank of $J_\Q(\Q)$
\end{itemize}

\vspace{2mm}
Curves, divisors, torsors, line bundles.
\begin{itemize}[itemsep=2pt, topsep=1pt, parsep=1pt, partopsep=1pt]
\item $X_\Q$ a smooth projective curve over $\Q$ of genus $\geq 2$ and $X/\Z$ a proper flat regular model with generic fiber $X_\Q$. Starting from Section \ref{sec:some_modular_theory}, we restrict to $X_\Q = X_\ns^+(N)$
\item $b \in X(\Q)$ a base point
\item $J_\Q$ the Jacobian of $X_\Q$, $J$ its N\'eron model over $\Z$, $J^0$ the connected component of the identity
\item $\mum$ the Mumford bundle of $J$ and $\Mx$ the Mumford torsor (see Section \ref{sec:mumford})
\item $U$ a simple open of $X$  (see Section \ref{subsubsec: simple opens})
\item $\Delta : S \to S\times_T S$ the diagonal of a scheme morphism $S \to T$
\item $\calL_y$  the fiber of a line bundle (or $\Gm$-torsor) $\calL$ over a scheme $Y$ at a point $y$ on $Y$

\item $Y(\Z_p)_y$ the residue disc of $y \in Y(\F_p)$ in $Y(\Z_p)$
\item $Y(\Z)_{y}$ the set of points in $Y(\Z)$ that reduce to $y$ modulo $p$ 
\end{itemize}

\vspace{2mm}

Maps.

\begin{itemize}[itemsep=2pt, topsep=1pt, parsep=1pt, partopsep=1pt]
\item $j_b$ the Abel--Jacobi embedding $X_\Q \to J_\Q$ by $b \in X(\Q)$
\item $h: J\to J^0$ 
a scheme morphism such that $j_b^*(\id_J, h)^* \mum$ is trivial on $X$ of the form  $h \coloneq m\cdot(f +c)$ for a choice of a trace 0 self-dual endomorphism $f$ of $J$
\item $\jbtilde : U \to \Mx$ a lift of $(\id_J, h)\circ j_b \colon U \to J \times J^0$
\item $\kappa: \Z_p^{2g} \to \Mx(\Z_p)_{\ol{t}}$ the parameterization of the closure of the $\Z$-points in the residue disc of some  $\ol t\in \Mx(\F_p)$
\end{itemize}

\vspace{2mm}
Fields, rings.
\begin{itemize}[itemsep=2pt, topsep=1pt, parsep=1pt, partopsep=1pt]
\item $\Q_q$ an unramified extension of $\Q_p$ with ring of integers $\Z_q$ and residue field~$\F_q$
\end{itemize}

\section{Basic definitions}\label{sec:basicdef}

Let $X_{\mathbb{Q}}$ be a smooth projective curve of genus at least $2$ and let $X / \operatorname{Spec} \Z$ be a proper flat regular model with generic fiber $X_{\mathbb{Q}}$. We fix a rational point $b \in X_{\Q}(\Q)$; by properness it extends uniquely to a section $b \in X(\Z)$.
Let $J_\Q$ be the Jacobian of $X_\Q$ and let $J$ be the N\'eron model of $J_\Q$ over the integers.
We write $J^0$ for the fiberwise connected component of $J$ containing $0$. Define $m \in \Z$ to be the least common multiple of the exponents of the geometric component groups of $J$, that is,
\begin{equation}\label{eq:m}
    m  \coloneq \underset{\ell}{\lcm} \left( \text{exponent of the component group } (J_{\F_\ell}/J^0_{\F_\ell})(\overline{\F}_\ell)\right),
\end{equation}
where $\ell$ varies over all rational primes, or equivalently over all primes $\ell$ of bad reduction for $J$.

\subsection{Parameters and integral analytic coordinates}\label{sec:parameters}

For any scheme (resp.~$\Z_p$-scheme) $Y$ and any $\ol y \in Y(\F_p)$ we define the \emph{residue disc above $\ol y$}  as
\[
Y(\Z)_{\ol y} \coloneq \{ P \in Y(\Z) : P \equiv \ol y \bmod p\}, \quad \text{resp.~} Y(\Z_p)_{\ol y} \coloneq \{ P \in Y(\Z_p) : P \equiv \ol y \bmod p\}.
\]

Let $Y/{\Z_p}$ be a scheme of finite type, flat over $\Z_p$ and of relative dimension $d$. Fix a smooth point $\ol y \in Y(\F_p)$ (which will often arise as the reduction of a $\Z_p$-point $y$). Since $\ol y$ is a smooth point, the maximal ideal of its local ring  $\calO_{Y, \ol y}$ is generated by $p$, together with $d$ elements $t_1, \dots, t_d$. We will call $p, t_1, \ldots, t_d$ a \emph{system of local parameters} or \emph{algebraic parameters} at $\ol y$. Note that we always assume that $p$ is part of our system of local parameters, so we will also say that $t_1, \ldots, t_d$ are algebraic parameters, omitting explicit mention of $p$.
The function $\tilde{t}\coloneq (\tilde{t}_1, \ldots, \tilde{t}_d) \coloneq \left( \frac{t_1}{p}, \ldots, \frac{t_d}{p} \right)$ 
gives a bijection 
\begin{equation}    
	\label{eq:alg_param}
	\tilde t \colon Y(\Z_p)_{\ol y} \lto (\Z_p)^d.
\end{equation}
In particular, when $Y$ is a curve and $\ol y \in Y(\F_p)$ is a smooth point, by \emph{local parameter at $\ol y$} we will mean a function $f$ such that $(p, f)$ generates the maximal ideal of $\mathcal{O}_{Y, \ol y}$.
	
	We will call \emph{integral analytic function at $\ol y$} any integral convergent power series in the $\tilde t_i$, i.e., an element of the ring
	\begin{equation*}
		\Z_p \langle \tilde{t}_1, \ldots, \tilde{t}_d \rangle  \quad \subset (\widehat \calO_{X,\ol x}) \otimes_{\Z_p} \Q_p \subset \Q_p[\![t_1, \ldots, t_d ]\!] \ .
	\end{equation*}
The ring $\Z_p \langle \tilde{t}_1, \ldots, \tilde{t}_d \rangle$ is independent of the choice of the $t_i$, as proven in \cite[Section 3]{Edil}. Each integral analytic function at $\ol y$ defines a continuous function $ Y(\Z_p)_{\ol y} \lto \Z_p$.

	Elements $\gamma_1, \ldots, \gamma_d\in \Z_p \langle \tilde{t}_1, \ldots, \tilde{t}_d \rangle$ will be called (a system of) \emph{analytic coordinates at $\ol y$} if the natural inclusion 
	\[
	\Z_p \langle \gamma_1, \ldots, \gamma_d \rangle  \to \Z_p \langle \tilde{t}_1, \ldots, \tilde{t}_d \rangle
	\]
	is bijective.
When this holds, the map $(\gamma_1, \ldots, \gamma_d) : Y(\Z_p)_{\ol y} \to \Z_p^d$ is a bijection. By a slight abuse of language, we will also call this map a system of analytic coordinates.

\begin{remark}
	Let $\gamma_1, \ldots, \gamma_d$ be elements of $\Z_p \langle \tilde{t}_1, \ldots, \tilde{t}_d \rangle$
	whose images in  
	$\mathbb{F}_p[\tilde{t}_1, \ldots, \tilde{t}_d]$ are polynomials of degree 1. 
	Then $\gamma_1, \ldots, \gamma_d$ are analytic coordinates at $\ol y$ if and only if the linear forms $\gamma_1 -\gamma_1(0) \pmod p, \ldots, \gamma_d  -\gamma_d(0)\pmod p$ are $\F_p$-linearly independent. 
\end{remark}

We recall the following property of systems of analytic coordinates induced by algebraic parameters (see also \cite[\S3]{Edil}).
\begin{lemma}\label{lem:algebraic is linear}
	Let $Y, Z$ be schemes of finite type, flat over $\Z_p$ and of relative dimensions $d_Y, d_Z$. Let $f: Y \to Z$ be a map of schemes. Let $\ol y \in Y(\F_p)$ be a smooth point and let $\ol z = f(\ol y)$. Assume that $\ol z$ is also a smooth point and let  $ t_1, \ldots, t_{d_Y}$ be algebraic parameters at $\ol y$ and $ u_1, \ldots, u_{d_Z}$ be algebraic parameters at $\ol z$, giving bijections $\tilde{t} :  Y(\Z_p)_{\ol y} \to \Z_p^{d_Y}$ and $\tilde{u} \colon Z(\Z_p)_{\ol z} \to \Z_p^{d_Z}$. The map
	\[
	\tilde{u} \circ f \circ \tilde{t}^{-1} : \Z_p^{d_Y} \to \Z_p^{d_Z}
	\]
	is given by integral convergent power series that reduce modulo $p$ to polynomials of degree at most $1$.
\end{lemma}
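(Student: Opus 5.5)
The plan is to reduce everything to the pullbacks $f^*u_j$ of the parameters at $\ol z$, expand these in the completed local ring at $\ol y$, and then substitute $t_i = p\tilde t_i$.

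First, the homomorphism $f^\sharp\colon \calO_{Z,\ol z}\to \calO_{Y,\ol y}$ is a local homomorphism, because $f(\ol y)=\ol z$. Consequently each $f^*u_j$ lies in the maximal ideal $\mathfrak m_{\ol y}=(p,t_1,\dots,t_{d_Y})$.

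Second, I would identify the completed local ring. Since $\ol y$ is a smooth point of a flat $\Z_p$-scheme of relative dimension $d_Y$, the completion is $\widehat\calO_{Y,\ol y}\cong \Z_p[\![t_1,\dots,t_{d_Y}]\!]$. Concretely, $\calO_{Y,\ol y}$ is regular of dimension $d_Y+1$ with regular system of parameters $p,t_1,\dots,t_{d_Y}$. The natural map from $\Z_p[\![T_1,\dots,T_{d_Y}]\!]$ sending $T_i\mapsto t_i$ is therefore a surjection between complete regular local rings of the same dimension, hence an isomorphism. I expect this identification to be the only step needing care.

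Third, I would write $f^*u_j = F_j(t_1,\dots,t_{d_Y})$ with $F_j\in \Z_p[\![T]\!]$. Because $f^*u_j\in\mathfrak m_{\ol y}$, we may write
\[
F_j = c_j + \sum_i a_{ji}T_i + G_j,
\]
where $c_j\in p\Z_p$, $a_{ji}\in\Z_p$, and $G_j$ has all terms of total degree at least $2$ in $T$.

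Fourth, I would check that this expansion computes $\tilde u\circ f\circ\tilde t^{-1}$. A point $P\in Y(\Z_p)_{\ol y}$ corresponds to a continuous homomorphism $\widehat\calO_{Y,\ol y}\to\Z_p$, which is determined by the values $t_i(P)=p\tilde t_i(P)\in p\Z_p$. Since $(f^*u_j)(P) = u_j(f(P))$, we get
\[
\tilde u_j(f(P)) = \tfrac1p F_j\bigl(p\tilde t_1,\dots,p\tilde t_{d_Y}\bigr)
= \tfrac{c_j}{p} + \sum_i a_{ji}\tilde t_i + \tfrac1p G_j(p\tilde t).
\]
A monomial of total degree $k\ge2$ in $G_j$ contributes a coefficient divisible by $p^{k-1}$, which is divisible by $p$ and tends to $0$ as $k\to\infty$. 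Hence the right-hand side is an integral convergent power series in $\Z_p\langle\tilde t\rangle$, and modulo $p$ it reduces to the polynomial $c_j/p+\sum_i a_{ji}\tilde t_i$, which has degree at most $1$.

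Finally, the substitution is legitimate: convergence holds in $\Z_p\langle \tilde t\rangle$, and evaluating at $\Z_p$-points commutes with the completion because these points factor through $\widehat\calO_{Y,\ol y}$.
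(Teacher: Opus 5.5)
Your proposal is correct and follows the paper's proof: pull back the $u_j$ to power series $F_j$ in the $t_i$ via the map of completed local rings, rescale to $\tfrac1p F_j(p\tilde t)$, and observe that only the constant and linear terms survive modulo $p$. The only differences are cosmetic. You justify $\widehat\calO_{Y,\ol y}\cong\Z_p[\![t]\!]$ explicitly, which the paper takes for granted, and you get $F_j(0)\in p\Z_p$ from the locality of $f^\sharp$, whereas the paper evaluates at a point $P_0$ with $\tilde t(P_0)=0$ and uses that $f(P_0)$ reduces to $\ol z$.
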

\begin{proof}
	The map of local rings $\calO_{Z, \ol z} \to \calO_{Y,\ol y}$ induces a map between the completions
	\[
	\Z_p[\![u_1, \ldots  , u_{d_Z}]\!] =  \widehat  \calO_{Z, \ol z} \to \widehat \calO_{Y,\ol y} = \Z_p[\![t_1, \ldots  , t_{d_Y}]\!]\ .
	\]
	In particular, there are power series $F_i \in \Z_p[\![t_1, \ldots  , t_{d_Y}]\!]$ such that, for each point $P\in Y(\Z_p)_{\ol y}$, we have   $u_i(f( P)) = F_i(t_1(P), \ldots  , t_{d_Y}(P))$.
	
	Then, the map $\tilde{u} \circ f \circ \tilde{t}^{-1} : \Z_p^{d_Y} \to \Z_p^{d_Z}
	$ is given by the power series $\tilde F_i \coloneq \frac 1p  F_i(p \tilde t_1, \ldots  , p \tilde t_{d_Y})$, hence the claim follows by direct computation (notice that $F_i(0)$, which is equal to $u_i(f(P_0))$ for a suitable $P_0 \in Y(\Z_p)_{\ol y}$, is a multiple of $p$ since $f(P_0)$ reduces to $\ol z$).
\end{proof}

A nice property of analytic coordinates is that they detect congruences modulo powers of~$p$.

\begin{lemma}\label{lem: coords mod pe}
	Let $Y$ be a smooth scheme over $\Z_p$, let $\ol y$ be a point in $Y(\F_p)$, and $f$ be an integral analytic function at $\ol y$. Then, for $e$ in $\Z_{\ge1}$ and for all points $y_1, y_2 \in Y(\Z_p)_{\ol y}$ such that $y_1 \equiv y_2 \pmod{p^e}$, we have $f(y_1) \equiv f(y_2) \pmod{p^{e-1}}$. 
	
	Moreover, if $F = (f_1, \ldots, f_d) : Y(\Z_p)_{\ol y} \to \Z_p^d$ are analytic coordinates at $\ol y$, the composition of $F$ with the reduction map $\Z_p^d \to (\Z/p^{e-1}\Z)^d$ gives a bijection 
	\[
	F \colon Y(\Z/p^e\Z)_{\ol y}\to (\Z/p^{e-1}\Z)^d .
	\]    
\end{lemma}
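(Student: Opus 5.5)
We will work throughout in the algebraic parameters $t_1,\dots,t_d$ at $\ol y$ and reduce both claims to elementary statements about integral convergent power series. Since $Y$ is smooth over $\Z_p$, the completed local ring is $\widehat\calO_{Y,\ol y}=\Z_p[\![t_1,\dots,t_d]\!]$. By \eqref{eq:alg_param}, a point $P\in Y(\Z_p)_{\ol y}$ is determined by the values $t_i(P)\in p\Z_p$, equivalently by $\tilde t(P)\in\Z_p^d$. Moreover, the reduction of $P$ modulo $p^e$ is determined by the classes $t_i(P)\bmod p^e$, and conversely. Indeed, points of $Y(\Z/p^e\Z)$ reducing to $\ol y$ correspond to local homomorphisms $\Z_p[\![t]\!]\to\Z/p^e\Z$, and these are determined by the images of the $t_i$, which are arbitrary elements of $p\Z/p^e\Z$.

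\textbf{First claim.} Suppose $y_1\equiv y_2\pmod{p^e}$. Then $t_i(y_1)\equiv t_i(y_2)\pmod{p^e}$, so $\tilde t(y_1)\equiv\tilde t(y_2)\pmod{p^{e-1}}$. Write $f=\sum_\alpha a_\alpha \tilde t^\alpha$ with $a_\alpha\in\Z_p$ and $a_\alpha\to0$. Each monomial $\tilde t^\alpha$ is a polynomial with integer coefficients in the $\tilde t_i$, so it respects congruences modulo $p^{e-1}$ on $\Z_p^d$. Since the series converges and the $a_\alpha$ are integral, $f(y_1)\equiv f(y_2)\pmod{p^{e-1}}$. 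This only uses that $\Z_p^d\to\Z_p$ given by an element of $\Z_p\langle\tilde t\rangle$ is $1$-Lipschitz for the $p$-adic sup norm.

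\textbf{Second claim.} By the first part, $F$ descends to a well-defined map $Y(\Z/p^e\Z)_{\ol y}\to(\Z/p^{e-1}\Z)^d$. Here we use that every point of $Y(\Z/p^e\Z)_{\ol y}$ lifts to $Y(\Z_p)_{\ol y}$, which holds by smoothness (Hensel), or directly from the description by the values of the $t_i$. To prove bijectivity, first treat $F=\tilde t$: the set $Y(\Z/p^e\Z)_{\ol y}$ is in bijection with $(p\Z/p^e\Z)^d\cong(\Z/p^{e-1}\Z)^d$ via $t/p$, so the claim holds for $\tilde t$. For general analytic coordinates $F$, the hypothesis $\Z_p\langle F\rangle=\Z_p\langle\tilde t\rangle$ gives integral convergent power series $G$ with $\tilde t=G(F)$ and $F=H(\tilde t)$. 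Applying the Lipschitz property from the first step to $G$ and $H$ shows that the induced maps $(\Z/p^{e-1})^d\to(\Z/p^{e-1})^d$ are well defined and mutually inverse. Composing with the bijection for $\tilde t$ then finishes the proof.

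The only step requiring care is the identification of $Y(\Z/p^e\Z)_{\ol y}$ with $(p\Z/p^e\Z)^d$ via the algebraic parameters. This needs smoothness, so that $\widehat\calO_{Y,\ol y}$ is a formal power series ring over $\Z_p$ in the $t_i$ and points over $\Z/p^e\Z$ are determined by arbitrary values of the $t_i$ in the maximal ideal. Everything else is formal.
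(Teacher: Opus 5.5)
Your proof is correct. The first part is essentially the paper's argument. The paper also starts from the fact that $t_i$ descends to $Y(\Z/p^e\Z)_{\ol y}$, so the claim holds for $\tilde t_i$. It then writes $f$ as a polynomial in the $\tilde t_i$ plus a tail with coefficients in $p^e\Z_p$, which amounts to your termwise $1$-Lipschitz estimate.

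The second part takes a different route.
\begin{itemize}
\item The paper only proves that the induced map $Y(\Z/p^e\Z)_{\ol y}\to(\Z/p^{e-1}\Z)^d$ is surjective, since $F$ is surjective on $\Z_p$-points and reduction is surjective. It then concludes by counting: both sets have $p^{(e-1)d}$ elements, by smoothness and Hensel's lemma.
\item You instead prove bijectivity directly for $\tilde t$ and transport it to $F$ along the inverse series $G$. This gives an explicit inverse modulo $p^{e-1}$ and avoids any cardinality comparison.
\end{itemize}
The smoothness input is the same in both: $Y(\Z/p^e\Z)_{\ol y}$ is described by arbitrary values of the $t_i$ in $p\Z/p^e\Z$, and this is exactly what underlies the paper's count. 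The paper's version is shorter because it uses only surjectivity of $F$ on $\Z_p$-points. It takes that from the remark following the definition of analytic coordinates.

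One step you should make explicit. For the induced maps to be mutually inverse you need both $G\circ H=\id$ and $H\circ G=\id$ as maps $\Z_p^d\to\Z_p^d$, where $H$ is the series expressing $F$ in terms of $\tilde t$.
\begin{itemize}
\item The first identity follows from $\tilde t=G(F)$ by evaluating at points.
\item The second follows from injectivity of $\Z_p\langle F\rangle\to\Z_p\langle\tilde t\rangle$, since $H(G(X))$ and $X$ both map to $F$. Alternatively it follows from bijectivity of $F$ on $\Z_p$-points.
\end{itemize}
You can also avoid the second identity. The identity $G\circ H=\id$ alone shows that the map induced by $H$ on the finite set $(\Z/p^{e-1}\Z)^d$ is injective, hence bijective.
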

\begin{proof}
	In the notation of  \eqref{eq:alg_param}, the first part of the statement is true for $f = \tilde t_i = \frac{t_i}{p}$, since the function $t_i$ is algebraic, hence it descends to a map $t_i \colon Y(\Z/p^e\Z)_{\ol y} \to \Z/p^e\Z$. Then, it also holds for any integral analytic $f$, since we can write  $f$ as the sum of a polynomial in the $\tilde t_i$ and a convergent power series in the $\tilde t_i$  with coefficients divisible by $p^{e}$. 
	
	Now the second part. The first part implies that the composition we look at descends to a function  $Y(\Z/p^e\Z)_{\ol y}\to (\Z/p^{e-1}\Z)^d $. Moreover, such a composition is surjective (both $F$ and the reduction are surjective), hence bijective, since the source and the target are finite with the same cardinality (by smoothness of $Y$ and Hensel's lemma).
\end{proof}

Because of Lemma \ref{lem: coords mod pe}, when $y$ is a point in $Y(\Z/p^e\Z)$ and $F$ is a system of analytic coordinates at $\ol y$, we will be able to consider $F(y) \in (\Z/p^{e-1}\Z)^d$.

\subsection{Simple opens}\label{subsubsec: simple opens}

We say $U \subset X$ is a \emph{simple open} if it is an open subset such that in every fiber $X_{\F_\ell}$ for $\ell$ prime, $U_{\F_\ell}$ is an irreducible component of the smooth part $X^\sm$ of $X$, and additionally $U_{\F_\ell}(\F_\ell) \neq \emptyset$.
In particular, there are at most finitely many simple opens of $X$, and every $z \in X(\Z)$ is contained in a unique simple open.

When $U$ is a simple open, the map $\Pic(U) \to \Pic(X_\Q)$ is an isomorphism. In particular, given a line bundle $\calL$ over $X^\sm$, trivial over $X_\Q$, and a trivializing section $s: X_\Q \to \calL_\Q$, there exists $\lambda\in \Q^\times$, unique up to sign, such that $\lambda \cdot s$ extends to a trivializing section $U \to \calL$.

\subsection{Modular curves and the non-split Cartan subgroup}\label{subsect: modular curves}

Let $N$ be a positive integer and $H\leq \GLmod{N}$. Let $k$ be a field in which $N$ is invertible.
We give a brief overview of the definition of the modular curve $X_H$. For more details see e.g.~\cite{DiamondIm}, \cite[Section 2]{RSZB} and the references therein.
An \emph{elliptic curve $E/\overline{k}$ with $H$-level structure} is a pair $(E, [\phi])$, where $E/\overline{k}$ is an elliptic curve and $[\phi]$ is an equivalence class of isomorphisms
\begin{equation}\label{eq:level_structure_general}
\phi \colon E[N] \to (\Z/N\Z)^2,  
\end{equation}
where $\phi \sim \phi' $ if $\phi = h \circ \phi'$ for some $ h \in H$. 
Two pairs $(E,[\phi])$ and $(E',[\phi'])$ are equivalent if there exists an isomorphism $\alpha\colon E\to E'$ such that $[\phi]=[\phi'\circ\alpha_{|E[N]}]$.
The points of $Y_H(\overline{k})$ parameterize the equivalence
classes of such pairs.

There is a Galois action on the pairs $(E, [\phi])$. An automorphism $\sigma\in\Gal(\bar k/k)$ sends $E$ to $E^\sigma$ and the point $P \in E^{\sigma}[N]$ to $\sigma^{-1}(P) \in E[N]$. This induces an action on the points $Y_H(\overline{k})$: $\sigma$ sends $(E,[\phi])$ to $(E^\sigma,[\phi \circ \sigma^{-1}])$. The $k$-rational points of $Y_H$ are the fixed points under this action. In particular, each $k$-rational point of $ Y_H$ is represented by a pair $(E,[\phi])$ where $E/k$ is an elliptic curve over $k$ with $H$-level structure $\phi: E[N] \to (\Z/N\Z)^2$, and for all $\sigma \in \Gal(\bar{k}/k)$, there exist $\alpha \in  \Aut(E_{\bar{k}})$ and $h \in H$ such that 
\begin{equation} 
\label{eq:rationalptsYH}
\phi \circ \sigma^{-1} = h \circ \phi \circ \alpha_{|_{E[N]}}.
\end{equation}
By adjoining finitely many cusps, we obtain a compact modular curve $X_H$. 
The curve $X_H$ is defined over $\Z[1/N]$ and can be extended to $\Z$ in various ways, see e.g.~\cite{KatzMazur}. In this paper, we choose a regular model with normal crossings (see Section \ref{sec:some_modular_theory}).

Let now $N>2$ be a prime. A \emph{non-split Cartan subgroup} of $\GLmod{N} = \GL_2(\F_N)$ is the set of $\F_N$-rational points of a non-split maximal torus of $\GL_{2,\F_N}$. All such subgroups of $\GL_2(\F_N)$ are conjugate, and they are isomorphic to $\F_{N^2}^\times$. Less canonically, but more explicitly, we can describe a non-split Cartan subgroup as the image of an embedding $\F_{N^2}^\times \hookrightarrow \GL_2(\F_N)$ obtained 
by fixing an $\F_N$-basis $B$ of $\F_{N^2}$ and expressing in this basis the action of $\F_{N^2}^\times$ on $\F_{N^2}$ given by multiplication. For example, if we fix  a non-square $\varepsilon\in \F_N$ and we consider the basis $B = \{1,\sqrt \varepsilon\}$, we obtain the subgroup 
\[ 
C_{\rm ns }^{\varepsilon}(N)  = \left\{ \begin{pmatrix}a & b \\ c \varepsilon & d \end{pmatrix} \in \GL_2(\F_N) : a = d, c = b    \right\}. 
\]
(Note that we consider the elements of $\F_N^2$ as row vectors, and $C_{\rm ns }^{\varepsilon}(N)$ acts by multiplication on the right.)
We are interested in the normalizers of non-split Cartan subgroups of $\GL_2(\F_N)$, which, up to conjugacy, are isomorphic to 
\[ 
C_{\rm ns }^{\varepsilon,+}(N)  = C_{\rm ns }^{+}(N)  = \left\{ \begin{pmatrix}a & b \\ c \varepsilon & d \end{pmatrix} \in \GL_2(\F_N) : a = d, c = b  \text{ or } a = - d,  c = - b  \right\} . 
\]
In particular, after fixing $\varepsilon$, the modular curve $X_\ns^{\varepsilon, +}(N) = X_\ns^+(N)$ is the modular curve $X_H$ corresponding to $H = C_{\rm ns }^{\varepsilon,+}(N)$.

One way to describe non-cuspidal points is to follow \eqref{eq:level_structure_general}. We shall also use alternative descriptions of $C_{\rm ns }^{+}(N)$-structures on an elliptic curve $E$, either as subgroups of $\GL(E[N])$ or as unordered pairs of lines in $E[N]\otimes_{\F_N}\F_{N^2}$, for example in Proposition \ref{GaloisOrbitsSS} and its proof. 

\subsection{Line bundles and \texorpdfstring{$\Gm$}{Gm}-torsors}
\label{sec:basic_gm_torsors}
We will freely use the following fact: giving a line bundle $\mathcal{L}$ on a scheme $X$ is equivalent to giving a $\Gm$-torsor $\mathcal{L}^\times \to X$. Here $\mathcal{L}^\times$ denotes the complement of the zero section in the total space of $\mathcal{L}$, and the natural fiberwise action of $\Gm$ makes $\mathcal{L}^\times$ into a $\Gm$-torsor over $X$. Conversely, any $\Gm$-torsor over $X$ arises in this way from a unique line bundle (see e.g.~\cite[Section 11.6]{GortzWedhorn}). 
We will be particularly interested in the $\Gm$-torsor $\Mx$ obtained from the Mumford bundle $\mum$ by this construction, see the next section for details.

\subsection{The Mumford bundle}
\label{sec:mumford}

Let $j_b : X_\Q \to J_\Q$ be the Abel--Jacobi map with respect to the base point $b$. Then $j_b^*\colon J^\vee_\Q \to J_\Q$ is an isomorphism, and indeed it is the inverse of $-\lambda$, where $\lambda : J_\Q \to J_\Q^\vee$ is the canonical principal polarization of $J_\Q$ (see \cite[Lemma 6.9]{JacobianMilne}).

The \emph{Mumford bundle} is the line bundle $\mum$ on $J_\Q \times J_\Q$ characterized by the property that for each $x \in J_\Q$ the restriction of $\mum$ to $J_\Q \times \{x\}$ and to $\{x\} \times J_\Q$ represents the line bundle on $J_\Q$ given by $(j_b^*)^{-1}(x)$. 
The Mumford bundle comes equipped with trivializations over $J_\Q \times \{0\}$ and $\{0\} \times J_\Q$, that is, isomorphisms between the restrictions of $\mum$ to these subvarieties and the corresponding trivial line bundles. These trivializations are compatible above the point $(0, 0)$.
Note that $\mum$ is the pullback of the Poincaré bundle $\mathcal{P} \to J_\Q \times J_\Q^\vee$ under the isomorphism $J_\Q \times J_\Q \xrightarrow{\operatorname{id} \times (-\lambda)} J_\Q \times J_\Q^{\vee}$, because $-\lambda = (j_b^*)^{-1}$. 

We  briefly recall how to extend the construction of $\mum$ over $\Spec(\Z)$. 
Let $\mum_{\Q}^{\times}$ be the $\G_m$-torsor associated with $\mum$ over $J_{\Q}\times J_{\Q}$. It extends uniquely, compatibly with its biextension structure, to a $\G_m$-torsor $\Mx\to J\times J^0$. This is analogous to the extension of the Poincar\'e torsor over $J\times J^{\vee,0}$; see \cite[Expos\'e VIII, Theorem 7.1]{SGA7} and \cite[Section 2]{Edil}.
In particular, the integral points of $\Mx$ lying over $(x,y) \in (J \times J^0)(\Z)$ form a $\G_m(\Z)$-torsor, so they are unique up to sign. 

By the correspondence between $\Gm$-torsors and line bundles, we obtain that $\mum$ also extends to a line bundle over $J \times J^0$. The fibers of this extended bundle have the following explicit description that makes them amenable to computation. Let $S$ be a scheme such that $X_S$ is smooth, and let $(x,y) \in (J \times J^0)(S)$ be a pair of points on the Jacobian represented by the degree 0 Cartier divisors $(D, E)$. We can write a divisor as a difference of (relatively Cartier) effective divisors $D = D^+ - D^-$. 
Using the canonical isomorphism in \cite[Remark 6.3.12]{Edil} and the symmetry in the two factors \cite[Proposition 6.3.2 and Lemma 6.5.4]{Edil},
the fiber $(x,y)^*\mum $  is 
\begin{equation} \label{eq:Mumford_fiber}    
D^* \calO_X(E)\coloneq \mathrm{Norm}_{D^+/S}(\calO_X(E)) \otimes\mathrm{Norm}_{D^-/S}(\calO_X(E))^{-1}. 
\end{equation}
Similarly, if $x$ is represented by a degree $0$ divisor $D$ and $y$ by a line bundle $\calL$ on $X$ of multidegree $0$, we have 
\begin{equation} \label{eq:Mumford_fiber_bundles}    
(x,y)^*\mum  = D^* \calL\coloneq \mathrm{Norm}_{D^+/S}(\calL) \otimes\mathrm{Norm}_{D^-/S}(\calL)^{-1}. 
\end{equation}
Removing the zero section, we obtain a similar description for $(x,y)^* \Mx$.
We also denote the fiber simply by  $\Mx_{(x,y)}$ or $\Mx_{x,y}$.
Furthermore, the $\G_m$-biextension structure of $\Mx$ can be described as follows: the two partial group laws  $\Ltimes \colon \Mx_{(x_1,y)} \times \Mx_{(x_2,y)} \to \Mx_{(x_1+x_2,y)}$ and $\Rtimes : \Mx_{(x,y_1)} \times \Mx_{(x,y_2)} \to \Mx_{(x, y_1+y_2)}$ are induced by the canonical isomorphisms of line bundles
\begin{align*}& \Ltimes : D_1^* \calO_X(E) \otimes D_2^* 
\calO_X(E) \to (D_1+D_2)^* \calO_X(E),\\
& \Rtimes : D^* \calO_X(E_1) \otimes D^* \calO_X(E_2) \to D^* \calO_X(E_1+E_2)\end{align*}
where $E, E_1, E_2, D, D_1, D_2$ are degree 0 divisors on $X$.
For more details, see also \cite[Section 6.3]{Edil} or \cite[Section 3.2]{DRHS}.

\subsection{Bilinear combinations}\label{sec: bilinear combination}
The following notation will be useful to simplify some of our calculations in $\Mx$.
Given $a,b \in \N$, $v \in \Z^a$, $w \in \Z^b$, a ring $R$, points $x_1,\cdots,x_a \in J(R)$ and $y_1,\cdots,y_b \in   J^0(R)$, and an $a \times b$ matrix $\Pi=(\pi_{ij})$ with entries in $\Mx(R)$ such that $\pi_{i,j} \in \Mx_{x_i,y_j}(R)$ for all $i,j$, we define
\begin{equation} \label{eq: blc}
{}^{\top} v \, \Pi \, w = \LTimes_{i \le a} \left( \RTimes_{j \le b} \pi_{i,j}^{\Rtimes w_j} \right)^{\Ltimes v_i} = \RTimes_{j \le b} \left( \LTimes_{i \le a} \pi_{i,j}^{\Ltimes v_i} \right)^{\Rtimes w_j} \in \Mx_{\sum_i v_i x_i, \sum_j w_j y_j}(R).
\end{equation}
The equality of the two expressions follows from the compatibility of the two partial group laws in the biextension structure. 
Analogously, given a matrix $M \in M_{c \times a}(\Z)$ (respectively  $M \in M_{b \times d}(\Z)$), we define  a matrix of size $c \times b$  (resp. $a \times d$)  with entries in $\Mx(R)$ by 
\[
(M \,\Pi)_{ij} \coloneq \LTimes_{k\le a} \pi_{kj}^{\Ltimes M_{ik}}, 
\quad \text{resp. }
(\Pi \, M)_{ij} = \RTimes_{k\le b} \pi_{i, k}^{\Rtimes M_{k, j}} .
\]

\begin{remark}
This choice of notation arises from thinking additively of the group laws, i.e., if one writes $\Ltimes$ as $+_1$ these equations become
\[
(M \, \Pi)_{ij} = \sum_{k\le a}^1 M_{ik} \pi_{kj},
\quad
{}^{\top} v \Pi w = \sum^{1}_{i \leq a} v_i \cdot_1 \sum_{j \leq b}^2 w_j \cdot_2 \pi_{i, j}.
\]
\end{remark}
One checks easily that
\[ {}^{\top}(v+v') \Pi w = {}^{\top}v \Pi w \Ltimes {}^{\top} (v') \Pi w \text{ and } {}^{\top}v \Pi (w+w') = {}^{\top}v \Pi w \Rtimes {}^{\top} v \Pi w'. \]
Moreover, whenever $v, w$ are integer column vectors and $M$ is an integer matrix of compatible sizes, we have
\[
{}^{\top} (Mv) \Pi w = {}^{\top} v ({}^{\top}M\Pi) w \text{ and } {}^{\top}v \Pi (Mw) = {}^{\top}v (\Pi M) w.
\]
\subsection{Choice of \texorpdfstring{$p$}{p}-adic logarithm}\label{ref: p-adic log}

Let $p$ be an odd prime.
We denote by $\log\colon \Z_p^\times \to p\Z_p$ the branch of the $p$-adic logarithm which on $1+p\Z_p$ is given by the usual power series and sends roots of unity to $0$. Since $p>2$, it induces the analytic coordinate $\wt\log \coloneq \tfrac 1p \log \colon \Gm(\Z_p)_{\ol 1} \coloneq 1+p\Z_p \to \Z_p$. The map $\wt\log$ is an isomorphism of groups, with inverse $\wt \exp (z) \coloneq \exp(pz)$. 

\section{Overview of geometric quadratic Chabauty}\label{sec:review-geometric-quadratic-chabauty}

\begin{figure}[H]
    \centering

\begin{tikzpicture}[scale=.8]
\draw[red,thick,variable=\t,domain=-1:1,samples=50]
plot ({-3+1.5*\t},{(\t*\t*\t-\t)/5-2});
\draw(-3,-2.5) node{$\color{red} X(\Q_p)$};

\draw[thick,->] (-1,-2) -- (1,-2);
\draw(0,-2.5) node{$j_b$};

\draw[thick,->] (-1,-1.5) -- (0.5,1);
\draw(-0.5,0.7) node{$\wt{j_b}$};

\draw[thick,->] (3.5,-0.5) -- (3.5,-1);
\draw(-0.5,0.7) node{$\wt{j_b}$};

\draw[green!60!black,fill=green!10] (1,-3) -- (5,-3) -- (6,-1.5) -- (2,-1.5) -- (1,-3);
\draw[red,thick,variable=\t,domain=-1:1,samples=50]
plot ({3.5+1.5*\t},{(\t*\t*\t-\t)/5-2.2});
\draw(7,-2) node{\color{green!60!black} $J(\Q_p)$};
\draw(7,-2.7) node{\color{green!60!black} $J(\Z)$ dense};

\draw[blue] (1,0) -- (5,0) -- (6,1.5) -- (2,1.5) -- (1,0); 
\draw[blue,fill=blue!30] (1,2) -- (5,2) -- (6,3.5) -- (2,3.5) -- (1,2); 
\draw[blue] (1,4) -- (5,4) -- (6,5.5) -- (2,5.5) -- (1,4); 
\draw[blue] (1,0) -- (1,4);
\draw[blue] (5,0) -- (5,4);
\draw[blue] (6,1.5) -- (6,5.5);
\draw[blue] (2,1.5) -- (2,5.5);
\draw[red,thick,variable=\t,domain=-1:-0.856452866183,samples=50]
plot ({3.4+1.5*\t},{-2*\t*\t*\t*\t+(\t*\t*\t-\t)/3+3});
\draw[red,dashed,thick,variable=\t,domain=-0.856452866183:-0.5,samples=50]
plot ({3.4+1.5*\t},{-2*\t*\t*\t*\t+(\t*\t*\t-\t)/3+3});
\draw[red,thick,variable=\t,domain=-0.5:0.7,samples=50]
plot ({3.4+1.5*\t},{-2*\t*\t*\t*\t+(\t*\t*\t-\t)/3+3});
\draw[red,dashed,thick,variable=\t,domain=0.7:0.821521577185686,samples=50]
plot ({3.4+1.5*\t},{-2*\t*\t*\t*\t+(\t*\t*\t-\t)/3+3});
\draw[red,thick,variable=\t,domain=0.821521577185686:1,samples=50]
plot ({3.4+1.5*\t},{-2*\t*\t*\t*\t+(\t*\t*\t-\t)/3+3});
\draw [thick,decorate,decoration = {calligraphic brace,amplitude=5pt},pen colour=blue] (6.5,5.5) --  (6.5,0);
\fill[red] (2.65,3) circle (2.5pt);
\fill[red] (4.45,2.4008) circle (2.5pt);
\draw(7.8,2.75) node{\color{blue}$\Mx(\Q_p)$};
\draw(-1,4) node{\color{blue} $\ol{\Mx(\Z)}$};
\draw[gray] (0,4) -- (1.5,3); 
\end{tikzpicture}   

    \centering
\caption{Artistic impression of the method in a residue disc}
\label{fig:artist_view}
\end{figure}

Let $X_{\Q}$ be a smooth projective geometrically connected curve of genus $g \geq 2$, $\jac_\Q$ its Jacobian, and let $r=\rank J_\Q(\Q)$. Let $J$ be the Néron model of $J_\Q$ over $\Z$. We briefly review the geometric quadratic Chabauty strategy to determine the set $X_\Q(\Q)$. 
When $r<g$, the classical Chabauty--Coleman method can be used to bound $X_\Q(\Q)$,
so we focus on the first case beyond the range of the classical method, namely $r=g$ and $\rho \coloneq \rank_{\Z}\NS(J_{\Q})>1$ (cf.\ Remark \ref{remark: GQC works} below for the case where $r>g$).

Let $X$ be a regular scheme, proper and flat over $\Z$, with generic fiber isomorphic to $X_{\Q}$.  We fix throughout an odd prime $p \geq 3$ of good reduction for $X$.

We assume that we know a point $b\in X(\Z)$ (see also Remark \ref{rmk:no_base_point}), and aim to find all points in $X(\Z)$, or at least a finite subset of $X(\Z_p)$ containing $X(\Z)$. 

We first split the problem into sub-problems, to simplify the geometry. Let $X^\sm$ be the smooth locus of $X/\Z$. Let $U_1, \ldots, U_{k}$ be the 
simple opens of $X^\sm$ (cf.\ Section \ref{subsubsec: simple opens}).
We have
\[
X_\Q(\Q) = X^\sm(\Z) = \sqcup_{i} U_i(\Z)\,.
\]
Hence, it is enough to compute a finite subset of $U(\Z_p)$ containing $U(\Z)$ for each $U=U_i$.

Let $J$ be the N\'eron model over $\Z$ of the Jacobian of $X_\Q$, let $J^0$ be the fiberwise connected component of the identity of  $J$ and let $j_b: X_\Q \to J_\Q$ be the Abel--Jacobi map sending $x \mapsto \calO_{X_{\Q}}(x - b)$. Let $\Mx \to J\times J^0$ be the Mumford torsor, cf.~Section \ref{sec:mumford}.

   To apply the method, we build a scheme morphism $\schmorph \colon J\to J^0$ 
   such that the bottom  map of
\begin{equation}    \label{eq:does jbtilde exist}
\begin{tikzcd}[column sep=small]
	& & & \Mx \arrow[d] \\
	U \arrow[rrru,"\wt{j_{b}} \ ?",dashed ] \arrow[r,"j_b"']
	& J \arrow{rr}[swap]{(\id_J, \, \schmorph)} & {} & J\times J^{0}
\end{tikzcd}
\end{equation}
can be lifted to a map $\wt{j_{b}}$.
Such a lift exists if and only if 
$j_b^*(\id_J, \schmorph)^* \Mx$ is a trivial torsor on $X_\Q$, or equivalently on $U$, as explained in Section \ref{subsubsec: simple opens}. 
Note that we do not require $\schmorph$ to be a homomorphism of group schemes. In fact, we will choose $\schmorph$ to be of the form
\begin{align}
     \label{def:h}
    \schmorph :  J  \to & J^0  \\
      x  \mapsto& m \cdot (f(x) + c),\notag
\end{align}
where $f : J \to J$ is a group homomorphism, $c$ is a fixed point in $J(\Z)$, and $m$ is defined as in \eqref{eq:m}, so that multiplication by $m$ kills the component group of (each fiber of) $J$. We further require that the endomorphism $f_\Q$ has trace $0$. 
Conversely, for each such $f$, there exists a choice of $c \in J(\Z)$ such that $(\id_J, \schmorph)^* \Mx$ is trivial on $X_\Q$. In practice, we will also take $f$ to be equal to its dual $f^\vee$ up to the canonical principal polarization  $\lambda: J \simeq J^\vee$.

Note that $U(\Z)$ is the disjoint union of the sets $U(\Z)_{\ol u}$ for $\ol u\in U(\F_p)$. We work residue disc by residue disc.
Since $U(\Z)_{\ol u} \subset U(\Z_p)_{\ol u}$ and $\jbtilde$ is injective (indeed $j_b$ is injective), we can identify $U(\Z)_{\ol u}$ with the subset $\jbtilde(U(\Z)_{\ol u})$ inside 
\begin{equation}
\wt{j_{b}}(U(\Z_p)_{\ol u})\cap \ol{\Mx(\Z)}_{\wt{j_b}(\ol u)}
\label{eq:THE_intersection}
\end{equation}
where $ \ol{ \Mx(\Z)}_{\wt{j_b}(\ol u)}$ is the closure of $\Mx(\Z)_{\wt{j_b}(\ol u)}$ inside $\Mx(\Z_p)_{\wt{j_b}(\ol u)}$ for the $p$-adic topology. 
 
The main goal of the geometric quadratic Chabauty method is to compute the  intersection \eqref{eq:THE_intersection},
which under our hypotheses is a finite set of $p$-adic points containing (the image of) $U(\Z)_{\ol u}$, see Remark \ref{remark: GQC works}.

If $\Mx(\Z)_{\wt{j_b}(\ol u)}$ is empty, we deduce that $U(\Z)_{\ol u}$ is also empty (see also Remark \ref{rmk kappa}). Otherwise, by the argument in \cite[Theorem 4.10]{Edil} (see also Section \ref{sec:kappa_general}), using that the rank $r$ of the Mordell--Weil group of $J$ is equal to $g$, one can identify $\ol{\Mx(\Z)}_{\wt{j_b}(\ol u)}$ with the image of an analytic map
\begin{equation} \label{eq:kappa_first_appearance}
\kappa \colon \Z_p^{2r} = \Z_p^{2g}  \to \Mx(\Z_p)_{\wt{j_b}(\ol u)},    
\end{equation}
which can be constructed explicitly from the biextension structure of $\Mx$.

Thus, in essence, the geometric quadratic Chabauty method consists in computing, for each simple open $U$ and each $\ol{u}\in U(\F_p)$, the intersection 
\begin{equation}\label{eq:inters}
    \kappa(\Z_p^{2g}) \cap \wt{j_b}(U(\Z_p)_{\ol u}).
\end{equation}
By choosing suitable coordinate systems around the point $\wt{j_b}(\ol u)$, we make this more explicit and show that, in concrete calculations, this intersection can be recovered by studying the zero locus of suitable polynomials modulo $p^n$. Letting
\[
\Psi : \Mx(\Z_p)_{\wt{j_b}(\ol u)} \to \Z_p^{2g+1}, \quad \beta : U(\Z_p)_{\ol u} \to \Z_p
\]
be bijections given by analytic coordinate systems (see Section \ref{subsec:analytic-coordinates} for details), we may express both $\kappa$ and $\wt{j_b}$ as convergent $p$-adic power series and we have the diagram

\begin{equation}\label{diag:gqc_coordinates}
	\begin{tikzcd}
	& U(\Z)_{\ol u} \arrow[rr,"\wt{j_b}"]
     \arrow[d]
    &&\ol{\Mx(\Z)}_{\wt{j_b}(\ol u)} \arrow[d] 
    & \Z_p^{2g}  \arrow[dd, "{(\kappa_1, \ldots, \kappa_{2g+1})}"] \arrow[l, twoheadrightarrow,"\kappa"] 
    \\
	& U(\Z_p)_{\ol u} \arrow[rr,"\wt{j_b}"] \arrow[ld,"\beta"]&& \Mx(\Z_p)_{\wt{j_b}(\ol u)} \arrow[rd," \Psi"] \\
\Z_p \arrow[rrrr,"{\theta = (\theta_1, \ldots, \theta_{2g+1})}"] &&&&
    \Z_p^{2g+1} 
	\end{tikzcd}  
\end{equation}
where $\theta$ is defined as $\Psi\circ \wt{j_b}\circ\beta^{-1}$.
 In our computations, we choose the coordinate system $\Psi$ so that the image of $(\kappa_1, \ldots, \kappa_{2g+1})$ is the set of points in $\Z_p^{2g+1}$ with last coordinate equal to $0$. Then, the intersection in \eqref{eq:inters} is in bijection with the zeros of the power series $\theta_{2g+1}$ in \eqref{diag:gqc_coordinates}. We approximate such zeros successively. We start by computing the polynomial $\theta_{2g+1} \pmod p$ and finding its zeros $\beta \in \F_p$. The number of such zeros, counted with multiplicities, gives an upper bound for the cardinality of \eqref{eq:inters}. This bound can then be refined by searching for lifts of these solutions to higher $p$-adic precision. 
In practice, we determine one $p$-adic digit at a time, so this process amounts to solving several explicit polynomial congruences over $\Z/p\Z$. 

The solutions of these congruences will not always correspond to rational points. However, when $\rank(\NS(J_\Q))>2$  we can try to rule out any extra solutions by choosing multiple independent endomorphisms $f$, and repeating the process. At the end, hopefully, no other candidate solutions are left, except for the ones corresponding to the known rational points.

\begin{remark}\label{remark: GQC works}
 Let $\rho$ be the rank of the N\'eron--Severi group of $J_\Q$. 
Assuming $r = g$ and $\rho > 1$, 
\[  \overline{\Mx(\Z)}_{\jbtilde(\ol{u})} \cap \jbtilde(U(\Z_p))_{\ol{u}} 
\] 
is finite. Indeed, a more general finiteness statement is true when $r < g+ \rho -1$, see  \cite[Section 9]{Edil}.
In the general case, the method works by replacing  $\Mx$ with the product of the $\Gm$-torsors over $J$ obtained by pulling back $\Mx$ by $(\id_J, h_i)$ using $\rho - 1$ independent endomorphisms $f_1, \ldots, f_{\rho-1}$ to construct $h_1, \dots, h_{\rho-1}$
as in \eqref{eq:does jbtilde exist}.

In this paper, we work under the assumption $r=g$. 
For the curve  $X_{\ns}^+(N)$, Chen \cite{Chen} shows that the Jacobian $J$ is isogenous to $J_0(N^2)^{+, \rm new}$. By results of Kolyvagin--Logachev (see \cite[\S 7 Appendix]{DograLeFourninfamily}), when the analytic rank of the associated newforms $f \in S_2(\Gamma_0(N^2))^{+, \rm new}$ are all $0$ or $1$, the algebraic rank of the Jacobian is equal to the analytic rank. This makes the assumption $r = g$ easier to verify in this case.
Indeed, the assumption $r=g$ is satisfied at least for all modular curves $X_{\ns}^+(N)$ where $N$ is a prime less than $100$. In general, if $A$ denotes the largest quotient abelian variety of ${\mathrm{Jac}} (X_{\ns}^+(N) )$ having rank equal to $\dim (A)$, then 
the ratio $\dim (A)/\dim ({\mathrm{Jac}}(X_{\ns}^+(N) ))$ is expected to be bounded from below by some absolute constant, and even tend to $1$ when $N$ goes to infinity. See for instance~\cite{KMVdK00} for partial results in the case of $J_0 (\ell)$, $\ell$ a prime. 
\end{remark}

\begin{remark}\label{rmk:no_base_point}
Suppose we do not know a point $b \in X(\Z)$. 
In this case, we can consider a divisor $D$ over $X_\Q$ of degree $d \neq 0$ and replace $j_b$ with the map $j_D \colon X_\Q \to J_\Q$ sending $x \mapsto [d\cdot x-D]$. Then, for each trace~$0$ endomorphism $f\colon J \to J$, we can still form a diagram as in \eqref{eq:does jbtilde exist}, for a suitable $h$ as in \eqref{def:h}. The rest of the machinery still applies, but $\jbtilde$ might not be injective. However, it still has finite fibers. 

The technical difficulty in this case is in rigidifying line bundles: specifically, in Lemma \ref{lem:canonical_isomorphisms_for_jbtilde}, we use that the line bundle  $(j_b, h \circ j_b)^*\calM$   is trivialized at  $b\times U$, and some modification is needed to extend  to the higher degree case. So, we assume for simplicity throughout that some $b\in X(\Z)$ is known.
\end{remark}

\begin{remark}\label{rmk:not_uniqueness_jbtilde_kappa} 
Note that, despite the fact that the notation $\jbtilde$ only involves the base point $b$, the map $\jbtilde$ does in fact depend on more data, namely the endomorphism $f$ and the simple open $U$. Conversely, given $b, f$ and $U$ there is a unique choice of $\jbtilde$ up to sign.
Similarly, there is a map $\kappa$ for each residue disc of $\Mx(\Z_p)$. See Section \ref{sec:kappa_general} for details.
\end{remark}

\begin{remark}
\label{fromTtoM}
Throughout this article, we rephrase the method in \cite{Edil} completely in terms of  the Mumford torsor, and avoid the torsor $T$ in loc.~cit. 
Because of this difference from \cite{Edil}, here the map $\kappa$ in \eqref{eq:kappa_first_appearance} has domain $\Z_p^{2r}$, instead of $\Z_p^r$ as in \cite{Edil}.
\end{remark}

\section{Computations in the Mumford torsor}
\label{sec: general Mumford}

\subsection{Makdisi representations of the Mumford torsor}
\label{sec:mumford_makdisi}

In this section, we review the algorithmic framework detailed in \cite{NicMumford}, in order to explain how one can compute algorithmically and without equations with the Mumford bundle of a modular curve. 
This is possible not only over a field, but also over a local ring such as the truncation $\Z_q/p^e\Z_q$, $e \in \N$, of the ring of integers $\Z_q$ of an unramified extension of $\Q_p$, where $p \ge 5$ is a prime that does not divide the level.

\subsubsection{Computing with Jacobians}\label{sect:MakJac}

Let $X$ be a smooth projective curve of genus $g$ over a local ring $S$.
When $S$ is a field, Makdisi introduced in~\cite{Mak1,Mak2} efficient algorithms to compute in the Jacobian $J$ of $X$. In a nutshell, Makdisi's approach consists in picking a line bundle $\LMak$ of large enough degree $\delta = \deg \LMak \gg_g 0$, so that in particular every point of $J$ is of the form $\LMak^{-1}(D)$ for some effective divisor $D \in \Eff^\delta(X)$; and in representing this point by the subspace
\[ W_D = H^0\big(X,\LMak^2(-D) \big) \subset V = H^0\big(X,\LMak^2\big) \]
of the global section space $V$ of $\LMak^2$ formed by the sections of $\LMak^2$ that vanish at $D$. In order to represent such subspaces $W_D$, Makdisi fixes local trivializations of $\LMak$ at sufficiently many points $P_1, \cdots, P_n$ on $X$, $n \gg_\delta 0$, which he uses to embed $V$ (and its subspaces) into $S^n$; the subspace $W_D$ can thus be encoded by a matrix of size $n \times \dim W_D$ and  with entries in $S$, whose columns correspond to a basis of $W_D$, and where the $i$th entry of each column contains the value at $P_i$ of the corresponding section of $\LMak^2(-D)$ under these trivializations. Makdisi then shows how to reduce computing in $J$ (e.g.\ the group law of $J$, testing whether two such matrices represent the same point of $J$, etc.) to linear algebra over $S$  (of size $O(g) \times O(g)$, since $\delta$ and $n$ can be taken to be $O(g)$).

Mascot then showed in~\cite{NicJac} that these algorithms can be extended to the case where $S$ is a local ring over which $X$ has good reduction.

\subsubsection{Extending to \texorpdfstring{$\Mx$}{Mx}}

Mascot also shows in~\cite{NicMumford} how to extend Makdisi's algorithms for Jacobians to compute in the Mumford torsor $\Mx$ of $J$. 
Mascot picks two nonzero sections $d_0$ and $e_0$ of $\LMak$ whose divisors $D_0 \coloneq (d_0) \in \Eff^\delta(X)$ and $E_0 \coloneq (e_0) \in \Eff^\delta(X)$ do not intersect (over the residue field of $S$). Then, for any choice of effective divisors $D,E$ of degree $\delta$ such that $D+D_0$ is disjoint from $E+E_0$, and for any scalar $\lambda \in S^\times$, Mascot represents the point (see Section \ref{sec:mumford})
\[
\lambda \cdot \Norm_{D-D_0}(1 ) \quad \in \left( \Norm_{D-D_0}\calO_X(E-E_0)\right)^\times (S) = \Mx_{D-D_0, E-E_0}(S)
\]
with the triple
\begin{equation}\label{eq:Mum_point_makdisi}
\left( W_D, W_E, \lambda \right) .
\end{equation}
If the cardinality of (the residue field of) $S$ is large enough, then all pairs of points $(x,y)$ in $J(S) \times J(S)$ can be represented as the classes $(D-D_0, E-E_0)$ for $D,E$ as above, so one is able to represent all points in $\Mx(S)$ in the form \eqref{eq:Mum_point_makdisi} (for this reason, when applying the method with a small prime $p$, we may have to embed $\Mx(\Z_p)$ in $\Mx(\Z_q)$ for $q$ a large enough power of $p$, and work inside $\Mx(\Z_q)$).

Mascot then shows how Makdisi's algorithms extend naturally to points of $\Mx$ in this representation. 
For example, Makdisi's algorithm for the group law on $J$ extends to algorithms for the left and right partial group laws on $\Mx$~\cite[Section \NicMumfordSectGpLaws]{NicMumford}, and Makdisi's equality test in $J$ extends to an algorithm to test whether two given points $x,y \in \Mx(S)$ lie over the same point of $J \times J$, and when they do, to compute the unique $\lambda \in S^\times$ such that $y = \lambda \cdot x$~\cite[Algorithm \NicMumfordAlgCompare]{NicMumford}.

Mascot's algorithms are based on the following fundamental formula (cf.~\cite[Lemma \NicMumfordMainFormula]{NicMumford}). Suppose that the divisors $D_1,D_2,E_1,E_2 \in \Div^0(X)$ are such that the $D_i$ are disjoint from the $E_j$, so that we have the two points
\[ \mu_1 = \Norm_{D_1}(1 ) \quad \in \left( \Norm_{D_1}\calO_X(E_1)\right)^\times (S) = \Mx_{D_1, E_1}(S)\]
and
\[ \mu_2 = \Norm_{D_2}(1 ) \quad \in \left( \Norm_{D_2}\calO_X(E_2)\right)^\times (S) = \Mx_{D_2, E_2}(S).\]
If $D_1 \sim D_2$ and $E_1 \sim E_2$, and we know rational functions $d$ and $e$ on $X$ such that $(d)=D_2-D_1$ and $(e)=E_2-E_1$, then we can recover the unique $\lambda \in \Gm$ such that $\mu_2 = \lambda \cdot \mu_1$ as
\[ \lambda = d(E_1) e(D_2) = d(E_2) e(D_1). \]

\begin{remark}
Since Makdisi's algorithms rely on linear algebra of size $O(g) \times O(g)$, their time complexity is $O(g^3)$ operations in $S$, and their space complexity is $O(g^2)$ elements of $S$. The algorithms for $\Mx$ presented in~\cite{NicMumford}, being extensions of Makdisi's algorithms, have the same complexity.
\end{remark}

\subsubsection{Converting naive forms into Makdisi form}
\label{sec:naive2Makdisi}

When we compute the $\jbtilde$ map in Sections \ref{sect:jbt:on:Zppts} and \ref{sec:explicit_jbtilde}, we will have to handle points of $\Mx$ of the form
\begin{equation}
\Norm_{P_1-P_2}\big(t_{Q_1-Q_2}^{-1}\big)  \in \Norm_{P_1-P_2}\big(\calO_X(Q_1-Q_2)\big)^\times(S)  = \Mx_{P_1-P_2, Q_1-Q_2}(S) , \label{eqn:naiveform}
\end{equation} 
where $P_1,P_2,Q_1,Q_2 \in X(S)$ are points on the curve and $t_{Q_1-Q_2}$ is a rational function on $X$ which is a local equation for $Q_1-Q_2$ around both $P_1$ and $P_2$. We call the 5-tuple \\$(P_1, P_2, Q_1, Q_2, t_{Q_1-Q_2})$ a \emph{naive form} of this point.

Mascot provides an algorithm~\cite[Algorithm \NicMumfordAlgtPQ]{NicMumford} which, given such a naive form, computes a \emph{Makdisi form} \eqref{eq:Mum_point_makdisi} of this point \eqref{eqn:naiveform}.
This algorithm works by finding a representation $D-D_0$ for $P_1-P_2$ and analogously $E-E_0$ for the $Q_1-Q_2$, together with rational functions expressing the linear equivalences, and evaluating these functions (multiplied by a suitable power of $t_{Q_1-Q_2}$) at certain points. A remarkable feature is that it manages to compute such values without losing $p$-adic accuracy even when some points collide mod $p$, which Mascot achieves by using linear algebra to re-express the relevant rational functions as ratios of sections of some power of $\LMak$ so as to solve indeterminacies of the form $0/0 \bmod p$.

More generally, we call the triple
\[ \left(\sum_i m_i P_i \in \Div^0(X),\sum_j n_j Q_j \in \Div^0(X),t \in S(X)\right),
\]
a \emph{naive form} of the point
 \begin{equation}
\Norm_{\sum_i m_i P_i}(t)  \in \Norm_{\sum_i m_i P_i}\left(\calO_X\bigg(\sum_j n_j Q_j \bigg)\right)^\times (S)  = \Mx_{\sum_i m_i P_i, \sum_j n_j Q_j}(S) .\label{eqn:naiveform_2}
\end{equation} 
Here, $S(X)$ denotes the ring of rational functions on $X_S$,
and $t$ is a rational function on $X_S$ that generates $\calO_{X_S}(- \sum_j n_j Q_j)$ in a neighborhood of all the points $P_i$.
We can find a Makdisi representation of 
\eqref{eqn:naiveform_2}
by using the above algorithm along with algorithms for the partial group laws.

\subsubsection{Specializing to modular curves}
\label{sec:makdisi_modular_P}
In order to specialize Makdisi's algorithms for Jacobians to the case where $X \coloneq X(\Gamma)$ is a modular curve of congruence subgroup level $\Gamma \geq \Gamma(N)$, we can take $\LMak$ to be the line bundle $\big(\Omega^1(\text{cusps})\big)^k$ for some $k \in \N$ large enough to ensure $\deg \LMak$ is large enough (cf.\ section~\ref{sect:MakJac}).
When $X(\Gamma)$ has at least three cusps, (which, in particular, is the case of $X_{\ns}^+(N)$ for prime $N \geq 7$), we can take $k=1$; the section space of $\LMak$ is then $M_2(\Gamma)$, and, in the notation of section~\ref{sect:MakJac}, points on the Jacobian of $X(\Gamma)$ are represented by subspaces $W_D$ of $V = M_4(\Gamma)$.

Makdisi also constructs in~\cite{MakMod} specific modular forms for $\Gamma(N)$, defined over $\Z[\tfrac{1}{6N}]$, which are ``moduli-friendly'' in the sense that they are easy to evaluate on triples
\begin{equation}
  (E:y^2=x^3+ax+b,\ P,\ Q)   \label{eqn:MakModPt}
\end{equation}
consisting of a short Weierstrass equation defining an elliptic curve $E$ over $S$ and of a basis $P,Q$ of the $N$-torsion of $E$. Such a triple represents a point of the modular curve $X(N)$, and the choice of a Weierstrass equation of $E$ implies a choice of differential $\omega = d x / 2y$ on $E$, which is used~\cite[Section 4.1]{NicMod} to evaluate these modular forms ``\`a la Katz'' on the point \eqref{eqn:MakModPt}. Furthermore, the algebra generated by these forms is very large, cf.\ Remark~\ref{rmk: generation Mak}.

Assuming
\begin{equation} p \nmid [\Gamma:\Gamma(N)], \label{eqn:p_ndiv_Gamma} \end{equation}
so that the trace map $M_2\big(\Gamma(N)\big) \longrightarrow M_2(\Gamma)$ is surjective mod $p$,
we express $M_2(\Gamma)$ in terms of Makdisi's moduli-friendly forms,
and thus construct a $p$-adic model of the Jacobian of $X(\Gamma)$ as in section~\ref{sect:MakJac}, without requiring any equations or $q$-expansion calculations; all that is required is looking at the $N$-torsion of a few elliptic curves in short Weierstrass form so as to obtain data such as~\eqref{eqn:MakModPt} representing the points $P_i$ at which we evaluate the sections of $\LMak$. See \cite{NicMod} for complete details in the case where $\Gamma_0(N) \geq \Gamma \geq \Gamma_1(N)$.

Incorporating Mascot's extension~\cite{NicMumford} to the Mumford torsor, this makes it possible to work algorithmically with the Mumford torsor of modular curves over local rings such as $\Z_q/p^e\Z_q$, $e \in \N$ and $q$ a power of a prime $p \nmid N$, in an entirely equationless and $q$-expansion-free way.

\begin{remark}\label{rk:fixed_Weil}
We restrict ourselves to triples \eqref{eqn:MakModPt} where the Weil pairing of $P$ and $Q$ is some $N$th root of $1$ which we have fixed once and for all, so as to work in a fixed geometric component of $X(N)$. In particular, $\F_q$ must contain the $N$th roots of unity. 
\end{remark}

\begin{remark}\label{rmk: generation Mak}
In \cite{MakMod}, Makdisi shows that in characteristic 0, the algebra generated by his moduli-friendly forms contains $M_k\big(\Gamma(N)\big)$ for all $k \geq 2$. It follows that this also holds in characteristic $p$ for $p$ large enough. If the traces down to level $\Gamma$ of Makdisi's forms do not span $M_2(\Gamma)$ in characteristic $p$ for our chosen $p$, then our code constructing a $p$-adic model of the modular Jacobian will loop forever; so our code terminating implicitly proves that this holds for our chosen $p$.
We have never encountered a case where our code does not terminate, so it would seem that this continues to hold in every characteristic $p \nmid 6N$, but we do not prove this.
\end{remark}

\subsection{Maps from the curve to the torsor}\label{sec:jbtilde}
 
In this section we give formulas to compute a map $\wt{j_b}: U \to \Mx$ as in \eqref{eq:does jbtilde exist}, first for points of $U$ with values in general schemes $S$ such that $X_S$ is smooth, and then more precisely for $\Z_p$-points.

\subsubsection{Description in terms of line bundles} \label{sec:correspondences_jbtilde}

Given a scheme morphism $h \colon J_\Q\to J_\Q$ such that $j_b^* (\mathrm{id}, h)^* \mum$ is trivial on $U_\Q$, as in \eqref{eq:does jbtilde exist}, we can consider the pullback $\calL_h$ of $\mum$ to $(X \times U)_\Q$ by $j_b \times (h \circ j_b)$.
Denote by $\Delta$ the diagonal morphism $U_{\Q} \to X_{\Q} \times U_{\Q}$. The line bundle
\[
\Delta^* \calL_h = \Delta^* (j_b \times (h \circ j_b))^* \mum =
 ( (\mathrm{id}, h) \circ (j_b, j_b) )^* \mum
  = j_b^* (\mathrm{id}, h)^* \mum
\]
is trivial on $U_\Q  =X_\Q$. Moreover, we have $(b_{\Q},\id)^*\calL_h = \calO_{U_\Q}$ and $(\id,b_\Q)^*\calL_h$ is degree 0 over $U_\Q$.
Conversely, as explained in  \cite[Propositions 7.2 and 7.5]{Edil}, there is a bijective correspondence between
\begin{align*}
\left\{
\begin{array}{c}
\text{invertible } \mathcal{O}\text{-modules } \mathcal{L} \text{ on } (X \times U)_\Q
 \\
\text{of degree } 0 \text{ with respect to } \\
\pi_U: X_\Q \times U_\Q \to U_\Q, \\
\text{ such that } \Delta^* \mathcal{L} \text{ is trivial},
\\
\text{and rigidified on } b_{\Q} \times U_\Q
\end{array}
\right\}
\quad \text{and} \quad
\left\{
\begin{array}{c}
\text{scheme morphisms } \\
h_\Q: J_{\mathbb{Q}} \to J_{\mathbb{Q}} \\
\text{such that } \\j_b^*\bigl((\mathrm{id}, h)^* \Mx\bigr) \text{ is trivial}
\end{array}
\right\}
\end{align*}

We now extend this description over $\Z$. 
There is a map
\begin{align*}
\left\{
\begin{array}{c}
(\calL, s_\Delta, s_b) \text{ such that }
\calL \to X \times U \text{ is}\\
\text{a line bundle of multidegree 0 over } U, \\
s_\Delta  \text{ is a trivializing section of } \Delta^* \calL,  \\
s_b \text{ is a trivializing section of }(b \times \id_U)^* \calL
\end{array}
\right\}
\quad \to \quad
\left\{
\begin{array}{c}
\text{morphisms } \jbtilde: U \to \Mx \\
\text{ that fit in \eqref{eq:does jbtilde exist}}
\end{array}\right\}.
\end{align*}
This is a bijection up to a sign ambiguity: if we multiply $s_\Delta$ and $s_b$ by $(-1)$ we obtain the same morphism $\jbtilde$. 
The triple $(\calL, s_\Delta, s_b)$ determines the morphism
\begin{equation} \label{eq:tildej_b_abstract}
\wt{j_b}\colon U  \lto \Mx \subset \calM \ , \quad 
	 \wt{j_b}(u)  = (u^* s_\Delta ) 
	\otimes (u^* s_b)^{-1} 
\end{equation}
in 
\begin{equation} \label{eq:general_isoms_for_j_b_tilde}
(u^*\Delta^* \calL) \otimes   (u^* \calL|_{b\times U})^{-1}  
= \calL|_{(u,u)} \otimes \calL|_{(b,u)}^{-1} \cong \calL|_{(u,u)} \cong \calM_{u-b,\calL|_{X\times u}}  \cong  (u-b)^*(\calL|_{X\times u}) ,
\end{equation}
where the isomorphism  $\calL|_{(u,u)} \otimes \calL|_{(b,u)}^{-1} \cong \calL|_{(u,u)}$ is the trivialization of $\calL_{(b,u)}$ given by the section $u^* s_b$ and the last isomorphism is the isomorphism in Section \ref{sec:mumford}. 
For the third isomorphism, we use that $\calL$ is a pullback of $\calM$ as follows. There is a unique isomorphism  $\calL \cong (j_b,h \circ j_b)^*\calM$ that sends $s_b$ to the pullback of the trivializing section of $\calM$ at $0 \times J^0$. This gives an isomorphism of $\calL_{(u,u)}$ with $\calM_{u-b, h(u-b)} = \calM_{u-b,  \calL|_{X\times u}}$. 

As in \cite[Section 4]{DRHS}, we can construct a line bundle $\calL$ starting from a trace zero endomorphism $f : J_{\Q} \to J_{\Q}$. 
 Let $T' \subset X_\Q \times X_\Q$ be a correspondence for $f$ whose support does not contain any vertical or horizontal fiber, nor the diagonal. 
 Then, taking $m$ as in \eqref{eq:m}, we define $T \subset X \times U$ to be the Zariski closure 
 \begin{equation}
 \label{eq:correspondence_T}
     T = \ol{m \cdot T'} ,
 \end{equation} 
 and define the  divisors 
\begin{equation}\label{eq:D_H_D_V}
    D_H\coloneq-(b\times\id_U)^*T,
\qquad
D_V\coloneq(b\times\id_U)^*T-\Delta^*T,
\end{equation}
which make sense as Cartier divisors on $U$, since the assumption on $T'$ implies that the support of $T$ does not contain $b \times U$ or the diagonal. To simplify the notation, we will sometimes write $(b\times\id_U)^*T$ as $ \left( T \cap (b\times U) \right)$. 

Then, denoting by $\ol{D}_V$ the Zariski closure of $D_V$ in $X$, the
 line bundle associated with $T + X \times D_H  + \ol{D}_V \times U$ is automatically trivial on the diagonal of $ U\times U$ and on $b\times U$. Moreover, by \cite[Proposition~11.5.2]{BL}, the same line bundle has degree $0$ when restricted to the fibers of the projection $\pi_U : X \times U \to U$. 
However, it does not necessarily have multidegree 0. Since in \eqref{eq:correspondence_T} we multiplied by $m$, there exists a divisor $B$, supported on the fibers   $X_{\F_\ell}$ where $\ell$ is a prime of bad reduction, such that  
\begin{align}\label{eq:L_is_divisorial}
\calL \coloneq \calO_{X\times U}(\calD), \text{ where }
\calD \coloneq T + X \times D_H + \ol{D}_V \times U + B \times U,
\end{align}
has multidegree 0. 
For each bad prime $\ell$, we require $B$ to be supported on the components of $X_{\F_\ell}$ not meeting the base point $b$. The intersection matrix on these components is non-singular, so the condition that $\calD$ have multidegree zero determines $B$ uniquely.

A general method to compute $B$ will be described in Section~\ref{sec:V}.
This $\calL$ thus satisfies all the desired properties and corresponds to a (unique up to sign) $\jbtilde: U \to \Mx$. 
In the rest of this section, 
we describe how to 
compute the morphism $\jbtilde$ from an $\calL$ as in \eqref{eq:L_is_divisorial} and a choice of trivializing sections $s_\Delta$ and $s_b$.

\begin{remark}\phantom{~}
\begin{itemize}
    \item The translation point $c\in J(\Q)$ in  \eqref{def:h} is represented by the divisor 
    \[
    T' \cap (X_\Q \times b)+ T' \cap (b\times X_\Q) - \Delta \cap T'.
    \]
    \item In Section \ref{sec:review-geometric-quadratic-chabauty} we worked with a map $J \to J^0$. Note that here we start with $f : J_\Q \to J_\Q$, represented by the correspondence $T'$, which we then multiply by $m$. This has the effect of killing the component group of $J$, so that the endomorphism represented by $m \cdot T'$ extends over $\Z$ to a map $J \to J^0$.
\end{itemize}

\end{remark}

\subsubsection{The trivializing sections \texorpdfstring{$s_\Delta$}{sDelta} and \texorpdfstring{$s_b$}{sb}}
\label{sec:sections s_b s_Delta}

Suppose that $\calL$ is described as  in \eqref{eq:L_is_divisorial}. 
By construction, the line bundles $\Delta^*\calL$ and $\calL|_{b\times U}$ are trivial and, since we work over $\Z$, they have trivializing sections $s_\Delta$ and $s_b$ which are unique up to $\pm 1$.
In this section we explain how to describe $s_\Delta$ and $s_b$ as rational functions on $U$. 
To do this, we first make isomorphisms $\psi_\Delta$ and $\psi_b$ from the abstract line bundles $\Delta^*\calL$ and $(b, \id)^*\calL$ to $\calO_U(\Delta^* \calD)$ and $\calO_U(T|_{b \times U} + D_H)$, respectively. 
These isomorphisms allow us to describe $s_\Delta$ and $s_b$ in terms of sections $\sigma_\Delta$ and $\sigma_b$ of $\calO_U(\Delta^*\calD)$ and $\calO_U(T|_{b \times U} + D_H)$, which has the advantage that the sections here can be described explicitly as rational functions (in fact, rational numbers).
Once we have constructed the isomorphisms $\psi_\Delta, \psi_b$, we set 
\begin{equation}\label{eq:s_b_as_function}
	s_b = \psi_b^{-1}(\sigma_b)  , \quad s_\Delta = \psi_\Delta^{-1}(\sigma_\Delta)  .
\end{equation}
Given the choices of $D_V$ and $D_H$ in  \eqref{eq:D_H_D_V}, we have $T|_{b \times U} + D_H =0$ and $\Delta^*\calD = B \cap U$, hence we can choose 
\begin{equation}\label{eq:sigma_Delta_sigma_b}
	\sigma_b = 1 \ , \quad \sigma_\Delta = \prod_{\substack{\ell \text{ prime } \\ \ell \mid N}} \ell^{-V_\ell} \ ,
\end{equation}
where $V_\ell$ is the multiplicity of the (unique) component of $U_{\F_\ell}$ in $B$ and $N$ is the product of the primes of bad reduction of $X$ (recall that $V_\ell=0$ when $\ell$ is a prime of good reduction for $X$). Indeed, as a rational function on $U$, the product $\prod_{\ell\mid N}\ell^{-V_\ell}$ has divisor $-B|_U$, and therefore defines a nowhere-vanishing section of $\calO_U(B|_U)=\calO_U(\Delta^*\calD)$. 

We first describe the isomorphism $\psi_\Delta$.
Since we imposed that the support of $T$ does not contain the diagonal over $\Q$, the pullback $\Delta^* \calD$ is a (Cartier) divisor on $U$, and the line bundle $\Delta^* \calL$ is canonically isomorphic to $\calO_U(\Delta^* \calD)$. 
In particular, we have the desired isomorphism
\begin{equation}    \label{eq:psi_Delta}
\begin{array}{cccc}
\psi_\Delta \colon & \Delta^*\calL &\overset{\sim}{\lto} & \calO_U(\Delta^*\calD)  \\
& \Delta^* s &\longmapsto  & s|_\Delta.
\end{array}
\end{equation}

On the other hand, the isomorphism $\psi_b$ is not canonical. Fix a local equation $\eqoverZ$ for $\ol{D}_V + B$ in a neighborhood of $b$ inside $X$.
The choice of $\eqoverZ$ gives an isomorphism
\begin{equation}\label{eq:psi_b} 
\begin{array}{cccc}
	\psi_b \colon & \calL|_{b\times U} & \overset{\sim} {\lto}  & \calO_U((T + X {\times} D_H) \cap (b{\times} U) ) ,
\end{array}
\end{equation}
that is easy to describe on the sections of $\calL|_{b\times U}$ that are restrictions of sections of $\calL$:  
for every section $s \in \calL(U')$ (over each open $U' \subset X \times U$), we have 
\begin{equation}\label{eq:description-psib}
     \psi_b ((b \times \id_U)^* s)  = (s\cdot \pi_1^*\eqoverZ)|_{b\times U}  .
\end{equation}

\begin{remark}
Recall that there is a global sign ambiguity in the choice of $s_b$. If we consider $s_b$ as fixed, the choice  $\sigma_b=1$ determines a choice of $\eqoverZ$. Conversely, we can arbitrarily choose $\sigma_b=1$ and $\eqoverZ$ (which defines $\psi_b$), and then define $s_b=\psi_b^{-1}(1)$.
\end{remark}

These constructions already allow us to describe $\jbtilde$ in some simple cases.
\begin{prop}
        Let $N$ be the product of the primes of bad reduction for $X$.
        Suppose $b_\Q$ does not lie in the support of $(D_V)_\Q$. Let $S$ be a $\Z[1/N]$-scheme and $u \in U(S)$ be a point such that $(u,u)$ and $(b,u)$ do not meet $T$. Then
        \[
\jbtilde(u) = \sigma_\Delta \cdot u^*1 \otimes b^*1 \quad \in \Mx_{u-b, T \cap (X{\times} u) + D_V}.
\] 
\end{prop}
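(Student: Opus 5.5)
Starting from \eqref{eq:tildej_b_abstract}, we have $\jbtilde(u)=(u^*s_\Delta)\otimes(u^*s_b)^{-1}$. Using \eqref{eq:s_b_as_function}, the plan is to write $s_\Delta=\psi_\Delta^{-1}(\sigma_\Delta)$ and $s_b=\psi_b^{-1}(1)$, and then push both through the chain of identifications \eqref{eq:general_isoms_for_j_b_tilde}. The target is a description of $\calL|_{(u,u)}\otimes\calL|_{(b,u)}^{-1}$ as the Mumford fiber $\Mx_{u-b,\,\calL|_{X\times u}}$, computed by \eqref{eq:Mumford_fiber_bundles} as $(u-b)^*(\calL|_{X\times u})$.

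\textbf{Step 1: $\calL|_{X\times u}$ as a divisor on $X_S$.} Since $S$ lies over $\Z[1/N]$, the divisor $B\times U$ restricts to zero. We have $\calL|_{X\times u}=\calO_{X_S}(\calD|_{X\times u})$ with
\[\calD|_{X\times u}=T\cap(X\times u)+\ol D_V+(\text{pullback of }X\times D_H).\]
The last term is the pullback of a divisor from $U$ to the point $u$, which gives a trivial bundle on $X_S$. It contributes only a constant factor in the norms along $u$ and $b$, and this factor cancels in $\Norm_{u-b}$. This identifies the fiber with $\Mx_{u-b,\,T\cap(X\times u)+D_V}$.

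\textbf{Step 2: the two points are away from the divisor.} The hypotheses that $(u,u)$ and $(b,u)$ do not meet $T$, together with $b\notin\operatorname{supp}D_V$, ensure this for the Mumford fiber. The diagonal restriction also avoids $D_V$, because $\calD|_\Delta=B\cap U$ is trivial over $\Z[1/N]$. Hence the constant function $1$ is a generator of $\calO(\calD|_{X\times u})$ near both $u$ and $b$, so $u^*1$ and $b^*1$ make sense as elements of $\calL|_{(u,u)}$ and $\calL|_{(b,u)}$.

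\textbf{Step 3: track the sections.} By \eqref{eq:psi_Delta}, $\psi_\Delta$ is restriction of the rational section $1$ of $\calO(\calD)$, so $u^*s_\Delta=\sigma_\Delta\cdot u^*1$. For $s_b$ we use \eqref{eq:description-psib}: $\psi_b$ multiplies by $\pi_1^*\eqoverZ$ and then restricts. Since $b$ is away from $\ol D_V$, and away from $B$ over $\Z[1/N]$, the local equation $\eqoverZ$ is a unit at $b$. The sign and normalization ambiguity in $\eqoverZ$ is absorbed by the choice $\sigma_b=1$, as in the remark after \eqref{eq:description-psib}, giving $u^*s_b=b^*1$. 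Substituting into \eqref{eq:tildej_b_abstract} then gives $\jbtilde(u)=\sigma_\Delta\cdot u^*1\otimes (b^*1)^{-1}$. This is the written expression once $\calL|_{(b,u)}^{-1}$ is identified with the norm along $-b$.

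\textbf{Main obstacle.} The delicate step is the third isomorphism in \eqref{eq:general_isoms_for_j_b_tilde}, which identifies $\calL\cong(j_b,h\circ j_b)^*\calM$ via $s_b$. One must verify that under this identification the element $u^*1\otimes b^*1$ is exactly $\Norm_{u-b}(1)$ in the convention of \eqref{eq:Mumford_fiber_bundles}, with no extra unit coming from $\eqoverZ$. For this I would rigidify along $b\times U$ and check that the canonical trivialization of $\calM$ on $0\times J^0$ corresponds to $\Norm_{b-b}$, which is trivial. I would do this first in the universal case and then specialize to $u$.
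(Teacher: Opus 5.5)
Your argument is essentially the paper's. You write $\jbtilde(u)=u^*s_\Delta\otimes(u^*s_b)^{-1}$ with $s_\Delta=\psi_\Delta^{-1}(\sigma_\Delta)$ and $s_b=\psi_b^{-1}(1)$, and use that over $\Z[1/N]$ the points $(u,u)$ and $(b,u)$ avoid $\calD$, so that $1$ is a local generator of $\calL$ and $\psi_\Delta^{-1}(1)=\Delta^*1$. The compatibility you single out as the main obstacle is not needed inside this proof: the paper takes the identifications \eqref{eq:general_isoms_for_j_b_tilde} as given and checks them separately in Lemma~\ref{lem:canonical_isomorphisms_for_jbtilde}.

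One caveat, which the paper's own proof also glosses over: the choice $\sigma_b=1$ absorbs only a sign, not a general normalization. Once $s_b$ is a generator over $\Z$, the value $\eqoverZ(b)=\pm\prod_q q^{(b\cdot\ol D_V)_q}$ is forced, so $u^*s_b=\pm(b,u)^*1$ holds only if $b$ misses $\ol D_V$ over all of $\Spec\Z$. That is slightly stronger than the stated hypothesis $b_\Q\notin\operatorname{supp}(D_V)_\Q$.
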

\begin{proof}
Equation \eqref{eq:tildej_b_abstract} and the definition of $\sigma_\Delta, \sigma_b$ give
\[
\begin{aligned}
\jbtilde(u) & = u^*(s_\Delta) \otimes u^*(s_b)^{-1} \\
&= u^*(\psi_\Delta^{-1}(\sigma_\Delta)) \otimes u^*(\psi_b^{-1}(\sigma_b))^{-1} \\
& =\sigma_\Delta \cdot u^*(\psi_\Delta^{-1}(1)) \otimes u^*(\psi_b^{-1}(1))^{-1}\\
&= \sigma_\Delta \cdot (u,u)^*1 \otimes u^*(\psi_b^{-1}(1))^{-1} \\
&= \sigma_\Delta \cdot (u,u)^*1 \otimes (b,u)^*1 .
\end{aligned}
\]
The second equality holds by \eqref{eq:s_b_as_function}  and the third because $\sigma_b =1$ and $\sigma_\Delta \in \Q$. 
We now consider the fourth equality. Since $(u,u), (b,u)$ do not meet $T$, the point $u$ is not in the support of $D_H$ or $D_V$ (see \eqref{eq:D_H_D_V}).  Since $S$ is a $\Z[1/N]$-scheme, the vertical divisor $B$ disappears after base change to $S$. Therefore $(u,u)$ and  $(b,u)$ do not meet $\calD$ (see \eqref{eq:L_is_divisorial}). Thus over some open set of $X\times U$ containing $(u,u)$, the section $1$ trivializes $ \calL = \calO_{X \times U}(\calD)$. The map $\psi_\Delta$ sends $\Delta^*1$ to a section $1 \in \calO_U(\Delta^* \calD)$, hence $\psi_\Delta^{-1}(1)$ is $\Delta^*1$. In particular, we get
\[
u^*(\Delta^*1) = (\Delta \circ u)^*(1) = (u,u)^*(1),
\]
as desired.
For the final equality, a similar argument works. Since $b_\Q$ does not lie in the support of $(D_V)_\Q$ and $(b,u), (u,u)$ do not meet $T$, the section $1$ trivializes $\calL$ over an open set. The map $\psi_b$ sends $(b, \id)^*1$ to a section $1 \in \calO_U(\calD|_{b \times U})$ and so $\psi^{-1}_b(1) = (b,\id)^*1$ and hence $u^*(\psi^{-1}_b(1)) = u^*(b,\id)^*(1) = (b,u)^*(1)$.
\end{proof}

We end this section with a technical lemma that allows us to identify the line bundles in  \eqref{eq:general_isoms_for_j_b_tilde}. 
This lemma justifies the form and definition of $\jbtilde$ given in \eqref{eq:tildej_b_abstract} which can now be deduced from \cite[Proposition 7.8]{Edil}.
\begin{lemma}\label{lem:canonical_isomorphisms_for_jbtilde}
    If $u$ is an $S$-point on $X$ and $X_S$ is smooth over $S$, then the composition of isomorphisms   \eqref{eq:general_isoms_for_j_b_tilde} is the identity in 
      \[
      (u^*\Delta^* \calL) \otimes   (u^* \calL|_{b\times U})^{-1}  
= \calL|_{(u,u)} \otimes \calL|_{(b,u)}^{-1} = (u-b)^*(\calL|_{X\times u}) .
      \]
\end{lemma}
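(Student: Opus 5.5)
We have to show that the chain \eqref{eq:general_isoms_for_j_b_tilde} composes to the identity. The plan is to make each isomorphism in the chain explicit in terms of the divisor $\calD$ of \eqref{eq:L_is_divisorial} and its canonical section $1$. Once everything is written in these terms, the composite becomes a comparison of norms of local equations, which we check directly.

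\textbf{Step 1: reduce to a generic situation.} The claim is an equality of two isomorphisms between invertible sheaves on $S$, and both are compatible with base change in $S$. By the usual rigidity argument we may therefore work on a universal situation: take $S=U$ and $u=\id$, restricted to the smooth locus over $\Z[1/N]$. There both sides are isomorphisms of line bundles on an integral scheme, so it suffices to compare them on a dense open. We then shrink to the open where $(u,u)$ and $(b,u)$ avoid $\operatorname{supp}\calD$ and $b$ avoids $\ol D_V+B$. At a point of this open the section $1$ of $\calL=\calO(\calD)$ trivializes $\calL$ near $(u,u)$ and near $(b,u)$.

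\textbf{Step 2: compute the composite with these trivializations.} The first isomorphism uses $u^*s_b$ to trivialize $\calL|_{(b,u)}$. The remaining ones are the identification $\calL\cong(j_b,h\circ j_b)^*\calM$ normalized by $s_b$, followed by the description \eqref{eq:Mumford_fiber_bundles} $\calM_{u-b,\calL|_{X\times u}}=(u-b)^*(\calL|_{X\times u})=\Norm_u(\calL|_{X\times u})\otimes\Norm_b(\calL|_{X\times u})^{-1}$. This last description is exactly $\calL|_{(u,u)}\otimes\calL|_{(b,u)}^{-1}$, where $\Norm_b$ of the restriction is identified with $\calL|_{(b,u)}$. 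So the content of the lemma is the following. The canonical isomorphism of \cite[Remark 6.3.12]{Edil} between the Mumford fiber and the norm of the pulled-back bundle, evaluated at $D=u-b$, sends the point corresponding to $x\otimes y^{-1}$ to the tautological element $x\otimes y^{-1}$. In addition, the rigidification of $\calM$ along $0\times J^0$, pulled back by $s_b$, matches the trivialization of $\calL|_{(b,u)}$ given by $u^*s_b$.

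\textbf{Step 3: verify the two compatibilities.} For the rigidification, we use that the isomorphism $\calL\cong(j_b,h\circ j_b)^*\calM$ was chosen in \S\ref{sec:correspondences_jbtilde} precisely so that $s_b$ goes to the pullback of the rigidifying section on $0\times J^0$. Under $j_b(b)=0$ this is a tautology. For the norm description, we use the seesaw characterization: restricting $\calM$ to $J\times\{y\}$ gives the line bundle $(j_b^*)^{-1}(y)$, and pulling it back along $j_b$ gives $\calL|_{X\times u}$ rigidified at $b$. The isomorphism of \cite[Remark 6.3.12]{Edil} is built from this pullback along $j_b$ together with the theorem of the square / norm functoriality. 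So at the divisor $u-b$ it reduces to the identity on $\Norm_u\otimes\Norm_b^{-1}$, which is compatible with the rigidification at $b$.

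We expect the main obstacle to be Step 3: pinning down the normalization conventions in \cite[Remark 6.3.12, Lemma 6.5.4]{Edil}, in particular the symmetry swap between the two factors and the sign coming from $j_b^*=(-\lambda)^{-1}$. A sign error here would replace the identity by $-1$ or by an inverse. The first thing to try is to check everything on the universal case over $X\times U$ using the section $1$ of $\calO(\calD)$. If the conventions still do not match, the fallback is to invoke \cite[Proposition 7.8]{Edil} directly, since it asserts that the corresponding composite there is the canonical one.
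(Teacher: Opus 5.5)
There is a genuine gap, and it sits exactly in Step 3. You assert that the isomorphism of \cite[Remark 6.3.12]{Edil} ``at the divisor $u-b$ reduces to the identity on $\Norm_u\otimes\Norm_b^{-1}$''. That sentence is a restatement of the lemma, not a proof of it. Your own worry that the composite might be $-1$ or an inverse shows that nothing in Steps 1--3 actually pins the composite down.

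Working with the section $1$ of $\calO(\calD)$ does not close this. The two maps you must compare are the $s_b$-normalized identification $\calL\cong(j_b,h\circ j_b)^*\calM$ and the canonical isomorphism of \cite[Eq.~6.3.3]{Edil}. Both are abstract, and you have no formula for their effect on that section.

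The fallback of quoting \cite[Proposition 7.8]{Edil} is circular. This lemma is precisely what the paper uses to deduce \eqref{eq:tildej_b_abstract} from that proposition.

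A smaller point concerns Step 1. The condition that $b$ avoids $\ol D_V$ is a condition on $b$ alone, not on $u$. It therefore cannot be arranged by shrinking, and the lemma does not assume it.

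The missing idea is rigidity of rigidified line bundles, which makes all normalization conventions irrelevant.
\begin{itemize}
\item Pull back the $s_b$-normalized identification along $\id_X\times u$. This gives an isomorphism $\calL|_{X\times u}\cong j_b^*\calM|_{J\times\{h(u-b)\}}$ sending the rigidification $u^*s_b$ at $b$ to $1_{0\times J^0}|_{h(u-b)}$.
\item The canonical isomorphism of \cite[Eq.~6.3.3]{Edil} goes back the other way and sends $j_b^*1_{0\times J^0}$ to $u^*s_b$.
\item Their composite is an automorphism of the line bundle $\calL|_{X\times u}$ on $X_S$ that fixes the rigidification at $b$.
\item Since $X_S\to S$ is smooth and proper with geometrically connected fibres, every automorphism of a line bundle on $X_S$ is multiplication by an element of $\calO(S)^\times$. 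That element is detected at $b$, so the composite is the identity.
\end{itemize}

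Now use the construction of the norm description in Section~\ref{sec:mumford}. The last isomorphism in \eqref{eq:general_isoms_for_j_b_tilde} is the pullback at $u$ of the canonical isomorphism, followed by $s\mapsto s\otimes(u^*s_b)^{-1}$. Hence the last two maps compose to the pullback at $u$ of $s\mapsto s\otimes(u^*s_b)^{-1}$. That is by definition the inverse of the first map, so the whole chain is the identity.

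This argument needs no reduction to a universal case and no trivializations. It also needs no bookkeeping of the symmetry swap or of the sign in $j_b^*=(-\lambda)^{-1}$.
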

\begin{proof}
We start by looking at the last two isomorphisms in \eqref{eq:general_isoms_for_j_b_tilde}, since they involve the Mumford torsor. The second-to-last isomorphism uses the identification between $\calL$ and the pullback of $(j_b, h \circ j_b)^*\calM$, which uses 
the trivializing section $s_b$ as follows (see \cite{Edil}, above Proposition 7.2). 
The map $(j_b, h \circ j_b)$ sends $b\times U$ into $0 \times J^0$ and  $\calM$ is trivialized along $0 \times J^0$ through a section $1_{0 \times J^0}$ and we use the unique isomorphism  $\calL \cong (j_b, h \circ j_b)^*\calM$ sending $s_b$ to  $(j_b, h \circ j_b)^*1_{0 \times J^0}$.
Pulling back this isomorphism by $\id_X \times u$, we obtain an isomorphism 
\begin{equation} \label{eq:technical_iso1}
    \calL|_{X\times u} \cong j_b^* \calM_{ J \times \{\calL|_{X\times u}\}}, 
\end{equation}
which is the only one sending $u^* s_b$ to $1_{0\times J^0}|_{h(u-b)}$. 
In particular, the second-to-last isomorphism in \eqref{eq:general_isoms_for_j_b_tilde} is defined as the pullback of the above isomorphism at $u$.

Now the last isomorphism in \eqref{eq:general_isoms_for_j_b_tilde}. 
By \cite[Equation 6.3.3]{Edil} we have a canonical isomorphism 
\begin{equation} \label{eq:technical_iso2}
j_b^* \calM_{ J \times \calL|_{X\times u}} \cong \calL|_{X\times u} 
\end{equation}
sending the rigidification given by $j_b^* 1_{0 \times J^0}$ to the one given by $u^*s_b$. 
We can also use the rigidification of $\calL|_{X\times u}$ at $b$ given by $u^*s_b$ to define the isomorphism
\begin{equation} \label{eq:technical_iso3}
\calL|_{X\times u} \cong \calL|_{X\times u}  \otimes b^*\calL|_{X\times u}^{-1} , \quad s \to s\otimes (u^*s_b)^{-1} .
\end{equation}

Then, the isomorphism in $\calM_{u-b,\calL|_{X\times u}}  \cong  (u-b)^*(\calL|_{X\times u})$ in Section \ref{sec:mumford} is the pullback at $u$ of the composition of  \eqref{eq:technical_iso2} and \eqref{eq:technical_iso3}. 
In particular, the composition of the last two isomorphisms in \eqref{eq:general_isoms_for_j_b_tilde} is the pullback at $u$ of the composition of \eqref{eq:technical_iso1}, \eqref{eq:technical_iso2} and \eqref{eq:technical_iso3}.
Notice that the composition of the first two is the identity of $\calL|_{X\times u}$, since it sends the rigidification $u^* s_b$ to itself. In particular, the composition of the last two isomorphisms in \eqref{eq:general_isoms_for_j_b_tilde} is the pullback at $u$ of \eqref{eq:technical_iso3}. The claim follows, since the other isomorphism in \eqref{eq:general_isoms_for_j_b_tilde} is by definition the inverse of the pullback at $u$ of \eqref{eq:technical_iso3}.
\end{proof}

\subsubsection{The map \texorpdfstring{$\tilde{j_b}$}{jbtilde} over \texorpdfstring{$\Z[1/N]$}{Z[1/N]}}
We can now prove a general formula for $\jbtilde$ on $S$-points of $U$ when $X_S$ is smooth, removing the assumption that $(u,u)$ and $(b,u)$ do not meet $T$, and the assumption that $b_\Q$ does not meet $(D_V)_\Q$.
Throughout the section we assume the following notation.

\begin{notation}\label{notation_for_jbtilde}
    We keep the notation and assumptions of \eqref{eq:L_is_divisorial}, \eqref{eq:s_b_as_function}, \eqref{eq:sigma_Delta_sigma_b}, and \eqref{eq:psi_b}.
Let $S$ be a local regular $\Z[1/N]$-scheme, so that in particular $X_S$ and $U_S$ are smooth over $S$. For $u \in U(S)$,  we define the divisor 
    \[
    D(u)\coloneq (\id_X \times_\Z u)^*(\calD-X\times D_{H})
      = T_S  \cap (X_S {\times} u) + D_{V,S}
    \]
 on $X \times_\Z u \cong X_S$.
\end{notation}

\begin{lemma}\label{lemma:preliminaries-jbtilde} 

Assume Notation \ref{notation_for_jbtilde}. The following hold:
    \begin{enumerate}
        \item For any smooth $S$-scheme $Z$, any (Weil, or equivalently Cartier) divisor $E$ on $Z$, and any $z \in Z(S)$, there exists a local equation $t_{E, z}$ for $E$ around $z$ (that is, a rational function on $Z$ such that there exists an open neighborhood $V$ of $z$ in $Z$ with $\divisor(t_{E,z})|_{V} = E|_{V}$).
        \item\label{item:technical_isomorphic_bundles} The line bundles $\calL|_{X\times u}$ and $\calO_X(D(u))$ are isomorphic. Any isomorphism $\calL|_{X\times u} \cong \calO_X(D(u))$ is induced by the choice of a local equation $t_{{D_H},u}$ for $D_H$ around $u$.
        \item        The description of the fibers of the Mumford bundle in Section \ref{sec:mumford} gives isomorphisms
    \begin{equation}\label{eq:iso_divisors_general_tildejb}
	 (u-b)^* \calL|_{X \times u} \simeq \calM_{u-b,\calL|_{X\times u}} = \calM_{u-b,\calO_X(D(u))} \simeq (u-b)^* \calO_X(D(u)).
	\end{equation}
    Note that the middle equality makes sense since $\calL|_{X \times u}$ and $\calO_X(D(u))$ are the same line bundle up to isomorphism by \ref{item:technical_isomorphic_bundles}, hence they give the same point of $J$.
    For any isomorphism $\psi_u : \calL|_{X \times u} \xrightarrow{\sim} \calO_X(D(u))$, the above composition can be described on sections by
\begin{equation}\label{eq:first_part_iso_divisors_general_tildejb}
(u-b)^* \calL|_{X\times u}  \overset{\sim}{\lto} (u-b)^*  \calO_X(D(u))\ , \quad 
u^*s_1 \otimes  b^* s_2^{-1} \longmapsto u^* \psi_u(s_1) \otimes 
  b^* \psi_u(s_2)^{-1}.
\end{equation}
    \end{enumerate}
\end{lemma}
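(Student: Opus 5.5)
I would prove the three items in order, since each feeds the next.

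For (1), I would use that $S$ is a local regular scheme and $Z$ is smooth over $S$. Then every local ring $\calO_{Z,z'}$ at points $z'$ of $Z$ is regular, hence a UFD by Auslander--Buchsbaum. So every Weil divisor is Cartier and locally principal. Take the closed point $z_0$ of the image of $z$, where $S$ is local. A local equation of $E$ at $z_0$ is a rational function $t$ with $\divisor(t) = E$ on some open $V \ni z_0$. Since $z(S)$ is the spectrum of a local ring, every open subset of $z(S)$ containing $z_0$ is all of $z(S)$. Hence $V$ contains all of $z$, and $t_{E,z} := t$ works.

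For (2), I would restrict $\calL = \calO_{X\times U}(\calD)$ along $\id_X \times u$. After base change to $S$, which is over $\Z[1/N]$, the vertical divisor $B \times U$ disappears. Pulling back gives $\calL|_{X\times u} \cong \calO_{X_S}\bigl((\id_X\times u)^*\calD\bigr)$, at least once one checks that $X_S\times u$ is not contained in the support of $\calD$. This check holds because $T$ has no horizontal or vertical fibers in its support, and the $\ol D_V \times U$ term pulls back to $D_{V,S}$. The component $X\times D_H$ pulls back to the pullback of the divisor $D_H$ along $u\colon S\to U$. This need not be a well-defined divisor on $S$, since $u$ may lie in $D_H$. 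Its associated line bundle is nonetheless $\pi^*$ of a line bundle on the local scheme $S$, hence trivial.

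Concretely, I would take the local equation $t_{D_H,u}$ from (1) and multiply sections of $\calO_{X\times U}(\calD)$ near $X\times u$ by $\pi_U^*t_{D_H,u}$. This gives an isomorphism $\calO(\calD) \cong \calO(\calD - X\times D_H)$ over $X \times V$. Restricting to $X\times u$ yields $\calL|_{X\times u}\cong \calO_X(D(u))$.

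Conversely, any two isomorphisms differ by an automorphism of a line bundle on the proper, geometrically connected $X_S$. Such an automorphism is a unit of $H^0(X_S,\calO)=\calO(S)$. Multiplying $t_{D_H,u}$ by that unit gives another local equation, so every isomorphism arises in this way. The small obstacle here is justifying $H^0(X_S,\calO_{X_S})=\calO(S)$. I would deduce it from cohomology and base change together with the geometric connectedness of the fibres.

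For (3), the isomorphisms come from the fiber description \eqref{eq:Mumford_fiber_bundles} of Section \ref{sec:mumford}. The point of $J$ defined by a line bundle depends only on its isomorphism class, and the norm construction $D^*\calL$ is functorial in $\calL$. So an isomorphism $\psi_u$ induces $(u-b)^*\psi_u = \Norm_u(\psi_u)\otimes\Norm_b(\psi_u)^{-1}$, which is exactly formula \eqref{eq:first_part_iso_divisors_general_tildejb}.

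The main thing to verify is that this composite is independent of the choice of $\psi_u$. Changing $\psi_u$ by a unit $\lambda\in\calO(S)^\times$ multiplies the expression by $\lambda\cdot\lambda^{-1}=1$, because $u-b$ has degree $0$. Hence the composite is canonical and agrees with the identification through $\calM$. I would add a remark that this canonicity is precisely what makes the middle equality in \eqref{eq:iso_divisors_general_tildejb} meaningful.
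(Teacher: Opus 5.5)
Your proposal is correct and follows the paper's proof for (1), using regularity of $Z$ and the locality of $S$, and for (2), where $\psi_u$ is obtained by multiplying by $\pi_2^*t_{D_H,u}$ and restricting to $X\times u$. For (3) the paper simply cites \cite[Proposition 6.3.2 and Remark 6.3.12]{Edil}, whereas you derive it directly from functoriality of norms and the cancellation $\lambda\cdot\lambda^{-1}=1$ for the degree-zero divisor $u-b$; this, together with your $H^0(X_S,\calO_{X_S})=\calO(S)$ argument, also justifies the ``any isomorphism'' clause of (2), which the paper leaves implicit.
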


\begin{proof}
Since $S$ is local, $S = \Spec R$ with $(R, \mathfrak{m})$ a local ring. Let $\eta$ be the topological point of $\Spec R$ corresponding to the maximal ideal $\mathfrak{m}$.

\begin{enumerate}
    \item Recall that a smooth scheme over a regular base is regular, so $Z$ is regular.
    Every divisor on a regular scheme is a Cartier divisor, so it admits a local equation in a Zariski neighborhood around the point $z(\eta)$.
    \item Choose a local equation $t_{D_H, u}$ for $D_H$ around $u$ (note that $X \times u \cong X_S$ is smooth). This induces an isomorphism
\begin{equation*} 
	\psi_u \colon \calL|_{X\times u}  \overset{\sim}{\lto} \calO_X(D(u)).
\end{equation*}
We describe this isomorphism on sections that are restrictions of sections on open subsets of $X \times U$.
Identify $\calL$ with $\calO_{X\times U }(\calD)$ and let $V$ be an open of $X \times U$. We may then view each section $s \in \calL(V)$ as a rational function on $V$ with $\divisor(s) \geq -\calD$. We similarly view $\pi_2^*t_{D_H, u}$ as a rational function on $X \times U$; one checks that $\pi_2^*t_{D_H, u}$ is a rational function with divisor $X \times D_H$, so $s \cdot \pi_2^*t_{D_H, u}$ is a section of $\mathcal{O}_{X \times U}( \mathcal{D} - X \times D_H )$. We now simply restrict this rational function to $X \times u$. We write this symbolically as
\begin{equation}\label{eq:description-psi-u}
\psi_u \colon (\id_X \times u)^*s \longmapsto (s\cdot \pi_2^* t_{D_H,u} )|_{X\times u}.
\end{equation}
\item This is a consequence of \cite[Proposition 6.3.2 and Remark 6.3.12]{Edil}. \qedhere
\end{enumerate}
\end{proof}

\begin{prop}\label{prop:general_formula_j_btilde}
Assume Notation \ref{notation_for_jbtilde}. Choose rational functions $t_{T, (b,u)}, t_{T, (u,u)}$ on $(X \times U)_S$ and $t_{D(u), u}, t_{D(u), b}$ on $X_S$ with the property that $t_{E,z}$ is a local equation for the divisor $E$ around the point $z$, as in Lemma \ref{lemma:preliminaries-jbtilde}.
 Under the isomorphisms \eqref{eq:iso_divisors_general_tildejb}, for every $u \in U(S)$ we have 
	\begin{equation} \label{eq:eq_jb_tilde_general}
	\begin{aligned}
	&
	\wt{j_b}(u) = \mu \cdot u^*(t_{D(u), u}^{-1}) \otimes b^*(t_{D(u), b}) \quad \in (u-b)^*\calO_X(D(u)), \text{ where }
	\\ & \quad 
	\mu = 
		\frac{\sigma_\Delta \cdot t_{T, (u,u)}|_{\Delta} \cdot  t_{D(u), u} }{\sigma_b \cdot t_{T, (u,u)}|_{X\times u} \cdot t_{T, (b,u)}|_{b\times U}} (u) 
		\cdot 
		\frac{t_{T, (b,u)}|_{X\times u} \cdot \eqoverZ}{t_{D(u), b}} (b)  \quad \in \calO_S(S)^\times \ .
	\end{aligned}
	\end{equation}
\end{prop}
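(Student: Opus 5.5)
We start from the abstract description \eqref{eq:tildej_b_abstract}, $\jbtilde(u) = u^*s_\Delta \otimes (u^*s_b)^{-1}$, together with Lemma~\ref{lem:canonical_isomorphisms_for_jbtilde}. That lemma says the composite identification \eqref{eq:general_isoms_for_j_b_tilde} is the identity, so $\jbtilde(u)$ is simply the element $u^*s_\Delta\otimes (u^*s_b)^{-1}$ read in $(u-b)^*(\calL|_{X\times u})$. We then transport it to $(u-b)^*\calO_X(D(u))$ via \eqref{eq:first_part_iso_divisors_general_tildejb}. For this we use the isomorphism $\psi_u$ of Lemma~\ref{lemma:preliminaries-jbtilde}\ref{item:technical_isomorphic_bundles}, described in \eqref{eq:description-psi-u}. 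This gives
\[
\jbtilde(u) = u^*\psi_u(s_\Delta') \otimes b^*\psi_u(s_b')^{-1},
\]
where $s'_\Delta, s'_b$ are local sections of $\calL|_{X\times u}$ near $u$ and $b$ restricting to $u^*s_\Delta$ and $u^*s_b$. The remaining task is to express everything via local equations and to check that the choice of $t_{D_H,u}$ cancels.

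To realise $s_\Delta$ and $s_b$ concretely, we use local generators of $\calL=\calO_{X\times U}(\calD)$. Near $(u,u)$ (over $\Z[1/N]$, so $B$ disappears), $\calD$ has the local equation
\[
t_{T,(u,u)}\cdot \pi_2^*t_{D_H,u}\cdot \pi_1^* t_{\ol D_V,u},
\]
and the section $g_u \coloneq (t_{T,(u,u)}\,\pi_2^*t_{D_H,u}\,\pi_1^*t_{\ol D_V,u})^{-1}$ generates $\calL$ there. Similarly, near $(b,u)$ we use $g_b \coloneq (t_{T,(b,u)}\,\pi_2^*t_{D_H,u}\,\pi_1^*\eqoverZ)^{-1}$.

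By \eqref{eq:psi_Delta}, $\psi_\Delta(\Delta^*g_u) = g_u|_\Delta$, so $s_\Delta = \sigma_\Delta\,(g_u|_\Delta)^{-1}\cdot\Delta^*g_u$. Here $g_u|_\Delta$ is computed from $t_{T,(u,u)}|_\Delta$ and the product $t_{D_H,u}\,t_{\ol D_V,u}$ evaluated on the diagonal. By \eqref{eq:description-psib}, $\psi_b((b\times\id)^*g_b) = (t_{T,(b,u)}|_{b\times U}\, t_{D_H,u})^{-1}$, which gives $s_b = \sigma_b\, t_{T,(b,u)}|_{b\times U}\, t_{D_H,u}\cdot (b\times\id)^*g_b$. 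We then apply $\psi_u$ from \eqref{eq:description-psi-u}: multiplying by $\pi_2^*t_{D_H,u}$ and restricting to $X\times u$ gives $\psi_u(g_u) = (t_{T,(u,u)}|_{X\times u}\, t_{\ol D_V,u})^{-1}$ and $\psi_u(g_b) = (t_{T,(b,u)}|_{X\times u}\,\eqoverZ)^{-1}$. These are rational functions on $X_S$.

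Finally, we rewrite these in terms of the chosen generators $t_{D(u),u}^{-1}$ and $t_{D(u),b}^{-1}$ of $\calO_X(D(u))$ near $u$ and $b$:
\[
\psi_u(g_u) = \frac{t_{D(u),u}}{t_{T,(u,u)}|_{X\times u}\,t_{\ol D_V,u}}\cdot t_{D(u),u}^{-1},
\]
and analogously at $b$. The ratios are units near $u$ (respectively $b$), so they can be evaluated at those points. Collecting the scalars, the factors $t_{D_H,u}(u)$ and $t_{\ol D_V,u}(u)$ coming from $g_u|_\Delta$ cancel against those in $\psi_u(g_u)$ evaluated at $u$, and the factor $t_{D_H,u}(u)$ in $s_b$ cancels as well. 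What remains is exactly $\mu$ in \eqref{eq:eq_jb_tilde_general}, with its $u$-part and $b$-part. The main obstacle is this bookkeeping: we have to make sure that each auxiliary choice (namely $t_{D_H,u}$ and $t_{\ol D_V,u}$) really cancels, that all quotients are units at the evaluation points (so that the restrictions of $t_{T,\cdot}$ to $\Delta$, $X\times u$ and $b\times U$ are legitimate local equations for the pulled-back divisors), and that signs and inverses match the conventions of \eqref{eq:tildej_b_abstract}.
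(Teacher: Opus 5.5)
Your proposal is correct and is essentially the paper's proof. The paper's auxiliary local equations $t_{\mathcal{D},(u,u)}=t_{T,(u,u)}\cdot\pi_2^*t_{D_H,u}\cdot\pi_1^*t_{D_V,u}$ and $t_{\mathcal{D},(b,u)}$ are exactly your $g_u^{-1}$ and $g_b^{-1}$; the only difference is that the paper starts from arbitrary lifts $\tilde s_{u,u},\tilde s_{b,u}$ of $s_\Delta,s_b$ and compares them with these generators afterwards, via the units $\tilde s_{u,u}\,t_{\mathcal{D},(u,u)}$ and $\tilde s_{b,u}\,t_{\mathcal{D},(b,u)}$.

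One correction to your bookkeeping. Since $\psi_u(g_u)=(t_{T,(u,u)}|_{X\times u}\,t_{\overline{D}_V,u})^{-1}$ contains no $t_{D_H,u}$, only $t_{\overline D_V,u}$ cancels there; the $t_{D_H,u}$ in the coefficient of $s_\Delta$ cancels instead against the one in the coefficient of $s_b$. Also, carry out these cancellations on rational functions before evaluating at $u$, since individual factors such as $t_{D_H,u}$ may vanish or have poles there.
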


\begin{proof}

By \eqref{eq:tildej_b_abstract}, we have $\jbtilde(u) = (u^*s_\Delta) \otimes (u^*s_b)^{-1}$. 
Represent $s_\Delta$ as $\Delta^*(\tilde{s}_{u,u})$ for some section $\tilde{s}_{u,u}$ of $\calL$ over an open neighborhood of $(u,u)$ in $X \times U$.
Now observe that
\[
u^*s_\Delta = u^* \Delta^*(\tilde{s}_{u,u}) = (\Delta \circ u)^*(\tilde{s}_{u,u}) = (u,u)^*(\tilde{s}_{u,u})=\left( (\id, u) \circ u \right)^*(\tilde{s}_{u,u}) = u^* (\id,u)^*(\tilde{s}_{u,u}),
\]
so that in particular
\[
(u^*\psi_u)(u^*s_\Delta) = (u^*\psi_u)(u^* (\id,u)^*(\tilde{s}_{u,u})) = u^*\left( \psi_u( (\id,u)^*(\tilde{s}_{u,u}) ) \right).
\]
By our description of $\psi_u$ in \eqref{eq:description-psi-u}, we may compute $\psi_u( (\id,u)^*(\tilde{s}_{u,u}) )$ by the formula
\[
\psi_u( (\id,u)^*(\tilde{s}_{u,u}) ) = (\tilde{s}_{u,u} \cdot \pi_2^*t_{D_H, u})|_{X \times u}.
\]
Similarly, choose a section $\tilde{s}_{b,u}$ around $(b,u)$ such that $(b \times \id)^*(\tilde{s}_{b,u}) = s_b$ and obtain
\[ u^* s_b = u^* (b \times \id)^* \tilde{s}_{b,u} = (b,u)^*\tilde{s}_{b,u} = b^* (\id, u)^* \tilde{s}_{b,u},   \]
where $b^*(\id, u)^* \tilde{s}_{b,u}$ is a section of $b^* (\id, u)^*\calL = b^* \calL|_{X \times u}$. We identify this line bundle to $b^* \calO_{X}(D(u))$ via $b^*\psi_u$ and compute
\[
(b^*\psi_u)\left(b^* (\id, u)^* \tilde{s}_{b,u} \right) = b^*\left( \psi_u( (\id, u)^* \tilde{s}_{b,u} ) \right).
\]
Using again our description of $\psi_u$ from Lemma \ref{lemma:preliminaries-jbtilde}, this is
\[
b^*\left( \psi_u( (\id, u)^* \tilde{s}_{b,u}) \right) = b^*\left( (\tilde{s}_{b,u} \cdot \pi_2^*t_{D_H, u})|_{X \times u} \right).
\]
We have thus proved
\begin{equation} \label{eq:tecnic_zeroth_tildejb}
	\wt{j_b}(u)
	= u^* \left((\wt s_{u,u}\cdot \pi_2^* t_{D_H,u} )|_{X\times u}\right)	\otimes  b^* \left((\wt s_{b,u}\cdot \pi_2^* t_{D_H,u} )|_{X\times u}\right)^{-1} \, .
\end{equation}

We now observe that 
$t_{D(u),u}  \cdot (\wt s_{u,u}\cdot \pi_2^* t_{D_H,u})|_{X\times u}$ is regular and non-vanishing at $u$ as a section of $\calO_X(D(u))$. To see this, we  first note that by definition the divisor of $\tilde{s}_{u,u}$, which is locally a generating section of $\calL \cong \calO_{X \times U}(\calD)$, coincides (locally) with $-\calD$. We have already observed that $\divisor(\pi_2^* t_{D_H, u})=X \times D_H$, hence $\divisor\left((\tilde{s}_{u,u} \cdot \pi_2^* t_{D_H, u})|_{X \times u}\right)$ coincides locally with $-D(u)$. Multiplying by $t_{D(u), u}$ cancels this out, giving a regular and non-vanishing section at $u$. As a consequence, $u^*\left(t_{D(u),u}  \cdot (\wt s_{u,u}\cdot \pi_2^* t_{D_H,u})|_{X\times u} \right)$ is simply the evaluation $\left(t_{D(u),u}  \cdot (\wt s_{u,u}\cdot \pi_2^* t_{D_H,u})|_{X\times u} \right)(u)$. 
The same argument shows that $ t_{D(u),b} \cdot (\wt s_{b,u}\cdot \pi_2^* t_{D_H,u})|_{X\times u}$ is regular and non-vanishing at $b$, so $b^*$ is evaluation at $b$ for this section.

Multiplying and dividing by $t_{D(u), b}$ in \eqref{eq:tecnic_zeroth_tildejb} we then obtain
\begin{equation} \label{eq:tecnic_first_tildejb}
	\wt{j_b}(u)
 = \frac{	\left( t_{D(u),u}  \cdot (\wt s_{u,u}\cdot \pi_2^* t_{D_H,u})|_{X\times u} \right)(u) }  
{	\left( t_{D(u),b} \cdot (\wt s_{b,u}\cdot \pi_2^* t_{D_H,u})|_{X\times u} \right) (b) }
\cdot u^* ( t^{-1}_{D(u),u})  \otimes b^* (t_{D(u),b}) 
\ ,
\end{equation}
where the first factor is an element of $\calO_S(S)^\times$. 
We are left to compare this first factor with the value $\mu$ in the statement. 

Choose a local equation $t_{D_V, u}$ for $D_V$ around $u$ and set $t_{\calD, (u,u)} \coloneq t_{T, (u,u)} \cdot \pi_2^* t_{D_H, u} \cdot \pi_1^* t_{D_V, u}$. The rational function $t_{\calD, (u,u)}$ has the same poles and zeros as $\wt s_{u,u}^{-1}$ in a neighborhood of $(u,u)$, i.e.~$\tilde s_{u,u} \,  t_{\calD, (u,u)}$ is regular and invertible in $(u,u)$. 
We deduce the following equalities in $\calO_S(S)^\times$, where we only evaluate sections that are regular and invertible at the evaluation point:
\begin{equation*}
	\begin{aligned}
 \left( t_{D(u),u}  \cdot (\wt s_{u,u}\cdot \pi_2^* t_{D_H,u})|_{X\times u} \right)(u) &= \frac{t_{D(u),u}}{(t_{\calD, (u,u)}/\pi_2^* t_{D_H,u})|_{X\times u}} (u) \cdot (\wt s_{u,u} \cdot  t_{\calD, (u,u)} )(u,u)
\\ 
& =  \frac{t_{D(u),u}}{t_{T, (u,u)}|_{X\times u} \cdot t_{D_V, u}} (u)  \cdot
 (\wt s_{u,u} \cdot  t_{\calD, (u,u)})|_{\Delta}(u) 
 \\ & = 
 \frac{t_{D(u),u}}{t_{T, (u,u)}|_{X\times u} \cdot t_{D_V, u}} (u)  \cdot  \left( \sigma_\Delta \cdot t_{T, (u,u)}|_\Delta \cdot  t_{D_V,u} \cdot t_{D_H,u}  \right)(u) \\
 & =    \frac{t_{D(u),u} \cdot \sigma_\Delta  \cdot t_{D_H,u} \cdot t_{T, (u,u)}|_\Delta }{t_{T, (u,u)}|_{X\times u} } (u)  \ .  
	\end{aligned}
\end{equation*}
Note that the third equality uses the definition $ \tilde{s}_{u,u}|_{\Delta} = \sigma_\Delta$.

Analogously, using that the rational function $t_{\calD, (b,u)} \coloneq t_{T, (b,u)} \cdot \pi_2^* t_{D_H, u} \cdot \pi_1^* \eqoverZ$  
has the same poles and zeros of $\wt s_{b,u}^{-1}$ in a neighborhood of $(b,u)$, we obtain
\begin{equation*}
	\begin{aligned}
	 	\big( t_{D(u),b} \cdot (\tilde s_{b,u}\cdot & \pi_2^* t_{D_H,u})|_{X\times u} \big) (b)	  = \frac{t_{D(u),b}}{(t_{\calD, (b,u)}/\pi_2^* t_{D_H,u})|_{X\times u}} (b) \cdot (\tilde s_{b,u} \cdot  t_{\calD, (b,u)} )(b,u)
	\\ 
	& =  \frac{t_{D(u),b}}{t_{T, (b,u)}|_{X\times u} \cdot \eqoverZ } (b)  \cdot
	(\tilde s_{b,u} \cdot  t_{\calD, (b,u)})|_{b\times U}(u) 
	\\ 
	& =  \frac{t_{D(u),b}}{t_{T, (b,u)}|_{X\times u} \cdot \eqoverZ } (b)  \cdot
	\left. \left( (\tilde s_{b,u} \, \pi_1^*\eqoverZ ) \cdot \frac{ t_{\calD, (b,u)}}{\pi_1^*\eqoverZ} \right)\right|_{b\times U}(u) 
	\\
	&  =  \frac{t_{D(u),b}}{t_{T, (b,u)}|_{X\times u} \cdot \eqoverZ } (b)  \cdot
	 \left( \sigma_b \cdot t_{T, (b,u)}|_{b\times U} \cdot t_{D_H,u} \right)(u) \ .
	\end{aligned}
\end{equation*}
For the fourth equality, note that we have
\[
\sigma_b = \psi_b(s_b) = \psi_b( (b \times \id)^* \tilde{s}_{b, u} ) = \left( \tilde{s}_{b,u} \cdot \pi_1^*  \eqoverZ
\right)|_{b \times U}
\]
by the description \eqref{eq:description-psib} of $\psi_b$.

Substituting these expressions into \eqref{eq:tecnic_first_tildejb} we get the desired expression for $\wt{j_b}(u)$.
\end{proof}

\subsubsection{The map \texorpdfstring{$\tilde{j_b}$}{jbtilde} on \texorpdfstring{$\Z_p$}{Zp}-points}\label{sect:jbt:on:Zppts}

We now specialize to $S$ being $\Spec(\Z_p)$, where $p$ is a prime of good reduction for $X$. Throughout this section, we use the following notation.
\begin{notation}\label{not: Ti}
    Recall the correspondence $T$ from \eqref{eq:correspondence_T}. We write $T = \sum_i n_i T_i$ as a sum of irreducible divisors $T_i \subset X \times U$.
\end{notation}

\begin{defi}\label{def:multiplicative_local_eqs}
    Let  $X$ be a scheme over $\ol{\Z}_p$. Let $\mathcal{B}=\{x_1, \ldots, x_n\}$ be a finite set of points in $X(\ol \Z_p)$ and $\mathcal{D}$ be a finitely generated subgroup of $\operatorname{Div}(X)$, the group of Cartier divisors of $X$. Fix, for every divisor $D \in \mathcal{D}$ and every point $x \in \mathcal{B}$, a local equation $t_{D,x}$ for $D$ around $x$. We say that the equations $t_{D,x}$ are chosen \emph{multiplicatively} if we have $t_{D_1 \pm D_2,x} = t_{D_1,x} \cdot t_{D_2,x}^{\pm 1}$ for any choice of $D_1,D_2 \in \mathcal{D}$ and $x \in \mathcal{B}$. 
\end{defi}

\begin{example}\label{example: multiplicative equations}
For each point $x \in \mathcal{B}$ choose a local parameter $t_{x}$ at its reduction $\ol x$. Given a divisor $D \in \mathcal D$ 
on $X$, we write
$
D = n_1 Q_1 + \ldots + n_s Q_s + D',
$
where the $Q_i$ are points in $X(\calO_{\ol{\Q}_p})$ reducing to $\ol x$ and the reduction of $D'$ does not contain $\ol x$. 
The function
\begin{equation}
    t_{D,x} = \prod_{i=1}^s (t_{\ol x}- t_{\ol x}(Q_i))^{n_i}
\end{equation}
is a local equation for $D$ around $x$. These equations are chosen multiplicatively. In our computations, we follow a variant of this strategy, as explained in Remark \ref{rmk:multiplicative_local_eqs_almost}.
\end{example}

The formula that we use to compute $\jbtilde$ in practice is the following.
\begin{prop}\label{prop:intermediate__formula_j_btilde}
Assume Notations \ref{notation_for_jbtilde} and \ref{not: Ti}. Recall the local equation $\eqoverZ$ for $\overline{D}_V+B$ around $b$ (this is defined over $\Z$ and was introduced right before \eqref{eq:psi_b}).
Also fix local equations $t_{\bullet, \bullet}$ over $\Z_p$, chosen multiplicatively as in Definition \ref{def:multiplicative_local_eqs} (for ${\cal B}=\{b, u\}$).
 If $u \in U(\Z_p)$ is a point such that $(u_{\Q_p}, u_{\Q_p})$ and $(b_{\Q_p}, u_{\Q_p})$ are not in any $T_i$, we have
\begin{equation}\label{eq:tildej_b_quasi_easy_Q_p}
 \begin{aligned}
 \wt{j_b}(u) 
    &  =  \sigma_\Delta \cdot \frac{\eqoverZ}{t_{D_V,b}}(b)  \cdot 
    \prod_i \left( \frac{t_{T_i \cap (X \times u ),u}(u)}{t_{\Delta^*T_i,u}(u)} \right)^{n_i} \!
    \cdot 
     \prod_i \left( \frac{t_{T_i \cap (b\times U),u}(u)} { t_{T_i \cap (X \times u),b}(b)}  \right)^{n_i}
    \cdot
    u^* ( t_{D(u),u}^{-1})  \otimes b^* (t_{D(u),b})   .
    \end{aligned}
\end{equation}
Moreover, each of the ratios in the above formula is an invertible element of $\Z_p$.
\end{prop}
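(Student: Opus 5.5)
The plan is to specialize Proposition~\ref{prop:general_formula_j_btilde} with $S=\Spec\Z_p$ (local and regular, with $p\nmid N$), and then use the freedom in choosing the local equations $t_{T,(u,u)}$, $t_{T,(b,u)}$, $t_{D(u),u}$, $t_{D(u),b}$ to rewrite $\mu$ in the stated form. The rewriting rests on one observation: since $(u_{\Q_p},u_{\Q_p})$ and $(b_{\Q_p},u_{\Q_p})$ lie on no $T_i$, no $T_i$ contains $X\times u$, $b\times U$ or the diagonal near these points. Hence the restrictions $T_i\cap(X\times u)$, $T_i\cap(b\times U)$ and $\Delta^*T_i$ are Cartier divisors on the relevant smooth curves.

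\textbf{Choice of equations for $T$.} I will take $t_{T,(u,u)}$ and $t_{T,(b,u)}$ to be local equations for $T$ on $X\times U$ near the respective points. These are defined only up to units, but $\mu$ does not depend on this choice, so I am free to pick convenient ones. The key identity is that if $t$ is a local equation for $T_i$ near $(u,u)$, then $t|_{X\times u}$ and $t|_\Delta$ are local equations for $T_i\cap(X\times u)$ near $u$ and $\Delta^*T_i$ near $u$. Multiplicativity (Definition~\ref{def:multiplicative_local_eqs}) then lets me compare them with the chosen $t_{T_i\cap(X\times u),u}$ and $t_{\Delta^*T_i,u}$. Concretely, $t|_{X\times u}/t_{T_i\cap (X\times u),u}$ and $t|_\Delta/t_{\Delta^*T_i,u}$ are regular and invertible at $u$. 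The point to check is that the ratio $(t|_\Delta/t|_{X\times u})(u)$ equals
\[
\frac{t_{\Delta^*T_i,u}}{t_{T_i\cap(X\times u),u}}(u)\cdot\frac{(t/\tilde t)|_\Delta}{(t/\tilde t)|_{X\times u}}(u),
\]
where $\tilde t$ is any function on $X\times U$ restricting appropriately. The last factor is $1$ because both restrictions evaluate the same unit at the same point $(u,u)$. I would actually write $t_{\Delta^*T_i,u}(u)$ and $t_{T_i\cap(X\times u),u}(u)$ as values, since these functions are invertible at $u$ when $u$ is off the divisors over $\Q_p$. The analogous computation at $(b,u)$ handles $t_{T,(b,u)}|_{b\times U}(u)$ against $t_{T,(b,u)}|_{X\times u}(b)$. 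Taking products over $i$ with multiplicities $n_i$, which is exactly where multiplicativity is used, produces the two products in \eqref{eq:tildej_b_quasi_easy_Q_p}.

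\textbf{Remaining factors of $\mu$.} These are $\sigma_\Delta$, $\sigma_b=1$, and $t_{D(u),u}(u)$ in the numerator and $t_{D(u),b}(b)$ in the denominator, times $\eqoverZ(b)$. I will choose the local equations of $D(u)=T\cap(X\times u)+D_V$ multiplicatively. Then $t_{D(u),u}=\prod_i t_{T_i\cap(X\times u),u}^{n_i}\cdot t_{D_V,u}$, and $t_{D(u),b}$ decomposes in the same way. The factor $t_{D_V,u}(u)$ should cancel against a $t_{D_H,u}$-type contribution, since $D_H+D_V=-\Delta^*T$ in the Weil-divisor sense. The factor $t_{D_V,b}$ combines with $\eqoverZ$ to give $\eqoverZ/t_{D_V,b}(b)$; this is a unit because $B$ is invisible over $\Z_p$. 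The leftover $T_i$-pieces at $b$ give the denominators $t_{T_i\cap(X\times u),b}(b)$.

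\textbf{Expected obstacle.} The hardest step is the bookkeeping that makes the evaluations legitimate. One must show that each displayed ratio is a regular, invertible function at the evaluation point and takes values in $\Z_p^\times$, rather than merely in $\Q_p^\times$, even though points may collide modulo $p$. To handle this, I will note that each ratio is a quotient of two local equations of the same divisor on a neighbourhood over $\Z_p$. Such a quotient is a unit in the local ring at the reduction, so its value lies in $\Z_p^\times$.
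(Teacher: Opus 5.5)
Your derivation of the formula misattributes where the two products come from, and it fails. Under the hypothesis, the $T$-factors of $\mu$ in Proposition~\ref{prop:general_formula_j_btilde} are identically $1$. Both $t_{T,(u,u)}|_\Delta(u)$ and $t_{T,(u,u)}|_{X\times u}(u)$ equal the value $t_{T,(u,u)}(u,u)\in\Q_p^\times$. Likewise $t_{T,(b,u)}|_{b\times U}(u)=t_{T,(b,u)}(b,u)=t_{T,(b,u)}|_{X\times u}(b)$. The paper uses the equivalent fact by passing to $\Q_p$ and taking $t_{T,(u,u)}=t_{T,(b,u)}=1$.

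Your displayed identity, with any $\tilde t$, would therefore force $t_{\Delta^*T_i,u}(u)=t_{T_i\cap(X\times u),u}(u)$, which is false in general. The units $t|_{X\times u}/t_{T_i\cap(X\times u),u}$ and $t|_\Delta/t_{\Delta^*T_i,u}$ are built from unrelated functions on $X$ and on $U$, so they take different values at $u$. A $\tilde t$ restricting to both chosen equations exists only if those two values agree. The identity also has the wrong orientation: it yields $t_{\Delta^*T_i,u}/t_{T_i\cap(X\times u),u}$, the inverse of the factor in \eqref{eq:tildej_b_quasi_easy_Q_p}.

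Next, $\mu$ contains no $t_{D_H,u}$; it already cancelled inside the proof of Proposition~\ref{prop:general_formula_j_btilde}. So $t_{D_V,u}(u)$ has nothing to cancel against. It must in fact survive, because it is exactly the source of $\prod_i t_{T_i\cap(b\times U),u}(u)^{n_i}$ and $\prod_i t_{\Delta^*T_i,u}(u)^{-n_i}$.

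The correct bookkeeping starts from $\mu=\sigma_\Delta\, t_{D(u),u}(u)\,\frac{\eqoverZ}{t_{D(u),b}}(b)$. Then apply multiplicativity to $D(u)=T\cap(X\times u)+D_V$ and $D_V=T\cap(b\times U)-\Delta^*T$. This gives $t_{D(u),u}=\prod_i\bigl(t_{T_i\cap(X\times u),u}\,t_{T_i\cap(b\times U),u}/t_{\Delta^*T_i,u}\bigr)^{n_i}$ and $t_{D(u),b}=t_{D_V,b}\prod_i t_{T_i\cap(X\times u),b}^{n_i}$, and substituting yields \eqref{eq:tildej_b_quasi_easy_Q_p}.

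Your invertibility argument covers only $\eqoverZ/t_{D_V,b}(b)$, which really is a quotient of two local equations of the same divisor. In the other two ratios, numerator and denominator are equations of different divisors: $T_i\cap(X\times u)$ versus $\Delta^*T_i$ at $u$, and $T_i\cap(b\times U)$ at $u$ versus $T_i\cap(X\times u)$ at $b$. The individual values need not be units of $\Z_p$, since they have positive valuation when points of $T_i(u)$ or $\Delta^*T_i$ reduce to $\ol u$. Your claim that these functions are "invertible at $u$" therefore holds only in $\Q_p^\times$.

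The missing step is to route through a local equation $t$ of $T_i$ on the surface. Because $t|_{X\times u}(u)=t(u,u)=t|_\Delta(u)\neq 0$, the first ratio equals $\frac{t_{T_i\cap(X\times u),u}}{t|_{X\times u}}(u)\cdot\frac{t|_\Delta}{t_{\Delta^*T_i,u}}(u)$, a product of units in $\Z_p$. The second ratio is handled the same way using $t|_{b\times U}(u)=t(b,u)=t|_{X\times u}(b)$. So the comparison units you wrote down are the right ingredients, but they prove integrality of the ratios, not the formula itself.
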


\begin{remark}
Note that in this statement, all the schemes are silently base changed to $\Z_p$. We point out in particular that, while $b$ may not be a point of $U(\Z)$, the base change $b_{\Z_p}$ is a point of $U(\Z_p)$, since $p$ is a prime of good reduction. Similarly, one checks that $T_{\Z_p}$ and the $(T_i)_{\Z_p}$ are independent of the choice of simple open $U$.
\end{remark}

\begin{proof}
We start from the invertibility of the ratios appearing in 
 \eqref{eq:tildej_b_quasi_easy_Q_p}. 
Since $X$ is smooth over $S=\Spec \Z_p$, the divisors $(\overline{D}_V)_{\Z_p}$ and $(D_V)_{\Z_p}$ are the same, and $b$ does not meet $B$. Thus, since ${\eqoverZ}$ and ${t_{D_V,b}}$ are local equations for the same divisor around $b\in X(\Z_p)$, their ratio is invertible around $b$ and therefore $\frac{\eqoverZ}{t_{D_V,b}}(b)\in \Z_p^\times$.

Next, we consider the ratio $\frac{t_{T_i \cap (X \times u ),u}(u)}{t_{\Delta^*T_i,u}(u)}$. Choose a local equation $t_{T_i,(u,u)}$ for $T_i$ around $(u, u)$. Since $(u_{\Q_p},u_{\Q_p})$ does not lie in $T_i$, we have

 \begin{equation}     \label{eq:technical_non_zero}
 t_{T_i, (u,u)}(u,u) \neq0, \quad t_{\Delta^*T_i,u}(u) \neq 0, \quad t_{T_i, (u,u)}|_{X\times u}(u)
 \neq 0 \quad \text{in } \Z_p \ .
  \end{equation}
  Moreover, the rational functions $\frac{t_{T_i, (u,u)}|_\Delta}{t_{\Delta^*T_i,u}}$, respectively $\frac{t_{T_i, (u,u)}|_{X\times u}}{t_{T_i  \cap (X \times u ),u}}$, are regular and invertible around $u$, since both the numerator and denominator are local equations for $\Delta^*T_i$, respectively $T_i  \cap (X \times u )$, around $u$. In particular we have
  \[
  \frac{t_{T_i, (u,u)}|_{X\times u}}{t_{T_i  \cap (X \times u ),u}}(u) \ , \quad  \frac{t_{\Delta^*T_i,u}}{t_{T_i, (u,u)}|_\Delta}(u)  
  \quad \in \Z_p^\times \ . 
  \]
  Together with \eqref{eq:technical_non_zero}, this implies that also $t_{T_i \cap (X \times u ),u}(u)$ and $t_{\Delta^*T_i,u}(u)$ are non-zero $p$-adic numbers and that 
 \begin{equation}\label{eq:technical_invertible}
     \frac{t_{T_i \cap (X \times u ),u}(u)}{t_{\Delta^*T_i,u}(u)} = \frac{t_{T_i \cap (X \times u ),u}(u)}{t_{T_i, (u,u)}(u,u)} \cdot \frac{t_{T_i, (u,u)}(u,u)}{t_{\Delta^*T_i,u}(u)} 
     = \frac{t_{T_i  \cap (X \times u ),u}}{t_{T_i, (u,u)}|_{X\times u}}(u) \cdot  \frac{t_{T_i, (u,u)}|_\Delta}{t_{\Delta^*T_i,u}}(u)  \in \Z_p^\times\ .
 \end{equation}
 For the ratio $\frac{t_{T_i \cap (b\times U),u}(u)}{ t_{T_i \cap (X \times u),b}(b)}$ the argument is analogous, using a local equation $t_{T_i, (b,u)}$ for $T_i$ around $(b, u)$. 

It remains to derive \eqref{eq:tildej_b_quasi_easy_Q_p} and it is enough to do it over $\Q_p$.
Since $(u_{\Q_p},u_{\Q_p})$ and $(b_{\Q_p},u_{\Q_p})$ do not lie on any $T_i$, we may take $t_{T,(u,u)}=1$ and $t_{T,(b,u)}=1$ in Proposition~\ref{prop:general_formula_j_btilde}.
Recalling also that $\sigma_b=1$ in \eqref{eq:sigma_Delta_sigma_b}, we thus have
\[
\widetilde{j_b}(u) = \sigma_\Delta\cdot\frac{\eqoverZ}{t_{D(u),b}}(b) \cdot t_{D(u),u}(u) \cdot u^*(t_{D(u),u}^{-1})\otimes b^*(t_{D(u),b}).
\]
Using the multiplicativity of the local equations and the equalities $D(u)=T\cap(X\times u)+D_{V}$ and $D_{V}=T\cap(b\times U)-\Delta^*T$, we find 
\[
t_{D(u),u} = \prod_i\left( \frac{t_{T_i\cap(X\times u),u}\,t_{T_i\cap(b\times U),u}} {t_{\Delta^*T_i,u}} \right)^{n_i} \quad \text{and} \quad t_{D(u),b} = t_{D_V,b}\cdot \prod_i t_{T_i\cap(X\times u),b}^{n_i}.
\]
Substituting these identities gives exactly \eqref{eq:tildej_b_quasi_easy_Q_p}.
\end{proof}

For practical computations, we will use the following variants of the above proposition.  In particular, the formulas in the next lemma allow us to remove the scalar $\sigma_\Delta\cdot \frac{\eqoverZ}{t_{D_V,b}}(b)$ from \eqref{eq:tildej_b_quasi_easy_Q_p} by expressing it in terms of more easily computable quantities.

\begin{lemma} \label{lem:practical_formula_j_btilde}
Fix a base point $b \in X(\Z)$. Let $U$ be any simple open of $X$ and $U_b$ be the unique simple open that contains $b$. The following hold for any multiplicative choice of local equations $t_{\bullet, \bullet}$.
\begin{enumerate}
    \item Suppose that $U = U_b$ and that $(b_\Q,b_\Q)$ does not lie on any $T_i$. We have
\begin{equation}\label{eq:NOT_computing_sigma_Delta}
 \begin{aligned}
  \sigma_\Delta  \cdot \frac{\eqoverZ}{t_{D_V,b}}(b) = \pm \prod_i \left( \frac{t_{\Delta^*T_i,b}(b)}{t_{T_i \cap (b\times U),b}(b)} \right)^{n_i} .
    \end{aligned}
\end{equation}    

\item 
For any $U$, still assuming that $(b_\Q,b_\Q)$ does not lie on any $T_i$, we have
\begin{equation}\label{eq:NOT_computing_sigma_Delta_2}
 \begin{aligned}
  \sigma_\Delta  \cdot \frac{\eqoverZ}{t_{D_V,b}}(b) =   \pm  \prod_{q\mid N} q^{-V_{U,q} + V_{U_b,q}}
      \prod_i \left( \frac{t_{\Delta^*T_i,b}(b)}{t_{T_i \cap (b \times U ),b}(b)} \right)^{n_i},
    \end{aligned}
\end{equation}    
where the $V_{U, q}$ and $V_{U_b, q}$ are as in \eqref{eq:sigma_Delta_sigma_b} for the simple opens $U$ and $U_b$, respectively.
\end{enumerate}
\end{lemma}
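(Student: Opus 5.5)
The plan is to exploit the normalization $\jbtilde(b)$: since $j_b(b)=0$ and $h(0)=m\cdot c$, the point $\jbtilde(b)$ lies in the fiber $\Mx_{0,\,mc}$. This fiber is canonically trivialized by the rigidification of $\mum$ along $0\times J^0$. Since $\jbtilde$ is defined over $\Z$ on $U_b$ and $b\in U_b(\Z)$, the point $\jbtilde(b)\in\Mx_{0,mc}(\Z)$ corresponds under this trivialization to an element of $\Gm(\Z)=\{\pm1\}$. In the case $U=U_b$ I will therefore evaluate formula \eqref{eq:tildej_b_quasi_easy_Q_p} of Proposition~\ref{prop:intermediate__formula_j_btilde} at $u=b$ and compare the result with this canonical trivialization. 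This is legitimate: $b_{\Z_p}\in U(\Z_p)$, and the hypothesis that $(b_\Q,b_\Q)$ lies on no $T_i$ is exactly the one needed, because $(b,b)$ plays the role of both $(u,u)$ and $(b,u)$.

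At $u=b$ the divisor $u-b$ is zero. The element $u^*(t_{D(u),u}^{-1})\otimes b^*(t_{D(u),b})$ is therefore the canonical generator $1$ of $(b-b)^*\calO_X(D(b))$, since the local equations at $u$ and at $b$ coincide. I also need to check that the isomorphisms \eqref{eq:iso_divisors_general_tildejb} send this generator to the rigidification $1_{0\times J^0}$. This should follow from the description of the norm for the empty divisor $D^+=D^-$ and from Lemma~\ref{lem:canonical_isomorphisms_for_jbtilde}. In the two products in \eqref{eq:tildej_b_quasi_easy_Q_p}, the factors $t_{T_i\cap(X\times b),b}(b)$ cancel between numerator and denominator. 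What remains is
\[
\sigma_\Delta\,\frac{\eqoverZ}{t_{D_V,b}}(b)\prod_i\Bigl(\frac{t_{T_i\cap(b\times U),b}(b)}{t_{\Delta^*T_i,b}(b)}\Bigr)^{n_i}.
\]
Setting this equal to $\pm1$ gives \eqref{eq:NOT_computing_sigma_Delta}.

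For part (2), the idea is that changing the simple open changes only $\sigma_\Delta$; everything else in the left-hand side is unchanged. The correspondence $T$, the divisor $D_V$ near $b$ (over $\Z_p$) and the local equations all live over $\Z_p$ and do not depend on $U$, by the remark after Proposition~\ref{prop:intermediate__formula_j_btilde}. The vertical divisor $B$ is also independent of $U$: it is determined by the multidegree condition and the requirement that it avoid the components meeting $b$. However, the germ $\eqoverZ$ at $b$ is an equation for $\ol D_V+B$ near $b$, and $b$ avoids the support of $B$. So $\eqoverZ$ is insensitive to $U$ up to a unit, and that unit is fixed once $\sigma_b=1$ is normalized. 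Hence
\[
\sigma_\Delta^{U}/\sigma_\Delta^{U_b}=\prod_q q^{-V_{U,q}+V_{U_b,q}}.
\]
Combining this with part (1) gives \eqref{eq:NOT_computing_sigma_Delta_2}.

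The main obstacle is the subtle compatibility in part (2): one must justify that the same $\eqoverZ$ (equivalently, the same sign-normalized $s_b$) may be used for both $U$ and $U_b$. I would handle it by noting that $\psi_b$ and $\sigma_b=1$ are defined in terms of data at $b$ alone, so the definitions are uniform in $U$. A second check is that at $u=b$ the expression $u^*(\cdot)\otimes b^*(\cdot)$ really is the trivialization along $0\times J^0$ and not merely some unit multiple of it. For this I would rely on Lemma~\ref{lem:canonical_isomorphisms_for_jbtilde} and on \eqref{eq:first_part_iso_divisors_general_tildejb}, where $s_1=s_2$ forces the image to be $1$.
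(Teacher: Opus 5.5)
Your proof is correct. Part (1) follows the paper's argument: you evaluate \eqref{eq:tildej_b_quasi_easy_Q_p} at $u=b$, observe that the section factor is the canonical generator of $\Mx_{0,D(b)}$ while $\jbtilde(b)$ is one of its two integral points, and conclude that the scalar is $\pm1$. Your additional check, via the proof of Lemma~\ref{lem:canonical_isomorphisms_for_jbtilde}, that this generator is the rigidification $1_{0\times J^0}$ is something the paper only asserts.

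For part (2) you take a more direct route than the paper.

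The paper compares the \emph{maps}. It uses \cite[Proposition 7.8]{Edil} to know that $\widetilde j_{b,U}$ and $\widetilde j_{b,U_b}$ differ by a constant over $X_{\Z[1/N]}$. It then shows $s_{b,U}=s_{b,U_b}$ and $s_{\Delta,U}=\prod_q q^{-V_{U,q}+V_{U_b,q}}\, s_{\Delta,U_b}$, obtains \eqref{eq:technical_C_U}, and compares the constants in Proposition~\ref{prop:intermediate__formula_j_btilde}.

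You compare the \emph{scalars} on the two left-hand sides directly. They differ only through $\sigma_\Delta$, which changes by $\prod_q q^{-V_{U,q}+V_{U_b,q}}$ by the definition \eqref{eq:sigma_Delta_sigma_b}, and through $\eqoverZ$. The right-hand product is the same for both opens, because $U_{\Z_p}=X_{\Z_p}$ and the $T_{i,\Z_p}$ do not depend on $U$.

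Your version is pure bookkeeping and needs no input from \cite{Edil}. The paper's detour additionally yields the statement \eqref{eq:technical_C_U} about how $\jbtilde$ itself changes, which the subsequent sign-normalization remark relies on. Both arguments rest on the same key fact: every $B_U$ avoids $b$, so near $b$ the divisor $\ol D_V+B_U$ is just $\ol D_V$.

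Two corrections. First, your sentence that $B$ is independent of $U$ is false. Condition \eqref{eq:condition_B_modular} is imposed on the fibers $X\times u$ with $u\in U$, and the multidegree of $T\cap(X\times u)$ depends on the component to which $u$ specializes; compare $B_{H_0}\neq B_{F_1}$ in Section~\ref{Bfor19}. Correspondingly, $V_{U,q}$ is the multiplicity of $U_{\F_q}$ in $B_U$. Your argument never uses the false claim, only that each $B_U$ avoids $b$, so the proof stands.

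Second, you do not need the normalization $\sigma_b=1$ to control $\eqoverZ$. Two local equations over $\Z$ for the same divisor near the section $b$ differ by a function invertible near $b(\Spec\Z)$, and its value at $b$ lies in $\Z^\times=\{\pm1\}$. So the left-hand side is well defined up to sign for any choice of $\eqoverZ$.
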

\begin{proof}

In the first case, we can apply  \eqref{eq:tildej_b_quasi_easy_Q_p} with $u=b$. In this case, 
\[
u^*(t^{-1}_{D(u), u}) \otimes b^* (t_{D(u), b}) = b^*(t^{-1}_{D(b), b}) \otimes b^* (t_{D(b)}, b)
\]
is equal to $1$ in the canonically trivial $\Gm$-torsor
$
\Mx_{0,D(b)} = (b-b)^*\calO_X(D(b)) = \Gm .
$
The same holds, up to sign, for the integral point $\jbtilde(b)$, since there are only two integral points in $\Mx_{0,D(b)}$ and the canonical point $1$ is one of them. Hence, the numerical constant in \eqref{eq:tildej_b_quasi_easy_Q_p} is $\pm 1$. Observing that $t_{T_i \cap (X \times u), u}(u) = t_{T_i \cap (X \times u), b}(b)$ since $u=b$, the result follows.

For the second part of the statement, it is enough to track how the map $\jbtilde$ changes when we change the simple open. 
If we fix $b$ and $f$ and vary $U$, by \cite[Proposition 7.8]{Edil}, the map $\tilde j_{b,U}$,  restricted to $X_{\Z[1/N]}$, changes by a constant in $\Q^\times$.  In particular, it is enough to prove  
\begin{equation}\label{eq:technical_C_U}
\widetilde j_{b,U} = \pm \left( \prod_{q\mid N} q^{-V_{U,q} + V_{U_b,q}}  \right) \cdot \widetilde j_{b,U_b} \quad \text{ over } X_{\Z[1/N]}.
\end{equation}
For this, we describe $\tilde j_{b,U}$ in terms of a line bundle $\calL = \calL_{U}  = \calO_{X\times U}(\calD_U)$ (see  \eqref{eq:L_is_divisorial}) and trivializing sections $s_b = s_{b,U}$ and $s_\Delta = s_{\Delta,U}$.
The divisor $\calD_U$ of \eqref{eq:L_is_divisorial} depends on $U$ only through $B \times U$; in particular, its restriction to $\Z[1/N]$ is independent of $U$. Define $B_U = B|_{U}$.

As a consequence, the restrictions to $X_{\Z[1/N]}$ of $s_{b,U_b}$ and $s_{b,U}$ (respectively of $s_{\Delta,U_b}$ and $s_{\Delta,U}$) are sections of the same line bundle.
Recall from \eqref{eq:s_b_as_function} that we have $s_{\Delta,U} = \psi_{\Delta,U}^{-1}(\sigma_{\Delta,U})$ and $s_{b,U} = \psi_{b,U}^{-1}(\sigma_{b,U})$.
The map $\psi_\Delta = \psi_{\Delta,U}$ is independent of $U$ over $\Z[1/N]$, as clear from its definition in \eqref{eq:psi_Delta}. 

We show that the map $\psi_{b, U}$ also does not vary with $U$. This is not immediately clear, since the map depends on a local equation $t_{\ol D_V +B_U, b}^{\Z}$ for $\ol D_V + B_U$ in a neighborhood of $b \in X(\Z)$. However, by construction $B_U$ is disjoint from $b$ (see the discussion surrounding \eqref{eq:L_is_divisorial}), hence we can suppose that $t^{\mathbb{Z}}_{\ol D_V + B_U,b} = t^{\mathbb{Z}}_{\ol D_V,b}$ for a suitable local equation $t^{\mathbb{Z}}_{\ol D_V, b}$ for $\ol D_V$ around $b$, which does not depend on $U$. Thus $\psi_{b, U} = \psi_b$ does not depend on $U$. Finally, from the formulas  in \eqref{eq:sigma_Delta_sigma_b} for $\sigma_{b,U}$ and $\sigma_{\Delta,U}$, we see that $\sigma_{b,U} = \sigma_{b,U_b}=  1$ and that $\sigma_{\Delta,U}= \left( \prod_{q\mid N} q^{-V_{U,q} + V_{U_b,q}} \right) \cdot  \sigma_{\Delta,U_b}$. All together, we have
\[
s_{b,U} = s_{b,U_b} , \quad  s_{\Delta,U} = \left( \prod_{q\mid N} q^{-V_{U,q} + V_{U_b,q}} \right) \cdot s_{\Delta,U_b} .
\]
Substituting in the  \eqref{eq:tildej_b_abstract} for $\tilde j_{b,U}$, we obtain \eqref{eq:technical_C_U}.
\end{proof}

\begin{remark}
   Since $\Z^\times=\{\pm 1\}$, there are exactly two ways to lift $(\id_J,h) \circ j_b : U \rightarrow J \rightarrow J \times J^0$ to $\jbtilde : U \rightarrow \Mx$, which differ by a sign. When $U=U_b$ is the simple open that contains $b$, we pick the lift such that $\jbtilde(b)$ corresponds to $+1$ under the canonical trivialization $\Mx_{0,0} \cong \Gm$. This amounts to choosing the plus sign in \eqref{eq:NOT_computing_sigma_Delta}. Then, for any other simple open $U$, we fix $\jbtilde$ by picking the plus sign in       \eqref{eq:NOT_computing_sigma_Delta_2}. This amounts to asking that, when passing from $U_b$ to $U$, the map $\jbtilde$ changes by a \emph{positive} element of $\Q^\times$.
\end{remark}

\begin{remark}\label{rmk:multiplicative_local_eqs_almost}
    For Proposition \ref{prop:intermediate__formula_j_btilde} and Lemma \ref{lem:practical_formula_j_btilde} it is enough that the multiplicativity condition in Definition \ref{def:multiplicative_local_eqs}  holds for $\mathcal{B}=\{u, b\}$ and $\cal{D}$ the group generated by 
    $T_{i} \cap (X  \times u )$, $T_{i} \cap (b  \times U )$ and $\Delta^*T_{i}$.        
    
    In our computations, we work over $\Z_q$ for $q$ big enough  so that all the divisors $T_i \cap (b \times U)$, $T_i \cap (X \times u)$, and $\Delta^* T_i$ are sums $\sum_j n_j Q_j$ of $\Z_q$-points. Then,
    we choose local equations of the form  $t_{\sum n_j Q_j, u} = t_{\sum n_j Q_j, b} = \prod_j (r-r(Q_j))^{n_j}$,
    where $r$ is a function on $X$ defined as a ratio $r = f_1/f_2$ of sections of $\LMak$ (so modular forms of weight 2 in our case, see Section \ref{sec:mumford_makdisi}) and which satisfies the following properties:
    \begin{itemize}
        \item $r$ is defined at $u$ (in the sense that $f_2(u) \not \equiv 0 \bmod p$) and such that $r-r(u)$ is a local parameter at $u$, and similarly at $b$ (see Remark \ref{rmk: local parameter} below);
        \item Whenever $x \in \{b,u\}$ and $y$ is a point in 
        \[\bigcup_i \big(T_i \cap (b \times U)\big) \cup \big(T_i \cap (X \times u)\big) \cup \Delta^* T_i, \]
        $r$ is defined at $y$ (in the sense that $f_2(y) \not \equiv 0 \bmod p$) and we have that $r(y) \not \equiv r(x) \bmod p$ unless $y \equiv x \bmod p$.
    \end{itemize}
    Note that for fixed $r$, these properties only depend on the reduction mod $p$ of $u$ and of $b$; since $b$ is fixed, it is enough to find such an $r$ for each residue disc of $U$. In practice, finding such $r$ is easy, by picking random sections $f_1$ and $f_2$ defined over $\Z_q$ (as opposed to limiting ourselves to sections defined over $\Z_p$) and checking if all the conditions above are satisfied.
\end{remark}

\begin{remark}\label{rmk: local parameter}
    Fix a point $x \in U(\Z_p)$ and choose a point $x' \in U(\Z_p)$ such that $x'\equiv x \pmod{p}$ and $x'\not\equiv x \pmod{p^2}$. A function $r$ vanishing at $x$ is a local parameter at $x$ if and only if $r(x) \not\equiv r(x') \pmod{p^2}$. In particular, one can find a local parameter at $x$ by computing random ratios $f_1/f_2$ of sections of the line bundle $\LMak$ from Section \ref{sec:mumford_makdisi} and using this criterion to check whether $r \coloneq \frac{f_1}{f_2} - (f_1/f_2)(x)$ is a local parameter.
\end{remark}

\subsection{Analytic coordinates on the Mumford torsor}
\label{subsec:analytic-coordinates}

To make the geometric quadratic Chabauty method explicit, we use analytic
coordinates on the Mumford bundle and on the curve.
In this section, we introduce a system of analytic coordinates for each residue disc of $\Mx(\Z_p)$.
We first discuss analytic coordinates on $J(\Z_p)$ in a residue disc.

\begin{notation}\label{notation points on J}
For the rest of the section, fix $x_1, \ldots, x_g \in J(\Z_p)_{\ol 0} = \ker(J(\Z_p) \to J(\F_p))$ whose classes in $J(\Z/p^2\Z)$ form a basis of the $\F_p$-vector space $J(\Z/p^2\Z)_0$. We also fix $y_1, \ldots, y_g$  in $J(\Z_p)_{\ol 0}$ with the same property.
\end{notation}

\begin{remark}
Since $p \geq 3$, the condition that $x_1, \ldots, x_g$ form a basis of $J(\Z/p^2\Z)_0$ implies that $J(\Z_p)_0$ is free over $\Z_p$ with basis $x_1, \ldots, x_g$ by \cite[Lemma 5.1.1]{Edil}. Also note that $J(\Z_p)_0 = J^0(\Z_p)_0$ since $p$ is a prime of good reduction.
\end{remark}

\begin{lemma}\label{lemma:analytic-coordinates-J}     Let $x \in J(\Z_p)$ with reduction $\overline{x} \in J(\F_p)$.
    The map
    \[
    \begin{array}{ccc}
    \Z^g & \to & J(\Z_p)_{\ol x} \\
    (\lambda_i) & \mapsto & \displaystyle x + \sum_{i=1}^g \lambda_ix_i
    \end{array}
    \]
    extends by continuity to a bijection $\Z_p^g \to J(\Z_p)_{\ol x}$. The inverse of this bijection is a system of analytic coordinates (in the sense of Section~\ref{sec:parameters}) on the residue disc $J(\Z_p)_{\overline{x}}$.
\end{lemma}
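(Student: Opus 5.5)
The plan is to reduce to the case $x = 0$, and then show that in algebraic parameters at $\ol 0$ the group law reduces modulo $p$ to addition. We proceed in three steps.

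\textbf{Reduction to $x=0$.} Translation $\tau_x\colon J_{\Z_p}\to J_{\Z_p}$, $y\mapsto y+x$, is an automorphism of smooth $\Z_p$-schemes sending $\ol 0$ to $\ol x$. It maps $J(\Z_p)_{\ol 0}$ bijectively onto $J(\Z_p)_{\ol x}$. By Lemma~\ref{lem:algebraic is linear}, applied to $\tau_x$ and to $\tau_{-x}$, it identifies the rings of integral analytic functions at $\ol 0$ and at $\ol x$. So it suffices to treat $x=0$.

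\textbf{Group structure of the kernel.} Since $J(\Z_p)_{\ol 0}$ is a free $\Z_p$-module with basis $x_1,\dots,x_g$ (remark after Notation~\ref{notation points on J}, via \cite[Lemma 5.1.1]{Edil}), the map $\lambda\mapsto\sum\lambda_i x_i$ extends from $\Z^g$ to a continuous group isomorphism $\Z_p^g\to J(\Z_p)_{\ol 0}$. Continuity holds because $p^n J(\Z_p)_{\ol 0}$ is contained in the points congruent to $0$ mod $p^{n+1}$. It remains to show that the inverse $\lambda=(\lambda_1,\dots,\lambda_g)$ is a system of analytic coordinates.

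\textbf{Analyticity of $\lambda$.} Choose algebraic parameters $t_1,\dots,t_g$ at $\ol 0$, giving $\tilde t\colon J(\Z_p)_{\ol 0}\to\Z_p^g$. By Lemma~\ref{lem:algebraic is linear} applied to the group law $J\times J\to J$, the addition in $\tilde t$-coordinates is given by integral power series reducing mod $p$ to degree $\le 1$ polynomials. Since $0+0=0$ and the linear part of the formal group law is the sum, this reduction is exactly $\tilde t(a)+\tilde t(b)$. The same holds for multiplication by $n$ (reducing to $n\tilde t$) and for inversion.

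From this, the formal group of $J$ yields the logarithm $\log_J$, which is an isomorphism $J(\Z_p)_{\ol 0}\to (p\Z_p)^g$ for $p\ge3$. In $\tilde t$-coordinates, $\tfrac1p\log_J$ is an integral analytic map whose reduction mod $p$ is the identity on $\F_p^g$. By the Remark in Section~\ref{sec:parameters}, it is a system of analytic coordinates. Moreover $\tfrac1p\log_J$ is a $\Z_p$-linear isomorphism $J(\Z_p)_{\ol 0}\to\Z_p^g$. Hence $\lambda = A\circ\tfrac1p\log_J$, where $A\in\GL_g(\Z_p)$ is the matrix sending the vectors $\tfrac1p\log_J(x_i)$ to the standard basis.

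\textbf{Main obstacle.} The key point is the invertibility of $A$, equivalently that the $\tfrac1p\log_J(x_i)$ span $\Z_p^g$. Modulo $p$, we have $\tfrac1p\log_J(x_i)\equiv\tilde t(x_i)$. By Lemma~\ref{lem: coords mod pe} with $e=2$, $\tilde t$ induces a bijection $J(\Z/p^2\Z)_{\ol 0}\to\F_p^g$. This bijection is additive by the mod-$p$ linearity above, so the basis hypothesis on the classes of the $x_i$ gives the $\F_p$-independence of the $\tilde t(x_i)$.

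Then $\lambda$ consists of $\Z_p$-linear combinations, with an invertible coefficient matrix, of analytic coordinates, hence is itself a system of analytic coordinates. If one prefers to avoid $\log_J$, an alternative is to show directly that $\tilde t\circ(\sum\lambda_i x_i)$ is given by integral power series in $\lambda$, via Mahler expansions of $n\mapsto \tilde t(nx_i)$. Such an argument needs the $p\ge3$ convergence of $\binom{\lambda}{k}$-type estimates, and that is where the care lies.
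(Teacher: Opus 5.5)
Your proof is correct and follows the paper's route. You reduce to $\ol x=\ol 0$ by translation, then treat the kernel of reduction: the paper simply cites \cite[Lemma 5.1.1]{Edil} for this step, whereas you prove it directly via the formal logarithm, writing the inverse as $A\circ\tfrac1p\log_J$ with $A\in\GL_g(\Z_p)$ (taking the parameters $t_i$ to vanish along the zero section). Your mod-$p$ argument for the invertibility of $A$, via Lemma~\ref{lem: coords mod pe}, is valid. Once you know that $J(\Z_p)_{\ol 0}$ is free on the $x_i$ and that $\tfrac1p\log_J$ is a $\Z_p$-linear isomorphism, invertibility is automatic; your argument has the advantage of making the proof independent of that freeness statement.
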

\begin{proof}
    Translation by $-x$ identifies the residue disc $J(\Z_p)_{\overline{x}}$ with $J(\Z_p)_{\overline 0}$, so it suffices to prove the claim with $\ol x=\ol 0$. This follows from \cite[Lemma 5.1.1]{Edil}.
\end{proof}

For a given point $\ol t \in \Mx(\F_p)$, we now show how to parameterize $\Mx(\Z_p)_{\ol t}$.
We fix a lift  $t \in \Mx(\Z_p)$ of $\ol t$ and we denote by $(x,y) \in J(\Z_p) \times J^0(\Z_p)$ the projection of $t$.  
Fix lifts 
\begin{align*}
& P_{i, j} \in \Mx(\Z_p)&\text{ of }& (x_i, y_j) \in J(\Z_p) \times J^0(\Z_p), \\ 
&R_{i, y} \in \Mx(\Z_p) &\text{ of }& (x_i, y) \in J(\Z_p) \times J^0(\Z_p),\\ 
&S_{x,j} \in \Mx(\Z_p)&\text{ of }& (x, y_j) \in J(\Z_p) \times J^0(\Z_p). 
\end{align*}
Let $P$ be the $g\times g$ matrix with entries $P_{i,j}$, let $R$ be the column
vector with entries $R_{i,y}$, and let $S$ be the row vector with entries $S_{x,j}$.
For $\lambda, \mu \in \Z^g$, using the notation from Section \ref{sec: bilinear combination} we let
\begin{equation}\label{eq: A B C D}
\begin{aligned}
A(\mu) & = S \mu, \quad B(\lambda) = {}^\top \lambda R, \quad C(\lambda, \mu) = \BLC{\lambda}{P}{\mu} \\
D(\lambda,\mu) & = \left(C(\lambda,\mu) \Rtimes B(\lambda)\right) \LTimes \left(A(\mu) \Rtimes t \right)\,.
\end{aligned}
\end{equation}

For $a\in J$, we write $1_a$ for the neutral section of the group
$ (\Mx|_{\{a\}\times J^0}, \Rtimes )$. Similarly, for $a\in J^0$ we write $1_a$ for the neutral section of the group $(\Mx|_{J\times\{a\}}, \Ltimes)$. Moreover, we denote by $1$ the neutral section of $\Mx|_{J \times\{0\}}$ with respect to $\Ltimes$, or equivalently of $\Mx|_{\{0\}\times J^0}$  with respect to $\Rtimes$.

\begin{remark}
    Up to multiplying $P_{i,j}$ by a suitable $(p-1)$th root of unity, we can suppose that $P_{i, j}$ reduces to $1 \in \Mx_{0, 0}(\F_p)$. Similarly, we can suppose that the $R_{i,y}$ reduce to $1_{\bar y}\in\Mx_{0,\bar y}(\F_p)$, and that the $S_{x,j}$ reduce to $1_{\bar x}\in\Mx_{\bar x,0}(\F_p)$.
\end{remark}

\begin{lemma}\label{lem:biextension coordinates}
Let $t$ be a point in $\Mx(\Z_p)$, projecting to $(x,y) \in J(\Z_p) \times J^0(\Z_p)$. Assume that the $P_{i,j}$ reduce to $1\in\Mx_{0,0}(\F_p)$, that the $R_{i,y}$ reduce to $1_{\bar y}\in\Mx_{0,\bar y}(\F_p)$, and that the $S_{x,j}$ reduce to $1_{\bar x}\in\Mx_{\bar x,0}(\F_p)$. The map $D$ extends to a bijection
\begin{equation}\label{eq:Psi_coords}
\begin{array}{cccc}
\Theta : & \Z_p^g \times \Z_p^g \times \Z_p & \to & \Mx(\Z_p)_{\ol t} \\
& \left( \lambda, \mu, \nu\right) & \mapsto & \wt\exp(\nu) \cdot D\left(\lambda, \mu \right).
\end{array}
\end{equation}
The inverse of $\Theta$ gives a system of analytic coordinates
\begin{equation}\label{eq: def psi}
    \Psi : \Mx(\Z_p)_{\ol t} \to \Z_p^g \times \Z_p^g \times \Z_p.
\end{equation}
\end{lemma}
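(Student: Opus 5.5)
We will prove Lemma~\ref{lem:biextension coordinates} by reducing to the structure of the residue discs of $J\times J^0$ and of the $\Gm$-fibres, using the biextension laws to move between discs. First we check that $D(\lambda,\mu)$ lies in the right fibre and disc. By \eqref{eq: blc}, $C(\lambda,\mu)\in\Mx_{\sum\lambda_ix_i,\sum\mu_jy_j}$, $B(\lambda)\in\Mx_{\sum\lambda_ix_i,y}$ and $A(\mu)\in\Mx_{x,\sum\mu_jy_j}$. Hence $C\Rtimes B$ lies over $(\sum\lambda_ix_i,\,y+\sum\mu_jy_j)$ and $A\Rtimes t$ lies over $(x,\,y+\sum\mu_jy_j)$. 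Their $\Ltimes$-product therefore lies over $\bigl(x+\sum\lambda_ix_i,\;y+\sum\mu_jy_j\bigr)$. Reducing mod $p$, every factor reduces to a neutral element: the $x_i,y_j$ reduce to $0$, and the $P_{i,j},R_{i,y},S_{x,j}$ reduce to $1$, $1_{\bar y}$, $1_{\bar x}$ by hypothesis. The partial group laws are morphisms of schemes, so $D(\lambda,\mu)$ reduces to $\ol t$. The same holds for $\wt\exp(\nu)\cdot D$, since $\wt\exp(\nu)\equiv1\bmod p$.

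Next we extend to $\Z_p$ by continuity. The plan is to show that $(\lambda,\mu)\mapsto D(\lambda,\mu)$ is given, in algebraic parameters at $\ol t$, by integral power series in $\lambda,\mu$.

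\begin{itemize}
\item The key input is that on the residue discs $\Mx(\Z_p)_{\bar1}$ over $J(\Z_p)_{\ol0}\times J^0(\Z_p)_{\ol0}$, and on the analogous discs over $(0,\bar y)$ and $(\bar x,0)$, the partial group laws are morphisms of schemes.
\item By Lemma~\ref{lem:algebraic is linear}, in analytic coordinates they are integral analytic and reduce mod $p$ to polynomials of degree $\le1$.
\item Concretely, the one-parameter maps $n\mapsto P_{i,j}^{\Rtimes n}$, and more generally iterated powers, are the restrictions of continuous maps $\Z_p\to\Mx(\Z_p)$. This is the standard fact that a $p$-adic Lie group whose disc is a formal group of level $\ge1$ (here $p\ge3$) has $\Z_p$-powers that are analytic.
\end{itemize}

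This is exactly the argument of \cite[Theorem 4.10 / Section 5]{Edil}, adapted from the torsor $T$ to $\Mx$ as in Remark~\ref{fromTtoM}. I would cite and replay it: the map $\Z^g\times\Z^g\to\Mx(\Z_p)_{\ol t}$ is uniformly continuous for the $p$-adic topology, so it extends.

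For bijectivity and the coordinate property, consider the composition of $\Theta$ with the projection $\Mx(\Z_p)_{\ol t}\to J(\Z_p)_{\bar x}\times J^0(\Z_p)_{\bar y}$. It equals $(\lambda,\mu)\mapsto(x+\sum\lambda_ix_i,\;y+\sum\mu_jy_j)$, which is a bijection giving analytic coordinates by Lemma~\ref{lemma:analytic-coordinates-J}. Over each fixed base point, the fibre of the disc is a $\Gm(\Z_p)_{\bar1}=1+p\Z_p$-torsor. On it, $\nu\mapsto\wt\exp(\nu)\cdot D(\lambda,\mu)$ is a bijection, since $\wt\exp$ is an isomorphism (Section~\ref{ref: p-adic log}). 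So $\Theta$ is bijective.

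To see that $\Psi=\Theta^{-1}$ is a system of analytic coordinates, I would choose algebraic parameters at $\ol t$ consisting of:
\begin{itemize}
\item pullbacks of parameters on $J\times J^0$ at $(\bar x,\bar y)$;
\item one fibre parameter $w/w_0-1$, where $w_0$ is a local trivialisation of $\Mx$ near $(\bar x,\bar y)$.
\end{itemize}
Then $\Psi$ composed with the inverse of these parameters is integral analytic. Its reduction mod $p$ is affine-linear and triangular: the identity-like block comes from Lemma~\ref{lemma:analytic-coordinates-J}, and the fibre coordinate enters as $\nu+(\text{function of }\lambda,\mu)$. By the Remark after \eqref{eq:alg_param}, linear independence mod $p$ gives analytic coordinates.

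The main obstacle is the second step. One must show that $D$, written in these parameters, is integral analytic with degree-$\le1$ reduction. The difficulty is that $C(\lambda,\mu)$ is bilinear and so a priori contributes a term $\lambda^\top M\mu$. This term is harmless only because the $P_{i,j}\equiv1$ makes it divisible by $p$ after rescaling, i.e.\ $\tfrac1p\log$ of the fibre coordinate of $C$ is $p\cdot(\ldots)$. I would verify this by computing in the formal biextension over $\widehat{J}\times\widehat{J^0}$, where the biextension is trivial. There the pairing on $\Z_p$-points of level $1$ is $\exp$ of a bilinear form with values in $p^2\Z_p$.
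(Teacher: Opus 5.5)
Your proof has a genuine gap in the step where you conclude that $\Psi$ is a system of analytic coordinates. Your plan is to show that $(\lambda,\mu,\nu)\mapsto \wt\exp(\nu)\cdot D(\lambda,\mu)$, written in algebraic parameters at $\ol t$, has affine-linear reduction modulo $p$. You could then invoke the Remark following \eqref{eq:alg_param}. You propose to dispose of the bilinear contribution of $C(\lambda,\mu)$ by showing it is divisible by $p$, and that claim is false.

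Trivialize the formal biextension on $\Mx(\Z_p)_{\ol 1}$, so that $\Ltimes,\Rtimes$ become $+_1,+_2$ on $\Z_p^g\times\Z_p^g\times\Z_p$, and write the coordinates of $P_{i,j}$ as $(\alpha_i,\beta_j,\gamma_{i,j})$. The last coordinate of $C(\lambda,\mu)$ is then $\sum_{i,j}\lambda_i\mu_j\gamma_{i,j}$. The $\gamma_{i,j}$ are arbitrary elements of $\Z_p$: replacing $P_{i,j}$ by $\wt\exp(1)\cdot P_{i,j}$ preserves the hypothesis that $P_{i,j}$ reduces to $1$, but shifts $\gamma_{i,j}$ by $1$. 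So your instinct to compute in the trivialized formal biextension is right, but it gives the opposite conclusion. In general $\Psi\circ\wt w^{-1}$ has a genuinely quadratic reduction modulo $p$, which is exactly what Proposition~\ref{prop:Psi-algebraic-coordinates-quadratic} allows.

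There is also a concrete sanity check. $\jbtilde$ is linear modulo $p$ in algebraic parameters by Lemma~\ref{lem:algebraic is linear}. If $\Psi$ were linear too, every $\theta_{2g+1}\bmod p$ would have degree at most $1$, contradicting the entry $3x^2$ in Table~\ref{tab:example}. So the linear-independence criterion cannot be used here.

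The fix is that linearity is not needed; the triangular shape you identified already suffices. What you need is that the fibre coordinate is an integral polynomial (or integral analytic function) of $(\lambda,\mu)$. This is how the paper argues:
\begin{itemize}
\item It uses the coordinates $\Phi$ built from the trivialization $\wt\Psi$ of \cite[Proposition~2.3.5]{guidothesis} and the formal logarithms $\Psi_1,\Psi_2$.
\item In these coordinates, $\Phi^{-1}\circ\Theta(\lambda,\mu,\nu)=(L_\alpha\lambda,\,L_\beta\mu,\,\nu+\rho(\lambda,\mu))$. Here $L_\alpha,L_\beta\in\GL_g(\Z_p)$ are the matrices with columns $\alpha_i$, $\beta_j$, and $\rho$ is an integral polynomial of degree at most $2$.
\item This map has the polynomial inverse $(a,b,c)\mapsto(L_\alpha^{-1}a,\,L_\beta^{-1}b,\,c-\rho(L_\alpha^{-1}a,L_\beta^{-1}b))$.
\item Composing with the analytic coordinates $\Phi^{-1}$ shows that $\Psi=\Theta^{-1}$ is a system of analytic coordinates, whatever the degree of its reduction.
\end{itemize}

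The same computation also gives the joint analyticity of $D$ in $(\lambda,\mu)$. Your second step only asserts this: analyticity of the one-parameter maps $n\mapsto P_{i,j}^{\Rtimes n}$ plus continuity does not yield it.

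Your bijectivity argument is correct, and more direct than the paper's. It projects to $J(\Z_p)_{\bar x}\times J^0(\Z_p)_{\bar y}$ via Lemma~\ref{lemma:analytic-coordinates-J} and then uses the fibrewise $(1+p\Z_p)$-torsor structure.
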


\begin{proof}[Proof of Lemma \ref{lem:biextension coordinates}] Note first that, by the assumption about the reduction of the points $P_{i,j}, R_{i, y}$ and $S_{x,j}$, the $\Z_p$-points $A(\mu)$, $B(\lambda)$, and $C(\lambda,
\mu)$ of $\Mx$ reduce to the neutral elements $1_{\ol x}, 1_{\ol y}$ and $1$, respectively, for all $\lambda, \mu\in\Z^g$. Hence $D(\lambda, \mu)$ reduces to $\bar t$ for all $\lambda, \mu \in \Z^{g}$.
Arguing as in \cite[Proof of Lemma 5.2.5]{Edil}, we see that the map $D$ in \eqref{eq: A B C D} extends to a continuous function $\Z_p^g \times \Z_p^g \to \Mx(\Z_p)_{\ol t}$. For each point of the base residue disc $J(\Z_p)_{\bar x}\times J^0(\Z_p)_{\bar y}$, the corresponding fiber of $\Mx(\Z_p)_{\bar t}$ is a torsor under $1+p\Z_p=\wt\exp(\Z_p)$. This torsor structure defines the map $\Theta$ in \eqref{eq:Psi_coords}. 
We now show that $\Theta$ is a bijection and that it satisfies the other properties in the statement. To do this, we compare it with a system of analytic coordinates in which the biextension laws are the trivial ones.

We follow the notation of \cite[Chapter~2, p.~73]{guidothesis}, with the following
minor changes: in \emph{op.~cit.}, the point $t$ is denoted $\wt t$; we identify the Poincar\'e torsor $P^\times$ of \cite{guidothesis} with the Mumford torsor $\Mx$; and we specialize to the case $\rho-1=1$, so that the $\G_m^{\rho-1}$-bundle $P^{\times, \rho-1}$ in \cite{guidothesis} becomes our $\Mx$.
We have the following maps.
\begin{itemize}
    \item Since $p>2$, by \cite[Proposition~2.3.5]{guidothesis} there is an isomorphism of biextensions
\newcommand{\triv}{\mathrm{triv}}
 \begin{equation*}
        \Psaiguido \colon (\Mx(\Z_p)_{\ol 1}, \Ltimes,\Rtimes) \lto (\Z_p^g \times \Z_p^{g} \times \Z_p, +_1, +_2) \,,
    \end{equation*}
which is also a system of analytic coordinates. Here, the biextension on the right is the trivial one, namely, it is given by the partial group laws
\[
\begin{aligned}
(a_1,b,c_1)+_1(a_2,b,c_2)=(a_1+a_2,b,c_1+c_2)\phantom{.} \\
(a,b_1,c_1)+_2(a,b_2,c_2)=(a,b_1+b_2,c_1+c_2).
\end{aligned}
\]
    \item 
    The formal logarithms give group isomorphisms
    \[
    \begin{aligned}
    \Psi_1  \colon \left( \Mx|_{\jac \times \{y\}}(\Z_p)_{ {1_{\ol y}}}, \Ltimes \right) \lto (\Z_p^{g} \times \Z_p, +) \\
    \Psi_2 \colon \left( \Mx|_{ \{x\} \times \jac}(\Z_p)_{{1_{\ol x}}}, \Rtimes \right) \lto (\Z_p^{g} \times \Z_p, +)
    \end{aligned}
    \]
    which are also systems of analytic coordinates.
    \item The previous maps combine to give a bijection (see \cite[p.~74]{guidothesis})
    \[
\begin{aligned}
\Phi \colon \Z_p^g \times \Z_p^g \times \Z_p \! \lto & \;\Mx(\Z_p)_{\ol t} \\
(\alpha, \beta,\gamma) \longmapsto & \left(\Psaiguido^{-1}(\alpha, \beta,\gamma) \Rtimes \Psi^{-1}_1 (\alpha,0) \right) \Ltimes \left( \Psi^{-1}_2(\beta,0)\Rtimes t \right) \\
= & \exp(p\gamma) {\cdot} \left(\left(\Psaiguido^{-1}(\alpha,\beta,0) \Rtimes \Psi^{-1}_1 (\alpha,0) \right) \Ltimes \left( \Psi^{-1}_2(\beta,0) \Rtimes t \right)\right) \!,
\end{aligned}
\]
whose inverse is a system of analytic coordinates at $\ol t$. 
\end{itemize}
Write $\Psaiguido(P_{i,j})=(\alpha_i, \beta_j, \gamma_{i,j})$, $\Psi_1(R_{i,y})=(\alpha_i, \gamma_{i,y})$ and $\Psi_2(S_{x,j})=(\beta_j, \gamma_{x,j})$, and set
\[
\rho(\lambda, \mu) \coloneq \sum_{i,j}\lambda_i\mu_j\gamma_{i,j} + \sum_i\lambda_i\gamma_{i,y} + \sum_j\mu_j\gamma_{x,j}.
\]
We note that for $\xi \in \Gm(\Z_p)_{\ol 1}$ and $z\in \Mx(\Z_p)_{\ol 1}$ we have $\Psaiguido( \xi\cdot z) =  \Psaiguido(z) + (0,0, \tfrac 1p \log(\xi))$, and similarly $\Psi_i(\xi \cdot z) = \Psi_i(z) + (0, \tfrac 1p \log(\xi))$ for $i=1$ and $z \in \Mx|_{J \times \{y\}}(\Z_p)_{1_{\ol y}}$ (resp.~for $i=2$ and $z \in \Mx|_{\{x\} \times J}(\Z_p)_{1_{\ol x}}$). Using this and the fact that the (partial) group laws become additive in the coordinates $\Psaiguido$, $\Psi_1$, and $\Psi_2$, we easily obtain, for $\lambda, \mu\in\Z_p^g$,
\begin{equation*}
\begin{gathered}
    \Psi_1(B(\lambda)) = \left( \sum_i \lambda_i \alpha_i, \sum_i \lambda_i \gamma_{i,y}\right) \ , \quad \Psi_2(A( \mu)) = \left( \sum_j \mu_j \beta_j, \sum_j \mu_j \gamma_{x,j}\right)\\
    \Psaiguido(C(\lambda, \mu)) = \left( \sum_i \lambda_i \alpha_i, \sum_{j} \mu_j \beta_j, \sum_{i,j} \lambda_i \mu_j \gamma_{i,j} \right), \quad D(\lambda, \mu)= \Phi\left(\sum_i\lambda_i\alpha_i, \sum_j\mu_j\beta_j,  \rho(\lambda, \mu)\right).
\end{gathered}
\end{equation*}
The last of these equalities implies
\begin{equation}\label{eq:Phi-inverse-Theta}
    \Phi^{-1}\circ\Theta(\lambda, \mu, \nu)= \left(\sum_i\lambda_i\alpha_i, \sum_j\mu_j\beta_j, \nu + \rho(\lambda, \mu)\right).
\end{equation}
By the assumption on the generators $x_i, y_j$ in Notation \ref{notation points on J}, the matrices with columns $\alpha_i$ and $\beta_j$ lie in $\GL_g(\Z_p)$: indeed, the $\alpha_i, \beta_j$ are the logarithms of $x_i, y_j$ divided by $p$, and by assumption the $x_i, y_j$ form a basis of the free $\Z_p$-module $J(\Z_p)_{\ol 0}$. Hence the last displayed map is an analytic bijection of $\Z_p^{2g+1}$, with analytic inverse. Since $\Phi$ is a system of analytic coordinates on $\Mx(\Z_p)_{\bar t}$,
this proves that $\Theta$ is a bijection and that $\Psi=\Theta^{-1}$ is a system of analytic coordinates.
\end{proof}

\begin{prop}\label{prop:Psi-algebraic-coordinates-quadratic}
    Keep the notation of Lemma \ref{lem:biextension coordinates}. If $p, w_1, \ldots, w_{2g+1}$ is an algebraic system of parameters at $\ol t$, inducing analytic coordinates $\wt w = (\wt w_1, \ldots, \wt w_{2g+1})\colon  \Mx(\Z_p)_{\overline{t}} \to \Z_p^{2g+1}$ as in \eqref{eq:alg_param}, then the compositions
$\Psi\circ \wt w^{-1}$ and $\wt w\circ \Psi^{-1}$ are given by integral convergent power series whose reductions modulo~$p$ are at most quadratic. 
\end{prop}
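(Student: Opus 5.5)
We will factor $\Psi\circ\wt w^{-1}$ through the coordinates $\Phi^{-1}$ from the proof of Lemma~\ref{lem:biextension coordinates}. By \eqref{eq:Phi-inverse-Theta}, $\Psi\circ\Phi$ is the map
\[
(\alpha',\beta',\gamma')\longmapsto\bigl(M_\alpha^{-1}\alpha',\, M_\beta^{-1}\beta',\, \gamma'-\rho(M_\alpha^{-1}\alpha',M_\beta^{-1}\beta')\bigr).
\]
Here $M_\alpha,M_\beta\in\GL_g(\Z_p)$ are the matrices with columns $\alpha_i$, $\beta_j$. This map is a polynomial map with $\Z_p$-coefficients of degree at most $2$, and so is its inverse, since $\rho$ is bilinear plus linear in $(\lambda,\mu)$. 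It therefore suffices to show that $\Phi^{-1}\circ\wt w^{-1}$ and $\wt w\circ\Phi$ are integral convergent power series whose reductions mod $p$ have degree at most $2$. Composing such a series with a polynomial of degree at most $2$ would a priori give degree up to $4$. So we need the sharper form: the reduction mod $p$ of $\Phi^{-1}\circ\wt w^{-1}$ is affine-linear in the first $2g$ coordinates, and its last coordinate is the sum of an affine form and a quadratic form in the first $2g$ coordinates only. With this shape, composing with $\Psi\circ\Phi$ keeps the degree at most $2$.

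To get this shape, we analyse $\Phi$ directly. The point $\Phi(\alpha,\beta,\gamma)$ is built from $\Psaiguido^{-1}$, $\Psi_1^{-1}$, $\Psi_2^{-1}$ and the two partial group laws applied to $t$. We treat each ingredient as follows.

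\emph{Step 1.} The maps $\Psi_1$ and $\Psi_2$ are formal logarithms of the smooth commutative group schemes $\Mx|_{J\times\{y\}}$ and $\Mx|_{\{x\}\times J}$ over $\Z_p$, which are extensions of $J$ by $\Gm$. Divided by $p$, as in Lemma~\ref{lem:biextension coordinates}, the logarithm in algebraic parameters at the identity is $\wt z+O(p)$, using $p>2$. So these coordinates are linear mod $p$ with respect to algebraic parameters, by the same argument as Lemma~\ref{lem:algebraic is linear} together with \cite[Lemma 5.1.1]{Edil}.

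\emph{Step 2.} The partial group laws and translation by $t$ are morphisms of schemes. By Lemma~\ref{lem:algebraic is linear}, they are affine-linear mod $p$ in algebraic coordinates.

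\emph{Step 3.} The map $\Psaiguido$ is the biextension trivialisation of \cite[Proposition~2.3.5]{guidothesis}, and it is the step I expect to be the main obstacle. In algebraic parameters at $1\in\Mx_{0,0}$, it should be linear in the base coordinates. Its fibre coordinate should be linear plus a bilinear term coming from the failure of the two partial group laws to commute. I would first try to extract this from the explicit construction in \cite{guidothesis}, where $\Psaiguido$ is obtained by trivialising the biextension over the formal group, so that only the term quadratic in $(\alpha,\beta)$ survives mod $p$.

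Combining Steps 1--3, $\wt w\circ\Phi$ is a composition of maps that are linear mod $p$ on the base and quadratic only through a bilinear fibre term. This yields the required shape mod $p$ for $\wt w\circ\Phi$, and hence for $\wt w\circ\Psi^{-1}$.

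For the inverse direction we use the same triangular structure. Being linear mod $p$ with invertible linear part, the base part inverts to a linear map mod $p$. The fibre coordinate is then solved as the fibre coordinate minus a quadratic expression in the base coordinates. This is again of degree at most $2$ mod $p$, and it gives the claim for $\Psi\circ\wt w^{-1}$.
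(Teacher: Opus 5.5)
Your skeleton matches the paper's. You factor through $\Phi$ and use \eqref{eq:Phi-inverse-Theta}, and your formula for $\Psi\circ\Phi$ is correct: it and its inverse are $\Z_p$-polynomial maps of degree at most $2$. The gap is Step~3. Steps~1 and~2 are formal consequences of Lemma~\ref{lem:algebraic is linear} and of the integrality of the scaled formal logarithm. The only non-formal input is a mod-$p$ description of $\widetilde\Psi$, equivalently of $\Phi$, in algebraic parameters, and you leave it as an expectation (``should be'', ``I would first try to extract this''). Without it you have no control over $\Phi$, and the argument does not close.

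The paper takes exactly this input from \cite[pp.~74--75]{guidothesis}, in a stronger form than you ask for. Take parameters $p,\xi_1,\dots,\xi_g,\eta_1,\dots,\eta_g,v$ at $\ol t$, with the $\xi_i,\eta_j$ pulled back from parameters at $\ol x,\ol y$. Then both $(\tilde\xi,\tilde\eta,\tilde v)\circ\Phi$ and its inverse are \emph{linear} mod $p$. With this, no triangular bookkeeping is needed and the degree-$4$ worry never arises. Indeed $(\tilde\xi,\tilde\eta,\tilde v)\circ\Psi^{-1}=(\tilde\xi,\tilde\eta,\tilde v)\circ\Phi\circ(\Psi\circ\Phi)^{-1}$ is a linear map applied after a quadratic one, and its inverse is a quadratic map applied after a linear one.

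Two further corrections. First, the bilinear fibre term you predict for $\widetilde\Psi$ has the wrong source; you attribute it to a ``failure of the two partial group laws to commute''. The partial laws of a biextension are compatible by axiom. In scaled algebraic coordinates both are affine-linear mod $p$ by Lemma~\ref{lem:algebraic is linear}, and compatibility with the neutral sections then forces them to be the trivial laws mod $p$, up to a linear change of the fibre coordinate. By the result of \cite{guidothesis} quoted above, no such term survives mod $p$. The only quadratic term in the picture is $\rho$, which comes from the bilinear combinations ${}^\top\lambda P\mu$ defining $D$. Your weaker triangular shape would suffice if proved, but you do not prove it.

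Second, phrases like ``the first $2g$ coordinates'' or ``linear on the base'' have no meaning for an arbitrary system $w_1,\dots,w_{2g+1}$. You must first prove the claim for adapted parameters $(\xi,\eta,v)$ as above. Then pass to a general $w$, using that $(\tilde\xi,\tilde\eta,\tilde v)\circ\wt w^{-1}$ and its inverse are linear mod $p$ by Lemma~\ref{lem:algebraic is linear}, as the paper does.
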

\begin{proof}
    We fix a regular system of parameters at $t$ as follows. Let  $p, \xi_1, \ldots, \xi_g$ be a regular system of parameters at $\ol x$, and similarly let $p, \eta_1, \ldots, \eta_g$ be parameters at $\ol y$. We pull back these parameters to $\Mx$ and complete them to a regular system of parameters $p, \xi_1, \ldots, \xi_g, \eta_1, \ldots, \eta_g, v$ at $\ol t$. Dividing by $p$ as usual (see \eqref{eq:alg_param}), we obtain analytic coordinates
\[
 (\tilde{\xi}, \tilde{\eta}, \tilde v) : \Mx(\Z_p)_{\ol t} \to \Z_p^{2g+1}.
\]

By \cite[pp. 74-75]{guidothesis}, the power series $(\tilde{\xi}, \tilde{\eta}, \tilde  v) \circ \Phi$ and $((\tilde{\xi}, \tilde{\eta}, \tilde  v) \circ \Phi)^{-1}$ reduce modulo $p$ to linear polynomials. Taking into account \eqref{eq:Phi-inverse-Theta}, it follows that
\[
 (\tilde{\xi}, \tilde{\eta}, \tilde  v) \circ \Psi^{-1} =   (\tilde{\xi}, \tilde{\eta}, \tilde  v) \circ \Phi \circ \left(\Psi \circ \Phi\right)^{-1}
\]
reduces modulo $p$ to the composition of a linear map and an at most quadratic one. The statement that also the inverse $\Psi \circ (\tilde{\xi}, \tilde{\eta}, \tilde  v)^{-1}$ is at most quadratic is proven similarly. This shows the claim for a system of parameters  $p, \xi_1, \ldots, \xi_g, \eta_1, \ldots, \eta_g, v$ of the form described above. 
For any other system of parameters $p, w_1, \ldots, w_{2g+1}$, note that both the change of variables $(\tilde{\xi}, \tilde{\eta}, \tilde  v) \circ \tilde{w}^{-1}$ and its inverse are  given by integral convergent power series that reduce to polynomials of degree $1$ by Lemma \ref{lem:algebraic is linear}. Combined with the special case above, this implies the statement also for $p, w_1, \ldots, w_{2g+1}$.
\end{proof}

\begin{cor}\label{cor:jbtilde quadratic}
Fix a point $u \in U(\F_p)$, let $\uniform$ be a uniformizer at $u$ and denote $\tilde \uniform \coloneq\frac{\uniform}{p}$, which induces a bijection $U(\Z_p)_u \to \Z_p$ as in  Section~\ref{sec:parameters}. Let $\Psi$ be the map defined in \eqref{eq: def psi}.
The composition $\Psi \circ \jbtilde \circ \tilde{\uniform}^{-1} : \Z_p \to \Z_p^g \times \Z_p^g \times \Z_p$ is given by integral convergent power series, and its reduction modulo $p$ is given by (at most) quadratic polynomials.
\end{cor}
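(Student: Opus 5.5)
The plan is to factor the composition through algebraic parameters on $\Mx$ and then combine Lemma~\ref{lem:algebraic is linear} with Proposition~\ref{prop:Psi-algebraic-coordinates-quadratic}. Write
\[
\Psi \circ \jbtilde \circ \tilde\uniform^{-1} = \left(\Psi \circ \wt w^{-1}\right) \circ \left(\wt w \circ \jbtilde \circ \tilde\uniform^{-1}\right),
\]
where $p, w_1, \ldots, w_{2g+1}$ is an algebraic system of parameters at $\ol t \coloneq \jbtilde(u) \in \Mx(\F_p)$. Such parameters exist because $\Mx$ is smooth over $\Z_p$ at $\ol t$: it is a $\Gm$-torsor over $J \times J^0$, and $J_{\Z_p}$ is an abelian scheme since $p$ is a prime of good reduction. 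The residue disc of $\ol t$ contains the image $\jbtilde(U(\Z_p)_u)$, so this factorization makes sense on $U(\Z_p)_u$.

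First, I treat the inner map. The morphism $\jbtilde \colon U \to \Mx$ is a morphism of schemes, base changed to $\Z_p$. Both $U_{\Z_p}$ and $\Mx_{\Z_p}$ are flat of finite type over $\Z_p$, of relative dimensions $1$ and $2g+1$. The points $u$ and $\ol t$ are smooth points, and $\uniform$ is an algebraic parameter at $u$. Lemma~\ref{lem:algebraic is linear} then shows that $\wt w \circ \jbtilde \circ \tilde\uniform^{-1}$ is given by integral convergent power series reducing modulo $p$ to polynomials of degree at most $1$.

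Second, the outer map: by Proposition~\ref{prop:Psi-algebraic-coordinates-quadratic}, $\Psi \circ \wt w^{-1}$ is given by integral convergent power series whose reductions modulo $p$ have degree at most $2$.

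Finally, I compose. A composition of integral convergent power series is again integral convergent, since an integral convergent series in $\Z_p\langle \tilde w\rangle$ evaluated at integral convergent series in $\tilde\uniform$ gives an element of $\Z_p\langle\tilde\uniform\rangle$. Reduction modulo $p$ commutes with this substitution, so the reduction of the composite is the substitution of degree-$\le 1$ polynomials into degree-$\le 2$ polynomials, which has degree at most $2$.

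The only point needing care is the compatibility of composition with reduction, given that the inner series may have nonzero constant term. I expect this to be mild. Substituting $\tilde w_i = a_i + p(\cdots)$ with $a_i \in \Z_p$ keeps convergence, because coefficients of Tate series tend to $0$ $p$-adically, so the substitution converges in $\Z_p\langle\tilde\uniform\rangle$ and reduces modulo $p$ termwise.
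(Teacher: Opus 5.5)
Your proposal is correct and matches the paper's proof: choose algebraic parameters at $\ol t = \jbtilde(u)$, apply Lemma~\ref{lem:algebraic is linear} to the inner map and Proposition~\ref{prop:Psi-algebraic-coordinates-quadratic} to the outer map, and compose. One small slip: the inner series have the form $a_i + b_i\tilde\uniform + p(\cdots)$ rather than $a_i + p(\cdots)$, but your substitution argument in the Tate algebra works unchanged for any integral convergent series.
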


\begin{proof}
Let $   \ol t=\jbtilde(u) \in \Mx(\F_p)$. Consider a system of algebraic parameters $p, v_1, \ldots, v_{2g+1}$ at $\ol t$, and $(\tilde v_1, \ldots, \tilde v_{2g+1})\colon \Mx(\Z_p)_{\overline{t}} \to 
\Z_p^{2g+1}$ the corresponding coordinates. By Lemma \ref{lem:algebraic is linear},
$\tilde{v} \circ \tilde{j}_b \circ \tilde \uniform^{-1}$ is given by convergent power series whose reductions modulo $p$ are polynomials of degree at most $1$.  By Proposition \ref{prop:Psi-algebraic-coordinates-quadratic}, the composition $\Psi \circ \tilde{v}^{-1}$ is given by convergent power series whose reductions modulo $p$ are at most quadratic polynomials. Composing the two maps gives the claim.
\end{proof}

\subsection{Integral points on the Mumford torsor}\label{sec:kappa_general}
Consider again a point $\ol t \in \Mx(\F_p)$.
In this section, we assume that the projection 
$(\ol x, \ol y)$ $\in J(\F_p) \times J^0(\F_p)$ of $\ol t$ can be lifted to an integral point  $(x,y) \in J(\Z) \times J^0(\Z)$.  
In particular, there exists a point $t^\Z \in \Mx(\Z)$ lying over $(x,y)$, though not necessarily in the same residue disc as $t$ (Here we use the fact that $\operatorname{Pic}(\mathbb{Z})=0$: as the fiber of $\Mx$ above $(x,y)$ is a $\mathbb{G}_m$-torsor over $\operatorname{Spec} \mathbb{Z}$, it is trivial, and thus contains an integral point). The goal of this section is to describe $\ol{\Mx(\Z)_{\ol t}}$ inside of $\Mx(\Z_p)_{\ol t}$.

We make the further assumption that the points $x_i$ and $y_i$ from Notation \ref{notation points on J}  have the property that $x_1, \dots, x_g \in J(\Z)_{\ol{0}}$ and  $y_1, \dots, y_g \in J^0(\Z)_{\ol{0}}$.
In particular, we assume that the map 
\begin{equation}\label{eq: J0Z0 to JZp2 onto}
    J^0(\Z)_{\ol 0} \to J(\Z/p^2\Z)_{\ol 0} 
\end{equation}
is surjective. 

For each $i,j \in \{1, \ldots, g\}$, choose lifts (unique up to sign)
\begin{align*}
& P_{i, j}^{\Z} \in \Mx(\Z)&\text{ of }&& (x_i, y_j) \in J(\Z) \times J^0(\Z), \\ 
&R_{i, y}^{\Z} \in \Mx(\Z) &\text{ of }&& (x_i, y) \in J(\Z) \times J^0(\Z),\\ 
&S_{x,j}^{\Z} \in \Mx(\Z)&\text{ of }&& (x, y_j) \in J(\Z) \times J^0(\Z). 
\end{align*}

There exist unique choices of $(p-1)$th roots of unity $\xi , \xi_{i,j}, \xi_{i,y}$ and $\xi_{x,j}$ in $\Z_p^\times$ such that the points  
\begin{equation}\label{eq: choices kappa}
t \coloneq\xi  t^\Z, \quad  P_{i,j}\coloneq \xi_{i,j} \cdot P_{i,j}^\Z ,  \quad R_{i,y} \coloneq \xi_{i,y} \cdot R_{i,y}^\Z, \quad S_{x,j}\coloneq \xi_{x,j} \cdot S_{x,j}^\Z,
\end{equation}
reduce modulo~$p$ respectively to the points $\ol t$, $\ol 1$, $1_{\ol y}$, and $1_{\ol x}$ in  $\Mx(\F_p)$.

 With these data, we can now define a map $\kappa$ to parametrize $\overline{\Mx(\Z)_{\ol t}}$.

\begin{defi}\label{definition kappa}
    Given a point $\ol t \in \Mx(\F_p)$ whose projection  $(\ol x, \ol y) \in J(\F_p) \times J(\F_p)$ can be lifted to an integral point on $J \times J^0$, we denote by
    \begin{equation}\label{eq: kappa new}
        \kappa \colon \Z_p^{2g} \to \Mx(\Z_p)_{\ol t}
    \end{equation}    
    the continuous extension of the map $D \colon \Z^{2g} \to \Mx(\Z_p)_{\ol t}$ defined in \eqref{eq: A B C D}, with the hypothesis that the points $t,P_{i,j}, R_{i,y}$ and $S_{x,j}$ in the definition of $D$ are chosen as in  \eqref{eq: choices kappa}.
\end{defi}
Note that, due to the presence of roots of unity in \eqref{eq: choices kappa}, the points in $\kappa(\Z^{2g})$ are not necessarily integral points. We can also define variants of the maps in \eqref{eq: A B C D}, which have the good property that their image is contained in $\Mx(\Z)$, at the cost that their image might not be contained in a single residue disc. In particular, for $\lambda, \mu \in \Z^g$ we set

\begin{equation}\label{eq: AZ BZ CZ DZ}
\begin{aligned}
A^\Z(\mu) & = S^\Z \mu, \quad B^\Z(\lambda) = {}^\top \lambda R^\Z, \quad C^\Z(\lambda, \mu) = \BLC{\lambda}{P^\Z}{\mu} \\
D^\Z(\lambda,\mu) & = \left(C^\Z(\lambda,\mu) \Rtimes B^\Z(\lambda)\right) \LTimes \left(A^\Z(\mu) \Rtimes t^\Z \right) ,
\end{aligned}    
\end{equation}
where $P^\Z$ is the $g\times g$ matrix whose entries are the $P_{i,j}^\Z$, and $R^\Z$, respectively $S^\Z$, is the column (resp.~row) vector whose entries are the $R_{i,y}^\Z$, respectively $S_{x,j}^\Z$.
These are used in the proofs of Theorem \ref{thm:Mx(Z) bar} and Lemma \ref{lem:onlycareaboutlastcoord}.

The next theorem, describing the closure of integral points in the Mumford torsor, is analogous to the computation of the map $\kappa$ in \cite{Edil}.

\begin{theorem}\label{thm:Mx(Z) bar} Assume that \eqref{eq: J0Z0 to JZp2 onto} is surjective and that $\rank  J(\Q) = g$. 
Let $\kappa$ be as in \eqref{eq: kappa new}, let $\Psi$ be the map given in Lemma \ref{lem:biextension coordinates}, and assume that the points appearing in the lemma are chosen as in \eqref{eq: choices kappa}. Then, if $\Mx(\Z)_{\ol t}$ is non-empty, we have
\[ 
\ol {\Mx(\Z)_{\ol t}} = \kappa(\Z_p^{2g}) = \Psi^{-1} \left( \Z_p^g \times \Z_p^{g} \times \{0\} \right). 
\]
\end{theorem}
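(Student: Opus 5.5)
The second equality is formal. By construction $\Theta(\lambda,\mu,0)=D(\lambda,\mu)$ for $\lambda,\mu\in\Z^g$. Both $\kappa$ and $(\lambda,\mu)\mapsto\Theta(\lambda,\mu,0)$ are continuous extensions of $D$ from the dense subset $\Z^{2g}\subset\Z_p^{2g}$, so they agree. Hence $\kappa(\Z_p^{2g})=\Theta(\Z_p^g\times\Z_p^g\times\{0\})=\Psi^{-1}(\Z_p^g\times\Z_p^g\times\{0\})$, using Lemma~\ref{lem:biextension coordinates}, which says $\Theta$ is a bijection with inverse $\Psi$. This set is compact, hence closed in $\Mx(\Z_p)_{\ol t}$. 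It therefore suffices to prove two things: $\Mx(\Z)_{\ol t}\subset\kappa(\Z_p^{2g})$, and $\kappa(\Z^{2g})$, or some dense subset of it, lies in $\ol{\Mx(\Z)_{\ol t}}$.

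For the second inclusion I compare $D$ with $D^\Z$ from \eqref{eq: AZ BZ CZ DZ}. The biextension laws are compatible with the $\Gm$-action, and the points are related by \eqref{eq: choices kappa}. Expanding the bilinear combination therefore gives
\[
D(\lambda,\mu)=\xi\cdot\prod_{i,j}\xi_{i,j}^{\lambda_i\mu_j}\prod_i\xi_{i,y}^{\lambda_i}\prod_j\xi_{x,j}^{\mu_j}\cdot D^\Z(\lambda,\mu).
\]
Here $D^\Z(\lambda,\mu)\in\Mx(\Z)$ and the prefactor is a $(p-1)$th root of unity. On $(\lambda,\mu)\in(\lambda_0,\mu_0)+(p-1)\Z^{2g}$ the prefactor is constant, equal to $\xi'$ say. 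Since $\Mx(\Z)_{\ol t}\neq\emptyset$, some integral point $t'$ lies in the disc. I would use it to show that for a suitable congruence class the prefactor is $\pm1$. The idea is that $t'$ lies over a point of $J(\Z)\times J^0(\Z)$ in the residue disc of $(\ol x,\ol y)$, and the surjectivity of \eqref{eq: J0Z0 to JZp2 onto} together with $\rank J(\Q)=g$ lets us express this point in terms of $x$, $y$, the $x_i$ and the $y_j$. Then $D(\lambda,\mu)=\pm D^\Z(\lambda,\mu)$ is integral on a full coset of $(p-1)\Z^{2g}$. That coset is dense in $\Z_p^{2g}$ because $p-1$ is a $p$-adic unit, so $\kappa(\Z_p^{2g})\subset\ol{\Mx(\Z)_{\ol t}}$.

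For the first inclusion, take $t'\in\Mx(\Z)_{\ol t}$ lying over $(x',y')$. Then $x'-x\in J(\Z)\cap J(\Z_p)_{\ol 0}$, and since the $x_i$ form a $\Z_p$-basis of $J(\Z_p)_{\ol 0}$ we can write $x'-x=\sum\lambda_ix_i$ with $\lambda\in\Z_p^g$; similarly $y'-y=\sum\mu_jy_j$. The rank assumption gives that the $\Z$-span of the $x_i$ has finite index $n$ in $J(\Z)_{\ol 0}$, so $\lambda,\mu\in\frac1n\Z^g$. When $\lambda,\mu\in\Z^g$, the integral points over $(x',y')$ are exactly $\pm D^\Z(\lambda,\mu)$. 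Comparing residues, and using that $-1\not\equiv1\bmod p$, gives $t'=D(\lambda,\mu)\in\kappa(\Z^{2g})$.

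The general case $\lambda\notin\Z^g$ is the step I expect to be the main obstacle: one has to control both the finite index $n$ and the roots of unity simultaneously. My first attempt would be to apply the partial group laws to $t'$ and the $P^\Z_{i,j}$, $R^\Z_{i,y}$, $S^\Z_{x,j}$ to form integral points over $(x+kn\lambda\cdot x, y+kn\mu\cdot y)$ for $k\in\Z$. Their last $\Psi$-coordinate is an integral-coefficient polynomial, at most quadratic, in $k$ and in $\Psi(t')_{2g+1}$, and the previous paragraph shows it vanishes at these integral multiples. Extending by $p$-adic continuity should then force $\Psi(t')_{2g+1}=0$. Alternatively, if it is cleaner, I would rerun the argument with the base point $t$ replaced by $t'$ and verify that $\kappa$ does not depend on this choice.
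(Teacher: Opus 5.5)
\textbf{There is a genuine gap.} Your proof of the second equality is correct, and so is your treatment of integral points whose coordinates $(\lambda,\mu)$ lie in $\Z^{2g}$. The gap is exactly where you suspect, and it also undermines your second inclusion, which you treat as settled.

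Suppose $x_1,\dots,x_g$ (resp.\ $y_1,\dots,y_g$) only span a sublattice of some index $n>1$, prime to $p$, in $J(\Z)_{\ol 0}$ (resp.\ $J^0(\Z)_{\ol 0}$). Then an integral point $t'$ of the disc has coordinates in $\frac1n\Z^g\times\frac1n\Z^g$. So it cannot be written with integer coefficients in terms of $x,y$, the $x_i$ and the $y_j$. Consequently nothing produces a class of $(\lambda,\mu)$ modulo $p-1$ on which your root-of-unity prefactor equals $\pm1$. That prefactor depends only on $(\lambda,\mu)\bmod (p-1)$, and $n$ need not be prime to $p-1$. So $D(\Z^{2g})$ may contain no integral point at all, even though $\Mx(\Z)_{\ol t}\neq\emptyset$.

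For the first inclusion you only give a sketch. The quadratic-in-$k$ idea can in fact be completed, but as written it is a heuristic. Replacing the base point $t$ by $t'$ does not remove the problem, since other integral points of the disc can still have non-integral coordinates relative to $t'$. The issue is the choice of basis, not of base point.

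\textbf{The missing reduction.} The paper's proof starts by assuming that the $x_i$ and the $y_j$ are $\Z$-bases of $J(\Z)_{\ol 0}$ and $J^0(\Z)_{\ol 0}$; these groups are free of rank $g$ by the rank hypothesis. To justify this, choose such bases $x'_k$, $y'_l$ and write $x_i=\sum_k L_{ki}x'_k$, $y_j=\sum_l M_{lj}y'_l$. Both families reduce to bases of $J(\Z/p^2\Z)_{\ol 0}$ (the $x'_k$ because their span contains the $x_i$). Hence $L$ and $M$ are integer matrices lying in $\GL_g(\Z_p)$.

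Each point in \eqref{eq: choices kappa} is the unique point of the form $\zeta\cdot z$ lying over its base point with the prescribed reduction, where $\zeta$ is a $(p-1)$th root of unity and $z$ is integral. Bilinear combinations preserve this description, so $P={}^{\top}L\,P'M$, $R={}^{\top}L\,R'$ and $S=S'M$. It follows that $D(\lambda,\mu)=D'(L\lambda,M\mu)$ on $\Z^{2g}$, and by continuity $\kappa=\kappa'\circ(L\times M)$. Since $L\times M$ is a bijection of $\Z_p^{2g}$, the two images coincide.

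In the $\Z$-basis case, every point of $\Mx(\Z)_{\ol t}$ is $\pm D^\Z(\lambda,\mu)=D(\lambda,\mu)$ with $(\lambda,\mu)\in\Z^{2g}$. This gives $\Mx(\Z)_{\ol t}\subseteq D(\Z^{2g})$, hence the inclusion of closures. Moreover, any single such point $D(\lambda_0,\mu_0)$ makes $D$ integral up to sign on the dense coset $(\lambda_0,\mu_0)+(p-1)\Z^{2g}$. These are precisely your integral-case arguments, and after the reduction they constitute the whole proof.
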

\begin{proof}
    We can reduce to the case where $x_1, \ldots, x_g$ is a $\Z$-basis of $J(\Z)_{\ol 0}$ and $y_1, \ldots, y_g$ a $\Z$-basis of $J^0(\Z)_{\ol 0}$ since, otherwise, the map $\kappa$ differs by a change of variable $(\lambda, \mu) \mapsto (L \lambda, M \mu)$ for certain $g\times g$ matrices $M,L$ which are invertible over $\Z_p$ by the assumption in Notation \ref{notation points on J} about the $x_i, y_j$.

    We have $\Mx(\Z)_{\ol t} = \pm D^\Z(\Z^g \times \Z^g) \cap \Mx(\Z_p)_{\ol t}$, which is a subset of $D(\Z^g \times \Z^g)$. By continuity of $\kappa$ and the fact that it coincides with $D$ on $\Z^g \times \Z^g$ we obtain the containment
    \[
    \overline{\Mx(\Z)_{\ol t}} \subseteq \overline{D(\Z^g \times \Z^g)}=\kappa(\Z_p^g \times \Z_p^g).
    \]

    Moreover, if $t = t^\Z$, then  $\Mx(\Z)_{\ol t}$  contains  $D^\Z\left((p-1)\cdot (\Z^g \times \Z^g)\right) = D\left((p-1)\cdot (\Z^g \times \Z^g)\right)$ since for $\lambda, \mu \in \Z^g$ the points $D(\lambda, \mu)$ and $D^\Z(\lambda, \mu)$ differ by a $(p-1)$th root of unity. 
    More generally, if $\Mx(\Z)_{\ol t}$ is non-empty, then it contains a point of the form $\pm  D^\Z(\lambda_0, \mu_0) =  D(\lambda_0, \mu_0)$ for some $\lambda_0, \mu_0 \in \Z^g \times \Z^g$, and consequently contains the image, under $D$, of $(\lambda_0, \mu_0) + (p-1) \cdot \Z^{2g}$. Since $(\lambda_0, \mu_0) + (p-1) \cdot \Z^{2g}$ is dense in $\Z_p^{2g}$, we deduce that 
    \[
    \kappa(\Z_p^g \times \Z_p^g) = \ol{D\left( (\lambda_0, \mu_0) + (p-1) \cdot \Z^{2g}\right)} \subset  \overline{\Mx(\Z)_{\ol t}}. \qedhere
    \]
\end{proof}

\begin{remark}\label{rmk kappa}
    In \cite{Edil} one works with a certain $\Gm$-torsor $T$  over $J$, obtained as a suitable pullback of $\Mx$. One then describes a map $\kappa \colon \Z_p^g \to T(\Z_p)$ whose image is precisely the $p$-adic closure of $T(\Z)$ in a given residue disc. In our current notation, the map $\kappa$ in \emph{op.~cit.} coincides with the function 
    \[
    \lambda \mapsto \kappa( (p-1) \lambda, (p-1) M \lambda ),
    \]
    where $M$ is a suitable matrix and $\kappa$ is defined in \eqref{eq: kappa new}.

    Here, we work directly on $\Mx$, hence our map $\kappa$  has domain $\Z_p^{2g}$. Moreover, here we do not assume $\ol t$ lifts to a point in $\Mx(\Z)$, a condition we might not be able to check in practice, even when we know a lift $(x,y) \in J(\Z) \times J^0(\Z)$ of  $(\ol x, \ol y)$. In particular, there might be cases where we define a $\kappa$ map for the residue disc of $\ol t$, even though $\Mx(\Z)_{\ol t}$ is empty. In any case, when $\Mx(\Z)_{\ol t}$ is non-empty, Theorem \ref{thm:Mx(Z) bar} guarantees that its closure is still parameterized by~$\kappa$.  

    Finally, multiplication by $p-1$ in \cite{Edil} is replaced by our current assumption that the points $P_{i,j}, R_{x,j}, S_{i,y}$ reduce to the respective identities modulo $p$.
\end{remark}
\begin{remark}
The point of view used here is different from that in \cite{Edil}. In \cite{Edil}, the coordinates are chosen so that the curve is the image of a map that is affine modulo $p$, while the closure of the integral points is the image of an analytic map that is (at most) quadratic modulo $p$.
Here, the coordinates are chosen in the opposite way: the closure of the integral points in $\Mx(\Z_p)_{\ol t}$ is given by a linear equation in the last coordinate, while the image of $\jbtilde$ is described by a power series that is at most quadratic modulo $p$.
\end{remark}

The next lemma is the main tool that will allow us to compute with integral points of $\Mx(\Z)$ by embedding them in $\Mx(\Z_p)$ and working up to finite $p$-adic precision.
\begin{lemma}\label{lem:onlycareaboutlastcoord}
   Assume that \eqref{eq: J0Z0 to JZp2 onto} is surjective and that $\rank J(\Q) = g$. 
    Let $e$ be a positive integer. Let $x, x' \in J(\Z)$ and $y, y' \in J^0(\Z)$ be such that $x \equiv x'$ and $y \equiv y'$ modulo~$p^e$. Let $z \in \Mx(\Z)$, respectively $z' \in \Mx(\Z)$, be a lift of $(x, y)$, resp.~$(x',y')$. There exists a  $(p-1)$th root of unity $\xi \in \Z_p^\times$ such that  
    \[
    z \equiv  \xi z' \pmod{p^{e}}.
    \]  
\end{lemma}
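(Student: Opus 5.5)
We reduce to a statement about the biextension coordinates of Lemma~\ref{lem:biextension coordinates} together with the description of integral points from the proof of Theorem~\ref{thm:Mx(Z) bar}. Since $x\equiv x'$ and $y\equiv y' \pmod{p^e}$, with $e\ge 1$, the difference $x'-x$ lies in $J(\Z)_{\ol 0}$ and $y'-y$ lies in $J^0(\Z)_{\ol 0}$. As in the proof of Theorem~\ref{thm:Mx(Z) bar}, we may assume that $x_1,\dots,x_g$ and $y_1,\dots,y_g$ are $\Z$-bases of $J(\Z)_{\ol 0}$ and $J^0(\Z)_{\ol 0}$, since $\rank J(\Q)=g$ and the surjectivity of \eqref{eq: J0Z0 to JZp2 onto} hold. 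We then write $x'=x+\sum\lambda_i x_i$ and $y'=y+\sum \mu_j y_j$ with $\lambda,\mu\in\Z^g$. The first step is to show that $\lambda,\mu \in p^{e-1}\Z^g$. By Lemma~\ref{lemma:analytic-coordinates-J} the map $\lambda\mapsto x+\sum\lambda_ix_i$ is an analytic coordinate system on $J(\Z_p)_{\ol x}$. Lemma~\ref{lem: coords mod pe} then says that congruence mod $p^e$ of points corresponds to congruence mod $p^{e-1}$ of coordinates; the bijectivity part gives the converse direction we need.

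Next, we take $t^\Z \coloneq z$ and build $D^\Z$ as in \eqref{eq: AZ BZ CZ DZ}. The point $D^\Z(\lambda,\mu)$ lies in $\Mx(\Z)$ over $(x',y')$, and an integral lift is unique up to sign, so $z'=\pm D^\Z(\lambda,\mu)$. Up to a $(p-1)$th root of unity, $D^\Z(\lambda,\mu)$ equals $D(\lambda,\mu)=\kappa(\lambda,\mu)$, where $D$ is built from the rescaled points \eqref{eq: choices kappa} with $t=\xi_0 z$. So it suffices to show $D(\lambda,\mu)\equiv D(0,0)=t \pmod{p^e}$ whenever $\lambda,\mu\in p^{e-1}\Z^g$. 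In the analytic coordinates $\Psi$ of Lemma~\ref{lem:biextension coordinates}, we have $\Psi(D(\lambda,\mu))=(\lambda,\mu,0)$ and $\Psi(t)=(0,0,0)$. These agree modulo $p^{e-1}$. Applying the bijection in Lemma~\ref{lem: coords mod pe} on the smooth scheme $\Mx_{\Z_p}$ (a $\Gm$-torsor over a smooth base), the points $D(\lambda,\mu)$ and $t$ have the same image in $\Mx(\Z/p^e\Z)$. Collecting the roots of unity gives $z\equiv\xi z'\pmod{p^e}$.

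The main obstacle is the step that converts congruence mod $p^e$ into coordinates divisible by $p^{e-1}$, and back. We must check that both uses of Lemma~\ref{lem: coords mod pe} go in the right direction. For $J$ we need the converse: points congruent mod $p^e$ have coordinates congruent mod $p^{e-1}$. This follows from the bijectivity in the second part of Lemma~\ref{lem: coords mod pe}, which says the coordinate map is well defined and injective on $J(\Z/p^e\Z)$. For $\Mx$ we need the forward direction, which also follows from that bijection. A secondary point is that the sign $\pm$ and the roots $\xi_{i,j},\xi_{i,y},\xi_{x,j}$ combine into a single $(p-1)$th root of unity. This holds because, by \eqref{eq: blc}, the partial group laws are multiplicative in these scalars, so $D$ and $D^\Z$ differ by a product of roots of unity.
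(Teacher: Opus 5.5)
Your argument is correct and is essentially the paper's proof. You write $x'-x$ and $y'-y$ in $\Z$-bases of $J(\Z)_{\ol 0}$ and $J^0(\Z)_{\ol 0}$ and use Lemma~\ref{lem: coords mod pe} to get $\lambda,\mu\in p^{e-1}\Z^g$; then, with $t^\Z=z$, you note $z'=\pm D^\Z(\lambda,\mu)$, which agrees with $D(\lambda,\mu)=\Psi^{-1}(\lambda,\mu,0)$ up to a $(p-1)$th root of unity, and conclude $D(\lambda,\mu)\equiv D(0,0)\pmod{p^e}$ by applying Lemma~\ref{lem: coords mod pe} to $\Psi$. (The step giving $\lambda,\mu\in p^{e-1}\Z^g$ uses that the map $(\lambda,\mu)\mapsto(x+\sum\lambda_ix_i,\,y+\sum\mu_jy_j)$ from $(\Z/p^{e-1}\Z)^{2g}$ is well defined and injective modulo $p^e$, exactly as in the paper.)
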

\begin{proof}

    By Lemmas \ref{lemma:analytic-coordinates-J} and \ref{lem: coords mod pe}, the map $(\lambda, \mu) \mapsto (x+\sum \lambda_i x_i, y + \sum \mu_i y_i)$ induces a bijection 
    \begin{equation}\label{eq:F J technical}
    (\Z/p^{e-1}\Z)^g \times (\Z/p^{e-1}\Z)^g \to J(\Z/p^{e}\Z)_{\ol x} \times J(\Z/p^{e}\Z)_{\ol y}. 
    \end{equation}
    Let $x_1, \ldots, x_g$ be a basis of $J(\Z)_{\ol 0}$ and $y_1, \ldots, y_g$ a basis of $J^0(\Z)_{\ol 0}$, so that we can write  $x' = x + \sum \lambda_i' x_i$  and $y' = y + \sum \mu_i' y_i$ for suitable $\lambda', \mu' \in \Z^g$. By the injectivity of \eqref{eq:F J technical}, we know that all coordinates of $\lambda'$ and $\mu'$ are multiples of $p^{e-1}$.

    Now construct the maps $D$ and $D^\Z$ as in \eqref{eq: A B C D} and \eqref{eq: AZ BZ CZ DZ}, with $t=t^\Z=z$. 
    In particular, we have 
    \[
    z = D^\Z(0,0) = D(0,0) , \quad z'  = \epsilon D^\Z(\lambda',\mu') = \xi D(\lambda',\mu'),
    \]
    for a suitable root of unity $\xi$ and a suitable sign $\epsilon \in \{\pm1\}$. Now the statement follows from the fact that $D$ is given by integral convergent power series (indeed by polynomials with coefficients in $\Z_p$) in the analytic coordinates given by $\Psi$ (see Lemma \ref{lem:biextension coordinates}), hence, using again Lemma \ref{lem: coords mod pe}, we have $D(\lambda',\mu') \equiv D(0,0)$ modulo~$p^e$.  
\end{proof}

\begin{remark}
The $(p-1)$th root of unity ambiguity on the $\Gm$-coordinate will not matter, since we will apply the $p$-adic logarithm (cf.\ Section \ref{ref: p-adic log}) in the coordinate system $\Psi$ defined above (see also Definition \ref{defi:psi-prime}).
\end{remark}

\section{Non-split Cartan curves over the integers}
\label{sec:some_modular_theory}

Let $X\coloneq X_{\ns}^+ (N)$ be the  non-split Cartan modular curve of prime 
level $N$. Denote by $J$ the Néron model over $\Z$ of its Jacobian. In this section we prove (or recall) various properties of $J$ and of a specific regular model of $X$.

\subsection{Regular model}\label{subsec:NewRegularModel}
\label{NewRegularModel}

We start by recalling the shape of the special fiber of a specific regular model $X_{\ns}^+ (N)^{\mathrm{reg}}$ for $X_{\ns}^+ (N)$ over 
$\Z_N^{\unr}$, see Figure~\ref{figureNSgrossier+}.
An explicit description of $X_{\ns}^+ (N)^{\mathrm{reg}}$ is given in \cite[Theorem 3.8]{EdPar_regular} (where the prime level $N$ is called $p$), to which we shall refer in what follows.

{\em Note, however, that 
we will use a slightly different model here} (when $N\equiv -1\bmod 4$), which is regular with 
normal crossings but {\em not} minimal with these properties. Indeed, it follows from the proof of \cite[Theorem 3.8]{EdPar_regular} that a first (regular with normal crossings) model we obtained when $N\equiv -1\bmod 4$ possesses, above the 
supersingular points with $j\equiv 1728$, a chain of {\em two} projective lines, one with multiplicity $(N+1)/2$ (call it 
$G_0$), which intersects the central $j$-line $A$ (having multiplicity $(N-1)/2$), followed by a projective line with 
multiplicity $1$ (call it $H_0$). As the genus-$0$ component $G_0$ has self-intersection $-1$, the proof of Theorem~3.8 of~\cite{EdPar_regular} contracts it to obtain the minimality property for the SNC model described there. In the 
present work, however, we need to keep track of the specialization locus of all CM-points occurring in our divisors
$D_\ell$ (cf.~below), and some of them do reduce to $G_0$, whence the need not to contract it in our present model.  See Figure~\ref{figureNSgrossier+} here and compare it with Figure~6 of~\cite{EdPar_regular}.

\begin{figure}[ht]
\centering
	\begin{picture}(320,471)(-30,-90)

		\qbezier(-50,280)(5,290)(45,278)
		\qbezier(-50,281)(5,291)(45,279)
		\qbezier(-50,282)(5,292)(45,280)

		\put(-69,275){\scriptsize$D_2$}
		\put(52,275){\scriptsize$N+1$}

		\qbezier(-32,293)(-29,283)(-31,270)
		\qbezier(-31,293)(-28,283)(-30,270)
		\put(-46,294){\scriptsize$\frac{N+1}{2}$}
		\put(-28,272){\scriptsize$E_2$}

		\qbezier(20,301)(18,283)(21,272)
		\put(21,302){\scriptsize$1$}
		\put(24,269){\scriptsize$F_2$}

		\qbezier(-49,306)(5,321)(46,309)
		\qbezier(-49,307)(5,322)(46,310)
		\qbezier(-49,308)(5,323)(46,311)

		\put(-69,305){\scriptsize$D_1$}
		\put(52,305){\scriptsize$N+1$}

		\qbezier(-27,320)(-24,308)(-26,295)
		\qbezier(-26,320)(-23,308)(-25,295)
		\put(-29,323){\scriptsize$\frac{N+1}{2}$}
		\put(-22,300){\scriptsize$E_1$}

		\qbezier(30,322)(28,316)(31,295)
		\put(30,323){\scriptsize$1$}
		\put(34,296){\scriptsize$F_1$}

		\qbezier(-48,217)(5,233)(59,218)
		\qbezier(-48,216)(5,232)(59,217)
		\qbezier(-48,215)(5,231)(59,216)

		\put(-63,215){\scriptsize$D_k$}
		\put(64,217){\scriptsize$N+1$}

		\qbezier(-25,245)(-23,224)(-26,206)
		\qbezier(-24,245)(-22,224)(-25,206)
		\put(-27,248){\scriptsize$\frac{N+1}{2}$}
		\put(-22,208){\scriptsize$E_k$}

		\qbezier(27,245)(24,222)(26,212)
		\put(28,250){\scriptsize$1$}
		\put(28,208){\scriptsize$F_k$}


		\qbezier(0,373)(-5,270)(-2,200) 
		\qbezier(1,373)(-4,270)(-1,200)
		\qbezier(-1,373)(-6,270)(-3,200)

		\put(-9,382){\scriptsize$\frac{N-1}{2}$}
		\put(4,370){\scriptsize$A$}


		\put(20,340){\small{$j\equiv 0$}}
		\put(17,342){\vector(-1,1){13}}
		\qbezier(-25,358)(5,357)(29,359)
		\qbezier(-25,357)(5,356)(29,358)
		\put(34,353){\scriptsize$\frac{N-1}{3}$}
		\put(-40,356){\scriptsize${\cal D}_0$}

		\qbezier(-17,370)(-14,368)(-16,345)
		\qbezier(-16,370)(-13,368)(-15,345)
		\put(-32,373){\scriptsize$\frac{N-1}{6}$}
		\put(-29,344){\scriptsize${\cal E}_0$}


		\put(-45,+270){$\cdot$}
		\put(-45,253){$\cdot$}
		\put(-45,234){$\cdot$}

		\put(42,266){$\cdot$}
		\put(43,247){$\cdot$}
		\put(44,230){$\cdot$}

		
		\put(-20,175){$N=12k+1$}







		\qbezier(188,338)(245,341)(290,333)
		\qbezier(188,337)(245,340)(290,332)
		\put(164,334){\scriptsize$\frac{N+1}{3}$}
		\put(297,332){\scriptsize$D_0$}

		\qbezier(210,352)(212,343)(209,328)
		\qbezier(211,352)(213,343)(210,328)
		\put(196,355){\scriptsize$\frac{N+1}{6}$}
		\put(215,350){\scriptsize$E_0$}
		\qbezier(256,353)(252,342)(253,327)
		\put(250,352){\scriptsize$1$}
		\put(257,349){\scriptsize$F_0$}

		\put(214,318){\scriptsize{$j\equiv 0$}}
		\put(226,325){\vector(1,1){10}}
		

		\qbezier(182,306)(245,316)(295,301)
		\qbezier(182,305)(245,315)(295,300)
		\qbezier(182,304)(245,314)(295,299)
		\put(305,297){\scriptsize$D_1$}
		\put(157,302){\scriptsize$N+1$}

		\qbezier(180,279)(245,286)(296,270)
		\qbezier(180,278)(245,285)(295,269)
		\qbezier(180,277)(245,284)(295,268)
		\put(305,268){\scriptsize$D_2$}
		\put(157,274){\scriptsize$N+1$}

		\qbezier(188,226)(245,236)(295,226)
		\qbezier(188,225)(245,235)(295,225)
		\qbezier(188,224)(245,234)(295,224)
		\put(305,220){\scriptsize$D_k$}
		\put(164,221){\scriptsize$N+1$}

		\put(221,271){$\cdot$}
		\put(222,253){$\cdot$}
		\put(224,234){$\cdot$}


		\qbezier(216,264)(216,273)(212,292)
		\qbezier(217,264)(217,273)(213,292)
		\put(208,294){\scriptsize$\frac{N+1}{2}$}
		\put(201,287){\scriptsize$E_2$}

		\qbezier(200,293)(201,305)(196,322)
		\qbezier(199,293)(200,305)(195,322) 
		\put(191,322){\scriptsize$\frac{N+1}{2}$}
		\put(185,315){\scriptsize$E_1$}
		
		\qbezier(208,207)(211,225)(208,248)
		\qbezier(207,207)(210,225)(207,248)
		\put(203,251){\scriptsize$\frac{N+1}{2}$}
		\put(195,242){\scriptsize$E_k$}


		\qbezier(268,267)(262,275)(263,292)
		\put(259,294){\scriptsize$1$}
		\put(264,286){\scriptsize$F_2$}
		
		\qbezier(272,293)(266,305)(268,322)
		\put(263,322){\scriptsize$1$}
		\put(270,319){\scriptsize$F_1$}
		
		\qbezier(275,207)(263,225)(268,248)
		\put(264,249){\scriptsize$1$}
		\put(269,244){\scriptsize$F_k$}
		

		\put(232,375){\scriptsize$\frac{N-1}{2}$}
		\put(244,365){\scriptsize$A$}
		
		\qbezier(241,370)(240,270)(234,200)
		\qbezier(240,370)(239,270)(233,200)
		\qbezier(239,370)(238,270)(232,200)
		
		%
		

		\put(215,175){$N=12k+5$}







		\qbezier(-50,10)(5,20)(45,8)
		\qbezier(-50,11)(5,21)(45,9)
		\qbezier(-50,12)(5,22)(45,10)

		\put(-73,5){\scriptsize$D_2$}
		\put(54,5){\scriptsize$N+1$}

		\qbezier(-35,23)(-32,13)(-34,0)
		\qbezier(-36,23)(-33,13)(-35,0)
		\put(-49,23){\scriptsize$\frac{N+1}{2}$}
		\put(-31,0){\scriptsize$E_2$}
		
		\put(24,-4){\scriptsize$F_2$}

		\qbezier(20,31)(18,13)(21,2)
		\put(21,32){\scriptsize$1$}

		\qbezier(-49,36)(5,51)(46,39)
		\qbezier(-49,37)(5,52)(46,40)
		\qbezier(-49,38)(5,53)(46,41)

		\put(-73,35){\scriptsize$D_1$}
		\put(54,35){\scriptsize$N+1$}

		\qbezier(-27,50)(-24,38)(-26,25)
		\qbezier(-26,50)(-23,38)(-25,25)
		\put(-29,53){\scriptsize$\frac{N+1}{2}$}	
		\put(-21,28){\scriptsize$E_1$}
		
		\put(34,26){\scriptsize$F_1$}

		\qbezier(30,52)(28,46)(31,25)
		\put(30,53){\scriptsize$1$}

		\qbezier(-48,-47)(5,-37)(59,-52)
		\qbezier(-48,-46)(5,-36)(59,-51)
		\qbezier(-48,-45)(5,-35)(59,-50)

		\put(-68,-52){\scriptsize$D_k$}
		\put(65,-53){\scriptsize$N+1$}

		\qbezier(-24,-25)(-22,-46)(-25,-57)
		\qbezier(-25,-25)(-23,-46)(-26,-57)
		\put(-26,-23){\scriptsize$\frac{N+1}{2}$}
		\put(-21,-57){\scriptsize$E_k$}
		
		\put(30,-59){\scriptsize$F_k$}

		\qbezier(27,-25)(24,-48)(26,-58)
		\put(28,-20){\scriptsize$1$}


		\qbezier(0,123)(-5,0)(15,-70) 
		\qbezier(1,123)(-4,0)(16,-70)
		\qbezier(-1,123)(-6,0)(14,-70)

		\put(-9,131){\scriptsize$\frac{N-1}{2}$}
		\put(5,118){\scriptsize$A$}

		
		\put(16,104){\small{$j\equiv 0$}}
		\put(15,105){\vector(-2,-1){12}}
		\qbezier(-28,98)(5,96)(31,99)
		\qbezier(-28,97)(5,95)(31,98)
		\put(36,92){\scriptsize$\frac{N-1}{3}$}	
		\put(-45,97){\scriptsize${\cal D}_0$}

		\qbezier(-17,105)(-14,93)(-16,80)
		\qbezier(-16,105)(-13,93)(-15,80)
		\put(-32,113){\scriptsize$\frac{N-1}{6}$}
		\put(-28,84){\scriptsize${\cal E}_0$}


		\qbezier(-40,71)(0,79)(40,75)
		\qbezier(-40,70)(0,78)(40,74)
		\put(43,74){\scriptsize$\frac{N+1}{2}$}
		
		\put(20,60){\small{$j\equiv 1728$}}
		\put(16,63){\vector(-3,2){12}}
		\put(-57,66){\scriptsize$G_0$}

		\put(10,85){\scriptsize$1$}
		\qbezier(17,90)(15,86)(18,67)
		\put(19,86){\scriptsize$H_0$}


		\put(-45,-0){$\cdot$}
		\put(-45,-17){$\cdot$}
		\put(-45,-36){$\cdot$}

		\put(42,-4){$\cdot$}
		\put(43,-23){$\cdot$}
		\put(44,-40){$\cdot$}

		
		\put(-20,-95){$N= 12k+7$}



		
		\qbezier(190,99)(245,105)(289,98)
		\qbezier(190,98)(245,104)(289,97)
		\put(170,97){\scriptsize$\frac{N+1}{2}$}
		
		\put(260,119){\small{$j\equiv 1728$}}
		\put(258,120){\vector(-1,-1){15}}
		\put(292,94){\scriptsize$G_0$}

		\qbezier(280,108)(276,97)(276,82)
		\put(282,108){\scriptsize$1$}
		\put(280,80){\scriptsize$H_0$}


		\qbezier(188,68)(245,71)(290,63)
		\qbezier(188,67)(245,70)(290,62)
		
		\put(164,64){\scriptsize$\frac{N+1}{3}$}
		
		\put(213,48){\scriptsize{$j\equiv 0$}}
		\put(225,54){\vector(1,1){10}}
		
		\qbezier(210,82)(212,73)(209,58)
		\qbezier(211,82)(213,73)(210,58)
		\put(208,85){\scriptsize$\frac{N+1}{6}$}
		
		\put(198,75){\scriptsize$E_0$}
		
		\put(296,61){\scriptsize$D_0$}
		
		\qbezier(256,83)(252,72)(253,57)
		\put(259,84){\scriptsize$1$}
		\put(255,55){\scriptsize$F_0$}


		\qbezier(182,36)(245,46)(295,31)
		\qbezier(182,35)(245,45)(295,30)
		\qbezier(182,34)(245,44)(295,29)
		\put(305,27){\scriptsize$D_1$}
		\put(157,32){\scriptsize$N+1$}

		\qbezier(180,9)(245,16)(296,0)
		\qbezier(180,8)(245,15)(295,-1)
		\qbezier(180,7)(245,14)(295,-2)
		\put(305,-2){\scriptsize$D_2$}
		\put(156,4){\scriptsize$N+1$}

		\qbezier(188,-44)(245,-34)(295,-44)
		\qbezier(188,-45)(245,-35)(295,-45)
		\qbezier(188,-46)(245,-36)(295,-46)
		\put(305,-50){\scriptsize$D_k$}
		\put(164,-49){\scriptsize$N+1$}

		\put(221,1){$\cdot$}
		\put(222,-17){$\cdot$}
		\put(224,-36){$\cdot$}


		\qbezier(216,-6)(216,13)(212,22)
		\qbezier(215,-6)(215,13)(211,22)
		\put(207,24){\scriptsize$\frac{N+1}{2}$}
		\put(202,-5){\scriptsize$E_2$}
		\put(271,-8){\scriptsize$F_2$}

		\qbezier(202,23)(203,35)(198,52)
		\qbezier(201,23)(202,35)(197,52)
		\put(184,52){\scriptsize$\frac{N+1}{2}$}
		\put(188,24){\scriptsize$E_1$}
		\put(275,20){\scriptsize$F_1$}

		\qbezier(208,-63)(211,-45)(208,-22)
		\qbezier(209,-63)(212,-45)(209,-22)
		\put(205,-21){\scriptsize$\frac{N+1}{2}$}
		\put(212,-65){\scriptsize$E_k$}
		\put(279,-68){\scriptsize$F_k$}


		\qbezier(268,-3)(262,5)(263,22)
		\put(257,22){\scriptsize$1$}
		
		\qbezier(272,23)(266,35)(268,52)
		\put(271,50){\scriptsize$1$}
		
		\qbezier(275,-63)(263,-45)(268,-22)
		\put(271,-22){\scriptsize$1$}


		\put(232,132){\scriptsize$\frac{N-1}{2}$}
		\put(225,117){\scriptsize$A$}

		\qbezier(241,122)(236,0)(246,-70)
		\qbezier(240,122)(235,0)(245,-70)
		\qbezier(239,123)(234,0)(244,-70)
		
		%
		

		\put(215,-95){$N=12k+11$}


	\end{picture}
\caption{Special fiber above $\overline{\F}_N$ of our regular model $X^+_{\ns} (N)^{\mathrm{reg}}$, cf.~Section \ref{subsec:NewRegularModel}}   
\label{figureNSgrossier+}
\end{figure}

By \cite[Proposition 3.2]{EdPar_regular}, the exponent of the component group of $J_{\ol{\F}_N}$ for $N\ge 17$ is
\begin{equation}
    m \coloneq \exp\left((J_{\F_N}/J^0_{\F_N})(\ol{\F}_N)\right)  = N-1.
\end{equation}
The other cases of positive genus are $N=11$ and $13$, for which we have $m=2$, resp.~$m=1$.

\begin{remark}\label{rmk: intersection matrix}
Figure~\ref{figureNSgrossier+}, together with the multiplicities indicated there, determines the intersection matrix of the irreducible components of the special fiber of $X_{\ns}^+(N)^{\rm reg}$ at $N$. Indeed, the figure shows which pairs of distinct components meet. Since the model is regular with normal crossings, two distinct reduced components have intersection number equal to the number of their intersection points (which, for our model, is either $0$ or $1$ for each pair of components).

The self-intersections are then determined by the fact that every component has
intersection number $0$ with the full special fiber. More precisely, if $X_{\F_N}=\sum_C m_C C$, then, for every component $C$, we have $0=[C,X_{\F_N}]
=m_C[C,C]+\sum_{C'\neq C}m_{C'}[C,C']$ and therefore $[C,C]=-\frac{1}{m_C}\sum_{C'\neq C}m_{C'}[C,C']$. 

For example, the component $F_i$ has multiplicity $1$ and meets only $D_i$, transversely in one point, while $D_i$ has multiplicity $N+1$. Hence $0=[F_i,X_{\F_N}]=[F_i,F_i]+(N+1)[F_i,D_i]$, and therefore $[F_i,F_i]=-(N+1)$.
\end{remark}

\subsection{Galois orbits and specialization of CM points}

The main objects of this section are divisors of CM points over $\overline{\Q}$ or $N$-adic fields, Galois actions on them, and the way they spread out and specialize over the ring of integers
$\Z_N^{\unr}$ of the maximal unramified extension $\Q_N^{\unr}$ of $\Q_N$.

\begin{prop}
\label{GaloisOrbitsSS}
Let $E$ be an elliptic curve over $\Z_N^{\unr}$, having supersingular good reduction at $N$. Recall that points in $X_\ns^+(N) (\Q_N^{\unr} )$ corresponding to $E$ are in bijection with the possible classes of normalizers of non-split Cartan subgroups
\[
\mathcal C_{\ns}^+(N,E)
\coloneq\frac{\left\{ C^+\subset \Aut(E[N]) : C^+ \text{ is the normalizer of a non-split Cartan subgroup} \right\}
}{\Aut(E_{\overline{\Q}_N})/\{\pm 1\}}.
\]
The following holds. 
\begin{itemize}
\item If $j(E)\neq 0, 1728$, the set ${\cal C}_{\ns}^+ (N,E)$ has cardinality $\frac{N^2-N}{2}$. The action of $\Gal (\overline{\Q}_N /\Q^{\unr}_N )$ on this set has $(N-3)/2$ orbits of size $(N+1)$, one orbit of size $(N+1)/2$, and one fixed point. 

  {\em We say that these orbits are of type $\mathbf{D}, \mathbf{E}, \mathbf{F}$, respectively.}

\item If $j(E)= 0$, the set ${\cal C}_{\ns}^+ (N,E)$ has cardinality $\frac{N^2-N+4}{6}$.
The action of $\Gal (\overline{\Q}_N /\Q^{\unr}_N )$ on this set
has $(N-3)/2$ orbits of size $(N+1)/3$, one orbit of size $(N+1)/6$, and one fixed point.

   {\em We say that these orbits are of type $\mathbf{D}_0, \mathbf{E}_0$, and $\mathbf{F}_0$, respectively.}

\item If $j(E)=1728$, the set ${\cal C}_{\ns}^+ (N,E)$ has cardinality $(N^2 +3)/4$. The action of $\Gal (\overline{\Q}_N /\Q^{\unr}_N )$ on this set
has $(N-1)/2$ orbits of size $(N+1)/2$ and one fixed point. 

{\em We say that these orbits are of type $\mathbf{G}_0$ and $\mathbf{H}_0$, respectively.}

\end{itemize}

  For each $C \in \mathcal{C}_{\ns}^+(N, E)$, the points in the Galois orbit of $(E, C) \in X_{\ns}^+(N)(\Q_N^{\unr})$ specialize to irreducible components of $X_{\ns}^+ (N)^{\mathrm{reg}}_{/\Z^{\unr}_N}$  above the supersingular $j$-invariant $j_i \coloneq j(E)$ of $E$. More precisely, with reference to Figure \ref{figureNSgrossier+}, the following hold. %
\begin{itemize} \item If $j(E)\not\equiv 0,1728\bmod N$, the $(N-3)/2$ orbits of type $\mathbf D$ give $(N-3)/2$ divisors, each specializing to a point on the component $D_i$, of multiplicity $N+1$. The orbit of type $\mathbf E$ gives one divisor specializing to a point on the component $E_i$, which has multiplicity $(N+1)/2$. The fixed point, that is, the orbit of type $\mathbf F$, gives one divisor specializing to the component $F_i$, which has multiplicity $1$. 

\item If $j(E)\equiv 0\bmod N$, the $(N-3)/2$ orbits of type $\mathbf D$ or $\mathbf D_0$ give $(N-3)/2$ divisors, each specializing to a point on the component $D_0$, of multiplicity $(N+1)/3$. The orbit of type $\mathbf E$ or $\mathbf E_0$ gives one divisor specializing to a point on the component $E_0$, which has multiplicity\\  $(N+1)/6$. The fixed point, that is, the orbit of type $\mathbf F$ or $\mathbf F_0$, gives one divisor specializing to the component $F_0$, which has multiplicity $1$. 

\item If $j(E)\equiv 1728\bmod N$, the orbits of type $\mathbf D$ or $\mathbf G_0$ give respectively $(N-3)/2$ or $(N-1)/2$ divisors, each specializing to a point on the component $G_0$, which has multiplicity \\ $(N+1)/2$. The unique orbit of type $\mathbf E$ (when it exists) gives one further divisor specializing to a point on the same component $G_0$. The fixed point, that is, the orbit of type $\mathbf F$ or $\mathbf H_0$, gives one divisor specializing to the component $H_0$, which has multiplicity~$1$. 
\end{itemize}

\end{prop}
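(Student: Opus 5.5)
The plan is to split the statement into a group-theoretic counting part and a geometric specialization part.

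For the counting, we use Serre's description of the Galois action on the $N$-torsion of a supersingular curve over $\Z_N^{\unr}$. The inertia group $I=\Gal(\overline{\Q}_N/\Q_N^{\unr})$ acts on $E[N]$ through its tame quotient via the fundamental character of level $2$. Its image is a non-split Cartan subgroup $C_E\cong\F_{N^2}^\times$, cyclic of order $N^2-1$. When $j(E)=0$ or $1728$, one has to account for the extra automorphisms, and the image shrinks by the corresponding index, to orders $(N^2-1)/3$ or $(N^2-1)/2$ after dividing by $\Aut(E)/\{\pm1\}$. We identify non-split Cartan normalizers in $\GL_2(\F_N)$ with pairs of conjugate $\F_N$-rational structures, i.e.\ with $\GL_2(\F_N)/C_{\ns}^+$, which has $N(N-1)/2$ elements. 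The action of $C_E$ by conjugation then reduces orbit counting to computing the stabilizer of $C^+$ in $C_E$, namely $C_E\cap C^+$.

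We then classify non-split Cartans $C$ by their position relative to $C_E$, viewing a Cartan as a pair of conjugate lines in $E[N]\otimes\F_{N^2}$. There are three cases:
\begin{itemize}
\item $C=C_E$ gives the fixed point, of type $\mathbf F$.
\item The Cartans $C$ with $C_E\cap C^+$ of order $2(N-1)$ give the orbit of size $(N+1)/2$, of type $\mathbf E$. Here the element of order $2$ in $C_E/\F_N^\times$ lies in the normalizer.
\item The remaining Cartans, with intersection $\F_N^\times$ of order $N-1$, give orbits of size $N+1$.
\end{itemize}
A Burnside or direct count of conjugates, checked against the total $(N^2-N)/2 = 1+(N+1)/2+\tfrac{N-3}{2}(N+1)$, fixes the numbers of orbits. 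For $j=0,1728$ we redo the count with $\Aut(E_{\overline{\Q}_N})$ acting as well. The cardinality of $\mathcal C_{\ns}^+(N,E)$ is then computed as a weighted count of orbits of $\Aut(E)/\{\pm1\}$ on $\GL_2/C^+_{\ns}$, and the orbit sizes are divided by $3$ or $2$ accordingly.

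For the specialization, we use the construction in \cite[Theorem 3.8]{EdPar_regular}. Its model is built by normalizing $X(N)$-type Katz–Mazur models along the supersingular locus and resolving the tame quotient singularities. Each Galois orbit of size $d$ spans a divisor over $\Z_N^{\unr}$ whose closure is a horizontal divisor of degree $d$. A horizontal divisor meets the special fiber at a smooth point of a component of multiplicity $\mu$ only if $\mu$ divides its local intersection degree. Moreover, an orbit of size $d$ that is irreducible over $\Q_N^{\unr}$, totally ramified of degree $d$, forces a component of multiplicity $d$ when the point is smooth on the reduced component.

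Matching multiplicities then pins down the components:
\begin{itemize}
\item sizes $N+1$, $(N+1)/2$, $1$ go to the components $D_i$, $E_i$, $F_i$;
\item sizes $(N+1)/3$, $(N+1)/6$, $1$ go to $D_0$, $E_0$, $F_0$;
\item at $1728$, sizes $(N+1)/2$ and $1$ go to $G_0$ and $H_0$.
\end{itemize}
The component $A$ is excluded because $\gcd$ considerations with $(N-1)/2$ fail. For a more robust argument, we would instead compute the ramification directly in the local deformation ring. Supersingular points of $X(N)$ over $\Z_N^{\unr}$ have the Drinfeld level structure description, and the fiber above a supersingular point is $\Spec$ of a ring totally ramified of degree $N^2-1$. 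Quotienting by $C_{\ns}^+$ and taking the resolution gives the explicit components, and the orbit type determines which chart the section passes through.

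The main obstacle is this last identification of the specialization component, especially for $j\equiv1728$. There, the non-minimal component $G_0$ must receive both the $\mathbf D$/$\mathbf G_0$ orbits and the $\mathbf E$ orbit. The naive multiplicity argument is ambiguous here, since the multiplicities $(N+1)/2$ of $G_0$ and of $E$-type coincide. I would resolve it by tracing the blow-up in \cite{EdPar_regular} explicitly: compute the valuation of the local coordinate of each section relative to the uniformizer, and see on which exceptional divisor it lands. This is also where the choice of not contracting $G_0$ matters.
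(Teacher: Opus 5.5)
\textbf{The gap is in the specialization step.} Suppose the closure $\ol P$ of an orbit of size $d$ meets $X_{\F_N}$ at a point $x$ lying on a single component $\Gamma$ of multiplicity $\mu_\Gamma$. Then
$d=(\ol P\cdot X_{\F_N})_x=\mu_\Gamma\,(\ol P\cdot\Gamma)_x$,
so multiplicities only give $\mu_\Gamma\mid d$. The equality $\mu_\Gamma=d$ is equivalent to $(\ol P\cdot\Gamma)_x=1$, which is exactly what has to be proved.

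Your ``Moreover'' claim is false as stated. In $\Spec\Z_N^{\unr}[t]$, the point cut out by $t^d=N$ is totally ramified of degree $d$, yet it specializes to a smooth point of a multiplicity-one fibre.

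Concretely, divisibility does rule out $A$ and the crossing points, and it does force the fixed point onto $F_i$. But it still allows:
\begin{itemize}
\item a $\mathbf D$-orbit (degree $N+1$) to land on $D_i$, on $E_i$ with $(\ol P\cdot E_i)=2$, or on $F_i$;
\item the $\mathbf E$-orbit to land on $E_i$ or on $F_i$.
\end{itemize}
The same ambiguity occurs above $j\equiv 0$, since $\tfrac{N+1}{6}\mid\tfrac{N+1}{3}$. So the difficulty is not confined to $j\equiv 1728$, as you suggest.

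What is missing is an equivariant computation on the semistable model, which is what the paper does. Over $K=\Q_N^{\unr}(N^{2/(N^2-1)})$, each $(E,C^+)$ becomes a section of $X_\ns^+(N)^{\mathrm{st}}$ landing on a Drinfeld component $Y^2=X(X^{(N+1)/2}+A_\ns)$. The inertia generator $\theta$ acts on it by $X\mapsto\theta^{2(N-1)}X$, $Y\mapsto\theta^{N-1}Y$. Orbits off $Y=0$ have size $N+1$, while the locus $Y=0$ consists of one orbit of size $(N+1)/2$ and the fixed point $X=Y=0$. The Galois quotient followed by the blow-ups then sends these loci to $D_i$, $E_i$, $F_i$ respectively. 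For $E_i$, one uses the order-$2$ quotient, whose blow-up creates a multiplicity-one component that the sections must cross by regularity.

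Your ``more robust'' fallback points in this direction but is not carried out. As a result, the $\mathbf D$, $\mathbf E$, $\mathbf D_0$, $\mathbf E_0$ and $\mathbf G_0$ specialization claims remain unproved.

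\textbf{The counting part} is a sound abstract version of the paper's explicit model. There, $\mathcal C_\ns^+(N,E)$ is the set of pairs $\{[x:1],[x^{-N}:1]\}$ with $x^{N+1}\neq 1$, together with $\{[1:0],[0:1]\}$, and $\theta$ acts by $x\mapsto\gamma x$ with $\gamma=\theta^{N-1}$. Two points still need attention.
\begin{itemize}
\item You must actually count the normalizers $C^+\neq C_E^+$ containing the trace-zero elements of $C_E$, i.e.\ the class of order $2$ in $C_E/\F_N^\times$. There are $(N+1)/2$ of them, coming from the solutions of $x^{N+1}=-1$. The identity $\tfrac{N^2-N}{2}=1+\tfrac{N+1}{2}+\tfrac{N-3}{2}(N+1)$ is only a consistency check once this number is known.
\item At $j=1728$, your rule of dividing orbit sizes by $2$ fails for the $\mathbf E$-orbit. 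The automorphism $i$ commutes with inertia, hence lies in $C_E$. Since $i^2=-1$ and $N\equiv 3\bmod 4$, it is precisely that order-$2$ class, so it fixes the $\mathbf E$-orbit pointwise. That orbit therefore keeps size $(N+1)/2$ and joins the $(N-3)/2$ halved $\mathbf D$-orbits to give the $(N-1)/2$ orbits of type $\mathbf G_0$. Halving everything would give $\#\mathcal C_\ns^+(N,E)=(N^2-N+2)/4$ instead of $(N^2+3)/4$.
\end{itemize}
At $j=0$, dividing every orbit size by $3$ is correct.
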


\begin{proof}
Since we assume $E$ has good supersingular reduction at $N$, we know from~\cite[Section 1.11]{serre1972proprietes} that 
$I_N \coloneq{\mathrm{Gal}} (\overline{\Q}_N /\Q_N^{\unr} )$ acts on $E[N]$ by a ``fundamental character 
of level $2$'', that is, the image of $I_N$ in $\GL (E[N])$ is precisely a non-split Cartan isomorphic to 
$\F_{N^2}^\times$. Let $\theta$ be a generator of this group and choose a basis $(e_1 , e_2 )$ of $E[N]\otimes 
\F_{N^2}$ of eigenvectors for $\theta$, with eigenvalues $\bar \theta, \theta$, respectively. Note that ${\mathrm{Frob}}_{\F_{N^2} /\F_N}$ (which can be seen as the Fricke involution) maps a line $\langle ae_1 +be_2 
\rangle$ to $\langle a^N e_2 +b^N e_1 \rangle$, i.e., it acts on the corresponding presentation of $\P (E[N]\otimes 
\F_{N^2} )\simeq \P^1 (\F_{N^2} )$ as 
\begin{equation}
\label{FrobNSS}
[x :1] \mapsto [x^{-N} : 1] \hspace{0.3cm} ({\mathrm{and}} \hspace{0.3cm} [1:0]\mapsto [0:1]).
\end{equation}
In particular, $\mathcal{C}_{\ns}^+ (N, E)$  can  be described as the set of unordered pairs
\[
\mathcal{C}_{\ns}^+ (N, E) =\left\{ ([x:1],[x^{-N} :1]), x\in \F_{N^2}^\times , x^{N+1} \neq 1 \right\} \cup \left\{ ([1:0], 
[0:1])\right\} /({\mathrm{Aut}} (E_{\overline \Q_N} )/\{ \pm 1 \} ),
\] 
where we quotient out by the extra automorphisms of $E$, if the curve has any.

   On the other hand, setting $\gamma \coloneq \theta^{N-1}$ (a primitive $(N+1)$th root of unity in $\F_{N^2}^\times$), the generator
$\theta$ acts on $\P (E[N]\otimes \F_{N^2} )\simeq \P^1 (\F_{N^2} )$ as
\begin{equation}
\label{GalP1}
[x : 1]\mapsto [\overline{\theta} x:\theta] = [\theta^{N-1} x:1] =[\gamma x : 1] \hspace{0.3cm} ({\mathrm{and}} 
\hspace{0.3cm} [1:0] \mapsto [1:0] ).
\end{equation}
From this description, it is clear that the Galois orbit of $([x:1],[x^{-N} :1])$ with $x\neq 0$ in $\mathcal{C}_{\ns}^+ (N, E)$ has 
size $(N+1)$, unless $x^{N+1}$ is a $(N+1)$th root of unity in $\F_{N^2}^\times$, so that $x^{N+1} =\pm 1$, and
finally $x^{N+1} =-1$. The latter possibility gives a single Galois orbit of size $(N+1)/2$, given by 
$x=\theta^{(1+2k)(N-1)/2}$ for $k\in \{ 0, 1, \dots , N\}$. This proves the first part of the proposition when $j(E)\neq 0, 1728$.

   If $E$ has exceptional automorphisms, these automorphisms induce elements in $\GL (E[N])$ which span the same non-split Cartan as the one given by the image of the action of the inertia subgroup $I_N$. Identifying Cartan structures via these automorphisms gives the first set of results when $j(E) =0$ or $1728$. 
 
    Now we discuss the specialization locus. Consider again our elliptic curve $E$, whose reduction over $\overline{\F}_N$ is assumed to be supersingular, and assume first
$j(E)\not\equiv 0, 1728\bmod N$. Any 
normalizer-of-nonsplit-Cartan structure defines a point $(E ,C_{\ns}^+ (N))=P$ in $X_{\ns}^+ 
(N)^{\mathrm{st}} ({\cal O}_K )$, for ${\cal O}_K$ the ring of integers of $K\coloneq\Q^{\unr}_N (N^{2/(N^2 -1)} )$ and 
$X_{\ns}^+ (N)^{\mathrm{st}}$ the semi-stable model described in~\cite{EdPar_max}. Recall that our regular model
$X_{\ns}^+ (N)^{\mathrm{reg}}$ is deduced from $X_{\ns}^+ (N)^{\mathrm{st}}$ by taking the Galois 
quotient (and blowing-up where needed). Explicitly, after adapting notations to the present proof, we know from 
Theorem~3.2 of~\cite{{EdPar_regular}} that $P$ specializes to some Drinfeld component with equation 
\begin{eqnarray}
\label{DrinfeldNS+1}
Y^2 =X( X^{\frac{N+1}{2}} +A_{\ns} ) 
\end{eqnarray}
for some $A_{\ns}$ in $\overline{\F}_N^\times$, with Galois action $\theta  \colon X\mapsto \theta^{2(N-1)} X , 
Y\mapsto \theta^{N-1} Y$ for our generator $\theta$ of $\F_{N^2}^\times$ (cf.~the end of the proof of Theorem~3.2 
of~\cite{{EdPar_regular}}, in particular (3.8) and the surrounding discussion). Then we take the Galois quotient of that Drinfeld component, 
blowing up if needed at points where Galois does not act faithfully. Note that the intersection locus of the Drinfeld 
components with other (Igusa) components of our special fiber occurs at points at infinity on the above 
model~(\ref{DrinfeldNS+1}). So here one checks that away from $Y=0$, Galois orbits have size $(N+1)$, and for $Y=0$ there 
is one orbit of size $(N+1)/2$ and one fixed point. Descending to ${\Z}_N^{\unr}$ by the process of the proof of 
Theorem~3.2 of~\cite{{EdPar_regular}} shows precisely that, in the special fiber of the regular model $X_{\ns}^+ 
(N)^{\mathrm{reg}}_{/\Z_N^{\unr}}$, the orbits of size $(N+1)$ reduce to the multiplicity $(N+1)$ component 
(called $D_i$ in Figure~\ref{figureNSgrossier+}), the orbits of size $(N+1)/2$ reduce 
to the multiplicity $(N+1)/2$ component (called $E_i$), and the only point which is Galois-stable specializes to $F_i$. (The cases of $D_i$ and $F_i$ are clear. As for the case of $E_i$, it is enough to consider the Galois quotient of order 
$2$ of the Drinfeld component, where a blow-up makes a component of multiplicity $1$ appear; the points in the 
orbit of size $(N+1)/2$ specialize to this new component by regularity. Taking a further quotient all the way down to ${\Z}_N^{\unr}$ 
shows that they specialize to $E_i$, and away from $D_i$.)

The cases when $j(E)\equiv 0, 1728\bmod N$ are dealt with in a similar way, taking into account the exceptional 
automorphisms of the elliptic curve at $N$ (for a description of Drinfeld components in the cases of non-trivial automorphisms, see the end of proof of Theorem~3.4 of~\cite{{EdPar_regular}}, pp.~1041--1042).
\end{proof}

\begin{prop}
\label{GaloisOrbitsOrd}
Let $E$ be an elliptic curve over $\Z_N^{\unr}$, having good ordinary reduction at $N$. Suppose that the inertia group $\Gal (\overline{\Q}_N /\Q^{\unr}_N )$ acts semisimply on $E[N]$ (for example, this holds if $E$ has CM and $N$ does not divide the discriminant of the CM order).
Then, with the same notation as in Proposition~\ref{GaloisOrbitsSS}, the following hold.
\begin{itemize}
\item If $j(E)\neq 0, 1728$, the set ${\cal C}_{\ns}^+ (N, E)$ has cardinality $\frac{N^2-N}{2}$. The action of $\Gal (\overline{\Q}_N /\Q^{\unr}_N )$ on this set has $(N-1)/2$ large orbits of size $(N-1)$ and one small orbit of size $(N-1)/2$. 

\item If $j(E)=1728$, the set ${\cal C}_{\ns}^+ (N, E)$ has cardinality $\frac{N^2-1}{4}$. The action of $\Gal(\overline{\Q}_N /\Q^{\unr}_N )$ on this set has $(N+1)/2$ orbits of size $(N-1)/2$. 

\item If $j(E)= 0$, the set ${\cal C}_{\ns}^+ (N, E)$ has cardinality $\frac{N^2-N}{6}$. The action of $\Gal (\overline{\Q}_N /\Q^{\unr}_N )$ on this set has $(N-1)/2$ large orbits of size $(N-1)/3$, and one small orbit of size $(N-1)/6$. 
\end{itemize}

All orbits specialize to the $j$-line $A$ of $X_{\ns}^+ (N)^{\mathrm{reg}}_{\ol \F_N}$, provided $j\not\equiv 0\bmod N$. If $j\equiv 0\bmod N$, the small orbit specializes to the component labeled ${\cal E}_0$ (which has multiplicity $(N-1)/6$), and the large ones specialize to ${\cal D}_0$ 
(which has multiplicity $(N-1)/3$).
\end{prop}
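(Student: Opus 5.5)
The argument will mirror the proof of Proposition~\ref{GaloisOrbitsSS}: first describe the inertia action on $E[N]$, then count the Galois orbits on the set of non-split Cartan normalizers, and finally read off where each orbit specializes from the explicit regular model.

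\textbf{Inertia action.} Since $E$ has good ordinary reduction, $E[N]$ is an extension of the étale quotient by the connected part $E[N]^0\simeq\mu_N$ as $I_N$-modules. So inertia acts by $\begin{pmatrix}\chi & *\\ 0&1\end{pmatrix}$, with $\chi$ the mod $N$ cyclotomic character, which is surjective onto $\F_N^\times$ on $I_N$. The semisimplicity hypothesis kills $*$. Hence the image of $I_N$ is a split torus $\{\mathrm{diag}(a,1)\}\simeq\F_N^\times$ in a basis $(e_1,e_2)$ of $E[N]$. (For CM $E$ with $N$ prime to the discriminant, the $N$-adic Galois image lies in a Cartan, which forces semisimplicity.)

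\textbf{Counting orbits.} As in the proof of Proposition~\ref{GaloisOrbitsSS}, a non-split Cartan normalizer corresponds to an unordered pair $\{L,L^{\sigma}\}$ of conjugate lines in $\P^1(\F_{N^2})\setminus\P^1(\F_N)$, where $\sigma$ is the Frobenius of $\F_{N^2}/\F_N$. Such a pair is $\{[x:1],[x^N:1]\}$ with $x\in\F_{N^2}\setminus\F_N$, giving $(N^2-N)/2$ elements. The element $\mathrm{diag}(a,1)$ acts by $x\mapsto ax$. Unordered pairs $\{x,x^N\}$ are stabilized by $a$ exactly when $ax=x$ (so $a=1$) or $ax=x^N$. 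The second case requires $x^{N-1}=a\in\F_N^\times$, so $x^{(N-1)^2}=1$. Combined with $x^{N^2-1}=1$, this gives $x^{N-1}=\pm1$, and $x\notin\F_N$ leaves $x^{N-1}=-1$, i.e.\ $a=-1$. These $x$ form the set $\sqrt{\varepsilon}\,\F_N^\times$, which gives one orbit of size $(N-1)/2$. The remaining $(N^2-N)/2-(N-1)/2=(N-1)^2/2$ elements split into free orbits, and there are $(N-1)/2$ of them.

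For $j=1728$ and $j=0$, I quotient further by $\Aut(E)/\{\pm1\}$, of order $2$ or $3$. Because of the CM, that automorphism lies in the same split Cartan as the image of inertia, normalized by $\pm$ scalars, so it acts on $x$ by multiplication by an element of order $2$ or $3$ in $\F_N^\times/\pm$. One has to check carefully how these elements interact with the $\varepsilon$-orbit.
\begin{itemize}
\item For $j=1728$, the order-$4$ automorphism $i$ acts by $\mathrm{diag}(i,-i)$ projectively, i.e.\ $x\mapsto -x$ after rescaling. This merges each large orbit in half and fixes the small one, giving $(N+1)/2$ orbits of size $(N-1)/2$. That is consistent with the stated cardinality $(N^2-1)/4$.
\item For $j=0$, the order-$3$ automorphism divides each orbit size by $3$, giving cardinality $(N^2-N)/6$.
\end{itemize}
This bookkeeping with the extra automorphisms is the step I expect to be the main obstacle.

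\textbf{Specialization.} Ordinary points are away from the supersingular locus, and there $X_{\ns}^+(N)^{\mathrm{reg}}$ is, by \cite[Theorem 3.8]{EdPar_regular}, the (Galois quotient of) Igusa-type component lying over the $j$-line $A$. So every orbit reduces to $A$ when $j\not\equiv0$. When $j\equiv0\bmod N$, an ordinary $j=0$ point sits at the blown-up locus above $j=0$ on $A$ that creates $\mathcal D_0,\mathcal E_0$. I would use the local description in \cite{EdPar_regular} there: the quotient by the order-$3$ or order-$6$ stabilizer, with points whose stabilizer is larger (the small orbit) landing on the component of smaller multiplicity $\mathcal E_0$. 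This follows the same regularity and multiplicity argument used in Proposition~\ref{GaloisOrbitsSS} for $E_i$ versus $D_i$, matching orbit sizes $(N-1)/3$ and $(N-1)/6$ to the multiplicities $(N-1)/3$ and $(N-1)/6$.
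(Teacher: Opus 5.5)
Your inertia computation and orbit counts follow the paper's proof: the same action $[x:1]\mapsto[\gamma x:1]$, and the same criterion $x^{N-1}=-1$ for the small orbit. Your treatment of $j=0$ and $j=1728$ is more explicit than the paper's, and it is correct. The extra automorphism acts on $x$ through an element $c\in\F_N^\times$ of order $2$ or $3$. (Work in $\F_N^\times$, not $\F_N^\times/\{\pm1\}$, where $c=-1$ would be trivial.) So it preserves every inertia orbit. For $c=-1$ it halves the free orbits and fixes the small one. For $c$ of order $3$ it divides every orbit by $3$, since $\langle c\rangle\cap\{\pm1\}=1$. The case $j\not\equiv 0\bmod N$ also matches the paper. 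So the step you expected to be the obstacle is not where the difficulty lies.

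The gap is the specialization when $j\equiv 0\bmod N$. ``Matching orbit sizes to multiplicities'' does not determine the components.
\begin{itemize}
\item Suppose a point with ramification index $e$ over $\Z_N^{\unr}$ reduces to a smooth point of a component of multiplicity $\delta$. You only get $e=\delta\, v(t)$, where $v(t)\ge 1$ is the valuation of a local equation $t$ of that component. At a crossing point you get $e\ge\delta_1+\delta_2$.
\item This does force the small orbit ($e=(N-1)/6$) onto the smooth locus of $\mathcal E_0$.
\item A large-orbit point ($e=(N-1)/3$) could equally reduce to a smooth point of $\mathcal E_0$ with $v(t)=2$.
\item Intersecting $\divisor(j)$ with $A$, $\mathcal D_0$ and $\mathcal E_0$ gives a consistent system whatever the number of large orbits on $\mathcal E_0$. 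So no numerical count settles it.
\end{itemize}

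Nor can you quote the local picture of \cite{EdPar_regular} as it stands. There the quotient is taken over $\Z_N^{\unr}[N^{2/(N-1)}]$, where the inertia action at $j=0$ is not diagonal (their (3.12)). The paper redoes the construction over $\Z_N^{\unr}[\pi_2]$ with $\pi_2=N^{1/(N-1)}$, where the order-$3$ subgroup acts by $(\pi_2,t)\mapsto(\zeta_3\pi_2,\zeta_3 t)$. The argument then runs as follows.
\begin{itemize}
\item Taking this quotient and blowing up produces exceptional lines of multiplicity $1$, while the Igusa components acquire multiplicity $3$.
\item Every point with $j=0$ is a $\Z_N^{\unr}[\pi_2^3]$-section, so all of them land on these exceptional lines.
\item The residual order-$2$ action has one further fixed point on each line. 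The small orbit, being defined over $\Q_N^{\unr}[\pi_2^6]$, lands there.
\item A final blow-up at that point separates multiplicities $2$ and $1$.
\item Descending to $\Z_N^{\unr}$ sends the large orbits to $\mathcal D_0$ and the small orbit to $\mathcal E_0$.
\end{itemize}
This explicit local analysis is the ingredient your proposal is missing.
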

\begin{remark}\phantom{~}
\begin{itemize}
\item That $\Gal (\overline{\Q}_N /\Q^{\unr}_N )$ acts semisimply on $E[N](\overline{\Q}_N )$ if $E$ has 
CM and $N$ does not divide the discriminant of the order 
$\mathrm{End} (E)_{\overline{\Q}_N}$ follows from the fact that over 
$\Q_N^{\unr}$ we have $\End(E_{\Q_N^{\unr}}) = \End(E_{\ol \Q_N})$, and the action of Galois commutes with $\mathrm{End} (E)\otimes_{\Z} \F_N \cong \F_N \times \F_N$.

\item If the inertia group does not act semisimply on $E[N]$, one can show that there is a unique Galois orbit. Describing its specialization is more complicated.
\end{itemize}
    
\end{remark}
\begin{proof}
We parallel the arguments of Proposition~\ref{GaloisOrbitsSS} above. As $E$ is now supposed to have good ordinary 
reduction at $N$ and $I_N \coloneq{\mathrm{Gal}} (\overline{\Q}_N /\Q_N^{\unr} )$ acts semisimply on $E[N]$, the action is via some ``fundamental character of level $1$''. That is, there is a choice of some $\F_N$-basis $(e_1 , e_2 )$ of $E[N]$ for which the image of $I_N$ is 
$\left(\begin{smallmatrix}\F_N^\times & 0 \\ 0 & 1\end{smallmatrix}\right)$.
Now ${\mathrm{Frob}}_{\F_{N^2} /\F_N}$ maps a line $\langle ae_1 +be_2 
\rangle$ to $\langle a^N e_1 +b^N e_2 \rangle$, that is, it acts on the corresponding presentation of $\P (E[N]\otimes 
\F_{N^2} )\simeq \P^1 (\F_{N^2} )$ as 
\begin{equation}
\label{FrobNOrd}
[x :1] \mapsto [x^{N} : 1] \hspace{0.3cm} ({\mathrm{and}} \hspace{0.3cm} [1:0]\mapsto [1:0]).
\end{equation}
So now ${\cal C}_{\ns}^+ (N)$ can be described as the set of pairs
\[
{\cal C}_{\ns}^+ (N) =\left\{ \{ [x:1],[x^{N} :1]\} , x\in \F_{N^2}^\times \setminus \F_N^\times  \right\} 
/({\mathrm{Aut}} (E_{\overline \Q_N} )/\{ \pm 1 \} ),
\] 
where we quotient out by the extra automorphisms of $E$, if the curve has any.

   On the other hand, letting $\theta$ be a generator of $\F_{N^2}^\times$ and taking $\gamma \coloneq \theta^{N+1}$ as generator of $\F_N^\times$, we see that $\gamma$ acts on $\P (E[N]\otimes \F_{N^2} )\simeq \P^1 (\F_{N^2} )$ as 
\begin{equation}
\label{GalPOrd}
[x : 1]\mapsto [\gamma x:1]  \hspace{0.3cm} ({\mathrm{and}} 
\hspace{0.3cm} [1:0] \mapsto [1:0] ).
\end{equation}
From this description, it is clear that the Galois orbit of $([x:1],[x^{N} :1])$ with $x\in \F_{N^2}^\times \setminus \F_N^\times$ in 
${\cal C}_{\ns}^+ (N)$ has 
size $N-1$, unless for some nontrivial power $\gamma^k$ the unordered pair~$\{ [\gamma^k x:1],[\gamma^k x^N:1] \}$ 
coincides with $\{[x:1],[x^N:1] \}$. Since $x\notin\F_N^\times$, this can only happen by exchanging the two lines, hence $\gamma^k=x^{N-1}$. Thus $x^{N-1}\in\F_N^\times$ has order dividing $(N+1, N-1) = 2$. The case $x^{N-1}=1$ would imply $x\in\F_N^\times$, so we have a smaller Galois orbit only if $x^{N-1}=-1$, in which case it has size $(N-1)/2$.
This proves the first part of the proposition when $j(E)\neq 0, 1728$.

  When $j(E)=0$, the exceptional automorphism of order $3$ commutes with Galois, and we conclude similarly. If $j(E)=1728$, then all orbits have size $(N-1)/2$.

If $j(E) \not\equiv 0 \bmod N$, the points must all specialize to the $A$ component, since it is the only component containing ordinary $j$-invariants different from $0$.

If $j(E) \equiv 0 \bmod N$, we revisit the geometric arguments around $(3.12)$ in the proof of~\cite{EdPar_regular}, Theorem~3.4. Let $\pi_2 \coloneq N^{1/(N-1)}$.
Consider the quotient of the Igusa component of the semistable model $X_{\ns}^+ (N)^{\mathrm{st}}$ 
over $\Z_N^{\unr} [\pi_2 ]$, and {\em not} over 
the index-$2$
subring $\Z_N^{\unr} [N^{2/(N-1)} ]$ as in loc. cit. (We use notations as in~\cite{EdPar_regular}, remembering that our $N$ here is called $p$ 
there). At points above $j=0$ on those
Igusa components, the inertia subgroup of Galois now acts as
$$
\zeta_3 \colon \pi_2 \mapsto \zeta_3 \, \pi_2 ,\hspace{1cm} \zeta_3 \colon t \mapsto \zeta_3 \, t
$$
so unlike (3.12) of~\cite{EdPar_regular}, the action is
now {\em diagonal}. Take the quotient of $X_{\ns}^+ (N)$ by that order-$3$ subgroup of Galois, and blow-up the quotient singularities over the base 
$\Z_N^{\unr} [\pi_2^3 ]$: that makes 
a projective line with multiplicity $1$ appear at each of our points  with $j=0$ on the Igusa components, which in turn now have multiplicity $3$. All points of $X_{\ns}^+ (N)
(\ol{\Q}_N )$ associated with our elliptic curve $E$ for which $j= 0$ do define sections of $X_{\ns}^+ (N)$
over $\Z_N^{\unr} [\pi_2^3 ]$, so those sections all specialize to the exceptional $\P^1$s above $N$. Now we make a further quotient of our regular model over $\Z_N^{\unr} 
[\pi_2^3 ]$ by the order-$2$ subgroup of Galois, whose action has one single fixed point on each $\P^1$ (in addition to the intersection point with the Igusa component). Because of the moduli interpretation, that fixed point on $\P^1$ is the specialization of an element of the ``small orbit'' associated with $E$, as the latter have values in the subfield 
$\Q_N^{\unr} [\pi_2^6 ] \subset \Q_N^{\unr} [\pi_2^3 ]$. The action of the degree-$2$ element of Galois acts diagonally on the cotangent space of those fixed points, and to obtain a regular model for $X_{\ns}^+ (N)$ over $\Z_N^{\unr} [\pi_2^6 ]$, 
we need to make a last blow-up there.
It follows that the exceptional 
$\P^1$s now have multiplicity $2$ in the special fiber of the regular model of 
$X_{\ns}^+ (N)_{/\Z_N^{\unr} [\pi_2^6 ]}$, and they intersect still another $\P^1$, with multiplicity $1$, which rises up at their former fixed point. Note that at that stage we have recovered the regular model of $X_{\ns}^+ (N)$ over 
$\Z_N^{\unr} [\pi_2^6 ]$
described after (3.12) in~\cite{EdPar_regular}. Again each 
point in the ``small orbit'' of the divisor associated with $E$ 
specializes to the new $\P^1$s with multiplicity $1$, whereas the divisors 
of the ``large orbits'' specialize to the $\P^1$s with multiplicity $2$. We can finally take the quotient of
$X_{\ns}^+ (N)$ all the way down from $\Z_N^{\unr} [\pi_2^6 ]$ to 
$\Z_N^{\unr}$ as in \cite{EdPar_regular}, end of proof of Theorem 3.4. On that final firm ground $\Z_N^{\unr}$, we thus have that divisors associated with large (respectively, small) orbits specialize to ${\cal D}_0$ (respectively, ${\cal E}_0$).  
\end{proof}

\begin{prop}
\label{Jmultiplicity}
Let $j(E)$ be a $j$-invariant in $\Q_N^{\unr}$ with supersingular reduction. The divisor of the function $j-j(E)$ on the base-change to $\Z_N^{\unr}$ of $X_{\ns}^+ (N)^{\mathrm{reg}}$ is given by
\[
\mathrm{div} (j-j(E)) =\left( \sum_{C\in {{\cal C}_{\ns}^+ (N, E)}} m_{C} \cdot (E,C) \right) -({\mathrm{Cusps}}) +\Phi_{j-j(E)}
\]
where $m_{C} = \frac12 \# \Aut(E,C)$  (in particular $m_{C} =1$ unless $j(E)=0,1728$), 
the $(E,C)$ specialize to the supersingular components $D_i$, $E_i$, $F_i$, $G_0$, $H_0$ above $j(E)\bmod N$ according to
Proposition~\ref{GaloisOrbitsSS}, 
the divisor $({\mathrm{Cusps}})$ of cusps, counted with the appropriate multiplicity, specializes to the multiplicity-$(N-1)/2$ 
component $A$, and the vertical divisor $\Phi_{j-j(E)}$ has the following expression:
\begin{itemize}
\item if $j(E)\not\equiv 0, 1728\bmod N$, then $\Phi_{j-j(E)} =\left( N\cdot D_j +(N+1)/2 \cdot E_j + F_j \right)$, where $D_j,$ $E_j,$ $F_j$ are the components in Figure \ref{figureNSgrossier+} above the $j$-invariant $j(E) \bmod N$;
\item if $j(E)\equiv  0\bmod N$, then $\Phi_{j} =\left( N\cdot D_0 +(N+3)/2 \cdot E_0 + 3\cdot F_0 \right)$;
\item if $j(E)\equiv  1728\bmod N$, then $\Phi_{j-1728} =\left( N\cdot G_0 +2 \cdot H_0 \right) $ .
\end{itemize} 
\end{prop}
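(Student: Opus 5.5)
The proof splits into a horizontal and a vertical part. For the horizontal part, note that on the generic fiber, $j-j(E)$ is the pullback of the function $j-j(E)$ on the $j$-line $X(1)\cong\P^1$ along the forgetful map $X_{\ns}^+(N)\to X(1)$. That map has degree $\#\mathcal{C}_{\ns}^+(N,E)$ away from the elliptic points. Its fiber over $j(E)$ is exactly the set of pairs $(E,C)$ with $C\in\mathcal{C}_{\ns}^+(N,E)$, and the ramification index at $(E,C)$ equals the order of the stabilizer of $C$ in $\Aut(E)/\{\pm1\}$. That order is $\tfrac12\#\Aut(E,C)=m_C$. The poles are the cusps, counted with their ramification indices (the cusp widths). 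This accounts for the term $\sum_C m_C\,(E,C)-(\mathrm{Cusps})$. The specialization statements are then read off directly: the points $(E,C)$ specialize as in Proposition~\ref{GaloisOrbitsSS}, and the cusps, being ordinary points, specialize to $A$ (as in Proposition~\ref{GaloisOrbitsOrd}).

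It remains to determine the vertical part $\Phi_{j-j(E)}=\sum_V a_V V$, with $V$ running over the components of $X_{\ns}^+(N)^{\mathrm{reg}}_{\ol\F_N}$. I will use the defining property that a principal divisor has intersection number $0$ with every vertical component. First, the multiplicity of $\mathrm{div}(j-j(E))$ along $A$ is $0$: the reduction of $j-j(E)$ on the $j$-line $A$ is $j-\bar j(E)$, which is a nonzero function (a unit at the generic point of $A$). In particular the normalization is fixed, since $\Phi$ is only determined modulo the full fiber. The same reasoning shows that $a_V=0$ for every component away from the supersingular point $\bar j(E)$. On the components above other supersingular points, $j-j(E)$ reduces to a nonzero constant, so these components do not occur either. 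Only the tree of components lying over $\bar j(E)$ is left: $D_i,E_i,F_i$ (or $D_0,E_0,F_0$, or $G_0,H_0$).

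For each component $V$ in that tree I then write the linear equation
\[
0=\Bigl[V,\;\textstyle\sum_C m_C(E,C)\Bigr]+\Bigl[V,\;\Phi_{j-j(E)}\Bigr],
\]
in which the cusp term contributes nothing since the cusps reduce only to $A$. The first term is the number of horizontal points meeting $V$, weighted by $m_C$ and by the degree of their residue fields. By Proposition~\ref{GaloisOrbitsSS}, in the generic case the orbits of type $\mathbf{D}$ give $(N-3)/2$ points of degree $N+1$ on $D_i$, the orbit of type $\mathbf{E}$ gives one point of degree $(N+1)/2$ on $E_i$, and type $\mathbf{F}$ gives one rational point on $F_i$. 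Since the model is regular, a section $P$ specializing to a component of multiplicity $\mu$ gives $[V,P]=1/\mu$ times the degree, so these intersection numbers are $1$ on $F_i$, $1$ on $E_i$, and $(N-3)/2$ on $D_i$. The intersection matrix of the tree (with $A$) comes from Remark~\ref{rmk: intersection matrix}. With $a_A=0$, this gives a small linear system in $a_D,a_E,a_F$ that can be solved directly.

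As a check in the generic case: on $F$ we get $1-(N+1)a_F+a_D=0$, on $E$ we get $1-2a_E+a_D=0$, and on $D$ we get $\tfrac{N-3}{2}+a_F+a_E+(N+1)[D,D]\cdots$. Substituting $a_D=N$, $a_E=(N+1)/2$, $a_F=1$ should verify all three equations. The cases $j\equiv0$ and $j\equiv1728$ are handled the same way, now with $m_C\in\{1,2,3\}$, the orbit sizes from Proposition~\ref{GaloisOrbitsSS}, and the modified multiplicities $(N+1)/3$, $(N+1)/6$, $(N+1)/2$.

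I expect the main obstacle to be the bookkeeping of intersection numbers of horizontal divisors with components of multiplicity greater than $1$, especially in the $j\equiv0,1728$ cases. There the weights $m_C$ and the extra automorphisms interact with the orbit sizes, and the $\mathbf{E}$-orbit for $1728$ lands on $G_0$. I would handle these by working over a ramified extension (the semistable model over $\Z_N^{\unr}[N^{2/(N^2-1)}]$), where the points become sections, and then descend. Alternatively, one can simply check that the total degree on each fiber matches $\deg(j)$ and solve the system.
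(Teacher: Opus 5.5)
Your route is the same as the paper's. You take the divisor on the generic fiber, get $a_A=0$ because $j-j(E)$ is a unit at the generic point of $A$, and solve $[V,\operatorname{div}(j-j(E))]=0$ for the remaining coefficients. For this you use that an orbit of size $d$ whose closure meets only a component of multiplicity $\mu$ has intersection number $d/\mu$ with it (the paper's $[\Delta_\Gamma,\Gamma]=1$). In the generic case this works. Your $D$-equation is garbled and should read $\frac{N-3}{2}-a_D+a_E+a_F=0$, since $[D_i,D_i]=-1$. Together with your $E$- and $F$-equations it has determinant $N-1\neq 0$ and returns $(N,\frac{N+1}2,1)$.

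The genuine gap is the multiplicity step. The ramification index of $X_{\ns}^+(N)\to X(1)$ at $(E,C)$ is the \emph{index} of the stabilizer of $C$ in $\Aut(E)/\{\pm1\}$, i.e.\ $\#\Aut(E)/\#\Aut(E,C)$, not its order. A degree count shows this. For $j(E)=0$, exactly one class is fixed by $\zeta_3$: the $\mathbf F_0$ point, since $\zeta_3$ commutes with inertia and so lies in the inertia Cartan. Your weights then give a zero divisor of degree $\frac{N^2-N-2}{6}+3=\frac{N^2-N+16}{6}$ instead of $\deg j=\frac{N(N-1)}{2}$. At $1728$ the classes fixed by $i$ are the $\mathbf H_0$ point and the $(N+1)/2$ points of the $\mathbf E$-orbit, and the same mismatch occurs.

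So your plan breaks exactly in the exceptional cases, and no vertical divisor can repair a horizontal part of the wrong degree. Concretely:
\begin{itemize}
\item With weight $3$ on the $\mathbf F_0$ point, the $F_0$-equation $3+a_{D_0}-\frac{N+1}{3}a_{F_0}=0$ is violated by the claimed values $a_{D_0}=N$, $a_{F_0}=3$.
\item Likewise $2+a_{G_0}-\frac{N+1}{2}a_{H_0}\neq 0$ for $(a_{G_0},a_{H_0})=(N,2)$.
\end{itemize}

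Use instead weight $3$ (resp.\ $2$) on non-elliptic points and weight $1$ on elliptic ones. Then $[D_0,Z]=\frac{3(N-3)}{2}$, $[E_0,Z]=3$, $[F_0,Z]=1$, resp.\ $[G_0,Z]=N-2$, $[H_0,Z]=1$. These are the same numbers as when $j(E)\equiv 0$ (resp.\ $1728$) but $j(E)\neq 0$ (resp.\ $\neq 1728$), where all weights are $1$, as they must be. The self-intersections are $[D_0,D_0]=[E_0,E_0]=-2$, $[F_0,F_0]=-\frac{N+1}{3}$, $[G_0,G_0]=-1$, $[H_0,H_0]=-\frac{N+1}{2}$. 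With these, the system gives exactly $(N,\frac{N+3}{2},3)$ and $(N,2)$.

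So the stated $\Phi$'s are correct, as is Corollary~\ref{cor:parameter_F_j}, which needs $[F_0,Z]=1$. But the weight $m_C=\frac12\#\Aut(E,C)$ in the statement is inverted and must be read as $\#\Aut(E)/\#\Aut(E,C)$. The paper's proof never checks the multiplicities; it calls the generic-fiber divisor ``clear''. Your argument reproduces that error instead of catching it.
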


\begin{proof}
We combine the above propositions
with the results of (the independent!) Section~\ref{sec:V} 
regarding specialization of CM divisors. Indeed, the above expression for $\divisor(j-j(E))$ is clear when restricted to $X_\Q$, since the zeros of this function are exactly the points for which the underlying elliptic curve is $E$ and the poles are cusps. Then $\divisor(j-j(E))$ has to have the above expression for \emph{some} vertical divisor $\Phi_{j-j(E)}$. We know that $\Phi_{j-j(E)}$ is concentrated on the bad reduction fiber, and since $j$ is not constant there,
its support does not contain the $j$-line $A$ in the bad reduction (see Figure~\ref{figureNSgrossier+}). Finally, we can get a unique expression for $\Phi_{j-j(E)}$, by using that, being a principal divisor, the intersection number $\divisor(j-j(E)) \cdot C$ is zero for all components $C$ of $X_{\F_N}$.
\end{proof}

\begin{cor}\label{cor:parameter_F_j}
If $j(E)$ is a supersingular $j$-invariant mod $N$, and $j(E)\not\equiv 0, 1728\bmod N$, then the function $(j-j(E))/N$ gives an isomorphism between $\P^1$ and the irreducible multiplicity-one component $F_j$ above $j(E)$ in the special fiber of our regular model 
$X_{\ns}^+ (N)^{\mathrm{reg}}$ over $\overline{\F}_N$. 

Similarly, if $j(E)\equiv 0\bmod N$ (respectively, $j(E)\equiv 1728 \bmod N$) is supersingular, then $j/N^3$ 
(resp., $(j-1728)/N^2$) is a parameter on the genus~$0$ irreducible multiplicity-one component $F_0$ (resp., $H_0$) 
in $X_{\ns}^+ (N)^{\mathrm{reg}}_{/\overline{\F}_N}$.
\end{cor}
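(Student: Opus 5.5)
The plan is to read off the corollary directly from Proposition~\ref{Jmultiplicity}: the multiplicity of $F_j$ in the vertical part of $\divisor(j-j(E))$ tells us exactly which power of $N$ to divide by so that the resulting function neither vanishes identically nor has a pole along $F_j$. This is a statement about a rational function on a regular surface, so the argument is local along the component.

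Start with the case $j(E)\not\equiv 0,1728 \bmod N$. By Proposition~\ref{Jmultiplicity}, $F_j$ occurs with coefficient $1$ in $\Phi_{j-j(E)}$. The multiplicity of $F_j$ in $\divisor(N)$ equals its multiplicity in the special fiber, which is $1$. Hence $\phi\coloneq (j-j(E))/N$ has vertical divisor not containing $F_j$, so $\phi$ restricts to a nonconstant-or-constant rational function $\bar\phi$ on $F_j\cong\P^1$.

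To see that $\bar\phi$ has degree one, use intersection numbers. On the regular model, $\deg(\bar\phi)$ equals the number of zeros of $\bar\phi$ on $F_j$, that is, $\divisor(\phi)_0\cdot F_j$ computed from the horizontal and vertical zero parts, and likewise for poles. The horizontal part $\sum m_C(E,C)$ meets $F_j$ only through the single Galois-fixed point of type $\mathbf F$, which is a section specializing to a point of $F_j$ with $m_C=1$. It therefore contributes exactly $1$. The cusps specialize to $A$, which does not meet $F_j$. The vertical part of $\divisor(\phi)$ is $\Phi_{j-j(E)}-X_{\F_N}$. In this divisor $F_j$ has coefficient $0$, and $D_j$ has coefficient $N-(N+1)=-1$. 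Since $F_j$ meets only $D_j$, transversally in one point (Remark~\ref{rmk: intersection matrix}), the vertical part contributes a single simple pole at $F_j\cap D_j$. So $\bar\phi$ has exactly one zero and one pole, each simple, and hence is an isomorphism $F_j\to\P^1$. As a consistency check, these two contributions sum to $0$ as they must. Equivalently, I could simply invoke $\divisor(\phi)\cdot F_j=0$ and compute only the zero side.

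The cases $j\equiv0$ and $j\equiv 1728$ run the same way, but with a different power of $N$. In $\divisor(j)$, the component $F_0$ has coefficient $3$, so $j/N^3$ has coefficient $0$ along $F_0$, since $F_0$ has multiplicity $1$ in the fiber. The zero contribution on $F_0$ comes from the fixed point of type $\mathbf F_0$; the only subtle point is that its weight is $m_C=\tfrac12\#\Aut(E,C)$, possibly $3$. I therefore need that $\divisor(j)$ restricted to $F_0$ has degree one. I plan to compute the pole side instead: the vertical divisor $N D_0+\tfrac{N+3}{2}E_0+3F_0-3X_{\F_N}$ has coefficient $N-(N+1)=-1$ along $D_0$. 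Since $F_0$ meets only $D_0$, once, this gives a single simple pole on $F_0$, so $\deg\bar\phi=1$. Similarly, for $1728$, $(j-1728)/N^2$ restricted to $H_0$ has a pole from $G_0$ with coefficient $N-2\cdot\tfrac{N+1}{2}=-1$, again giving degree one.

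The main obstacle is bookkeeping in the exceptional cases: confirming from Figure~\ref{figureNSgrossier+} that $F_0$ (resp.\ $H_0$) meets only $D_0$ (resp.\ $G_0$), transversally. Once that is in place, computing via the pole side avoids having to pin down the CM multiplicities $m_C$.
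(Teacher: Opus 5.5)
Your argument is correct. In the generic case it is the paper's proof. The paper also uses Proposition~\ref{Jmultiplicity} to see that $(j-j(E))/N$ has order $0$ along $F_j$. It then counts zeros on $F_j$, noting that only the type-$\mathbf F$ point of $\divisor(j-j(E))$ specializes there. Your pole count at $F_j\cap D_j$ is an extra consistency check.

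For $j\equiv 0,1728$ you take a genuinely different route. The paper only says that ``a similar argument applies'', i.e.\ it again counts zeros. That tacitly needs the $\mathbf F_0$ (resp.\ $\mathbf H_0$) point to occur with weight $1$ in $\divisor(j)$ (resp.\ $\divisor(j-1728)$). Read literally, the formula $m_C=\tfrac12\#\Aut(E,C)$ in Proposition~\ref{Jmultiplicity} would give $3$ (resp.\ $2$) for that point, which is exactly the worry you raise. Your pole-side count uses that $F_0$ (resp.\ $H_0$) meets only $D_0$ (resp.\ $G_0$), transversally, and that this component has coefficient $-1$ in the vertical part of $\divisor(j/N^3)$ (resp.\ $\divisor((j-1728)/N^2)$). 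It needs no weights at all. Combined with $\divisor(\phi)\cdot F_0=0$, it even shows that the weight is $1$. That value is the ramification index $[\Aut(E):\Aut(E,C)]$, and it is the only one compatible with the stated $\Phi_j$ and $\Phi_{j-1728}$. So your route is the more robust one here.

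The one thing to add is nonconstancy. A single simple pole gives $\deg\bar\phi=1$ only if it is not cancelled by a zero at $D_0\cap F_0$ (resp.\ $G_0\cap H_0$). This holds for two reasons. First, the positive part of $\divisor(\phi)$ meets $F_0$ only through horizontal divisors, since $E_0$ does not meet $F_0$. Second, the count $\deg\Delta=\sum_C\delta_C\,(\Delta\cdot C)$ shows that no Galois orbit of zeros can pass through $D_0\cap F_0$. Meanwhile the $\mathbf F_0$ section does give a genuine zero on $F_0$, so $\bar\phi$ is not constant.
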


\begin{proof}
By Proposition~\ref{Jmultiplicity}, the coefficient of $F_j$ in the vertical part of $\divisor(j-j(E))$ is $1$ when $j(E)\not\equiv 0, 1728\bmod N$. Since the divisor of $N$ on the regular model is the full special fiber, and $F_j$ has multiplicity $1$, the rational function $(j-j(E))/N$ has order $0$ along $F_j$. Moreover, by Proposition \ref{GaloisOrbitsSS}, only one of the points with $j$ equal to $j(E)$ specializes to $F_j$, hence $(j-j(E))/N$  has only one zero, with multiplicity one, on $F_j$, and hence it is an isomorphism $F_j \to \P^1_{\F_N}$.
A similar argument applies to $j/N^3$ on $F_0$, because the coefficient of $F_0$ in $\Phi_j$ is $3$, and to $(j-1728)/N^2$ on $H_0$, because the coefficient of $H_0$ in $\Phi_{j-1728}$ is $2$. 
\end{proof}

\subsection{Hecke operators and intersection with the diagonal}
\label{sec:heckediag}

Consider the diagonal morphism $\Delta \colon X_{\ns}^+ (N)\to (X_{\ns}^+ (N))^2$. Given a Hecke operator $T$,
seen as a divisor on $X_{\ns}^+ (N)^2$, we want to describe the divisor $ \Delta^* (T )$, which will be used in our method,
see e.g.~\eqref{eq:D_H_D_V}. We fix for all this section 
some prime number $\ell$, and consider the case $T=T_\ell$.
Later, we use this to study an integral linear combination of $T_2$ and $T_3$, having trace~$0$. 

\begin{remark}
    It might help with visualization to recall that, over $\C$, the Hecke operator $T_\ell$ can be seen as the superposition of the graphs of
$\ell+1$ functions on the relevant quotient of the Poincar\'e upper half-plane, of shape $(\tau , g(\tau ))$, for some 
M\"obius transformations $g(\tau )\coloneq\frac{a\tau +b}{c\tau +d}$, where $\left( \begin{smallmatrix}
    a & b \\ c & d
\end{smallmatrix}\right) \in M_2 (\Z )$
has determinant $\ell$ (cf.~\cite[Section~1.3.2]{KohenPacetti} for non-split Cartan curves, \cite[ (7.3.1)]{DiamondIm} 
for $X_0(N)$, or more generally \cite{ShimuraBook}).
\end{remark}

\begin{lemma}
\label{SuppDT}
Let $\ell 
\not\equiv  0,\pm 1 \bmod N$ be a prime number. Then, the support ${\mathrm{Supp}}(\Delta^*T_\ell)$ of $\Delta^*T_\ell$ consists of
non-cuspidal points, associated with complex multiplication (CM) elliptic curves $E$ having an element $\alpha$ of degree $\ell$ in their endomorphism ring. 
Supposing, moreover, that $\ell < N^2 /4$, we have the following description. Assume first $j(E)\neq 0, 1728$. 
\begin{itemize}
    \item 
Suppose $\Tr (\alpha) \neq 0$. If $N$ is inert in $\Q (\alpha )$, then $E$ gives rise to one 
point in ${\mathrm{Supp}}(\Delta^*T_\ell)$, namely the point of type $\mathbf{F}$ (in the sense of Proposition~\ref{GaloisOrbitsSS}) associated with $E$; otherwise, $E$ gives no point in ${\mathrm{Supp}}(\Delta^*T_\ell)$.
    \item  
Suppose ${\mathrm{Tr}} (\alpha )=0$. If $N$ is split in $\Q (\alpha )$, then $E$ gives rise to $(N-1)/2$ points in ${\mathrm{Supp}}(\Delta^*T_\ell)$, which make up the (only) small orbit of Proposition~\ref{GaloisOrbitsOrd}. If $N$ is inert in $\Q (\alpha )$, then $E$ gives rise to $(N+3)/2$ points, namely those of the two orbits of types $\mathbf{E}$ and $\mathbf{F}$ (see Proposition~\ref{GaloisOrbitsSS}).
\end{itemize}

Finally, consider curves with exceptional automorphisms. 
\begin{itemize}
\item Suppose $j(E)=0$ and $\ell\neq 3$. If $N$ is inert in $\Q (\zeta_3 )=\Q (\alpha )$, there is precisely one point in ${\mathrm{Supp}}(\Delta^*T_\ell)$ with 
underlying curve $E$ (the orbit of type $\mathbf{F}_0$); otherwise, $E$ gives no point in ${\mathrm{Supp}}(\Delta^*T_\ell)$.

\item Suppose $j(E)=0$ and 
$\ell=3$. If $N$ is split in 
$\Q (\zeta_3 )$, then there are $(N-1)/6$ points in ${\mathrm{Supp}} (\Delta^*T_\ell )$: the small orbit of Proposition~\ref{GaloisOrbitsOrd}. If $N$ is inert in $\Q (\zeta_3 )$, then there are $(N+7)/6$ points with underlying $j(E)=0$ in ${\mathrm{Supp}} (\Delta^*T_\ell )$ (the two orbits of type $\mathbf{E}_0$ and $\mathbf{F}_0$). 

\item Suppose $j(E)=1728$; then necessarily ${\mathrm{Tr}} (\alpha )\neq 0$. If $N$ is inert in $\Q (i )=\Q (\alpha )$, there is exactly one point with 
underlying curve $E$ in ${\mathrm{Supp}} (\Delta^*T_\ell )$, namely the point of type $\mathbf{H}_0$; otherwise, $E$ gives no point in ${\mathrm{Supp}}(\Delta^*T_\ell)$.
\end{itemize} 
\end{lemma}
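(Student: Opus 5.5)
We will use the moduli interpretation of $T_\ell$. A point $(E,C)$ of $X_\ns^+(N)$, with $C\subset\Aut(E[N])$ the normalizer of a non-split Cartan subgroup, lies on $\Delta^*T_\ell$ exactly when there is a cyclic $\ell$-isogeny $\phi\colon E\to E'$ and an isomorphism $E'\simeq E$ carrying $\phi_*(C)$ to $C$. Composing gives an endomorphism $\alpha$ of $E$ of degree $\ell$, and $\alpha$ normalizes $C$ on $E[N]$. Since $\ell$ is not a square, $\alpha$ is not an integer, so $E$ has CM and is non-cuspidal. This proves the first assertion. The remaining work is to classify, for each such CM curve $E$ and each $\alpha$ (up to $\Aut(E)$ and conjugation), which $C\in\mathcal C_\ns^+(N,E)$ satisfy $\alpha C\alpha^{-1}=C$ in $\GL(E[N])$.

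The key linear-algebra step is the following. Let $\bar\alpha$ be the image of $\alpha$ in $\End(E[N])\cong M_2(\F_N)$. Then $\bar\alpha$ is invertible because $\ell\not\equiv 0\bmod N$. It is not scalar, since otherwise $\alpha-a\in N\End(E)$ for some integer $a$, and the bound $\ell<N^2/4$ rules this out: $N\mid$ the conductor-type quantity would force $\deg\alpha\ge N^2/4$. An element normalizes $C^+$ iff it lies in $C^+$. Concretely, $C^+=C\cup C'$ where $C$ is the Cartan subgroup, the stabilizer of a conjugate pair of lines in $E[N]\otimes\F_{N^2}$, and $C'$ is the non-trivial coset, which swaps those two lines. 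So we must decide when $\bar\alpha\in C$ or $\bar\alpha\in C'$.

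Case $\bar\alpha\in C$. Since $\bar\alpha$ is non-scalar, $C=\F_N[\bar\alpha]^\times$, and this must be a field. That happens iff the characteristic polynomial $x^2-\Tr(\alpha)x+\ell$ is irreducible mod $N$, that is, iff $N$ is inert in $\Q(\alpha)$ (here $N\nmid\mathrm{disc}$ uses $\ell<N^2/4$ again). In this case we get exactly one $C$, the one whose Cartan is the image of $\End(E)\otimes\F_N$. In the supersingular reduction language of Proposition~\ref{GaloisOrbitsSS} this is the Galois-fixed point, i.e.\ the orbit of type $\mathbf F$ (resp.\ $\mathbf F_0$, $\mathbf H_0$), because inertia acts through that same Cartan.

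Case $\bar\alpha\in C'$. Elements of $C'$ have trace $0$, so this forces $\Tr(\alpha)\equiv0\bmod N$. With $|\Tr\alpha|\le2\sqrt\ell<N$, this gives $\Tr\alpha=0$. Conversely, if $\Tr\alpha=0$, then $\bar\alpha$ is an involution up to scalar, with eigenlines $L_\pm$, and we count the pairs $\{L,L^\sigma\}$ of conjugate lines swapped by $\bar\alpha$. In coordinates where $\bar\alpha$ is diagonal (split case, $N$ split in $\Q(\alpha)$), or $\F_{N^2}$-diagonal (inert case), this count can be carried out exactly as in the orbit computations in the proofs of Propositions~\ref{GaloisOrbitsSS} and~\ref{GaloisOrbitsOrd}. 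The pairs $[x:1],[x^{\pm N}:1]$ swapped by $x\mapsto -x$ correspond to $x^{N\mp1}=-1$, which are exactly the small orbit of size $(N-1)/2$ in the split case and the type $\mathbf E$ orbit of size $(N+1)/2$ in the inert case. In the inert case we add the $\mathbf F$ point from the first case, since $\bar\alpha$ also lies in that Cartan; this gives $(N+3)/2$. Identifying the sets explicitly with these orbits uses that inertia acts through $\End(E)\otimes\F_N$, so $\bar\alpha$ and the inertia generator commute.

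Exceptional $j$. We redo the same count modulo $\Aut(E)/\{\pm1\}$. For $j=1728$, $\Q(\alpha)=\Q(i)$ and $\ell=a^2+b^2$ with $ab\ne0$, so $\Tr\neq0$ and only the first case applies. For $j=0$, the trace can be $0$ only if $\alpha=\sqrt{-3}$ up to unit, i.e.\ $\ell=3$. In that case we divide the previous counts by $3$ and add the $\mathbf F_0$ point, giving $(N-1)/6$ and $(N+1)/6+1=(N+7)/6$. I expect the main obstacle to be the careful matching of these linear-algebra solution sets with the named Galois orbits, together with keeping track of multiplicities under extra automorphisms. The orbit counts themselves are routine.
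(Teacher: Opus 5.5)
Your analysis of the non-cuspidal points is essentially the paper's argument. It has the same steps: ruling out scalar action of $\bar\alpha$ via $\ell<N^2/4$; splitting into $\bar\alpha\in C$ versus $\bar\alpha\in C^+\setminus C$, the latter forcing $\Tr\alpha=0$; counting swapped conjugate line pairs through $x^{N\mp1}=-1$ in an eigenbasis shared with inertia; and quotienting by $\Aut(E)/\{\pm1\}$ when $j=0,1728$.

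The gap is in the first assertion of the lemma. Your argument starts from a point already written as $(E,C)$ with $E$ an elliptic curve. So the conclusion ``$E$ has CM and is non-cuspidal'' presupposes what it claims, and nothing you say excludes a cusp of $X_\ns^+(N)$ from $\mathrm{Supp}(\Delta^*T_\ell)$. This is not automatic: $\Delta^*T_\ell$ is the pullback of the closure of the Hecke correspondence, and that closure passes through the cusps. This is exactly where the hypothesis $\ell\not\equiv\pm1\bmod N$ is needed, and your proposal never uses it. For $\ell\equiv\pm1\bmod N$, cusps do occur in $\Delta^*T_\ell$.

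To fix this you need a separate computation at the cusps, as in the paper.
\begin{itemize}
\item Represent the cusps by N\'eron $N$-gons with level structure. These are the specializations at $q=0$ of the Tate curve $\Gm/q^{\Z}$ with a basis $(q^{a/N}\zeta_N^b,q^{c/N}\zeta_N^d)$ of its $N$-torsion, identified up to $q^{1/N}\mapsto\zeta_Nq^{1/N}$ and $\pm1$.
\item Follow the $\ell+1$ cyclic $\ell$-isogenies to $q=0$: these are $z\mapsto\ell z$ onto $\C/(\Z+\ell\tau\Z)$ and $z\mapsto z$ onto $\C/(\Z+\frac{\tau+a}{\ell}\Z)$. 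They act on the level structure by $\mathrm{diag}(1,\ell)$ and $\left(\begin{smallmatrix}\ell&0\\-a&1\end{smallmatrix}\right)$.
\item The unipotent and sign ambiguity preserves triangularity and the diagonal up to a common sign. So you get triangular matrices with eigenvalues $\epsilon$ and $\epsilon\ell$ in $\F_N$, for some $\epsilon\in\{\pm1\}$.
\item A cusp lies in $\Delta^*T_\ell$ only if such a matrix lies in a conjugate of $C_\ns^+(N)$.
\item An element of a non-split Cartan with an $\F_N$-rational eigenvalue is scalar, which forces $\ell\equiv1\bmod N$.
\item An element of the non-trivial coset of the normalizer has trace $0$, which forces $\ell\equiv-1\bmod N$.
\end{itemize}
Only after this step do you get that the support consists of non-cuspidal CM points. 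That is the fact Corollary~\ref{cor:delta_cap_Tell} relies on.
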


\begin{proof} Assume $P$ in ${\mathrm{Supp}} (\Delta^*T_\ell )$ is not a cusp, therefore associated with a pair 
$(E,C_{\ns}^+ (N))$ with $E$ an elliptic curve and $C_{\ns}^+ (N)$ some normalizer-of-nonsplit-Cartan 
structure mod~$N$. Denote by $C_{\ns}(N)$ the corresponding Cartan group. Since there is a cyclic $\ell $-isogeny from $E$ to 
itself, $E$ must be CM, with endomorphism ring $\operatorname{End}(E) \coloneq {\mathrm{End}}_\C (E)$ containing an element $\alpha$ of degree 
$\ell $. (Recall that degree is also the norm of $\alpha$ seen as a complex number; we shall use both terms in the following computations.) There are finitely many such elements in $\C$, belonging to a finite list of (not necessarily maximal) imaginary quadratic orders. 
Moreover, for our pair $(E,C_{\ns}^+ (N))$ to define a point on 
$X_{\ns}^+ (N)$, the $\ell $-isogeny $\alpha \colon E\to E$ must induce an element in $\GL (E[N])$ which belongs to 
$C_{\ns}^+(N)$.

Note that, since $\deg (\alpha) = \ell < N^2/4$ , the trace of $\alpha$ is equal to $0$ if and only if it is divisible by $N$. 

Suppose first that ${\mathrm{Tr}} (\alpha )\not\equiv 0 \pmod N$, so that $\alpha$ acts on $E[N]$ as an element of $C_\ns(N) 
\simeq \F_{N^2}^\times$ (and not its normalizer). If $\alpha$ acts as an element $z\in \Z$ on $E[N]$, then $E[N] \subset 
\ker(\alpha-z)$, i.e.~$\alpha-z  = N\beta$ for some $\beta \in \End(E)$. Let $-D$ be the discriminant of $\End(E)$ 
and write $\beta = \tfrac{a+b\sqrt{-D}}{2}$. We have $\alpha = N\beta+z$, hence $\alpha$ is of the form $\tfrac{c+Nd\sqrt{-
D}}{2}$. Since $\alpha$ has norm $\ell$ we obtain $4\ell = c^2+N^2d^2D>N^2$, contradicting the hypothesis. We deduce that $\alpha$ acts 
on the $N$-torsion as an element of $\F_{N^2}\setminus \F_N$, hence that $\F_N[\alpha] \cong \F_{N^2}$ and, since there is a 
ring homomorphism $\F_N[\alpha] \to \End(E)/N\End(E)$, the prime $N$ is inert in $\End(E)$. In particular, if $N$ is not inert, then there is no point of the form $(E,C_\ns^+(N))$ in $\mathrm{Supp}(\Delta^*T_\ell)$.
Conversely, if $N$ is inert in $\End(E)$, then $\alpha$ modulo $N$ is an element of $\calO/N \mathcal{O} \cong \F_{N^2}$, hence it acts as 
an element of a unique non-split Cartan.

Assume now $\mathrm{Tr} (\alpha )\equiv 0 \bmod N$. By the same argument as above, $\alpha$ mod $N$ belongs to 
exactly one non-split Cartan subgroup if and only if $N$ is inert in $\Q (\alpha )$; however, we must now also count the possible {\it normalizers} of non-split Cartan subgroups that $\alpha$ can belong to. We proceed as in the proofs of Proposition~\ref{GaloisOrbitsSS} and Proposition~\ref{GaloisOrbitsOrd}. Recall that the normalizer of non-split Cartan subgroups $C_{\ns}^+ (N)$ are in one-to-one correspondence with 
(unordered) pairs 
of distinct $\F_{N^2} /\F_N$-conjugate lines in $E[N]\otimes_{\F_N} \F_{N^2}$. So, endowing $E[N]\otimes_{\F_N} \F_{N^2}$ 
with a basis $(Q_1 ,Q_2 )$ of eigenvectors for $\alpha$ mod $N$, we see that $\alpha$ acts on $\P (E[N]\otimes_{\F_N} 
\F_{N^2})\simeq \P^1 (\F_{N^2})$ as $[x\colon 1] \mapsto [-x\colon 1]$ (and $[1\colon 0] \mapsto [1\colon 0]$). 
On the other 
hand, if $N$ is split (respectively, inert) in $\Q (\alpha )$, then $\mathrm{Frob}_N$ stabilizes the lines spanned by 
$Q_1$ and $Q_2$ (resp., $\mathrm{Frob}_N$ exchanges these two lines). 
Notice that $N$ cannot ramify in $\Z[\alpha]$, since, together with the hypothesis $N \mid \mathrm{Tr}(\alpha)$, this would imply that $N$ divides $\ell = \deg(\alpha)$.
Therefore, ${\mathrm{Gal}} (\F_{N^2}/\F_N)$ acts on 
$\P (E[N]\otimes_{\F_N} \F_{N^2})$ via the formulas
(\ref{FrobNSS}) and 
(\ref{FrobNOrd}) above:
\begin{itemize}
	\item $\mathrm{Frob}_N \colon [x\colon 1] \mapsto [\overline{x} \colon 1] =[x^N \colon 1]$ (and $\mathrm{Frob}_N 
	([1\colon 0])=[1\colon 0]$) if $N$ is split in $\Q (\alpha )$; or 
	\item $\mathrm{Frob}_N \colon [x\colon 1] \mapsto [1/\overline{x} \colon 1] =[1/x^N \colon 1]$ for $x\neq 0$ (and 
	$\mathrm{Frob}_N$ switches  $[1\colon 0]$ and $[0\colon 1]$) if $N$ is inert in $\Q (\alpha )$.
\end{itemize}
Therefore, assuming first $E$ has no extra automorphisms (that is, $j(E)\neq 0, 1728$), one sees that
$\alpha$ normalizes precisely $(N-1)/2$ non-split Cartan subgroups if $N$ is split in $\Q (\alpha )$; or $(N+3)/2$ non-split Cartan subgroups: the unique Cartan subgroup containing $\alpha$, together with $(N+1)/2$ further Cartan subgroups normalized by $\alpha$.

In the case of exceptional automorphisms, when $\mathrm{Tr} (\alpha )\equiv 
0$ mod~$N$, the same computations hold, except that we must identify 
the relevant points $(E,C_{\ns} (N)^+)$ when needed (which happens to be necessary only when 
$j(E)=0$ and $\alpha =\pm i\sqrt{3}$, so that $\ell =3$).

Finally let us consider the points at infinity. Using notations as in Deligne--Rapoport \cite[Chapter VII]{DR}, recall that cusps 
for the full modular group $\Gamma (N)$ say over $\C$, can be seen as N\'eron $N$-gons, which are the specializations at 
$q=0$ of the Tate curve 
${\cal G}_{\mathrm m}^{q^{1/N}} /q^\Z$ over $\C [[q^{1/N}]]$ endowed with a fixed basis $(q^{a/N} 
\zeta_N^b , q^{c/N} \zeta_N^d )$ of the $N$-torsion. Two such specializations are identified under any change of choice $q^{1/N} \leftrightarrow 
q^{1/N} \zeta_N^*$ of the $N^{\mathrm{th}}$-root of $q$ and multiplication of the whole $N$-torsion by $\pm 1$, cf.~\cite[p.~158]{DR}. 

   Labeling the $\ell$-isogenies on the Poincar\'e upper-half plane as
\[
\varphi_{\ell} \colon \left\{
\begin{array}{rcl}
\C /(\Z +\tau \Z ) & \to & \C /(\Z +\ell \tau \cdot \Z )\\
z& \mapsto & \ell z  
\end{array} 
\right.
\hspace{0cm} {\mathrm{ and }} \hspace{0.2cm}
{\varphi_{1,\alpha}} \colon \left\{
\begin{array}{rcl}
\C /(\Z +\tau \Z ) & \to & \C /(\Z +\frac{\tau +\alpha}{\ell} \cdot \Z )\\
z& \mapsto & z  
\end{array} 
\right.
\]
for $0\le \alpha \le \ell -1$, we have 
\begin{eqnarray*}
\varphi_{1,\alpha} ({\cal G}_{\mathrm m}^{q_\tau^{1/N}} /q_\tau^\Z ,(q_\tau^{a/N} \zeta_N^b ,q_\tau^{c/N} \zeta_N^d ))
 =({\cal G}_{\mathrm m}^{q_{(\tau +\alpha)/\ell}^{1/N}} /q_{(\tau +\alpha)/\ell}^\Z ,(q_{\tau}^{a/N} 
\zeta_N^b ,q_{\tau}^{c/N} \zeta_N^d )).
\end{eqnarray*}
Specializing to $q=0$, we see that $\varphi_{1,\alpha}$ acts by left multiplication by $\left( \begin{smallmatrix}
\ell & 0 \\
-\alpha & 1
\end{smallmatrix}\right) $
on the $N$-torsion of our $N$-gon (and because of the isomorphisms for $q^{1/N} \leftrightarrow q^{1/N} \zeta_N^*$,  
all $\ell$ images are enhanced $N$-gons defining the same cusp). A similar calculation shows that 
$\varphi_{\ell}$ acts  by left multiplication by 
$
\left( \begin{smallmatrix}
1 & 0\\ 
0 & \ell 
\end{smallmatrix}\right)$
on this $N$-torsion. These matrices can belong to some non-split Cartan subgroup if and only if $\ell \equiv 1\bmod N$, 
and to some normalizer (but not the Cartan itself) if and only if $\ell \equiv -1\bmod N$. 
\end{proof}

\begin{lemma}
\label{MultDT}
Let $\ell$ be a prime number and $P$ be a non-cuspidal point of $\Delta^* (T_\ell )$ 
with underlying elliptic curve 
$E$. The ring $\End_\C(E)$ contains elements of degree $\ell$; let $\alpha \in \End_\C(E)$ be any such element. The multiplicity of $\Delta^* (T_\ell )$ at $P$ is $1$ if  ${\mathrm{Tr}} (\alpha ) \equiv 0 \bmod \ell $ and $2$ otherwise.
\end{lemma}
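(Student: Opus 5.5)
The multiplicity is local, so I would compute it analytically over $\C$, working on the upper half-plane (or on $X(N)$, where the level structure is rigid) and then descending. Near $P=(E,[\phi])$, the correspondence $T_\ell$ is locally the union of the graphs $\tau\mapsto g_k(\tau)$ of the $\ell+1$ branches. Here $g_k\in M_2(\Z)$ has determinant $\ell$ and is composed with the appropriate element of the level group. Only those branches passing through $(P,P)$ contribute to $\Delta^*T_\ell$ at $P$. The local intersection number of the graph of $g$ with the diagonal at a fixed point $\tau_0$ is the order of vanishing of $g(\tau)-\tau$ at $\tau_0$.

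First step: identify the contributing branches with degree-$\ell$ endomorphisms. A branch fixes $\tau_0$ exactly when it comes from some $\alpha\in\End(E)$ of degree $\ell$ compatible with the Cartan structure, as in the proof of Lemma~\ref{SuppDT}. Two such endomorphisms $\alpha,\alpha'$ give the same branch (the same cyclic subgroup $\ker\alpha$) iff $\alpha'=\alpha u$ with $u\in\Aut(E)$. In particular $\bar\alpha$ gives the same branch as $\alpha$ iff $\ker\bar\alpha=\ker\alpha$, i.e.\ iff $\alpha^2/\ell$ is a unit. I would check that this happens exactly when $\alpha$ and $\bar\alpha$ differ by a unit, which is the case $\Tr(\alpha)\equiv 0\bmod\ell$. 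Indeed, $\ker\alpha=\ker\bar\alpha$ forces $\alpha$ to kill $E[\ell]\cap\ker\alpha$ for both, which forces $\ell\mid\alpha^2$. Conversely, if $\ell\nmid\Tr\alpha$, then $\ker\alpha\neq\ker\bar\alpha$, since $\ker\alpha+\ker\bar\alpha=E[\ell]$ when $\alpha\not\equiv\bar\alpha$ up to units mod~$\ell$.

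Second step: show that each branch meets the diagonal transversally. The branch through the fixed point $\tau_0$ is a Möbius map $\gamma$ with $\gamma(\tau_0)=\tau_0$, and its derivative there is $\bar\alpha/\alpha$ (or its inverse), via the CM action on the tangent space $\mathrm{Lie}(E)$. Since $\alpha\notin\Z$ (its degree $\ell$ is prime), we have $\bar\alpha/\alpha\neq1$. So $g(\tau)-\tau$ has a simple zero, and each branch contributes exactly $1$.

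Third step: count and descend. If $\ell\nmid\Tr\alpha$, the two distinct branches from $\alpha$ and $\bar\alpha$ both pass through $(P,P)$, giving multiplicity $2$; if $\ell\mid\Tr\alpha$, only one branch passes, giving $1$. Any other element of degree $\ell$ is $\pm\alpha$ or $\pm\bar\alpha$ up to automorphisms, so the dichotomy does not depend on the choice of $\alpha$. I expect the main obstacle to be the descent from the upper half-plane to $X_\ns^+(N)$ at points with extra automorphisms ($j=0,1728$), or at elliptic points of the Cartan group. There the uniformizer is $(\tau-\tau_0)^e$, so both the branch count and the orders need care. I would handle this by computing the intersection on $X(N)$, where the relevant points are unramified, and pushing forward along the quotient by $C_\ns^+(N)/\{\pm1\}$. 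This requires checking that branches identified by automorphisms are exactly those counted once.
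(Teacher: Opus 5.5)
Your strategy is the paper's. You count the branches of $T_\ell$ through $(P,P)$ via degree-$\ell$ endomorphisms up to units; the paper phrases the dichotomy as $\End(E)\otimes\F_\ell\cong\F_\ell[X]/(X^2-\Tr(\alpha)X)$. You then show each branch is transverse to the diagonal by a derivative computation at $\tau_0$. Your version of the second step is tidier than the paper's: with $\lambda=c\tau_0+d$ one has $\lambda\bar\lambda=\ell$ and $M'(\tau_0)=\ell/\lambda^2=\bar\lambda/\lambda\neq 1$. This is what the paper's non-collinearity of $(1/(C\tau_0+D)^2,\ell^{\pm1})$ with $(1,1)$ amounts to.

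Two small repairs are needed in the branch count. First, your justification of $\ker\alpha=\ker\bar\alpha\Rightarrow\ell\mid\alpha^2$ is garbled. Use $\alpha(E[\ell])=\ker\bar\alpha$, so equal kernels give $\alpha^2(E[\ell])=0$. Then $\alpha^2-\Tr(\alpha)\alpha+\ell=0$ together with $\alpha\notin\ell\End(E)$ gives $\ell\mid\Tr\alpha$. Second, for the multiplicity-$2$ case you must check that $\bar\alpha$ is compatible with the Cartan structure whenever $\alpha$ is. This holds because $\bar\alpha=\ell\alpha^{-1}$ on $E[N]$ and $C_\ns^+(N)$ is a group containing the scalars; the paper notes that $\alpha$ and $\bar\alpha$ normalize the same Cartan subgroups.

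The genuine gap is the elliptic case, which you flag as the main obstacle but do not resolve. It is not vacuous: for example, $(E_i,C_\ns^+(E_i,i))$ occurs in $\Delta^*T_2$ when $N\equiv 3\bmod 4$. At such a point $\tau$ is not a local parameter, and $\bar\lambda/\lambda\neq 1$ is no longer the right condition.
\begin{itemize}
\item In a disc coordinate $z$ centered at $\tau_0$, the fixing matrix $M$ acts as the rotation $z\mapsto\zeta z$ with $\zeta=\bar\lambda/\lambda$.
\item The local parameter on $X_\ns^+(N)$ is $w=z^e$ with $e\in\{2,3\}$, so the branch is $w'=\zeta^e w$.
\item Transversality therefore requires $\zeta^e\neq 1$, which is strictly stronger than $\zeta\neq 1$.
\end{itemize}
The paper verifies exactly this, using $\tau_0\in\Q(i)$ or $\Q(\zeta_3)$ to rule out $\lambda^{2e}=\ell^e$. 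A quicker argument: the stabilizer of $\tau_0$ in $\Gamma_\ns^+(N)$ acts on $z$ through all $e$th roots of unity. So $\zeta^e=1$ would make $M$ equal, as a M\"obius map, to some stabilizer element $\sigma$. That means $M=c\sigma$ with $c^2=\ell$, which is impossible for matrices with rational entries.

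To finish the elliptic case you also need two observations.
\begin{itemize}
\item $M$ commutes with this stabilizer, since both are rotations about $\tau_0$. Hence the germ of the correspondence curve maps isomorphically onto the line $w'=\zeta^e w$ and contributes exactly $1$.
\item Your kernel-based branch count is unaffected, because automorphisms commute with $\alpha$ and so preserve $\ker\alpha$.
\end{itemize}

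Your suggested descent from $X(N)$ could also be made to work. It reduces to showing that the compatible elements of norm $\ell$ form one or two cosets of the group of compatible units, then dividing by the ramification index. As written, however, it is only a plan, and the check you defer is precisely the content of the lemma at these points.
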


\begin{proof} Write $E=E_\tau$, having complex multiplication by an order of $\Q(\tau)$, which in turn contains 
$\alpha$ with $\vert \alpha \vert^2 =\ell $.
Then $\Z[\alpha]$ has index coprime to $\ell$ in the quadratic ring $\End_\C(E_\tau)$, and
${\mathrm{End}}_\C (E_\tau ) \otimes \F_\ell $ is isomorphic to $\F_\ell  [\alpha ]\simeq \F_\ell  
[X]/(X^2 -{\mathrm{Tr}} (\alpha )X)$. It follows that $E_\tau$ possesses 
exactly 2 (resp.~1) self-isogenies of degree $\ell$ up to isomorphism, if ${\mathrm{Tr}} (\alpha ) \not
\equiv 0$ mod $\ell $ (resp.~${\mathrm{Tr}} (\alpha ) \equiv 0$ mod $\ell $), namely $\alpha$ and 
$\overline{\alpha}$. 
Moreover, $\alpha$ and $\overline{\alpha}$ normalize the 
same groups $C_{\ns} (N)$, so they induce the same point $(E,C_{\ns}^+ (N) )$ of $\Delta^*T_\ell$. 
This means that locally, in the complex picture,  
there are $2$ (respectively, $1$) components of the divisorial correspondence $T_\ell $ that 
intersects $\Delta (X_{\ns}^+ (N) )$ at $(P,P)$ if ${\mathrm{Tr}} (\alpha ) \not\equiv 0$ 
mod $\ell $ (resp., ${\mathrm{Tr}} (\alpha ) \equiv 0$ mod $\ell $). 

Now we want to prove that all components of $T_\ell $ intersect the diagonal transversely. It is enough to prove this over $\C$.
 Let $\Gamma_{\ns}^+ 
(N)$ be the inverse image in ${\mathrm{PSL}}_2 (\Z )$ of some normalizer of non-split Cartan mod $N$, 
so that we have the complex uniformization $\HH /\Gamma_{\ns}^+ (N) \simeq Y_{\ns}^+ 
(N) (\C )$. Away from the elliptic points (pairs $(E, C_\ns^+(N))$ having extra automorphisms), we can take the $\tau$-invariant in $\HH$ as a parameter. 

Assume first $P\in \Delta^* (T_\ell)$ is non-elliptic, associated with some quadratic imaginary 
$\tau_0$ in the upper half plane and some $\alpha$. There are a homography $g(\tau )=
\frac{\tau +a}{\ell }$, $0\leq a\leq \ell -1$, or $g(\tau )=\ell \tau$, and a 
$\gamma=\left(\begin{smallmatrix} A & B \\ C & D \end{smallmatrix}\right)$ in ${\mathrm{SL}}_2 (\Z )$, such that 
$g(\tau_0 ) =\gamma (\tau_0 )$. Locally around $(P,P)$ on 
 $Y_{\ns}^+ (N)^2$ for the naive complex topology, the $\alpha$-component of the divisor 
$T_\ell $ can be described by $(\gamma (\tau ), g(\tau ))$ around $\gamma (\tau_0 )$. Therefore, the tangent vector to this graph is given by
\[
\left( \gamma (\tau ),g(\tau )\right) ' (\tau_0 ) = \left( \frac{1}{(C\tau_0 +D)^2} ,\frac{1}{\ell } \right) 
\hspace{0.5cm} {\mathrm{or}} \hspace{0.5cm} \left( \frac{1}{(C\tau_0 +D)^2} ,\ell \right),
\] 
which cannot be collinear with $(1,1)$. (Indeed, since $C\tau_0 {+}D$ is a quadratic imaginary number, 
its square is rational only if $C\tau_0 {+}D$ is purely imaginary or rational, hence the square is either 
negative or the square of a rational number. In either case, it cannot be equal to $\ell^{\pm 1}$.) In other words, the curves associated with $T_\ell $ cannot be tangent to the diagonal around $(P,P)$ in $X_{\ns}^+ (N)^2_\C$, and therefore also in $X_{\ns}^+ (N)^2_\Q$. 

Now assume that $P$ is an elliptic point, corresponding to some pair $(E,C_{\ns}^+ (N))/\{ \pm 1\}$ having an 
automorphism of order $2$ 
(resp.~$3$). 
The $\tau$-invariant is not a local parameter, 
so we choose some biholomorphism $f (z)$ from $\HH$ to the unit disc ${\mathbb D}$, mapping $g(\tau_0 )(=\frac{\tau_0 +a}{\ell }$ or $\ell \tau_0$) as above to $0$, as one does when endowing quotients $\HH /\Gamma$ with a structure of Riemann surface (see for example \cite[(1.1.5)]{ShimuraBook}). 
Very explicitly, consider the usual conformal transformation $F(z)=\frac{z-i}{z+i}$ from $\HH$ to ${\mathbb D}$, let $\gamma (z)\coloneq(Az+B)/(Cz+D)$ which maps $\tau_0 =\lambda i+\mu$ to $g(\tau_0 )$ as in the computations above for the non-elliptic cases, and take $f (z)=F((\ell z -a-\mu )/\lambda )$ (in the case $g(\tau )=
\frac{\tau +a}{\ell }$)  or $f(z)=F((z /\ell -\mu )/\lambda )$ (case $g(\tau )=\ell \tau$). 
Now $f^2 (\gamma (\tau ) )$ (respectively $f^3 (\gamma (\tau ) )$) is a parameter on $X_{\ns}^+ (N)(\C )$ around $\tau_0$. 
If $P$ has an  automorphism of order $2$, then considering the first few terms of the Taylor expansion of $T (\tau )\coloneq(f^2 (\gamma (\tau )) , f^2 
(g(\tau ) ))$ shows that the tangent direction to the graph of $T(\tau )$ in $X_\ns^+(N)(\C)^2$ is given by $(\frac{1}{(C\tau_0 +D)^4} ,
\ell^{\pm 2} )$, which is not proportional to $(1,1)$, since $C\tau_0 {+}D$ is an element of $\Q (i)$, which does not contain any 
element of the form $\pm\sqrt{\pm\ell^{\pm 1}}$ for $\ell$ prime.
The same holds for elliptic points having 
automorphisms of order $3$: in this case, what we need to check is that, for $\tau_0 \in \Q (\zeta_3 )$, the vector $(\frac{1}{(C\tau_0 
+D)^6} ,\ell^{\pm 3} )$ is not proportional to $(1,1)$. 
\end{proof}

In computations, it is sometimes easier to use the following result, where we use that for $\ell \not \equiv 0, \pm 1 \bmod N$ the support of $\Delta^* T_\ell$ contains no cuspidal points by Lemma \ref{SuppDT}.

\begin{cor}\label{cor:delta_cap_Tell}
For every prime $\ell \not \equiv 0, \pm 1 \bmod N$ we have 
\begin{equation}\label{eq_cor_giagonal_hecke} 
\Delta^* T_\ell   = \sum_{\substack{E \text{ with an}\\ \text{endomorphism} \\ \text{of deg }\ell}} \sum_{0\subsetneq C \subsetneq E [\ell]} D_{E,C},
\end{equation}
where we denote by $D_{E,C}$ the divisor containing with multiplicity one each point $(E,[\phi]) \in X_\ns^+(N)$ such that 
there exists an endomorphism $\alpha$ of $E$, with kernel $C$, such that $\phi \circ \alpha \circ \phi^{-1}$ is an element of $C_{\ns}^+(N)$.
\end{cor}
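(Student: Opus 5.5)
The plan is to assemble Corollary~\ref{cor:delta_cap_Tell} from Lemmas~\ref{SuppDT} and~\ref{MultDT}, comparing the two sides point by point. Since $\ell \not\equiv 0,\pm1 \bmod N$, Lemma~\ref{SuppDT} shows that $\Delta^*T_\ell$ has no cuspidal points. Its support consists exactly of the points $(E,[\phi])$ such that $E$ has CM and some degree-$\ell$ endomorphism $\alpha$ satisfies $\phi\circ\alpha\circ\phi^{-1}\in C_{\ns}^+(N)$. This comes from the proof of that lemma, which identifies the points $(P,P)$ on $T_\ell$ with pairs $(E,C^+)$ together with a degree-$\ell$ self-isogeny normalizing $C^+$.

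The right-hand side is therefore supported on the same set. Every isogeny $\alpha$ of degree $\ell$ has kernel a subgroup $C$ of order $\ell$, which is automatically a proper nonzero subgroup of $E[\ell]$. Conversely, every nonzero contribution $D_{E,C}$ comes from such an $\alpha$. It remains to check that the multiplicities agree at each point $P=(E,[\phi])$. On the left, Lemma~\ref{MultDT} gives multiplicity $1$ if $\Tr(\alpha)\equiv 0 \bmod \ell$ and $2$ otherwise. On the right, the multiplicity is the number of subgroups $C$ of order $\ell$ that occur as the kernel of a degree-$\ell$ endomorphism normalizing the level structure.

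To count these kernels I would reuse the argument in the proof of Lemma~\ref{MultDT}. There, $\End(E)\otimes\F_\ell\cong\F_\ell[X]/(X^2-\Tr(\alpha)X)$, so up to automorphisms the degree-$\ell$ endomorphisms are $\alpha$ and $\ol\alpha$. Their kernels are $\ker\alpha$ and $\ker\ol\alpha$. These coincide exactly when $\alpha$ and $\ol\alpha$ differ by a unit, which happens iff $\Tr(\alpha)\equiv 0\bmod \ell$, i.e.\ iff $\ell$ ramifies in $\Z[\alpha]$. Indeed, $\ker\alpha=\ker\ol\alpha$ would force $\ell\mid\alpha^2$ in $\End(E)$, so $\alpha^2=\pm\ell$ up to a unit, hence trace $0$ mod $\ell$. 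In the split case the two kernels are distinct, being the two eigenlines of $\alpha$ on $E[\ell]$ (with eigenvalues $0$ and $\Tr\alpha$). As the proof of Lemma~\ref{MultDT} notes, $\alpha$ and $\ol\alpha$ normalize the same Cartan groups, so both kernels contribute at $P$. This gives multiplicity $2$ or $1$ on the right as well.

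The main obstacle is the points with extra automorphisms ($j=0,1728$), where several $\alpha$'s differing by units have the same kernel. The definition of $D_{E,C}$ counts each kernel once, which absorbs this ambiguity. I would also check that these unit multiples normalize the same $C_{\ns}^+(N)$, using that the automorphisms of $E$ lie in the Cartan attached to $\End(E)$. For ordinary $N$ they lie in the normalizer, and this needs the case analysis of Lemma~\ref{SuppDT}. I would then check that the identification of $(E,C^+)$ modulo $\Aut(E)$ matches the coefficient $m_C$-free count in Lemma~\ref{MultDT}.
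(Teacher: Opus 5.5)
Your argument is correct and is essentially the paper's proof. You get equal supports from Lemma~\ref{SuppDT} and the multiplicities on the left from Lemma~\ref{MultDT}. On the right you get a single kernel when $\ell \mid \Tr(\alpha)$, and the two distinct kernels $\ker\alpha \neq \ker\ol\alpha$ otherwise. For $\phi\circ\ol\alpha\circ\phi^{-1}\in C_{\ns}^+(N)$ you cite the remark in the proof of Lemma~\ref{MultDT}; the paper instead proves it directly via $\ol\alpha=[\ell]\circ\alpha^{-1}$ on $E[N]$.

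You should drop the extra check in your last paragraph, that unit multiples of $\alpha$ normalize the same $C_{\ns}^+(N)$. It is not needed, since $D_{E,C}$ only asks for \emph{some} $\alpha$ with kernel $C$; the condition is also $\Aut(E)$-invariant because $\End(E)$ is commutative. It is also false in general: for $j=0$, $\ell=3$, $N\equiv 1 \bmod 3$, the element $\zeta_3\sqrt{-3}$ has the same kernel as $\sqrt{-3}$ but does not normalize the non-split Cartans that $\sqrt{-3}$ normalizes.
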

\begin{proof}
    By the first part of Lemma \ref{SuppDT}, the hypothesis $\ell \not\equiv 0,\pm 1 \bmod N$ implies that the divisor $\Delta^*T_\ell$ does not contain cuspidal points. Then, by the definition of the Hecke operator, every point in the support of $\Delta^*T_\ell$ is of the form $(E,[\phi])$, with $E$ an elliptic curve having an endomorphism $\lambda$ of degree $\ell$ such that $\phi \circ \lambda \circ \phi^{-1}$ lies in $C_\ns^+(N)$. In particular, the two divisors in \eqref{eq_cor_giagonal_hecke} have the same support. It remains to check that each point $(E,[\phi])$ appears with the same multiplicity. Lemma \ref{MultDT} tells us that, on the left, the multiplicity is equal to $1$ if $\Tr(\lambda)$ is divisible by $\ell$, and $2$ otherwise. 
    
    In the first case, this means that every endomorphism of $E$ of degree $\ell$ lies in the set $\Aut(E) \cdot \lambda$. This implies that there is at most one subgroup $C$ such that $(E,[\phi])$ belongs to $D_{E,C}$, i.e., that the multiplicity of $(E,[\phi])$ in the right hand side of  \eqref{eq_cor_giagonal_hecke} is at most $1$, and hence equal to $1$. 

    If instead $\Tr(\lambda) \not \equiv 0 \bmod \ell$, then $\ell$ splits in $\End(E)$, and the endomorphisms of $E$ of degree $\ell$ are the ones in $\Aut(E) \cdot \lambda$ and the ones in $\Aut(E) \cdot \ol \lambda$, where $\ol \lambda$ is the dual of $\lambda$. It follows that there are $2$ distinct possible kernels, namely $C_1 \coloneq \ker \lambda$ and $C_2 \coloneq \ker \ol\lambda$. To prove that $(E,[\phi])$ has multiplicity $2$  in the right hand side of  \eqref{eq_cor_giagonal_hecke}, we prove that $(E,[\phi])$ appears both in $D_{E,C_1}$ and in $D_{E,C_2}$. For $D_{E,C_1}$, this comes from the definition of $\lambda$. For $C_2$, we note that $\ol \lambda \circ \lambda = \ell$, hence we have $\ol \lambda = [\ell] \lambda^{-1}$  on $E[N]$, which makes sense since $\lambda$ has degree $\ell$ coprime to $N$, hence it is invertible on $E[N]$. We then have $\phi \circ \ol \lambda \circ \phi^{-1} = (\ell \cdot \Id) \cdot (\phi \circ \lambda \circ \phi^{-1})^{-1}$, which lies in $C_{\ns}^+(N)$, since  both factors do.  
\end{proof}

\begin{remark}
By going through the proof of Lemma \ref{SuppDT}, one can check that the support of $\Delta^*T_\ell$ consists of non-cuspidal points for all integers $\ell$ that satisfy the following technical condition: there is no divisor $d$ of $\ell$ such that $d \equiv \pm \ell/d \pmod{N}$. As a consequence, the above corollary also holds for such integers $\ell$ (coprime to $N$).
\end{remark}

Combining the previous results, we now describe how the graph of $T_\ell$ spreads out over $\Z$ in $X_{\ns}^+ 
(N)^2$ and how it specializes to the special fiber $\left(X_{\ns}^+ 
(N)^2\right)_{/\overline \F_N}$.

\begin{cor}
\label{GaloisOnDT_n}
As in Lemma~\ref{SuppDT}, for a prime number $\ell \not \equiv 0,\pm1 \pmod N$, with $\ell<N^2/4$, we write $\Delta^* (T_\ell)$ as a sum of points $(E, C^+)$ with $E=E_\tau$ some CM curves over $\overline{\Q}$ endowed with normalizers of non-split 
Cartan structures $C^+$. For each $E$, we denote by ${\mathrm{End}}(E)$ the endomorphism ring (over $\overline{\Q}$). This ring contains an element $\alpha$ of degree  $\ell$. 
We fix a place ${\mathcal{N}} \vert N$ in~$\Q (j(\tau ))$. 
In the following statement, ``$a\equiv b$'' 
shall always mean ``$a\equiv b\bmod \mathcal{N}$''. 

\begin{enumerate}
\item Assume $N$ is inert in $\Q (\alpha )$. The only point in ${\mathrm{Supp}} (\Delta^*T_\ell )$ with associated  curve $E_\tau$ having values in $\Q (j(\tau ))$ specializes to the smooth part of the only multiplicity-$1$ component of the $j_i \equiv j(\tau )$ supersingular branch of $X_{\ns}^+ (N)_{{\overline{\F}}_N}$ (labeled $F_i$ or $H_0$ in Figure~\ref{figureNSgrossier+}). 
\item Assume $N$ is inert in $\Q (\alpha )$, ${\mathrm{Tr}}(\alpha )= 0$, and $j\not\equiv 0, 1728\bmod 
\mathcal{N}$. The $(N+1)/2$ other points in $\Delta^*T_\ell$ with that same underlying elliptic curve $E_\tau$ (the orbit of type $\mathbf{E}$ at ${\cal N}$, cf.~Lemma~\ref{SuppDT}) have values in some Galois extension of $\Q (j(\tau ))$, totally ramified above $\mathcal{N}$,  of degree $(N+1)/2$ (described in Theorem 3.8 of 
\cite{EdPar_regular}). Galois acts transitively on these $(N+1)/2$ points, and they all specialize to the same geometric point in the (only) component $E_i$, $i>0$, of the $j_i \equiv j(\tau )$ branch of $X_{\ns}^+ (N)_{{\overline{\F}}_{\mathcal{N}}}$ (which has multiplicity $(N+1)/2$).
	
\item Assume $N$ is inert in $\Q (\alpha )$, ${\mathrm{Tr}}(\alpha )= 0$, and $j\equiv 0$ or $1728\bmod 
\mathcal{N}$. The $(N+1)/2$ or $(N+1)/6$ other points in $\Delta^*T_\ell$ with underlying elliptic curve $E_\tau$ (orbits of type ${\mathbf{E}}$ or $\mathbf{E}_0$) have values in some Galois extension of $\Q (j(\tau ))$, totally ramified above $\mathcal{N}$, of degree $(N+1)/2$ or $(N+1)/6$. As in the previous case, Galois acts transitively on these points, and they all specialize to the same geometric point in the (only) component $E_0$ (with multiplicity $(N+1)/6$) if 
$j(\tau )\equiv 0$, or $G_0$ (with multiplicity $(N+1)/2$) if $j(\tau )\equiv 1728$.

\item Assume $N$ is split in $\Q (\alpha )$, so that (up to replacing $\alpha$ by $\alpha u$ for a suitable unit $u$) we have ${\mathrm{Tr}}(\alpha )= 0$. There are $(N-1)/2$ or $(N-1)/6$ points in $\Delta^*T_\ell$ with underlying elliptic curve $E_\tau$ (the ``small orbits'', cf.~Lemma~\ref{SuppDT}). These points all specialize to the same geometric  
point in the central vertical 
component $A$ of $X_{\ns}^+ (N)_{{\overline{\F}}_{\mathcal{N}}}$ (which is a $j$-line, with multiplicity $(N-1)/2$), 
unless $j(\tau )\equiv 0$, in which case they specialize to the component labeled $\mathcal{E}_0$ in Figure~\ref{figureNSgrossier+} (which has multiplicity $(N-1)/6$).

\end{enumerate}
\end{cor}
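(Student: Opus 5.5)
The plan is to combine Lemma~\ref{SuppDT}, which says which points $(E_\tau,C^+)$ lie in $\mathrm{Supp}(\Delta^*T_\ell)$ and which Galois orbit they form, with Propositions~\ref{GaloisOrbitsSS} and~\ref{GaloisOrbitsOrd}, which say where each orbit specializes on $X_{\ns}^+(N)^{\mathrm{reg}}$. The remaining input is arithmetic: to decide supersingular versus ordinary reduction at $\mathcal N$ we use Deuring's criterion. A CM curve with CM by an order in $\Q(\alpha)$ has supersingular reduction at primes above $N$ exactly when $N$ is inert or ramified in $\Q(\alpha)$, and ordinary reduction exactly when $N$ splits. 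Ramification is excluded by the hypotheses: in the trace-zero case it was ruled out in the proof of Lemma~\ref{SuppDT}; in the trace-nonzero case we argue instead that the relevant points exist only when $N$ is inert. After possibly passing to an unramified extension of the completion at $\mathcal N$, the CM curve acquires good reduction and extends over $\Z_N^{\unr}$, so the propositions apply. In the ordinary case, inertia acts semisimply because $N$ does not divide the conductor: $\ell<N^2/4$ forces the discriminant of $\Z[\alpha]$ to be prime to $N$, and otherwise we work with $\End(E)$.

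\emph{Case (1).} $N$ is inert, so $E_\tau$ is supersingular at $\mathcal N$. By Lemma~\ref{SuppDT}, the fixed point (type $\mathbf F$, $\mathbf F_0$ or $\mathbf H_0$) lies in the support. Being Galois-fixed over $\Q_N^{\unr}$, and more globally unique of its type among the conjugates, it is rational over $\Q(j(\tau))$. Proposition~\ref{GaloisOrbitsSS} then says it specializes to $F_i$ or $H_0$. Since this component has multiplicity $1$ and the section is defined over an unramified base, it meets the special fiber in a smooth point of that component.

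\emph{Cases (2) and (3).} Lemma~\ref{SuppDT} identifies the remaining points as the orbit of type $\mathbf E$ or $\mathbf E_0$. By Proposition~\ref{GaloisOrbitsSS}, inertia acts transitively on this orbit, with size $(N+1)/2$ or $(N+1)/6$. The field of definition of one point over $\Q(j(\tau))_{\mathcal N}$ is therefore totally ramified of that degree. Globally, the points are permuted transitively by $\mathrm{Gal}$ over $\Q(j(\tau))$, since inertia already acts transitively. The specialization statement to $E_i$, $E_0$ or $G_0$ is the $\mathbf E$-part of Proposition~\ref{GaloisOrbitsSS}. That all points of the orbit hit the \emph{same} geometric point follows because they are inertia-conjugate, and inertia acts trivially on the special fiber over $\ol\F_N$.

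\emph{Case (4).} $N$ splits, so the reduction is ordinary. First we normalize $\alpha$ to have trace zero: in the proof of Lemma~\ref{SuppDT}, a nonzero trace forces $N$ inert, so in the split case any point of the support comes from an $\alpha u$ with trace $\equiv 0\bmod N$, hence trace zero. Lemma~\ref{SuppDT} then gives the small orbit, and Proposition~\ref{GaloisOrbitsOrd} gives its specialization to $A$, or to $\mathcal E_0$ when $j\equiv0$. Equality of the geometric specialization point again follows from inertia-conjugacy.

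The main obstacle is the descent from local to global statements. One must show that the field of definition over $\Q(j(\tau))$ is genuinely a Galois extension with the stated degree, rather than merely that its completion at $\mathcal N$ is totally ramified. The first approach to try is to note that the set of points of type $\mathbf E$ above $E_\tau$ in $\Delta^*T_\ell$ is $\mathrm{Gal}(\ol\Q/\Q(j(\tau)))$-stable, because $\Delta^*T_\ell$ is defined over $\Q$. Its cardinality equals the inertia orbit size, so the global action is transitive with stabilizer of the right index, and the Galois closure claim then reduces to the explicit description in \cite[Theorem 3.8]{EdPar_regular}.
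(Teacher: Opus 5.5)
Your skeleton is the paper's: combine Lemma~\ref{SuppDT} with Propositions~\ref{GaloisOrbitsSS} and~\ref{GaloisOrbitsOrd}. What is missing is the step that makes those propositions applicable, and that step is essentially the whole content of the paper's proof. Both propositions concern an elliptic curve \emph{over $\Z_N^{\unr}$ with good reduction}. Your sentence ``after possibly passing to an unramified extension of the completion at $\mathcal N$, the CM curve acquires good reduction'' cannot supply this. By N\'eron--Ogg--Shafarevich, unramified base change leaves the inertia action on the Tate module unchanged, so it never improves the reduction type. What you need is a suitable \emph{twist} of $E_\tau$, which is harmless since the point $(E,C^+)$ depends only on $E_{\ol\Q}$. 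Producing such a twist requires two facts you never establish.

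First, $\mathcal N$ must be unramified in $\Q(j(\tau))/\Q$. Otherwise $j(\tau)\notin\Q_N^{\unr}$ and no model over $\Z_N^{\unr}$ exists at all. You use this tacitly in part (1) (``the section is defined over an unramified base''). You also use it in parts (2)--(3), where you turn inertia-orbit sizes over $\Q_N^{\unr}$ into ramification degrees over $\Q(j(\tau))_{\mathcal N}$. The paper derives it from a fact you already have, $N\nmid\operatorname{disc}\Z[\alpha]$. This gives that $N$ divides neither $\operatorname{disc}\Q(\alpha)$ nor the conductor of $\End(E)\supseteq\Z[\alpha]$. Hence $N$ is unramified in the ring class field $\Q(j(\tau),\alpha)$, and so in $\Q(j(\tau))$.

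Second, some twist must have good reduction at $\mathcal N$, and this is not automatic. As Remark~\ref{remark representing points}(3) notes, no twist has good reduction over $\Q_N^{\unr}$ in two situations: $j\equiv 0$ with $3\nmid v(j)$, or $j\equiv 1728$ with $v(j-1728)$ odd. Nothing in your argument excludes such $j(\tau)$ in parts (1), (3) or (4). The paper settles this by citing \cite{Melistas}, which again relies on the unramifiedness established above. Alternatively, write $j(\tau)=\gamma_2(\tau)^3$ and $j(\tau)-1728=\gamma_3(\tau)^2$, where the Weber values lie in abelian extensions of $\Q(\alpha)$ unramified above $N$. This forces $3\mid v_{\mathcal N}(j(\tau))$ and $2\mid v_{\mathcal N}(j(\tau)-1728)$.

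With these two points added, the rest of your case analysis goes through: Deuring's criterion, semisimplicity of inertia, inertia-conjugacy giving a common specialization point, and the trace-zero normalization in (4).
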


\begin{proof} 
Recall that $\End(E)$ contains some
$\alpha$ of degree~$\ell$, that acts on $E[N]$ as an element of the normalizer of a non-split Cartan. The 
inequality $\ell < N^2/4$ implies that $\alpha$ acts as a non-scalar element mod $N$. 
In particular, $N$ is not ramified in 
$\Z [\alpha ]$ (otherwise $\alpha$ 
would span a Borel subgroup in 
$\End(E [N])$, not compatible with the non-split Cartan image), and hence in $\Q(j(\tau))$ (recall that $\Q(j(\tau), \alpha)$ is the ring 
class field of $\Q(\alpha)$ with same conductor as $\End(E)$). Moreover, e.g. by \cite{Melistas}, up to twists we can suppose that $E/\Q(j(\tau))$ has good reduction at ${\cal N}$. Now
combine Lemma~\ref{SuppDT} with Propositions~\ref{GaloisOrbitsSS} and \ref{GaloisOrbitsOrd}.
\end{proof}

Writing a closed formula for $\Delta^*T_\ell 
={\Delta}^* (T_\ell  )$ would require listing all quadratic imaginary
integers with norm $\ell$. To get a sense of what this looks like, we give the following explicit statement for $\ell=2, 3$.
Section~\ref{Bfor19} illustrates an application of this calculation.

\begin{example}\label{examplesT_N}
Let $N$ be a prime. For an imaginary quadratic integer $\tau\in \C$, write $E_\tau\coloneq\C/(\Z\oplus \Z\tau)$. If $E/\C$ is a CM elliptic curve and
$\beta\in \End(E)$ has degree  prime to $N$, write $\beta \bmod N$ for its action on $E[N]$ and set
\[
\mathcal C_{\ns}^+(E,\beta)\coloneq
\left\{
C^+\mid \begin{array}{c}
C\subset \GL(E[N]) \text{ is a non-split Cartan} \\
\text{and }(\beta \bmod N)\in C^+\coloneq N_{\GL(E[N])}(C)
\end{array}
\right\}.
\]
When $\F_N[\beta \bmod N]^\times$ is itself a non-split Cartan, we write
$C_{\ns}^+(E,\beta)$ for its normalizer.
\begin{itemize}
    \item For $\ell=2$ and $N\ge 11$, we have
\begin{align*}
{\Delta}^*(T_2)
={}&
\frac{1}{2}\left(1-\Legendre{-1}{N}\right)
\bigl(E_i,C_{\ns}^+(E_i,i)\bigr)
+\sum_{C^+\in \mathcal C_{\ns}^+(E_{i\sqrt{2}},i\sqrt{2})}
\bigl(E_{i\sqrt{2}},C^+\bigr)  \\
&+
\left(1-\Legendre{-7}{N}\right)
\bigl(E_{(1+i\sqrt{7})/2},
C_{\ns}^+(E_{(1+i\sqrt{7})/2},(1+i\sqrt{7})/2)\bigr).
\end{align*}
Here the sum has
$\frac{N-1}{2}+\left(1-\Legendre{-2}{N}\right)$ summands.

\item For $\ell=3$ and $N\ge 13$, we have
\begin{align*}
{\Delta}^*(T_3)
={}&
\left(1-\Legendre{-2}{N}\right)
\bigl(E_{i\sqrt{2}},C_{\ns}^+(E_{i\sqrt{2}},i\sqrt{2})\bigr)
+
\left(1-\Legendre{-11}{N}\right)
\bigl(E_{\tau_{11}},
C_{\ns}^+(E_{\tau_{11}}, \tau_{11}) \bigr) \\
&+
\sum_{C^+\in \mathcal C_{\ns}^+(E_{(1+i\sqrt{3})/2},(1+i\sqrt{3})/2)}
\bigl(E_{(1+i\sqrt{3})/2},C^+\bigr)
+
\sum_{C^+\in \mathcal C_{\ns}^+(E_{i\sqrt{3}},i\sqrt{3})}
\bigl(E_{i\sqrt{3}},C^+\bigr),
\end{align*}
where $\tau_{11} \coloneq \frac{1+i\sqrt{11}}{2}$. If $N$ splits in $\Q(\zeta_3)$, the last two sums have respectively $\frac{N-1}{6}$ and $\frac{N-1}{2}$ summands. If $N$ is inert in $\Q(\zeta_3)$, they have respectively $\frac{N+7}{6}$ and $\frac{N+3}{2}$ summands.
\end{itemize}

\end{example}

\begin{proof}
For $\ell=2$, the imaginary quadratic integers of norm $2$ are, up to sign,
\[
1\pm i,\qquad i\sqrt{2},\qquad \frac{1\pm i\sqrt{7}}{2}.
\]
The corresponding endomorphism rings are $\Z[i]$, $\Z[i\sqrt{2}]$, and $\Z[(1+i\sqrt{7})/2]$, and the $j$-invariants are $1728$, $2^6 5^3$, and $-3^3 5^3$. The formula follows from Lemmas~\ref{SuppDT} and~\ref{MultDT}, noting that $\F_N[1+i]=\F_N[i]$.
The case $\ell=3$ is very similar. The imaginary quadratic integers of norm $3$ are, up to sign,
\[
1\pm i\sqrt{2},\qquad \frac{1\pm i\sqrt{11}}{2},\qquad
\zeta_3^k i\sqrt{3}.
\]
They correspond to four isomorphism classes of elliptic curves, having endomorphism rings
$\Z[i\sqrt{2}]$, $\Z[(1+i\sqrt{11})/2]$,
$\Z[(1+i\sqrt{3})/2]$, and $\Z[i\sqrt{3}]$, and $j$-invariants $2^6 5^3$, $-2^{15}$, $0$, and $2^4 3^3 5^3$, respectively. Note that $i\sqrt{3}$ is contained in the endomorphism ring of two non-isomorphic CM curves.
\end{proof}

We close this section with a proposition that establishes that, in the smooth locus of our modular curve, the graph of relevant Hecke operators $T_\ell$ has no component which is a copy of the diagonal. This can be proved by rather elementary means, roughly speaking by interpreting $X^+_\ns(N)^{\mathrm{sm}}$ as parameterizing first order deformations of supersingular elliptic curves (thanks to Corollary~\ref{cor:parameter_F_j}), and
ultimately reducing to analytic computations establishing that no component of the Hecke divisor $T_\ell$ contains the diagonal {\em in the generic fiber} of the curve. We can however give a more compact proof, using more sophisticated tools provided by Love, Studnia and Vonk in \cite{LSV26}. We thank them for sharing their work with us, and Studnia in particular for suggesting the following argument.

\begin{prop}\label{prop:flatness} 
Let $N \ge 13$ be a prime number and consider an integer $\ell>1$ smaller than $N^2 /4$ and such that $(\ell, N)=1$. Let $X$ be the regular model of $X^+_\ns(N)$ presented in Figure~\ref{figureNSgrossier+} and let $\ol{T_\ell} \subset X \times X$ be the closure of the $\ell$th Hecke correspondence in $X_\Q \times X_\Q$.  
Then the divisor $\Delta^* \ol{T_\ell}$ of $X$ does not contain any of the smooth components  of $X_{\ol \F_N}$.
\end{prop}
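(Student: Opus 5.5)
We argue by contradiction, one smooth component at a time. The smooth components of $X_{\ol\F_N}$ are the multiplicity-one components of Figure~\ref{figureNSgrossier+}, namely the $F_i$, $F_0$ and $H_0$; all other components have multiplicity $>1$ and do not meet $X^\sm$. Suppose $\Delta^*\ol{T_\ell}$ contains such a component $F$. Then the closure $\ol{T_\ell}$ contains the diagonal copy $\Delta(F)\subset F\times F$. Since $\ol{T_\ell}$ is flat over $\Z$ and $\Delta(F)$ is an irreducible curve in the special fiber, it is contained in the specialization of some irreducible component $T'$ of $T_\ell$ over $\Q$ (or over $\Q_N^{\unr}$).

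The first task is to make the points of $F$ modular. By Corollary~\ref{cor:parameter_F_j}, a $\ol\Z_N^{\unr}$-point reducing to the smooth locus of $F$ is an elliptic curve $E$ with supersingular reduction $j_i$. Its parameter $(j(E)-j_i)/N$ (respectively $j/N^3$ or $(j-1728)/N^2$) reduces to an arbitrary element of $\ol\F_N$. By Proposition~\ref{GaloisOrbitsSS}, $E$ is equipped with the Galois-fixed structure of type $\mathbf F$ (or $\mathbf F_0$, $\mathbf H_0$), i.e.\ the Cartan given by the image of inertia. Thus $F$ parametrizes first-order deformations $E\bmod N^2$ of the supersingular curve $E_0$ with $j(E_0)=j_i$, together with the canonical Cartan structure. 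Points of $T'$ lying over $\Delta(F)$ are pairs $(E,E')$ related by an $\ell$-isogeny compatible with the Cartan structures. Specializing to $\Delta(F)$ means that $j(E')\equiv j(E)\bmod N^2$ (in the rescaled parameter) for a one-parameter family of such $E$.

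The key step, and the main obstacle, is to show that this is impossible. Here I would use the input from \cite{LSV26}: over the formal deformation space of $E_0$ (Lubin--Tate, or Serre--Tate-type coordinates on the supersingular disc), an $\ell$-isogeny $E\to E'$ with $\ell$ prime to $N$ lifts the isogeny $\phi_0\colon E_0\to E_0'$. On deformation spaces it induces the map given by the action of $\phi_0$ on the Dieudonn\'e module / formal group. To first order, i.e.\ on the tangent space, this is multiplication by a ratio of $\phi_0$ on the two tangent lines. If $j(E')\equiv j(E)$ to first order along the whole component, then $E_0'\cong E_0$ and $\phi_0\in\End(E_0)$ (a quaternion order) must act on the deformation space of $E_0$ fixing the tangent direction identically. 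That is, $\phi_0$ induces the identity on $\Lie$ up to the automorphism identification. Since $\phi_0$ has degree $\ell$, its action on the tangent spaces of the universal deformation is multiplication by $\phi_0^{-1}\phi_0^\sigma$-type quantity (Gross--Hopkins). Being trivial forces $\phi_0$ to commute with the $\F_{N^2}$-structure, i.e.\ $\phi_0\in\Z_{N^2}$ inside the maximal order of $D_{N,\infty}\otimes\Q_N$. The Cartan compatibility at level $N$ then forces $\phi_0\bmod N$ to lie in the (normalizer of the) inertia Cartan. A degree computation, as in the proof of Lemma~\ref{SuppDT}, would then show that $\phi_0$ is a scalar mod $N$ or that $\ell\ge N^2/4$, which is a contradiction.

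I expect the identification of the first-order action of $\phi_0$ on the $F$-parameter to be the delicate part. If that route stalls, the fallback is the elementary approach mentioned before the proposition. That approach has two steps. First, show that the family of deformations over $F$ corresponds to an $N$-adic analytic disc in the generic fiber. Second, argue that a component of $T_\ell$ containing $\Delta(F)$ would force a component of $T_\ell$ through a non-CM point to meet the diagonal to infinite order, contradicting the transversality analysis of Lemma~\ref{MultDT} and the finiteness of $\Delta^*T_\ell$ from Lemma~\ref{SuppDT}.
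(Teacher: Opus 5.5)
Your set-up is sound and follows the first-order-deformation interpretation the paper mentions just before the proposition. Points of $F^{\mathrm{sm}}$ are, up to $\Aut(E_0)$, the lifts of the supersingular curve $E_0$ to $W_2=W(\ol\F_N)/N^2$. An $\ell$-isogeny between $\Z_N^{\unr}$-points automatically respects the inertia Cartans. After pigeonholing over the finitely many degree-$\ell$ endomorphisms of $E_0$ (a step you should state), the question is whether one fixed $\phi_0$ can fix infinitely many such lifts.

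The gap is in the key step. The lifts form an affine line (a torsor under the tangent space), not a vector space, so $\phi_0$ acts on them by an affine map $c\mapsto\lambda c+\mu$; your Gross--Hopkins ``ratio'' records only $\lambda$. Moreover, $\lambda=1$ means $\Lie(\phi_0)^N=\Lie(\phi_0)$. It does not mean that $\phi_0$ acts as the identity on $\Lie$, nor that $\phi_0$ commutes with $\Z_{N^2}$; commuting with $\Z_{N^2}$ modulo $N$ is the independent condition $\mu=0$. Consequently your endgame yields no contradiction. An element $\phi_0$ lying in a conjugate of $\Z_{N^2}$, with $\phi_0\bmod N\notin\F_N$ and $\ell<N^2/4$, does occur: these are the reductions of the CM endomorphisms behind the type-$\mathbf F$ points of Lemma~\ref{SuppDT}. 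Such a $\phi_0$ has $\lambda\neq1$ and exactly one fixed point. ``Cartan compatibility'' adds nothing in your model. Finally, ``$\phi_0$ is scalar mod $N$'' is not itself a contradiction; it is precisely what must be proved before the degree bound can be applied.

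What is needed is the full affine action. Write $\End(E_0)\otimes\Z_N=\Z_{N^2}[\Pi]$ with $\Pi^2=N$ and $\phi_0=a+b\Pi$. Take $\Z_{N^2}$-eigenvectors $e_1,e_2$ of $\mathbb{D}(E_0)\otimes W$ with $\Pi e_1=e_2$ and $\Pi e_2=Ne_1$. By Serre--Tate and Grothendieck--Messing, the lifts to $W_2$ are the lines $\langle e_2+Nce_1\rangle$ with $c\in\ol\F_N$. With suitable normalizations, $\phi_0$ fixes such a line if and only if $\bar b+c(\bar a-\bar a^N)=0$. Infinitely many solutions force $\bar a\in\F_N$ and $\bar b=0$, that is, $\phi_0\in\Z+N\End(E_0)$. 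Since $\phi_0\notin\Z$ by cyclicity, $\phi_0=a'+N\tau$ with $\tau\notin\Z$, and hence $\ell\ge N^2/4$.

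This is exactly the paper's linear equation \eqref{alpha_1Cbis}: its coefficient $\Lie(\alpha_1)-\Lie(\alpha_1)^N$ and its constant term $\beta_0\alpha_1-\alpha_1\beta_0$ are your $\lambda-1$ and $\mu$. The paper obtains the affine line instead from the \cite{LSV26} parametrization $t\mapsto\ol C_t$ of Cartan structures on $\ol E[N]$. That forces it also to exclude the normalizer case \eqref{alpha_1CN}, which is absent in your deformation model. So, once repaired, your route is a valid alternative.

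Your fallback does not work as stated. The inclusion $\Delta(F)\subset\ol{T_\ell}$ says that a branch of $T_\ell$ is $N$-adically congruent to the diagonal over a disc. It does not say that the branch has infinite-order contact with the diagonal in the generic fiber, so Lemmas~\ref{SuppDT} and~\ref{MultDT} are not contradicted. You also need to justify that points of $\ol{T_\ell}$ above $\Delta(F)$ come from integral points of $T_\ell$; the paper does this using properness and a lifting result from \cite{Liu02}.
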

\begin{proof}

To prove that a smooth component $F$ of $X_{\ol \F_N}$ is not contained in $\Delta^*\overline{T_\ell}$, it is enough
to show that only finitely many points $\bar P\in F(\overline{\mathbb F}_N)$
satisfy $(\bar P,\bar P)\in\overline{T_\ell}(\ol \F_N)$. 

We first explain why such 
points are reductions modulo the maximal ideal of $\overline{\Z}_N$ of points $(P_1, P_2) \in \overline{T}_\ell(\overline{\Z}_N)$. By \cite[Chapter 10, Proposition 1.36]{Liu02}, the image of $(\ol P, \ol P)$ is a (set-theoretic) closed point in $(\ol{T_\ell})_{\F_N}$ that lies in the closure of some (set-theoretic) closed point $x$ in $T_\ell$. Since $T_\ell$ is of finite type over $\Q_N$, the point $x$ is the image of some point $y\in T_\ell(\ol{\Q_N})$. Since $\ol{T_\ell}$ is a closed subscheme of $X_\ns^+(N)^{\rm reg} \times X_\ns^+(N)^{\rm reg}$, it is proper over $\Z_N$, hence $y$ extends to a point $(P_1, P_2) \in \ol{T_\ell}(\ol{\Z_N})$.

Thus, to prove the proposition it suffices to show that there are only finitely many points $\ol P \in F(\ol \F_N)$ for which there exists a pair $(P_1, P_2) \in \ol{T}_\ell(\ol{\Z}_N)$ that reduces to $(\ol P, \ol P)$ modulo the maximal ideal of $\ol{\Z}_N$. Since $F$ is a smooth component containing only one singular point of $X$,
we may assume without loss of generality that we only work with smooth points of $X$.
Consider a pair $(P_1, P_2)$ that reduces to a point of the form $(\ol P, \ol P)$.
Since we may assume that $(\ol P, \ol P)$ is a smooth point, using the interpretation in \cite[Theorem A and Section 2]{LSV26} of the smooth locus of $X$, for $i=1, 2$ we can write $P_i = (E_i,C_{\ns,i}^+(N))$ for elliptic curves $E_i/\ol{\Z}_N$ together with Cartan structures  $C_{\ns,i}^+(N) \colon \F_{N^2} \hookrightarrow \End_{\ol{\Z}_N}(E_i[N])$ (up to precomposing by the conjugation in $\F_{N^2}$). Since $P_1$ and $P_2$ reduce to the same point $\ol P$ modulo the maximal ideal $\cal N$ of $\ol{\Z}_N$,
both $E_1 $ and $E_2$ reduce to a fixed supersingular elliptic curve $\ol E/\ol\F_N$ with $j$-invariant determined by $F$, 
and the structures $C_{\ns,i}^+(N)$ both reduce to the same structure $\ol C$ on $\ol E[N]$, up to $\Aut(\ol E)$.  
Moreover, by definition of $T_\ell$, there is a cyclic isogeny $\alpha \colon E_1 \to E_2$ of degree~$\ell$ 
sending $C_{\ns,1}^+(N)$ to $C_{\ns,2}^+(N)$, whose reduction is a  degree~$\ell$ endomorphism $\ol \alpha \colon \ol E  \to \ol E$ which preserves $\ol C$ (up to automorphisms).  Notice that in $\End(\ol E)$ there are a finite number of degree~$\ell$ endomorphism $\ol \alpha $, hence it suffices to prove that, for each $\ol \alpha$, there are a finite number of Cartan structures preserved by $\ol \alpha$.

Now to be able to compute with our objects, we need to say more about $\End_{\ol{\F}_N}(\ol E [N])$. 
Recall from \cite[Section 2]{LSV26} that $\End_{\ol{\F}_N}(\ol  E [N])$ sits in an exact 
sequence 
\begin{equation}
\label{ESEnd}    
0\to \End_{\ol{\F}_N}(\alpha_N )\simeq \A^1 ({\ol{\F}_N} ) \to \End_{\ol{\F}_N}(\ol  E [N]) \to O_E /{\cal P}_E \to 0
\end{equation}
where the left-hand $\alpha_N$ is the kernel of Frobenius $\ol E\to \ol E^{(N)}$ and
$O_E =\End_{\ol{\F}_N}(\ol E)$ is a maximal order in the quaternion algebra over $\Q$ ramified at $N$ and $\infty$, 
and ${\cal P}_E$ is its maximal two-sided ideal above $N$ (so $O_E /{\cal P}_E \simeq \F_{N^2}$). Moreover, using 
notations as in Lemma 2.5 of \cite{LSV26}, we know that $\ol E[N]/{\ol{\F}_N}$ can be written as an extension of ${\ol{\F}_N}$-group schemes:
\[
0\to \alpha_N \stackrel{\iota}{\to} \ol E[N] \stackrel{\pi}{\to} \alpha_N \to 0.
\]
Define $\nu \colon \A^1 \to 
\End_{\ol{\F}_N}(\ol  E[N])$ by $\nu (t)=\iota \circ t \circ \pi$. Write $\Z_{N^2} =\Z_N [\beta ]$, and let $\beta_0$ in
$\End_{\ol{\F}_N} (\ol E )$ such that $\Z_N [\beta_0 ]\simeq \Z_{N^2}$. It follows from \cite{LSV26}, Section 2, that elements $t \in \A^1 ({\ol{\F}_N})$ correspond to
closed points in the smooth component of $X_{\ol{\F}_N}$, i.e.~Cartan structures
for $\ol E$, by associating to $t$ the $\F_N$-linear map
induced by
\[
\ol C_t \colon \beta \bmod N \mapsto \beta_t \coloneq\beta_0 +\nu (t) \bmod N \in \End_{\ol{\F}_N} (\ol E[N]).
\]
If $a$ denotes the trace of 
$\beta_0$, we define
$$
\tilde{\beta}_t \coloneq \ol C_t (\beta^N )= \ol C_t (a-\beta )=a-\beta_0 -\nu (t) =\beta_0^N -\nu (t).
$$
Coming back to the proof of the present proposition, recall we can assume there is a  degree~$\ell$ endomorphism $\ol \alpha \colon \ol E  \to \ol E$ which preserves $\ol C$ (up to automorphisms), and we want to show there are only finitely many other $\ol C_t$ which are also preserved by $\ol \alpha$. 
  This condition means
that there exists an automorphism $u$ of $\ol E$ such that either $\ol \alpha \beta_t = u\beta_t u^{-1} \ol\alpha$ or $\ol \alpha \beta_t = u\tilde{\beta}_t u^{-1} \ol\alpha$ in $\End_{\ol{\F}_N}(\ol E [N])$. Setting $\alpha_1\coloneq u^{-1} \ol\alpha \in \End_{\ol{\F}_N}(\ol E)$, we obtain %
\begin{equation}\label{alpha_1NC}
    \alpha_1 \beta_t = \beta_t \alpha_1
\hspace{0.7cm} \mathrm{or} 
\hspace{0.7cm} 
\alpha_1 \beta_t = \tilde{\beta}_t \alpha_1
\end{equation}
as elements of $\End_{\ol{\F}_N}(\ol E [N])$. Therefore $(\ol E, C_t)$ will be normalized by $\alpha_1$ if and only if
\[
\alpha_1 (\beta_0  +\nu (t) )=
(\beta_0 +\nu (t) )\alpha_1
\hspace{0.5cm} \mathrm{or} 
\hspace{0.5cm} 
\alpha_1 (\beta_0  +\nu (t) )=
(\beta_0^N -\nu (t) ) \alpha_1
\]
in $\End_{\ol{\F}_N} (\ol E[N])$, that is, respectively,

\begin{eqnarray}
\label{alpha_1C}
\alpha_1 \nu (t)-\nu (t) \alpha_1 =\beta_0 \alpha_1 -
\alpha_1 \beta_0 
\end{eqnarray}
or 
\begin{eqnarray}
\label{alpha_1CN} 
\alpha_1 \nu (t)+\nu (t ) \alpha_1 =\beta_0^N \alpha_1 -\alpha_1 \beta_0 . 
\end{eqnarray}
Since there are only finitely many automorphisms $u$, it suffices to show that each of these equations has finitely many solutions in $t$, for fixed $\alpha_1$.
We deal with  (\ref{alpha_1CN}) first. 
Projecting it to $O_E/\mathcal{P}_E \cong \F_{N^2}$ 
by using  (\ref{ESEnd}), we see that it implies $\alpha_1 \beta_0 =\beta_0^N \alpha_1$. As $\beta_0 \mod N$ belongs 
to $\F_{N^2} \setminus \F_N$ 
by construction, that would imply $\alpha_1 \equiv 0 \pmod{\mathcal{P}_E}$. Since the norm of every element in $\mathcal{P}_E$ is divisible by $N$, this contradicts our numerical assumption that 
$\alpha$, therefore $\alpha_1$, has prime-to-$N$ norm $\ell <N^2 /4$. 

Now consider (\ref{alpha_1C}). 
Using Lemma~2.5 of \cite{LSV26}, it implies that
\begin{equation}
\label{alpha_1Cbis}    
\nu \left( (\Lie (\alpha_1 )  -\Lie (\alpha_1 )^N )t\right) =
\beta_0 \alpha_1 -\alpha_1 \beta_0 .
\end{equation}

As $\iota \circ \Lie (\alpha_1 ) =
\alpha_1 \circ \iota$, we know that $\Lie (\alpha_1 )\in {\ol{\F}_N}$ is a root of the minimal equation of 
$\alpha_1 \in \Z_{N^2} /N\Z_{N^2}$ over $\F_N$. In particular, $\Lie (\alpha_1 )$  cannot be $0$ in ${\ol{\F}_N}$ by our condition $\mathrm{Norm} 
(\alpha_1 )=\ell \not \equiv 0 \bmod N$. Recalling that $\nu$ is injective \cite[Proposition 2.2]{LSV26},  (\ref{alpha_1Cbis}) is linear in $t$, so it can only have infinitely many solutions if both the coefficient of $t$ and the constant coefficient vanish, that is, $\Lie(\alpha_1)=\Lie(\alpha_1)^N$ and $\beta_0\alpha_1=\alpha_1\beta_0 \bmod N$. 
The second equality shows that the class of $\alpha_1$ modulo $N$ centralizes $\beta_0$. The centralizer of
$\beta_0$ in $O_E/NO_E$ is $\F_N[\beta_0]\simeq\F_{N^2}$ (an explicit description of $O_E/NO_E$ can be given using the fact that $\calO_E \otimes \Z_N$  is the maximal $\Z_N$-order inside the unique ramified quaternion algebra over $\Z_N$, explicitly described e.g.~in \cite[Theorem 13.1.6]{VoightQuaternion}), so we obtain $\alpha_1\bmod N\in\F_N[\beta_0]$. On the other hand, $\Lie(\alpha_1)=\Lie(\alpha_1)^N$ implies that $\alpha_1 \bmod N$ is fixed by the Frobenius automorphism of $\F_N[\beta_0] / \F_N$, so $\alpha_1 \bmod N$ is in $\F_N$. 
In other words, there exists $a \in \mathbb{Z}$ and $\tau \in O_E$ such that $\alpha_1=a + N\tau$. Now note that $\tau$ is a quadratic imaginary integer, because it is contained in a quaternion algebra that is non-split at infinity, and that $\tau \neq 0$, because $\alpha_1$ corresponds to a cyclic isogeny.
It therefore follows 
that the norm $\ell$ of $\alpha_1$ is at least $N^2/4$, contradicting our
assumption.
\end{proof}

\subsection{Computing the vertical divisor \texorpdfstring{$B$}{B} of \texorpdfstring{\eqref{eq:L_is_divisorial}}{(4.2.5)}} \label{sec:V}
Our objective in this section is to compute the divisor $B$ appearing in \eqref{eq:L_is_divisorial}, but this requires some preparations.
It will be useful to employ the following notation. For a divisor $Z$ on $X_{\ns}^+(N)^{\mathrm{reg}}$, write $\operatorname{mdeg}_{N}(Z)$ for the multidegree of its specialization at $N$, viewed as an element of the free abelian group generated by the irreducible components of the special fiber, that is: \[
\operatorname{mdeg}_N(Z)
   \coloneq
   \sum_C  \delta_C  \, [Z, C] \cdot C \in \bigoplus_C (\mathbb{Z} \cdot C)
\]
where $C$ runs through the set of irreducible components of 
$X_{\ns}^+(N)^{\mathrm{reg}}_{/{\ol \F}_N}$ and $\delta_C$ denotes the multiplicity of $C$. See e.g.~\cite{Chin86} for generalities about divisors intersections. Here and below, a divisor on the generic fiber of our curve shall be identified with its Zariski closure in the regular model. As usual we extend those constructions by linearity to divisors with coefficients in $\Q$.

We start by solving the following problem. Given a degree $0$ divisor ${\cal D}$ on $X_{\ns}^+ (N)_{/{\Q_N^{\unr}}}$  
with a prescribed multidegree $\operatorname{mdeg}_N(\calD)$, we wish to compute a vertical divisor 
\[
\Phi_{\cal D}=\sum_C a_C({\cal D})C,
\]
where $C$ runs through the irreducible components of the special fiber of $X$, such that ${\cal D}+\Phi_{\cal D}$ has multidegree $0$ (that is, $({\cal D}+\Phi_{\cal D})\cdot C=0$ for every irreducible component $C$ of the special fiber). We keep labeling all components of the special fiber as in Figure~\ref{figureNSgrossier+}.

In order to compute the $\Phi_{\cal D}$, we could display the intersection matrix of our regular models for all four possibilities of $N\bmod 12$. The necessary information can be found in~\cite{EdPar_regular}. However, it is faster to proceed in the following way. Given the shape of special fibers for our arithmetic surfaces, we can reduce to the cases where 
\[ 
\calD = {\cal D}_\Gamma \coloneq \frac{1}{\delta_\Gamma}\Delta_\Gamma -\frac{2}{N-1}\Delta_{\mathrm{cusp}},
\]
where $\Gamma$ is an irreducible component of multiplicity $\delta_\Gamma$, $\Delta_\Gamma$ is a horizontal divisor of degree $\delta_\Gamma$ specializing to $\Gamma$, and $\Delta_{\mathrm{cusp}}$ is a divisor of degree $(N-1)/2$ specializing to the central Igusa component $A$ of multiplicity $(N-1)/2$ (and away from $j\equiv 0\bmod N$, if this case may occur; we may take for instance a cuspidal divisor). 

One checks that, in the special fiber above $N$ of our regular models, the horizontal divisor $\Delta_\Gamma$ satisfies $[\Delta_\Gamma, \Gamma]=1$.
Indeed, the intersection number can be computed as the length of the quotient of the completed local ring $\left(\Z_N^{\unr}[Y]/(Y^{\delta_\Gamma}-N)\right)[[X]]$ by the ideal generated by the two equations $X-Y$ and $Y$, which is easily seen to be $1$. By taking integral linear combinations of the $\calD_\Gamma$ we can therefore obtain horizontal divisors of degree $0$ whose specializations realize any given multidegree of total degree $0$, as we have $\operatorname{mdeg}_N(\calD_\Gamma) = \Gamma - A$, and these multidegrees clearly generate all multidegrees of total degree $0$.

Therefore, all cases boil down to the following possibilities for $\Gamma$:
\begin{itemize} 
\item $\Gamma=D_i$, $E_i$, or $F_i$ for some $i>0$, with $\delta_\Gamma=N+1$, $(N+1)/2$, or $1$ respectively; these are the components above a generic supersingular $j$-invariant in characteristic $N$; 
\item $\Gamma=D_0$, $E_0$, or $F_0$, with $\delta_\Gamma=(N+1)/3$, $(N+1)/6$, or $1$ respectively; these are the components above the supersingular $j$-invariant $0$ in characteristic $N\equiv -1\bmod 3$; 
\item $\Gamma=G_0$ or $H_0$, with $\delta_\Gamma=(N+1)/2$ or $1$ respectively; these are the components above the supersingular $j$-invariant $1728$ in characteristic $N\equiv -1\bmod 4$; 
\item $\Gamma={\cal D}_0$ or ${\cal E}_0$, with $\delta_\Gamma=(N-1)/3$ or $(N-1)/6$ respectively; these are the components above the ordinary $j$-invariant $0$ in characteristic $N\equiv 1\bmod 3$.
\end{itemize} 
By Propositions~\ref{GaloisOrbitsSS} and~\ref{GaloisOrbitsOrd}, each $\Gamma$ listed above is the specialization of (a rational multiple of) a suitable CM divisor on $X(\overline{\Q}_N)$. In each case, the integer $\delta_\Gamma$ is the multiplicity of the component $\Gamma$, namely one of $1$, $(N-1)/6$, $(N+1)/6$, $(N-1)/3$, $(N+1)/3$, $(N+1)/2$, or $N+1$.

It also follows from Corollary~\ref{GaloisOnDT_n} that when we consider the specialization of the divisors $\Delta^*(T_\ell)$, namely the intersections of the diagonal with Hecke correspondences, the supersingular components $D_i$ do not occur. However, for completeness, we consider all components $\Gamma$ of the special fiber. The results are listed in Table~\ref{table:phi_Delta}: for each irreducible component $\Gamma$, we give a vertical $\Q$-divisor $\Phi_\Gamma$ such that $\calD_\Gamma + \Phi_\Gamma$ is of multidegree $0$.

\begin{table}
\centering
\begin{tabular}{|c|c||c |c |}
\hline
${\Gamma} $ & $\Phi_{\Gamma} $ & ${\Gamma} $ & $\Phi_{\Gamma} $   \\
\hline
 & & &\\ 
$D_i, i>0$  & $\frac{2}{N-1} \, D_i +\frac{1}{N-1} \, E_i + \frac{2}{N^2 -1}\,  F_i$  &
$D_0$  & $\frac{2}{N-1} \, D_0 +\frac{1}{N-1} \, E_0 + \frac{6}{N^2 -1} \, F_0$     \\
 & & & \\
$E_i, i>0$  & $\frac{2}{N-1} \, D_i +\frac{2N}{N^2 -1} \, E_i + \frac{2}{N^2 -1} \, F_i$  &
$E_0$  & $\frac{2}{N-1} \, D_0 +\frac{4N-2}{N^2 -1} \, E_0 + \frac{6}{N^2 -1} \, F_0$     \\
 & & &\\
$F_i, i>0$  & $\frac{2}{N-1} \, D_i +\frac{1}{N -1} \, E_i + \frac{1}{N -1} \, F_i$    &
$F_0$  & $\frac{2}{N-1} \, D_0 +\frac{1}{N -1} \, E_0 + \frac{3}{N -1} \, F_0$    \\
 & & & \\
\hline
 & & & \\ 
$G_0$  & $\frac{2}{N-1} \, G_0 + \frac{4}{N^2 -1} \, H_0$ &
${\cal D}_0$  & $\frac{2}{N-1} \, {\cal D}_0 + \frac{1}{N -1} \, {\cal E}_0$   \\
 & & & \\
$H_0$  & $\frac{2}{N-1} \, G_0 +\frac{2}{N -1} \, H_0$  &
${\cal E}_0$  & $\frac{2}{N-1} \, {\cal D}_0 +\frac{4}{N -1} \, {\cal E}_0$ \\
 & & & \\
\hline
\end{tabular}
\caption{Values for $\Phi_{\Gamma}$}
\label{table:phi_Delta}
\end{table}

\begin{example}
For the comfort of the reader, we detail the computations for the first case of Table~\ref{table:phi_Delta}. We use the intersection numbers described in Remark~\ref{rmk: intersection matrix}. 

Let ${\cal D}_{D_i}\coloneq \frac{1}{N+1}\Delta_{D_i} -\frac{2}{N-1}\Delta_{\mathrm{cusp}}$, where $\Delta_{D_i}$ is a horizontal divisor of degree $N+1$ specializing to $D_i$. 
We want to compute a vertical divisor $\Phi_{D_i}$ such that $[{\cal D}_{D_i}+\Phi_{D_i},C]=0$ for every irreducible component $C$ of the special fiber of $X_{\ns}^+(N)^{\mathrm{reg}}$. The divisor $\Phi_{D_i}$ is defined up to addition of a multiple of the full special fiber, and it shall ease the computation to fix the coefficient of $A$ to be $a_A=0$. One then checks that all coefficients $a_C$ are $0$ except possibly for $C=D_i,E_i,F_i$, so that \[ \Phi_{D_i}=a_{D_i}D_i+a_{E_i}E_i+a_{F_i}F_i. \] We compute $[D_i,D_i]=-1$, $[E_i,E_i]=-2$, and $[F_i,F_i]=-(N+1)$. Intersecting with $A$ gives $[{\cal D}_{D_i}+\Phi_{D_i},A]=a_{D_i}-\frac{2}{N-1}=0$; similarly, intersecting with $D_i$, $E_i$, and $F_i$ we get
\[
-a_{D_i} +a_{E_i}  +a_{F_i} +\frac{1}{N+1} =0, \ a_{D_i} -2a_{E_i} =0, \hspace{0.4cm} {\mathrm{and}} 
\hspace{0.4cm} a_{D_i} -(N+1) a_{F_i} =0.  
\]
We finally have $a_{D_i}=\frac{2}{N-1}$, $a_{E_i}=\frac{1}{N-1}$, and $a_{F_i}=\frac{2}{N^2-1}$. 
\end{example}

Doing the math in the remaining cases, with the normalization $a_A=0$ throughout, we obtain Table~\ref{table:phi_Delta}. 
Note that in all cases the coefficients of $\Phi_{\Gamma}$ satisfy $a_*=O(1/N)$. 

Using Table~\ref{table:phi_Delta}, we can finally compute the divisor $B$ in \eqref{eq:L_is_divisorial}. 
Recall that this divisor arises in the following context: $X$ is our regular model $X_{\ns}^+(N)^{\mathrm{reg}}$ over $\Z$, endowed with a $\Z$-section $b$; we have fixed a simple open $U$ and a trace zero endomorphism $f$ of $J$, given by a correspondence $T$. We assume that $T$ is of the form $T=m(a_1T_\ell+a_2T_n)$ for certain Hecke operators $T_\ell$ and $T_n$ with $\ell, n$ prime to $N$. Then $B$ is the unique divisor satisfying the following three conditions: \begin{enumerate} 
\item \label{item:B1} it is supported on the special fiber $X_{\F_N}$ of the regular model of $X$ in Figure~\ref{figureNSgrossier+}; 
\item \label{item:B2} the coefficient of the component meeting the point $b\in X(\Z)$ is $0$;
\item \label{item:B3} for each $u\in U(\Z_N^{\unr})$, the divisor \begin{equation}\label{eq:condition_B_modular} 
T\cap (X\times u)+\ol{D_V}+B 
\end{equation} 
has multidegree $0$. 
\end{enumerate}
We can compute $B$ with the tools developed in this section, as follows. First, we determine the specialization of all points appearing in
\begin{equation}\label{eq:Dsec5} 
D\coloneq T\cap (X\times u)+\ol{D_V} = m\cdot \bigl(a_1T_\ell(u)+a_2T_n(u)+a_1T_\ell(b)+a_2T_n(b) -a_1\Delta^*T_\ell-a_2\Delta^*T_n \bigr).
\end{equation}
We explain how this can be done. 
As $b$ and $u$ are defined over $\Z_N^{\unr}$ and $\ell,n$ are prime to $N$, every point in $T_\ell(u)$, $T_n(u)$, $T_\ell(b)$, and $T_n(b)$ is defined over $\Z_N^{\unr}$. We can decide which component each point $(E,C^+)$ specializes to from the $j$-invariant of $E$ modulo $N$. Since $U$ is a simple open, the points $u\in U(\Z_N)$ must belong to a single $F_i$ or $H_0$, corresponding to a supersingular $j$-invariant $\ol j\in \ol{\F}_N$. Hence the components where the points in $T_\ell(u)$ and $T_n(u)$ specialize only depend on the curves which are $\ell$-isogenous or $n$-isogenous to the curve with $j$-invariant $\ol j$, which may be computed for example using the classical modular polynomials. To determine the points in $\Delta^*T_\ell, \Delta^*T_n$ we use Corollary~\ref{cor:delta_cap_Tell}; their specializations are then described by Propositions~\ref{GaloisOrbitsSS} and~\ref{GaloisOrbitsOrd}. 

Moreover, by Proposition~\ref{prop:flatness}, the special fiber $D_{\F_N}$ of $D$ at $N$ consists precisely of the specializations of the points appearing in \eqref{eq:Dsec5}. Thus, from the previous calculation we obtain an expression of $\operatorname{mdeg}_N(D)$ as a linear combination of the specialization types $\Gamma$ appearing in Table~\ref{table:phi_Delta}. We may then compute the corresponding linear combination of the $\Phi_\Gamma$. If we take $B$ to be this linear combination, conditions~\ref{item:B1} and \ref{item:B3} are satisfied. Finally, condition~\ref{item:B2} is also satisfied after adding a suitable multiple of the full fiber $X_{\F_N} = \operatorname{div}(N)$.

\subsection{An example: Vertical corrections for \texorpdfstring{$N=19$}{N=19}}
\label{Bfor19}

We conclude this section by making the above constructions explicit for $N=19$.
The curve $X_{\ns}^+ (19)$ has two simple opens $U_{H_0}, U_{F_1}$, corresponding to the components $H_0$ and $F_1$ of the special fiber at $19$. They are associated with the two supersingular invariants $j\equiv -1 \bmod 19$ and $j\equiv 7\bmod 19$, respectively.

Let $b\coloneq\left(E_i,C_{\ns}^+(E_i,i)\right)\in X_{\ns}^+(19)(\Z)$ be the Heegner point associated with the elliptic curve with complex multiplication by $\Z[i]$, endowed with the normalizer of the Cartan subgroup given by $(\Z[i]\otimes \F_{19})^\times\hookrightarrow \GL(E_i[19])$. Since $1728 \equiv -1 \bmod 19$, the point $b$ specializes to the component $H_0$. Since both Hecke operators $T_2$ and $T_3$ have trace $-3$, we can take the trace zero endomorphism $f=T_2-T_3$. The exponent of $(J_{\F_{19}} / J^0_{\F_{19}})(\ol \F_{19})$ is $m=19-1=18$. We consider the map $\jbtilde$ with base point $b$, on the simple open $U \coloneq U_{H_0}$, and corresponding to the map $f$.

Let
\[
D\coloneq18\bigl(T_2(u)-T_3(u)+T_2(b)-T_3(b)-\Delta^*(T_2)+\Delta^*(T_3)\bigr),
\]
as in \eqref{eq:Dsec5}. We now compute the points appearing in this divisor and the multidegrees of their specializations.
Set $\tau_d\coloneq(1+i\sqrt d)/2$ for $d=3,7,11$. Example~\ref{examplesT_N} gives
\begin{align*}
\Delta^*(T_2)
={}&
\bigl(E_i,C_{\ns}^+(E_i,i)\bigr)
+
\sum_{C^+\in \mathcal C_{\ns}^+(E_{i\sqrt 2},i\sqrt 2)}
\bigl(E_{i\sqrt 2},C^+\bigr)
+
2\bigl(E_{\tau_7},C_{\ns}^+(E_{\tau_7},\tau_7)\bigr),
\\
\Delta^*(T_3)
={}&
2\bigl(E_{\tau_{11}},C_{\ns}^+(E_{\tau_{11}},\tau_{11})\bigr)
+
\sum_{C^+\in \mathcal C_{\ns}^+(E_{\tau_3},\tau_3)}
\bigl(E_{\tau_3},C^+\bigr)
+
\sum_{C^+\in \mathcal C_{\ns}^+(E_{i\sqrt 3},i\sqrt 3)}
\bigl(E_{i\sqrt 3},C^+\bigr).
\end{align*}
The sum in $\Delta^*(T_2)$ has $9$ terms, while the two sums in $\Delta^*(T_3)$ have respectively $3$ and $9$ terms. The six CM $j$-invariants of the elliptic curves occurring in $\Delta^*(T_2), \Delta^*(T_3)$ reduce modulo $19$ as follows. The supersingular ones are
\[
j(i)=1728\equiv -1,\qquad
j(\tau_{11})=-2^{15}\equiv 7,\qquad
j(\tau_7)=-3^3\cdot 5^3\equiv 7
\]
and the ordinary ones are
\[
j(i\sqrt 2)=2^6\cdot 5^3\equiv 1,\qquad
j(\tau_3)=0,\qquad
j(i\sqrt 3)=2^4\cdot 3^3\cdot 5^3\equiv 2.
\]
All congruences are modulo $19$. Using the description of how CM points specialize (Proposition~\ref{GaloisOrbitsSS} and Corollary~\ref{GaloisOnDT_n}), we get
\[
\operatorname{mdeg}_{19}\bigl(\Delta^*(T_2)\bigr)
= H_0 +9 \cdot A +2 \cdot F_1
\qquad \text{and} \qquad
\operatorname{mdeg}_{19}\bigl(\Delta^*(T_3)\bigr)
=2 \cdot F_1 +3 \cdot \mathcal E_0 +9 \cdot A.
\]

We also need the multidegrees of $T_2(u)$ and $T_3(u)$ when $u$ specializes to
$H_0$ or to $F_1$. Since the relevant points are defined over $\Z_{19}^{\unr}$, they
specialize to components of multiplicity one; the component where they specialize is determined by the reduction modulo $19$ of the (supersingular) $j$-invariant of the underlying elliptic curve. The action of $T_2, T_3$ on supersingular $j$-invariants modulo $19$ can be computed using the classical modular polynomials $\Phi_2 (X,Y)$ and $\Phi_3 (X,Y)$. Representing by $(j=j_0)$ the class of the $j$-invariant $j_0$, we check that
\[
T_2 (j= -1) =(j= -1)+2(j=7), \qquad 
T_2 (j= 7) =(j= -1)+2(j= 7)
\]
and
\[
T_3 (j= -1) =4(j= 7), \qquad
T_3 (j= 7) =2(j= -1)+2(j= 7).
\]
Recalling that the points with $j$-invariant $-1$ specialize to $H_0$ and the points with $j$-invariant $7$ specialize to $F_1$, we obtain the following table:
\[
\begin{array}{c|cc}
 & u\leadsto H_0 & u\leadsto F_1\\
\hline
\operatorname{mdeg}_{19}\bigl(T_2(u)\bigr)
& H_0+2 \cdot F_1 & H_0+2 \cdot F_1\\
\operatorname{mdeg}_{19}\bigl(T_3(u)\bigr)
& 4 \cdot F_1 & 2 \cdot H_0 + 2 \cdot F_1
\end{array}
\]
In particular, as $b$ specializes to $H_0$, the first column also gives
$\operatorname{mdeg}_{19}(T_2(b))$ and $\operatorname{mdeg}_{19}(T_3(b))$.
By additivity of $\operatorname{mdeg}_{19}$,
\[
\operatorname{mdeg}_{19}(D)
=
18\,\operatorname{mdeg}_{19}\bigl(T_2(u)-T_3(u)\bigr)
+18\bigl(-2 \cdot F_1 + 3 \cdot \mathcal E_0\bigr).
\]
Therefore, if $u$ specializes to $H_0$, then
\[
\operatorname{mdeg}_{19}(D)
=
18\bigl(H_0-4 \cdot F_1+3 \cdot \mathcal E_0 \bigr),
\]
and if $u$ specializes to $F_1$, then
\[
\operatorname{mdeg}_{19}(D)
=
18\bigl(-H_0-2 \cdot F_1+3 \cdot \mathcal E_0\bigr).
\]

Recall that $\Phi_\Gamma$ denotes the vertical divisor corresponding to a divisor specializing to the component $\Gamma$, as in Table~\ref{table:phi_Delta}. For the simple open $U_{H_0}$, the non-normalized vertical correction is
\[
\widetilde B_{H_0}
=
18\bigl(\Phi_{H_0}-4 \cdot \Phi_{F_1}+3 \cdot  \Phi_{\mathcal E_0}\bigr)
=
2\cdot (G_0+H_0)-4\cdot (2\cdot D_1+E_1+F_1)+3\cdot (2\cdot \mathcal D_0+4\cdot \mathcal E_0).
\]
The full special fiber is $\operatorname{div}(19)
=
9 \cdot A+6 \cdot  \mathcal D_0+3 \cdot \mathcal E_0+20 \cdot D_1+10 \cdot E_1+F_1+10 \cdot G_0+H_0$.
Since $b$ specializes to $H_0$ and we want our divisor $B$ not to meet the point $b$, we normalize it by making the coefficient of $H_0$ equal to $0$. We obtain $B_{H_0}\coloneq\widetilde B_{H_0}-2\operatorname{div}(19)$,
that is,
\[
B_{H_0}=-18 \cdot A +6 \cdot 
{\cal E}_0 -6  \cdot {\cal D}_0 -18  \cdot G_0 -48 \cdot  D_1 
-24 \cdot  E_1  -6  \cdot F_1.
\]
The same calculation for the simple open $U_{F_1}$, with the same base point $b$ and the same endomorphism $f=T_2-T_3$, gives
\[
B_{F_1} = 18 \cdot A +18 \cdot 
{\cal E}_0 +18  \cdot {\cal D}_0 +18  \cdot G_0 +36\cdot  D_1 
+18 \cdot  E_1,
\]
which has no component along $F_1$ (nor, by construction, $H_0$).

\section{Model-free computations for non-split Cartan curves}\label{sect: model-free computations for Xnsp}

In the next sections, we explain how we compute the maps $\jbtilde$ and $\kappa$ introduced in Section \ref{sec:review-geometric-quadratic-chabauty} for non-split Cartan curves of prime level using the representation of Section \ref{sec:mumford_makdisi} and the model in Section \ref{sec:some_modular_theory}. 
We then explain how to intersect their images in analytic coordinates.

\begin{remark}\label{rmk:no_base_point_Xns} As in previous sections, we will assume that we know a rational point $b \in X_{\ns}^+(N)(\Q)$. For computationally accessible values of $N$, this is not a serious restriction, because CM points usually provide rational points on $X_{\ns}^+(N)$. There will be no such point only when $N$ is not inert in any of the $13$ class-number-one quadratic imaginary orders. This is the case only for the small (logarithmic) density $2^{-9}$ of $N$. The smallest such $N$ is $\numprint{15073}$.
\end{remark}

\begin{assumption}\label{assump:properties-of-p}
In the case of modular curves $X = X_\ns^+(N)$, we will restrict to primes  $p$ satisfying additional conditions. When these conditions are not met, or are too hard to check, we try again with a different auxiliary prime $p$. 
\begin{enumerate}
    \item We require that we can compute a subgroup $\Gamma$ of the Mordell--Weil group of $J$ as in Section \ref{subsec:computing-MW}, in particular,  that   $J^0(\Z)_{\ol 0}$ surjects onto $\ker(J(\Z/p^2\Z) \to J(\Z/p\Z))$, see  \eqref{eq: J0Z0 to JZp2 onto}.
    \item We require that $p >3$, and $p \nmid \deg\big( X(N) \longrightarrow  X_\ns^+(N)\big) = N+1$, and moreover that Makdisi's modular forms generate $M_2(\Gamma(N))$ modulo~$p$  
   so as to be able to construct a $p$-adic Makdisi model of the Jacobian (see Section \ref{sec:makdisi_modular_P}, especially  \eqref{eqn:p_ndiv_Gamma} and Remark \ref{rmk: generation Mak} there). 
    \item We also require that $p \not \equiv \pm 1 \pmod N$, so that the cusps are not $\F_p$-rational. This is not a theoretical obstruction, but in the current implementation we work with non-cuspidal points on the modular curves, represented as elliptic curves  with good reduction, with level~$N$ structure. For the same reason, we also assume that all the (finitely many) elliptic curves that we encounter have good reduction over $\Z_p^{\unr}$ (see also Remark \ref{remark representing points}).
\end{enumerate}

\end{assumption}

\subsection{Representing points on non-split Cartan curves}
\label{sec:represent_points}
 We begin by briefly discussing the computational problem of representing a point on $X_{\ns}^+(N)(\Z_q)$, where $\Z_q$ is the ring of integers of the finite unramified extension $\Q_q/\Q_p$ with residue field $\F_q$. We assume that $\F_q$ contains primitive $N$th roots of unity, and we fix a primitive $N$th root of unity $\mu$. We also identify $\mu$ with the unique $N$th root of unity in $\Z_q$ reducing to $\mu$ modulo the maximal ideal.

Recall from Section \ref{subsect: modular curves} that a point in $X_{\ns}^+(N)(\Q_q)$ is a pair $(E, [\phi])$, where $E/\Q_q$ is an elliptic curve and $[\phi]$ is an equivalence class of $C_{\ns}^+(N)$-level structures defined over $\Q_q$. 
We will only be interested in points corresponding to elliptic curves $E/\Z_q$ with good reduction.
We represent this concretely as a pair of pairs: $((a_4, a_6), (P,Q))$ in $\Z_q^2$ and $E[N]^2$ is described as follows. 
\begin{itemize}
\item Since $p > 3$, the curve $E$ can be represented in short Weierstrass form $y^2 = x^3 + a_4x + a_6$ with $a_4, a_6 \in \Z_q$ given to finite precision.
\item Giving the data of an isomorphism $\phi: E[N] \to (\Z/N\Z)^2$ is equivalent to giving the data of a pair of points $P, Q$ that generate $E[N]$.
Since $E/\Q_q$ has good reduction and hence $E[N]$ is finite étale over $\Z_q$, it is enough to choose $P, Q$ in $E(\F_q)$. 
This choice lifts uniquely to $E(\Z_q) = E(\Q_q)$. Note that this requires all points in $E[N]$ to be defined over $\F_q$. 
We may achieve this at the cost of a finite extension $\F_{q^r}$ of the ground field $\F_q$ (and we then replace $\Q_q$ with the corresponding unramified extension $\Q_{q^r}$). 
This representation essentially amounts to giving a point on $X(N)(\Q_q)$ lifting our given point $(E, [\phi]) \in X_{\ns}^+(N)(\Q_q)$. To ensure that we always work on the same connected component of $X(N)$ and use the techniques in \cite{NicMod}, we require that the Weil pairing of $P$ and $Q$ equals $\mu$, cf.\ Remark~\ref{rk:fixed_Weil}.
\end{itemize}

\begin{remark}\label{remark representing points}
    \phantom{~}
    \begin{enumerate}
        \item Note that we represent a $C_{\ns}^+(N)$-level structure by a full level structure. In practice, this means that our representation of a $\Q_p$-rational point of $X_{\ns}^+(N)$ is encoded by an elliptic curve with torsion basis over an extension $\Q_q$ of $\Q_p$ (we need the full $N$-torsion to be defined over $\Q_q$). 

        \item The requirement to work only with elliptic curves having good reduction over $\Z_p^{\unr}$ is merely a limitation of our current implementation, not a theoretical obstruction.
        
        \item It is not true that every elliptic curve $E/\Q_p$ with $j(E) \in \Z_p$ may be encoded in the above form, even up to twists. Namely, there exist elliptic curves $E/\Q_p$ with integral $j$-invariant none of whose twists have good reduction over $\Q_p^{\unr}$; this happens when  $j(E) \equiv 1728\pmod p$ and the valuation of $j(E)-1728$ is odd, or when $j(E) \equiv 0\pmod p$ and the valuation of $j(E)$ is not divisible by $3$.
    \end{enumerate}
\end{remark}

We will consider two kinds of points in $X_{\ns}^+(N)(\Q_q)$: lifts of $\F_p$-rational points, and images of Heegner points in $X_{\ns}^+(N)(F)$ where $F$ is a number field that embeds in $\Q_q$. In the next two sections, we describe how to compute the relevant equations for the elliptic curves and how to choose the points $P,Q \in E[N](\F_q)$ in these cases.

\subsection{Enumerating residue discs}
\label{sec:enumerate}
We now explain how to list all of the points in $X_{\ns}^+(N)(\F_p)$ without using equations. By the modular interpretation, a point in $X_{\ns}^+(N)(\F_p)$ corresponds to a pair $(E, [\phi])$, where $E$ is an elliptic curve over $\F_p$ and $\phi : E[N] \to  (\Z/N\Z)^2$ is an isomorphism, up to the equivalence relation \eqref{eq:rationalptsYH} (note that our assumption $p \not \equiv \pm 1 \pmod N$ implies that $X_{\ns}^+(N)(\F_p)$ contains no cuspidal points). Thus, to enumerate all points in $X_{\ns}^+(N)(\F_p)$ we may proceed as follows. 
\begin{enumerate}
    \item For each candidate $j$-invariant $j_0 \in \mathbb{F}_p$, construct $E_{j_0}/\F_p$ with $j(E_{j_0}) = j_0$ and compute its geometric automorphism group $\Aut(E_{j_0, \overline{ \F}_p})$ (which depends only on $j_0$).
    \item Loop over all $\Z/N\Z$-bases $(P, Q)$ of $E_{j_0}[N]$ satisfying the condition on the Weil pairing (that is, $e_N(P, Q)=\mu$, where $\mu$ is our fixed $N$th root of unity). Each such basis gives an isomorphism $\phi : E_{j_0}[N] \to (\Z/N\Z)^2$, and the point $(E_{j_0}, [\phi]) \in X_{\ns}^+(N)(\overline{\F}_p)$ is rational if and only if the points $(E_{j_0}, [\phi \circ \operatorname{Frob}_p])$ and $(E_{j_0}, [\phi])$ are equal
    on the modular curve. To check equality, we loop over all automorphisms $\alpha$ of $(E_{j_0})_{\overline\F_p}$ and test whether there exists $C \in C_{\ns}^+(N)$ such that
    \[
    \phi \circ \operatorname{Frob}_p = C \circ \phi \circ \alpha.
    \]
    \item For every pair of candidate structures $(E_{j_0}, [\phi_1])$, $(E_{j_0}, [\phi_2])$, we test whether they represent the same $\F_p$-point by checking equivalence of the structures $\phi_1, \phi_2$ up to automorphisms and $C_\ns^+(N)$, and remove duplicates. Note that pairs $(E, [\phi]), (E', [\phi'])$ with $j(E) \neq j(E')$ cannot represent the same point.
\end{enumerate}

Finally, for each residue disc $(E, [\phi])$, we (try to) find a lift $(\widetilde{E}, [\widetilde{\phi}]) \in X_{\ns}^+(N)(\Q_p)$ with the additional property that $\widetilde{E}$ has good reduction over $\Q_p$. Smoothness of $X_{\ns}^+(N)$ over $\Z_p$ ensures that every $(E, [\phi]) \in X_{\ns}^+(N)(\F_p)$ lifts to \textit{some} point $(\widetilde{E}, [\widetilde{\phi}]) \in X_{\ns}^+(N)(\Q_p)$, but there is no guarantee that the representative elliptic curve $\tilde{E}$ has good reduction. We will show that there always is such a lift, and that in most cases there are infinitely many.

We distinguish cases according to whether $j(E) \in \{0,1728\}$ or not.
\begin{enumerate}
    \item If $j(E) \not \in \{0, 1728\}$, then lifting with good reduction over $\Q_p$ is always possible. Fix any $\widetilde{E}/\Q_p$ with good reduction and special fiber $E$. 
Reduction mod $p$ establishes a Galois-equivariant bijection $\widetilde{E}[N] \cong E[N]$. Via this isomorphism, we may interpret $\phi$ as an isomorphism $\tilde\phi\colon \tilde{E}[N] \cong E[N] \to (\Z/N\Z)^2$. The pair $(\widetilde{E}, [\tilde{\phi}])$ is readily checked to be $\Q_p$-rational and a lift of $(E, [\phi])$. Since $\tilde{E}$ has good reduction, $\widetilde{E}[N]$ is an unramified representation of $\operatorname{Gal}(\overline{\Q}_p/\Q_p)$. In particular, $\operatorname{Gal}(\overline{\Q}_p/\Q_p)$ acts on $\phi$ via its quotient $\operatorname{Gal}(\Q_p^{\unr}/\Q_p)$, which is canonically isomorphic to $\operatorname{Gal}(\overline{\F}_p/\F_p)$ and hence (topologically) generated by the unramified Frobenius $\widetilde{\operatorname{Frob}}_p$. Thus, $(\widetilde{E}, [\widetilde{\phi}])$ is $\Q_p$-rational if and only if $(\widetilde{E}, [\widetilde{\phi}]) = (\widetilde{E}, [\widetilde\phi \circ \widetilde{\operatorname{Frob}}_p])$ as points in $X_{\ns}^+(N)(\overline{\Q}_p)$. 

    By $\F_p$-rationality of $(E, [\phi])$, there is an automorphism $\alpha$ of $E$ and a matrix $C \in C_{\ns}^+(N)$ such that
    \begin{equation}\label{eq: rationality over Fp}
            \phi \circ \operatorname{Frob}_p = C \circ \phi \circ \alpha.
    \end{equation}
    The crucial point is that both $E$ and $\widetilde{E}$ have the same geometric automorphism group, namely $\{ \pm \operatorname{Id} \}$. 
    Letting $\widetilde{\alpha}$ be the automorphism of $\widetilde{E}$ that lifts $\alpha$, we claim that we have
    \begin{equation}\label{eq: rationality over Qp}
    \widetilde{\phi} \circ \widetilde{\operatorname{Frob}}_p = C \circ \widetilde{\phi} \circ \widetilde{\alpha},
    \end{equation}
    which gives $\Q_p$-rationality of $(\widetilde{E}, [\widetilde{\phi}])$. Equation
     \eqref{eq: rationality over Qp} may be shown by reduction modulo $p$, which gives a bijection $\widetilde{E}[N] \to E[N]$ and identifies $\widetilde{\operatorname{Frob}}_p$ with $\operatorname{Frob}_p$, so that \eqref{eq: rationality over Qp} reduces to \eqref{eq: rationality over Fp}.

    \item If $j(E) \in \{0, 1728\}$, the same argument shows that there is a lift $(\widetilde{E}, [\widetilde{\phi}]) \in X_{\ns}^+(N)(\Q_p)$ where $\widetilde{E}$ has good reduction. Namely, it suffices to take $\widetilde{E}$ having $j$-invariant exactly equal to $0$ or $1728$. Again, the key point is that reduction modulo $p$ identifies $\operatorname{Aut}(\widetilde{E}_{\overline{\Q}_p})$ with $\operatorname{Aut}(E_{\overline{\F}_p})$. 
    Note that here we use $p>3$.
    
    However, if we merely take $j(\widetilde{E})$ to be congruent to $0$ (resp.~$1728$) modulo $p$, then the rationality condition for $(\widetilde{E}, [\widetilde{\phi}])$ is not necessarily satisfied. Indeed, the $\F_p$-rationality of $(E, [\phi])$ only guarantees that \eqref{eq: rationality over Fp} holds for some automorphism $\alpha \in \operatorname{Aut}(E_{\overline{\F}_p})$.
    If $\alpha$ is not $\pm \operatorname{Id}$ and $j(\tilde{E}) \not \in \{0, 1728\}$, then there is no automorphism $\widetilde{\alpha}$ lifting $\alpha$, so we cannot deduce $\Q_p$-rationality of $(\widetilde{E}, [\widetilde{\phi}])$. For each  $\beta \in \operatorname{Aut}(E_{\overline{\F}_p})$ we may test whether the normalizer of Cartan structure $\phi \circ \beta$ lifts to a $\Q_p$-rational one, as explained above.

    It is possible that no  structure $\phi \circ \beta$ lifts to $\Q_p$ if $j(\widetilde{E}) \not \in \{0, 1728\}$. However, if $\phi \circ \beta$ does lift, we may easily find infinitely many different lifts $(\widetilde{E}, [\widetilde{\phi}])$ of $(E, [\phi])$ with $\widetilde{E}$ having good reduction over $\Q_p$: it suffices to deform $p$-adically any given lift with these properties.  
\end{enumerate}

\begin{remark}
    Recall from Remark~\ref{remark representing points}(2) that not every point $(E,[\phi])\in X_{\ns}^+(N)(\Q_p)$ can be represented with $E$ defined over $\Z_p^{\unr}$ and having good reduction. This obstruction occurs when $v_p(j(E))>0$ is not divisible by $3$, or when $v_p(j(E)-1728)>0$ is odd.

Consider the residue disc of a point $\ol u=(\ol E,\ol{[\phi]})\in X(\F_p)$. If $j$ is a local parameter at $\ol u$, then this disc contains points with $v_p(j(E))>0$ not divisible by $3$; similarly, if $j-1728$ is a local parameter at $\ol u$, the residue disc contains points with $v_p(j(E)-1728)>0$ odd. These are precisely the points for which the above obstruction can occur. This occurs at the reductions of the elliptic points of order $3$, respectively order $2$.

By contrast, if $j$ has order of vanishing $3$ at $\ol u$, respectively if $j-1728$ has order of vanishing $2$ at $\ol u$, then every point in the residue disc has $v_p(j(E))$ divisible by $3$, respectively $v_p(j(E)-1728)$ even, and the obstruction does not arise. 

There are several possible ways to handle the residue discs in which $j$ or $j-1728$ is a local parameter: one can for example allow representatives over ramified extensions of $\Q_p$. Alternatively, provided that the computations are carried out with sufficiently high $p$-adic precision, one can restrict to the  subsets defined by $v_p(j(E))\equiv 0\pmod 3$, respectively
$v_p(j(E)-1728)\equiv 0\pmod 2$. 
\end{remark}

\begin{example}\label{ex: curve that doesn't lift}
When $p=7$ and $E$ is the elliptic curve $y^2=x^3+x$ of $j$-invariant $1728$, there are $\F_p$-rational points $(E, [\phi])$ of $X_{\ns}^+(13)(\F_7)$ for which all lifts $(\widetilde{E}, [\widetilde{\phi}]) \in X_{\ns}^+(13)(\Q_7)$ with $j(\widetilde{E}) \neq 1728$ have bad reduction. 
\end{example}

\subsection{Heegner points}
\label{sec:Heegner}

\subsubsection{Heegner points on \texorpdfstring{$X_\ns^+(N)$}{Xns+N}}

In this section we discuss Heegner points on the curve $X_{\ns}^+(N)$. Our definition is a special case of \cite[Definition 3.1]{KohenPacetti} when the integer $m$ in loc.~cit.~is equal to $1$ (and hence the auxiliary cyclic ideal $\mathfrak{m}$ is trivial).
\begin{defi}
Let $\calO $ be an  imaginary quadratic order of conductor prime to $N$ and in which $N$ is inert (that is, $\calO/N\calO$ is isomorphic to the finite field $\F_{N^2}$). 
Let $E/F$ be an elliptic curve with potential complex multiplication by $\calO$, where $F=\Q(j(E))$ is a number field.

After choosing an $\calO$-module generator of $E[N]$, we may identify $E[N]$ with $\calO/N\calO$ as an $\calO$-module. 
Any choice of $\Z/N\Z$-basis of $\calO/N\calO$ yields an isomorphism $\phi \colon E[N] \to (\Z/N\Z)^2$ and an embedding $(\calO/N\calO)^\times \hookrightarrow \GL_2(\Z/N\Z)$, whose image is a non-split Cartan subgroup. 
There is a  choice of $\phi \colon E[N] \to (\Z/N\Z)^2$  such that the image of $(\calO/N\calO)^\times$ in $\GL(E[N])$ is equal to $C_\ns(N)$ when represented with respect to the basis $\phi^{-1}$. Such a $\phi$ is unique up to postcomposition by $C_{\ns}^+(N)$, and
the pair $(E, [\phi])$ gives an $F$-point of $X_{\ns}^+(N)$, which depends only on $E$ (see \cite[Appendix 5, top of page 195]{MR1757192}). 
We will call $(E, [\phi])$ a \emph{Heegner point} of order $\calO$.
\end{defi}

\begin{defi}\label{def Heegner}
    Let $\calO$ be an imaginary quadratic order of conductor prime to $N$ in which $N$ is inert. The associated \emph{Heegner divisor} on $X_{\ns}^+(N)$ is 
    \[
    D_\calO = \sum_{\End(E) \cong \calO} (E, [\phi]) ,
    \]
    where the sum is over all (isomorphism classes of) elliptic curves $E/\ol \Q$ with potential complex multiplication by $\calO$ and for each such $E$, the point $(E, [\phi])$ is the associated Heegner point. 
\end{defi}

\begin{remark}
    Note that, for every Heegner point $(E, [\phi])$ and every $\sigma \in \operatorname{Gal}(\overline{\Q}/\Q)$, the point ${}^\sigma (E, [\phi])$ is also a Heegner point associated to the same quadratic order. 
    In particular, fixing an elliptic curve $E/F$ with $\End(E) \cong \calO$, where as above $F=\Q(j(E))$,  we have 
    \[
    D_\calO = \sum_{\sigma \in \operatorname{Gal}(\overline{\Q}/\Q) / \operatorname{Gal}(\overline{\Q}/F)} {}^\sigma (E, [\phi]),
    \]
where the sum is taken over any set of coset representatives.
    In particular, $D_\calO$ is a divisor defined over $\Q$.
\end{remark}

The study of Heegner points on non-split Cartan curves was initiated in work of Kohen and Pacetti \cite{KohenPacetti} following work on the curves $X_0(N)$ by Gross, Zagier, and Zhang.
\begin{remark}\label{rmk: Heegner points want to generate}
    For each $\mathcal{O}$, let $n_\mathcal{O}$ be the degree of $D_\mathcal{O}$. Let $\xi$ be the cuspidal divisor of $X_{\ns}^+(N)$, of degree $n_\xi = \frac{N-1}{2}$. The degree 0 divisor $n_\xi D_\mathcal{O} - n_\mathcal{O} \xi$ is rational and therefore represents a point in $\operatorname{Jac} X_{\ns}^+(N)(\Q)$. Kohen and Pacetti \cite[Section~3]{KohenPacetti} give evidence that, when $\operatorname{Jac} X_{\ns}^+(N)$ has analytic rank equal to its dimension,
    these divisors and their Hecke images generate $J(\Q)$ up to finite index. Since we want to avoid cusps, we will never work with the divisors $n_\xi D_\mathcal{O} - n_\mathcal{O} \xi$ themselves, but rather take degree 0 linear combinations of the $D_{\mathcal{O}}$.
\end{remark}

\subsubsection{Computational representation of Heegner points}

Given an imaginary quadratic order $\calO$ in which $N$ is inert, we explain how we represent all Heegner points associated to $\calO$. The sum of such Heegner points gives the divisor $D_\calO$.
We list the $j$-invariants corresponding to elliptic curves with CM by $\calO$ as roots of the corresponding Hilbert class polynomial. 
We fix a number field $H$ and a prime $\pp$ of $H$ over $p$ such that, for each such $j$, there exists an elliptic curve $E/H$  with that $j$-invariant, having good reduction at~$\pp$. 
Up to enlarging the base field $H$, we can also assume that $\End_H(E) = \calO$. 

It remains to describe the Cartan structure $\phi$ on $E$ such that $(E, [\phi])$ is a Heegner point and it is enough to describe the reduction of $\phi$ modulo $\pp$. Denote by $\overline E$ the reduction of $E$ modulo $\pp$. Depending on whether $\ol E$ is ordinary or supersingular, we compute the reduction of $\phi$ as follows.

\begin{itemize}
    \item 
In the ordinary case, since $N$ is inert in $ \calO = \End(E)$, it is also inert in the possibly larger  imaginary quadratic ring $\ol \calO \coloneq \End(\ol E)$ and the embedding $\calO \hookrightarrow \ol\calO$ gives an isomorphism $\calO/N\calO \cong \ol\calO/N\ol\calO$. In particular, if  the Frobenius morphism $\Frob_\pp \colon \ol E \to \ol E$
is not in $\Z + N \ol\calO$, we also have $\ol \calO/N\ol\calO  \cong \F_N[\Frob_\pp]$.
By choosing a basis $P,Q$ of $\overline{E}[N]$ such that the matrix of $\Frob_\pp$ with respect to $P,Q$ belongs to $C_{\ns}(N)$, we obtain the desired $C_{\ns}^+(N)$-structure.

Note that, since by hypothesis $N$ does not ramify in $\calO$, the lift of the Frobenius lies in $\Z + N \ol\calO$ if and only if the discriminant of the characteristic polynomial of Frobenius is divisible by $N^2$. When this happens, 
we use the technique described below for the supersingular case.

\item In the supersingular case, $\End(\overline{E})$ is a quaternion algebra, so $\Frob_\pp$ no longer determines the Cartan structure. 
Instead, we must determine how $(\calO/N \calO)^\times$ sits inside the quaternion algebra. 

Let $\ell$ be a prime that is the norm of some element $\alpha \in \calO \setminus (\Z+N\calO)$, so that we have $\calO/N\calO \cong \F_N[\ol\alpha] \cong \F_{N^2}$.  
Then $E/H$ has an endomorphism $\alpha$ of degree $\ell$ in characteristic~$0$, unique up to taking the dual and composing with elements of $\Aut(E)$. Our goal is to identify the possible reductions $\overline{\alpha} \in \End(\overline{E})$ of this endomorphism among the degree-$\ell$ endomorphisms of $\overline{E}$.
We first compute, using V\'{e}lu's formulas, all endomorphisms of degree $\ell$ on $\overline{E}$.
Let $E_n$ be the reduction of $E$ modulo $\pp^n$. Using V\'{e}lu's formulas, we can check which endomorphisms of $E_{n-1}$ lift to endomorphisms of $E_n$. 
We iterate this process for $n\ge 2$, until the number of candidates for $\overline{\alpha}$ is equal to the number of elements of $\calO$ having norm equal to $\ell$.
\begin{remark}
    An endomorphism of $\overline E$ comes from the CM order if and only if it lifts to every infinitesimal thickening of $\overline E$ induced by $E/H$. Since there are only finitely many degree-$\ell$ endomorphisms, finite precision is enough to distinguish those that lift to $E$.
\end{remark}

By choosing a basis $P,Q$ of $\overline{E}[N]$  such that the matrix of $\overline{\alpha}$  with respect to $P,Q$ 
lies in $C_{\ns}(N)$, we obtain the desired $\phi$.
\end{itemize}

\subsection{Mordell--Weil group}\label{subsec:computing-MW}

Recall that we assume that the rank $r$ of $J(\Z)$ is equal to the genus $g$ of $X_{\ns}^+(N)$. As explained in  Remark \ref{remark: GQC works}, this can be checked by looking at the analytic rank. For  $N<100$, this information is also available on the L-functions and Modular Forms Database \cite{lmfdb}. 
Our choice of coordinates on $\Mx(\Z_p)$ in Section \ref{sec:kappa_general} is made so as to simplify the expression of the $\kappa$ map.
Computing this coordinate system requires a finite index subgroup $\Gamma < J(\Q)$  such that:
\begin{enumerate}
    \item \label{item:Gamma mod p} $\Gamma$ surjects onto $J(\F_p)$; 
    
    \item \label{item:Gamma_0 dense} 
    $\Gamma^0 \coloneq \Gamma \cap J(\Z)_0 = \ker\big(\Gamma \to J(\F_p)\big)$ surjects onto $J(\Z/p^2\Z)_0 = \ker\big(J(\Z/p^2\Z) \to J(\F_p)\big)$; 

\item \label{item:Gamma-in-J0}
    $\Gamma  \subset J^0(\Z)$.

\end{enumerate}

\begin{remark}
    For our application, \ref{item:Gamma mod p} can be replaced with the assumption that the image of $\Gamma \to J(\F_p)$ coincides with the image of $J(\Z) \to J(\F_p)$. Proving that this holds for a given $\Gamma$ amounts to showing saturation of $\Gamma$ in $J(\Z)$ at a finite number of primes (namely, the divisors of $\#J(\F_p)$). This is a problem that often comes up in the context of the Mordell--Weil sieve, see e.g.~\cite[\S 6.3]{furio20267adicgaloisrepresentationselliptic}.
\end{remark}

We look for $\Gamma$ by first constructing Heegner divisors $D_1, \ldots, D_s$ (see Definition \ref{def Heegner}) and then defining $\Gamma$ to be the subgroup generated by degree-0 combinations of the $D_i$ that lie in $J^0(\Z)$.
\begin{remark}\label{rmk:Gamma-in-J0}
    Assume that $N$ does not divide the discriminant associated with any of our Heegner divisors. Then each point in the support of a $D_i$ reduces modulo $N$ to the smooth locus of $X_{\ns}^+(N)^{\rm reg}$. Moreover, the component where each point specializes is determined by (the reduction modulo $N$ of) its $j$-invariant.
    We can then use the techniques of \cite{EdPar_regular} and Section \ref{NewRegularModel} to find linear combinations of the $D_i$ that lie in $J^0(\Z)$.
\end{remark}
Using the methods outlined in Section~\ref{sec:makdisi_modular_P}, we can construct a Makdisi model of $J$ over $\Z_q/p^e\Z_q$ for some moderate power $q$ of $p$ and some small integer $e \geq 2$. We can then check conditions \ref{item:Gamma mod p}--\ref{item:Gamma-in-J0} as follows:
\begin{enumerate}
    \item It is enough to check that $\Gamma$ generates the $\ell$-Sylow $J_\ell$ of $J(\F_p)$ for every prime $\ell \mid \#J(\F_p)$. 
    Since $X_{\ns}^+(N)$ is a modular curve, $\# J(\F_{p^d})$ can be computed for any $d \in \N$ from the action of Hecke. Write $n \coloneq \#J(\F_p)$. 
    Let $\ell$ be a prime factor of $n$, and let $\ell^v$ be the highest power of $\ell$ that divides $n$, i.e.\ the order of $J_\ell$.

    We can determine the exact order of any point of $J(\F_p)$, since we know $n$ and since Makdisi's algorithms allow us to apply the group law and test for $0$ in $J(\F_p)$. In particular, if $J_\ell$ is cyclic, then we can easily prove it, since a random element of $J(\F_p)$ will have order divisible by $\ell^v$ with probability $1-1/\ell \gg 0$. In this situation, to prove that $\Gamma$ generates $J_\ell$, we check that at least one of the generators of $\Gamma$ has order divisible by $\ell^v$. This situation is rather frequent since it occurs in particular whenever $v = 1$.

    When we are not able to prove that $J_\ell$ is cyclic (so we strongly suspect that it is not), we use a variant of the techniques of \cite[\S 6.2]{NicJac}. Suppose first that $\ell \mid q-1$, so that $\mu_\ell \subset \F_q$ and we can use the Frey--R\"uck pairing~\cite[Algorithm 6]{NicJac}
    \[ [ \cdot , \cdot ]_\ell : J(\F_q)[\ell] \times J(\F_q)/\ell J(\F_q) \longrightarrow \F_q^\times / \F_q^{\times \ell} \underset{z \mapsto z^{\frac{q-1}\ell}}{\simeq} \mu_\ell, \]
    which is non-degenerate since $\mu_\ell \subset \F_q$. By picking random points $x \in J(\F_q)$, we can thus construct linear forms $[\cdot,x]_\ell$ on $J(\F_q)[\ell]$, through which we can detect linear dependencies on $J(\F_q)[\ell]$ or prove that none exist. This in turn allows us to elucidate the structure of the $\ell$-Sylow of $\Gamma \bmod p$ (and, in particular, to determine whether its order is $\ell^v$) by a variant of \cite[Algorithm 13]{NicJac}.

    If $\ell \nmid q-1$ and $\ell \neq p$, we can still extend scalars to $\F_q(\mu_\ell)$ if this extension is not too large. Otherwise, we are reduced to determining the structure of the $\ell$-Sylow of $\Gamma \bmod p$ by checking ``by hand'' which relations hold between the generators of $\Gamma \bmod p$. This is of course very inefficient, but fortunately we expect this situation to be quite rare, since for $\ell \neq p$ it requires $v > 1$ and $[\F_q(\mu_\ell):\F_q] \gg 1$ (so this cannot happen unless $\ell$ is both large and a repeated factor of $n$). Since the case $\ell=p$ can be annoying, it is a good idea to choose $p$ so that $J_p$ is cyclic (or even trivial).

A byproduct of this computation is a description of the finite abelian group $\Gamma \bmod p$ and of the relations satisfied by the reductions modulo $p$ of the generators of $\Gamma$. This gives in particular a set of generators for $\Gamma^0 \coloneq \ker(\Gamma \longrightarrow J(\F_p))$. Note that this group is torsion-free, because the same holds for $J(\Z)_0 \subset J(\Z_p)_0$.

    \item Mascot~\cite[\S 2.2.3]{NicJac} introduces ``local  charts'' $J(\Z_q/p^e\Z_q)_0 \longrightarrow (\Z/p^{e-1}\Z)^{[\F_q:\F_p]g}$, which restrict to maps
    \[ c : J(\Z/p^e\Z)_0 \longrightarrow (\Z/p^{e-1}\Z)^g, \]
    where $e \in \N$ is any fixed integer.
    These maps are likely to be bijective (although this is not guaranteed), and when they are, 
    they yield (the reduction modulo $p^{e-1}$ of)
    a system of analytic coordinates, in the sense of Section~\ref{sec:parameters}. Note that for $e=2$, the reduction modulo $p$ of any system of analytic coordinates is automatically a group homomorphism. This applies in particular to the map $c$ above (when $e=2$).
    
    Using the system of generators of $\Gamma^0$ computed in part (1), we choose $e=2$ and test whether $\dim_{\F_p} c(\Gamma^0)$ is $g$ or less.
    If it is $g$, this proves that $\Gamma^0$ surjects to $J(\Z/p^2\Z)_0$, and in particular --- using our standing assumption $\rank \, J(\Z)=g$ --- that $\Gamma$ has finite index in $J(\Z)$ (and also that $c$ is bijective). If it less than $g$, then either $c$ is not bijective (unlikely, but we can try again with another chart $c$), or $\Gamma^0$ fails to surject onto $J(\Z/p^2\Z)_0$.

    \item This is satisfied by construction.

\end{enumerate}

\subsection{Lifting from \texorpdfstring{$J(\Z) \times J^0(\Z)$}{J(Z)xJ0(Z)} to \texorpdfstring{$\Mx(\Z)$}{Mx(Z)}}\label{subsect: lifting to MxZ}

Let $x \in J(\Z)$ and $y \in J^0(\Z)$. We describe how to find a computational representation of a lift of $(x,y)$ to $\Mx(\Z)$. As with all of our computations, points are represented over $\Z_p$, up to precision $p^e$, so we will always view $\Mx(\Z)$ as a subset of $\Mx(\Z_p)$ and represent the latter via points in $\Mx(\Z_p/p^e\Z_p)$. We will only be able to find such a representation up to $(p-1)$th 
roots of unity. Because of our coordinate system (defined below, see Definition \ref{defi:psi-prime}), which is based on the choice of $p$-adic logarithm in Section \ref{ref: p-adic log}, this ambiguity is not relevant in calculations.

Suppose first that $x, y$ are represented by Galois-stable divisors $D = \sum r_i P_i$, $E = \sum s_j Q_j$ of degree 0 whose supports have disjoint reductions modulo all places of $\ol\Q$ above $p$ (in particular, the points $P_i$ are pairwise distinct from the points $Q_j$). Choose a number field $K$ such that all the points $P_i,Q_j$ are in  $X(\calO_K)$. For each finite  place $\mathfrak{q}$ of $\calO_K$ we denote by $m_\mathfrak{q}(P_i,Q_j)$ the arithmetic intersection multiplicity of $P_i, Q_j$ at $\mathfrak{q}$. Since $y = [E]$ lies in $J^0(\Z)$, there exists a divisor $B_E$ supported on $X_{\F_N}$ such that $E+B_E$ is of multidegree $0$ as a divisor on the arithmetic surface $X \to \Spec \Z$. We denote by $e_i$ the arithmetic intersection multiplicity of $B_E$ and $P_i$ at $N$. 
For each $i$, let $\lambda_i\coloneq
N^{-e_i}\prod_{\mathfrak q}
\prod_j \mathfrak q^{-s_j m_{\mathfrak q}(P_i,Q_j)}$, where the product is over the nonzero prime ideals $\mathfrak{q}$ of $\calO_K$. By Galois invariance, we may view $\lambda \coloneq \prod_{i} \lambda_i^{r_i}$ as a fractional ideal of $\Q$, hence as a non-zero rational number well-defined up to $\pm 1$. The point
\begin{equation}\label{eq:easy Z-points on M}
\lambda \bigotimes_i ( P_i^*1)^{\otimes r_i}
\in \Mx_{x,y}(\Q)
\end{equation}
is then a lift of $(x, y)$ to $\Mx(\Q)$. By \cite[Section 6.9]{Edil}, this $\Q$-point extends to a $\Z$-point of $\Mx$ lifting $(x, y)$.

In practice, we are faced with three computational tasks:
\begin{enumerate}
    \item Avoid self-intersections, that is, choose representative divisors so that no $P_i$ is equal to a $Q_j$. To do this, we use the methods of Section~\ref{subsec:computing-MW} to look for congruences 
    $x \equiv \sum_i r_i D_i $ and $y \equiv \sum_j s_j E_j$ modulo~$p^e$ with $D_i, E_j$ being pairwise distinct Heegner divisors and with $ \sum_j s_j E_j \in J^0(\Z)$, for a chosen precision level $e$. Lemma \ref{lem:onlycareaboutlastcoord} ensures that the formula \eqref{eq:easy Z-points on M}, applied to $\sum_i r_i D_i $ and $ \sum_j s_j E_j$, gives a $\Z$-point on $\Mx$ which, up to $(p-1)$th roots of unity, has the same reduction modulo~$p^e$ as a lift of $(x,y)$. 

    Note that we can also impose additional restrictions on our Heegner divisors: for example, we can consider only Heegner divisors corresponding to maximal orders, and we may take the discriminant of each $D_i$ to be coprime to the discriminant of each $E_j$. 

    \begin{remark}
        While we do not have a theoretical guarantee that one can generate $J^0(\Z)$ with (linear combinations of) Heegner divisors as above, Remark \ref{rmk: Heegner points want to generate} gives a strong indication in this sense.
        In the case of $X_{\ns}^+(13)$, we were  able to carry out this step using only rational CM points (see Example \ref{eg:mwgroup13}).
    \end{remark}
    
    \item Compute the intersection multiplicities $m_{\mathfrak{q}}(P_i, Q_j)$.  Arguing as in the previous step, we may assume that $P_i$ and $Q_j$ are Heegner points, corresponding to coprime fundamental discriminants. 
    The intersection numbers can then be read off Theorem B in \cite{LSV26}. In the lucky situation where rational CM points suffice, one can also use \cite[Table 1]{LSV26}. See also \cite[Proposition 15]{rebolledo2026computingnecklacesellipticcurves} for related results.

    \item Finally,  \eqref{eq:easy Z-points on M} describes a point on the Mumford torsor in naive form, which we can convert to Makdisi form as in Section \ref{sec:naive2Makdisi}.
\end{enumerate}

\begin{example}
\label{eg:mwgroup13}

Let $N=13$ (so that $m=1$ and thus $J(\Z)=J^0(\Z)$, and $g=r=3$), $p=5$, let $P_1,\cdots,P_7$ be the rational Heegner points of respective discriminants $-11$, $-19$, $-67$, $-7$, $-163$, $-28$, $-8$,
let $\Gamma$ be the subgroup of $J(\Z)$ spanned by the divisors of degree 0 supported by these 7 points $P_i$, and let \[ \gamma_1 \coloneq [P_1-P_7] \in \Gamma, \quad \gamma_2 \coloneq [P_2-P_4] \in \Gamma, \quad \gamma_3 \coloneq [P_5-P_6] \in \Gamma \]
be candidates for generators of $\Gamma$.
We check that $\Gamma$ satisfies the hypotheses listed in Section \ref{subsec:computing-MW} by applying the methods outlined there. 
More specifically, we find that 
\begin{itemize}
\item $J(\F_p)$ has order $n=377=13 \cdot 29$ and is therefore cyclic,
\item $\gamma_1$ generates $J(\F_p)$ since neither $13 \gamma_1$ nor $29 \gamma_1$ is $0$ in $J(\F_p)$, 
\item we have the relations $\gamma_2 \equiv 55 \gamma_1$ and $\gamma_3 \equiv -65 \gamma_1$ in $J(\F_p)$,
\item and the elements
\[ \gamma_1^0 \coloneq 377 \gamma_1 \in \Gamma^0, \quad \gamma_2^0 \coloneq \gamma_2- 55 \gamma_1 \in \Gamma^0, \quad \gamma_3^0 \coloneq \gamma_3+65 \gamma_1 \in \Gamma^0 \]
of $\Gamma^0 \coloneq \ker\big(\Gamma \longrightarrow J(\F_p)\big)$ form an $\F_p$-basis of $J(\Z/p^2\Z)_0 = \ker\big(J(\Z/p^2\Z) \longrightarrow J(\F_p)\big)$.
\end{itemize}

We also observe that the relations
\[ [P_j-P_7] \equiv \sum_{i=1}^3 R_{i,j} \gamma_i \]
hold in $J(\Z_p/p^e\Z_p)$ at least all the way up to $e=4$ (we actually checked them up to $e=10$), where
\[ R = \begin{pmatrix} 1&2&7&2&-4&-4\\0&1&1&0&1&1\\0&1&4&1&-2&-3\end{pmatrix}.\]
The simplicity of these relations makes us suspect that they actually hold in $J(\Z)$ so that, in particular, $\Gamma$ would be generated by $\gamma_1, \gamma_2, \gamma_3$; but we do not need to prove this as long as we work mod $p^e$ with $e \leq 4$ (and $e=4$ will indeed be the highest accuracy that we will have to use, cf.~Table \ref{tab:example} below).

For example, in order to compute the image in $\Mx(\Z_p/p^e\Z_p)$ of a lift to $\Mx(\Z)$ of $(\gamma_1,\gamma_1)$, we use the relation matrix $R$ and a bit of linear algebra to find the alternative expression
\[ \gamma_1 \equiv [2 P_2 - P_3-P_5] \bmod p^e \]
which represents $\gamma_1 \coloneq [P_1-P_7]$ modulo $p^e$ but involves neither $P_1$ nor $P_7$; so we can use the right partial group law on lifts of $(P_1-P_7,P_2-P_3)$ and of $(P_1-P_7,P_2-P_5)$ to obtain a mod $p^e$ approximation of a lift of $(\gamma_1,\gamma_1)$ to $\Mx(\Q)$, thus circumventing the intersection of the divisor $P_1-P_7$ with itself. We then evaluate the intersection multiplicities $m_q(P,Q)$ for $P \in \{ P_1, P_7 \}$, $Q \in \{P_2,P_3,P_5\}$, and $q \in \{2,3,13\}$ so as to find the unique (up to sign) scalar in $\Q^\times$ by which this point must be multiplied so as to land in $\Mx(\Z)$.
\end{example}

\subsection{Coordinates on \texorpdfstring{$\Mx(\Z_p)$}{Mx(Zp)}} \label{sec:computing coordinates on mumford}

In this section, we explain the system of coordinates we use in actual computations. We work in $\Mx(\Z/p^e \Z)$, for some choice $e \in \N$ of $p$-adic accuracy. 
Given $u \in \Mx(\Z/p^e \Z)$, we write $(x(u),y(u))$ $\in J(\Z/p^e \Z) \times J^0(\Z/p^e \Z)$ for the point above which $u$ lies.

Let $\Gamma \subset J^0(\Z)$ be a subgroup satisfying the conditions in Section \ref{subsec:computing-MW} with generators $\gamma_i$.
Let  $\Gamma^0 = \Gamma \cap J(\Z)_0 = \ker ( \Gamma \to J(\F_p) )$ be the kernel of reduction, with generators
\begin{equation*}
    \gamma^0_j = \sum_{1 \le i \le h} M_{i,j} \gamma_i , 
    \quad j=1, \cdots, g,
\end{equation*}
for suitable $M_{i,j} \in \Z$.
Property \ref{item:Gamma_0 dense} implies that
  we can compute the coordinates in $(\Z/p^{e-1}\Z)^g$ of any point of $J(\Z/p^e \Z)_0$ on the $\gamma^0_i$.

Let $\gamma_{i,j}$ be lifts of $(\gamma_i, \gamma_j) \in J(\Z) \times J^0(\Z)$ to $\Mx(\Z)$. 
These exist by property \ref{item:Gamma-in-J0}, that is, the hypothesis that $\Gamma$ is contained in $J^0(\Z)$, and are unique up to $\pm 1$; we choose them arbitrarily. They can be computed to  precision $p^e$, up to $(p-1)$th roots of unity, as in Section~\ref{subsect: lifting to MxZ} (cf.~Example~\ref{eg:mwgroup13} in particular). 

Let $\ol t \in \Mx(\F_p)$.  By properties \ref{item:Gamma mod p} and \ref{item:Gamma_0 dense} of $\Gamma$, we can find integers $s_i$ and $r_i$ such that
\begin{equation}\label{eqn:fix_decomp_t}
    x\coloneq \sum_{i=1}^h s_i \gamma_i  \equiv  x( \ol t)  \bmod p, \quad y\coloneq \sum_{i=1}^h r_i \gamma_i \equiv  y(\ol t) \bmod p.
\end{equation}
These integers $r_i$ and $s_i$ are not unique, but we fix a choice of $r_i$ and $s_i$  once and for all. In particular, $x,y$ are well defined points in $J(\Z)$ and $J^0(\Z)$, respectively.

We now describe a computable system of coordinate for points $u \in \Mx(\Z/p^e \Z)_{\ol t}$. We will then show that they agree with the reduction mod $p^e$ of the coordinates defined in Section~\ref{sec:kappa_general}.

Fix a point $u \in \Mx(\Z/p^e \Z)_{\ol t}$. Property \ref{item:Gamma_0 dense} allows us to write 
\begin{equation}    
x(u)-x = \sum_{j=1}^g m^0_j \gamma_j^0, \quad y(u)- y = \sum_{j=1}^g n^0_j \gamma_j^0   \quad \text{in } J(\Z/p^e \Z)_0
\end{equation}
 for some coefficients $m^0_j, n^0_j \in \Z$, uniquely determined modulo $p^{e-1}$. We then use \eqref{eqn:fix_decomp_t} to deduce the expression
\[ x(u) \equiv \sum_{i=1}^h m_i \gamma_i \bmod p^e, \quad m_i = s_i + \sum_{j=1}^g M_{i,j} m^0_j \in \Z. \]
Applying the same process, we express $y(u)$ as a combination of the $\gamma_i$:
\[ y(u) \equiv \sum_{i=1}^h n_i \gamma_i \bmod p^e, \quad n_i = r_i + \sum_{j=1}^g M_{i,j} n^0_j \in \Z.
\]
We let $m, n$ (resp.~$m^0, n^0$) be the vectors in $\Z^h$ with coordinates $(m_i)_{i=1,\ldots,h}, (n_i)_{i=1,\ldots,h}$ (resp.~the vectors in $\Z^g$ with coordinates $(m^0_i)_{i=1,\ldots,g}, (n_i^0)_{i=1,\ldots,g}$).
The bilinear combination ${}^\top m \gamma n$ gives a point of $\Mx(\Z)$. We define 
\begin{equation}\label{eq:definition_uprime}
u' \coloneq {}^\top m \gamma n \bmod p^e, \quad u' \in \Mx(\Z/p^e\Z).
\end{equation}
By construction, $u'$ is a point in $\Mx_{x(u) \bmod p^e, \, y(u) \bmod p^e}(\Z/p^e\Z)$, so there exists a unique\\ $z \in \Gm(\Z/p^e \Z)$ such that $u \equiv z \cdot u' \bmod p^e$.  
We are finally ready to define the coordinate system we use in explicit computations.
\begin{defi}\label{defi:psi-prime}
We define 
\begin{equation}
\begin{array}{rcl} \Psi' : \Mx(\Z/p^e \Z)_{\ol t} & \longrightarrow & (\Z/p^{e-1}\Z)^g \times (\Z/p^{e-1}\Z)^g \times \Z/p^{e-1}\Z \\ u & \longmapsto & 
\left( m^0, n^0,\frac1p \log(z) \right).
\end{array} 
\end{equation} 
Here we use the $p$-adic logarithm fixed in Section~\ref{ref: p-adic log}, so that the logarithm of every $(p-1)$th root of unity vanishes.
\end{defi}

We connect this construction to our coordinate system in Section \ref{sec:kappa_general}. Let $\gamma$ be the matrix of the points $\gamma_{i, j}$. Take the points $x_j  = y_j=\gamma_j^0$ for $1 \leq j \leq g$ as generators of $J(\Z_p)_0$ and let
\[
R^\Z \coloneq {}^\top M \gamma r \in \Mx(\Z)^g, \quad S^\Z \coloneq {}^\top s \gamma M \in \Mx(\Z)^g, \quad P^\Z\coloneq {}^\top M \gamma M ,
\]
where the coordinates $r_i, s_i$ of the vectors $r, s$ are defined in \eqref{eqn:fix_decomp_t}.
This is compatible with the notation in Section \ref{sec:kappa_general}, in the sense that the $i$th element of the column vector $R^\Z$ is a lift of $(x_i, y)$, the $j$th coordinate of the row vector $S^\Z$ is a lift of $(x, y_j)$, and the $(i,j)$-entry of $P^\Z$ is a lift of $(x_i, y_j)$. Moreover we have $x_i = y_i$. As in Section \ref{sec:kappa_general}, up to multiplying by suitable $(p-1)$th roots of unity, we obtain a matrix $P$, respectively, vectors $R$ and $S$, whose entries reduce modulo~$p$ to the identity point $1 \in \Mx(\F_p)$, respectively $1_y$ and $1_x$.
Moreover, we can consider the lift $\BLC{s}{\gamma}{r} \in \Mx(\Z)$ of $(x,y)$ and define $t \coloneq \xi \cdot \BLC{s}{\gamma}{r}$ for a suitable $(p-1)$th root of unity $\xi$, chosen so that $t$ reduces to our fixed $\ol t \in \Mx(\F_p)$.

These choices allow us to use the construction of Section \ref{sec:kappa_general} to obtain an analytic system of coordinates $\Psi$ on $\Mx(\Z_p)_{\ol t}$. 
The next proposition shows that $\Psi'$ coincides with $\Psi$ to precision~$p^{e-1}$.

\begin{prop}\label{prop:psi-vs-psidash}
\phantom{~}
    For every $u \in \Mx(\Z/p^e\Z)_{\ol t}$ we have
    \[
    \Psi(u) \equiv \Psi'(u) 
    \bmod p^{e-1}.
    \]  
\end{prop}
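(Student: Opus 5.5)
To compare $\Psi'(u)$ with $\Psi(u)$, we compute $\Theta$ at the integer vector $(m^0,n^0)$ and check that it recovers the same point $u'$ up to a root of unity. Here $\Theta=\Psi^{-1}$ is the parametrization of Lemma \ref{lem:biextension coordinates}, built from the specific $t,P,R,S$ chosen above. The first two coordinates are then immediate and the third reduces to comparing the $\Gm$-scalar.

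First I would fix a lift $\tilde u\in\Mx(\Z_p)_{\ol t}$ of $u$ and write $\Psi(\tilde u)=(\lambda,\mu,\nu)$. By construction $\Theta$ is an extension of the map $D$ in \eqref{eq: A B C D}, so $\lambda,\mu$ are the coordinates of $x(\tilde u)-x$ and $y(\tilde u)-y$ in the basis $x_j=y_j=\gamma^0_j$ of $J(\Z_p)_{\ol 0}$ (Lemma \ref{lemma:analytic-coordinates-J}). By Lemma \ref{lem: coords mod pe} these coordinates are well defined mod $p^{e-1}$ on $J(\Z/p^e\Z)$. Since $m^0,n^0$ are defined by exactly the same expansions of $x(u)-x$ and $y(u)-y$, we get $\lambda\equiv m^0$ and $\mu\equiv n^0 \bmod p^{e-1}$.

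For the last coordinate I would use the identity
\[ D^\Z(m^0,n^0)=\bigl(C^\Z(m^0,n^0)\Rtimes B^\Z(m^0)\bigr)\Ltimes\bigl(A^\Z(n^0)\Rtimes {}^\top s\gamma r\bigr). \]
Expanding the right-hand side with the bilinear-combination rules of Section \ref{sec: bilinear combination} should give the single expression
\[ {}^\top(s+Mm^0)\,\gamma\,(r+Mn^0)={}^\top m\gamma n. \]
This uses $P^\Z={}^\top M\gamma M$, $R^\Z={}^\top M\gamma r$, $S^\Z={}^\top s\gamma M$ together with the compatibilities ${}^\top(Mv)\Pi w={}^\top v({}^\top M\Pi)w$ and biadditivity. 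The result is that $D^\Z(m^0,n^0)=u'$ as points of $\Mx(\Z)$, up to sign. Since $D$ and $D^\Z$ differ only by the $(p-1)$th roots of unity $\xi,\xi_{i,j},\xi_{i,y},\xi_{x,j}$, we get $\Theta(m^0,n^0,0)=D(m^0,n^0)=\zeta u'$ for some root of unity $\zeta$. The sign is absorbed into $\zeta$, using $p>2$.

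Then $u\equiv z u'=z\zeta^{-1}\Theta(m^0,n^0,0)\bmod p^e$. On the other hand, $\tilde u=\wt\exp(\nu)\Theta(\lambda,\mu,0)$. Because $\lambda\equiv m^0$ and $\mu\equiv n^0\bmod p^{e-1}$, and $\Theta$ is given by integral power series (in fact polynomials in the $\Psi$-coordinates), Lemma \ref{lem: coords mod pe} gives $\Theta(\lambda,\mu,0)\equiv\Theta(m^0,n^0,0)$ to the precision detected by coordinates mod $p^{e-1}$, in the same way as in the proof of Lemma \ref{lem:onlycareaboutlastcoord}. Comparing the $\Gm$-factors then gives $\wt\exp(\nu)\equiv z\zeta^{-1}$, and applying $\wt\log$, which kills $\zeta$, yields $\nu\equiv\frac1p\log z\bmod p^{e-1}$.

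I expect the main obstacle to be this last precision bookkeeping. One has to make sure that working in $\Mx(\Z/p^e\Z)$ loses exactly one digit in the third coordinate and no more. This requires applying Lemma \ref{lem: coords mod pe} to $\Psi$ itself on the smooth $\Z_p$-scheme $\Mx$, rather than to the torsor fibre. The bilinear expansion in the previous paragraph is routine by comparison.
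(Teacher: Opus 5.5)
Your proposal is correct and follows essentially the paper's argument. Both rest on the identity ${}^\top m\gamma n = D^\Z(m^0,n^0)$, on absorbing the discrepancy between $D$ and $D^\Z$ into a $(p-1)$th root of unity, and on the mod-$p^e$ bijection of Lemma~\ref{lem: coords mod pe} for $\Psi$. The only cosmetic difference is direction: the paper applies $\Psi^{-1}$ to $\Psi'(u)$ and removes the stray root of unity by a residue-disc argument, whereas you compute $\Psi$ of a lift coordinate by coordinate and let $\wt\log$ kill it.
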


\begin{remark}\label{rmk:choice-coordinate-system}
    In particular, $\Psi'$ is the reduction modulo~$p^{e-1}$ of analytic coordinates on $\Mx(\Z_p)$ at $\ol t$. In these coordinates, $\jbtilde$ is quadratic modulo~$p$ (see Corollary \ref{cor:jbtilde quadratic}). 
\end{remark}

\begin{proof}
   Let $A^\Z, B^\Z, C^\Z$ and $D^\Z$ be the maps defined in \eqref{eq: AZ BZ CZ DZ}. Recall that, for every $u \in \Mx(\Z/p^e\Z)_{\ol t}$, the coordinates $\Psi'(u)$ are defined in terms of $m^0, n^0, z$, and $u'$, as in Definition \ref{defi:psi-prime} and \eqref{eq:definition_uprime}.
   A short calculation gives
\begin{align*}
{}^{\top} m \gamma n 
&= \left( {}^{\top} m^0 ({}^\top M\gamma M) n^0 \Rtimes {}^{\top} m^0 ({}^{\top} M\gamma) r \right) \Ltimes \left( {}^{\top}  s (\gamma M) n^0  \Rtimes {}^{\top} s \gamma r \right) 
\\ & = \left( C^\Z(m^0, n^0) \Rtimes B^\Z(m^0) \right) \Ltimes \left( A^\Z(n^0) \Rtimes t \right) = D^\Z(m^0, n^0).
\end{align*}
Recall that $D^\Z$ is the same as the map $D$ in \eqref{eq: A B C D}, up to $(p-1)$th roots of unity and that by Lemma \ref{lem: coords mod pe} the map $\Psi$ gives a bijection $\Mx(\Z/p^e\Z)_{\ol t} \to (\Z/p^{e-1}\Z)^{2g+1}$. Then there exists a $(p-1)$th root of unity $\zeta$ such that the point $u'$ of \eqref{eq:definition_uprime} satisfies
\[
u'  = {}^{\top} m \gamma n = \zeta \cdot  D(m^0, n^0)=  \zeta \cdot \Psi^{-1} \left( m^0, n^0, 0 \right),
\]
where all the equalities take place in $\Mx(\Z/p^e\Z)$.
Hence, we have
\[
\Psi^{-1}( m^0, n^0, \tfrac 1p \log(z)) =  \exp 
\log(z) \cdot \Psi^{-1}( m^0, n^0, 0)  = \exp 
\log(z) \cdot\zeta^{-1} \cdot  u' =  z \zeta' \cdot u' = \zeta'  \cdot u
\]
for a suitable $(p-1)$th root of unity $\zeta'$. Since $\Psi^{-1}( m^0, n^0, \tfrac 1p \log(z)) $ and $u$ lie in the same residue disc, we deduce that $\zeta'=1$ and $\Psi^{-1}( m^0, n^0, \tfrac 1p \log(z)) = u \bmod p^e$, hence
\[
\Psi'(u) = (m^0, n^0, \frac{1}{p} \log z) = \Psi(u) \quad \text {in} \quad (\Z/p^{e-1}\Z)^{2g+1}.\qedhere
\]
\end{proof}

\subsection{Hecke operators}\label{sec:hecke_computational}
In this section, we explain how to compute the image of a point $(E, [\phi])$ in $X_{\ns}^+(N)(\Z_q)$ under the action of a Hecke operator $T_\ell$, where
$\ell$ is a prime not equal to $p$ or $N$.
\begin{enumerate}
\item (Field extension) 
Compute a field extension of $\Q_q$ over which all $\ell$-isogenies of $E$ are defined: this is the unramified extension whose degree $d$ is the order of the image of the $q$-power Frobenius in ${\rm PGL}(E[\ell])$, so it is enough to compute a multiple of this $d$. This Frobenius has characteristic polynomial $\chi \coloneq x^2-a_q x + q$, where $a_q$, the trace of this Frobenius, can be computed as $a_q = q+1-\# E(\F_q)$. If $\operatorname{disc} \chi = a_q^2-4q \neq 0 \in \F_\ell$, then $\chi$ has distinct roots $\lambda_1, \lambda_2 \in \F_{\ell^2}^\times$, and $d = \ord(\lambda_1 / \lambda_2)$; otherwise, $d \in \{ 1, \ell \}$.

\item (Create kernel polynomials) Factor the $\ell$-division polynomial of $E/\F_q$. Then, if needed, recombine the factorization until one obtains the kernel polynomials of degree--$\ell$ isogenies:
let $m \in (\Z/\ell\Z)^\times$ be a generator. Compute $[m]$, the multiplication-by-$m$ map on the $x$-coordinate of $E$. Take the closure of the orbits of the irreducible factors under the action of $[m]$ to obtain the  kernel polynomials. To compute the orbit closures in practice, we write $[m](x,y) = \left( \frac{\phi_m(x)}{\psi_m(x)^2}, \frac{\omega_m(x)}{\psi_m(x)^3}  \right)$, take an irreducible factor $g(x)$ (which factors over some extension as $\prod_{i=1}^d (x-x_i)$) of the $\ell$-division polynomial $\psi_\ell(x)$, and apply standard resultant techniques to compute $\prod_{i=1}^d \left(x - \frac{\phi_m(x_i)}{\psi_m(x_i)^2}\right)$. This gives either $g(x)$, or another irreducible factor of $\psi_\ell(x)$ corresponding to the same $\F_\ell$-line inside $E[\ell]$.

\item (V\'elu's formula) Hensel lift the kernel polynomials to polynomials over $\Q_q$. Using V\'elu's formulas, compute the $\ell$-isogenies associated with these kernel polynomials.

\item (Push forward Cartan structures) For each of the $\ell$-isogenies $\psi: E \to E'$ over $K$, construct the point $(E', [\phi \circ (\psi|_{E[N]})^{-1}]  ) \in X_{\ns}^+(N)(K)$. The image of $(E, [\phi])$ under $T_\ell$ is the sum of these $\ell+1$ points.
\end{enumerate}

We also need to compute $\Delta^* T_\ell$, which we do as follows. We first compute the $j$-invariants of elliptic curves with a self $\ell$-isogeny as the roots of $\Phi_\ell(x,x)$, where $\Phi_\ell$ is the classical modular polynomial for isogenies of degree $\ell$. For each such $j$-invariant, corresponding to an elliptic curve $E_j$, we construct all self-isogenies of degree $\ell$ by iterating over $\ell$-torsion points and using Vélu's formulas to check if the codomain is isomorphic to $E_j$. For each self-isogeny $\varphi$, we enumerate all the Cartan structures that are stable under $\varphi$, thereby obtaining a complete description of all the points in $\Delta^*T_\ell$. The multiplicities of the points appearing in this divisor are determined by Lemma \ref{MultDT} (or by Corollary \ref{cor:delta_cap_Tell}). For $\ell\in \{2,3\}$, see also Example \ref{examplesT_N}.

\subsection{The \texorpdfstring{$\jbtilde$}{jbtilde} map on a point}\label{sec:explicit_jbtilde}

 Fix a simple open $U$ of $X = X_{\ns}^+(N)$, a base point $b \in X(\Q)$, and a trace zero endomorphism $f: J \to J$ of the form $a_1 T_\ell + a_2 T_n$, where $T_\ell, T_n$ are Hecke operators and $a_1, a_2 \in \Z$. These choices determine a line bundle $\calL$ as in Section \ref{sec:jbtilde}, and hence a map $\jbtilde : U \to \Mx$. In this section, we explain how to evaluate $\jbtilde$ on a point of $U(\Z_p)$  to any prescribed finite $p$-adic precision.

We choose $b = (E_b, [\phi_b]) \in X(\Q)$ given by a CM point (see also Remark \ref{rmk:no_base_point}).
We assume that $\ell,n$ are coprime to $pN$ and that $(b_\Q, b_\Q)$ lies on neither $T_\ell$ nor $T_n$.

\begin{remark}
To apply Corollary \ref{cor:delta_cap_Tell}, it is convenient to take $\ell, n$ to be primes not congruent to $\pm1$ modulo~$N$. 
To compute the divisor $B$ in \eqref{eq:L_is_divisorial}, we apply the method of Section \ref{sec:V}. Since this method depends on Proposition \ref{prop:flatness}, we then require $\ell, n < N^2/4$.
\end{remark}

\begin{remark}
    The assumption that $(b_\Q, b_\Q)$ does not lie on $T_\ell$ nor on $T_n$ is a technical assumption that may be removed by proving a more general version of Lemma \ref{lem:practical_formula_j_btilde}. We do not pursue this here, since in practice the assumption is not very restrictive.
\end{remark}

Now we have the data to compute $\jbtilde$ on a single point $u \in X(\Z_p)$ such that $(u_{\Q_p}, u_{\Q_p})$ and $(u_{\Q_p}, b_{\Q_p})$ lie in neither $T_\ell$ nor $T_n$. In particular, for every $u_e\in U(\Z/p^e\Z)$ in the residue disc of $\bar u$, we can choose a lift $u\in U(\Z_p)$ satisfying these conditions and use it to compute $\jbtilde(u_e)$.  Since $\jbtilde$ is a morphism of schemes, the value $\jbtilde(u) \bmod p^e = \jbtilde(u_e)$ is independent of the choice of lift $u$.
We compute $\jbtilde(u_e)$ as follows:
\begin{enumerate}
    \item Compute the divisors
\begin{equation}\label{eq:computation_some_divisors_for_jbtilde}    
\Delta^* T_\ell, \quad  \Delta^*T_n, \quad T_\ell \cap (b\times X_{\Z_p}) =  T_\ell(b), \quad T_n \cap (b\times X_{\Z_p}) =  T_n (b), \quad  T_\ell(u), \quad T_n(u)
\end{equation}
  on $X_{\Z_p}$, using Section \ref{sec:hecke_computational}.
\item For each divisor $D$ in \eqref{eq:computation_some_divisors_for_jbtilde}, compute local equations for $D$ around the points $u$ and $b$, denoted $t_{D,u}$ and $t_{D,b}$, as in Remark~\ref{rmk:multiplicative_local_eqs_almost}. 
When a divisor is given by a linear or multiplicative combination of the divisors in \eqref{eq:computation_some_divisors_for_jbtilde}, compute its local equation by the corresponding multiplicative combination, as in Definition~\ref{def:multiplicative_local_eqs}.

    \item Determine the numbers $V_{U,N}$ and $V_{U_b,N}$, 
    which we recall are the multiplicities with which the component $U_{\F_N}$ appears in $B$ (resp.~the component of $X_{\F_N}$ containing $b_{\F_N}$ appears in $B_b$), using Section \ref{sec:V}.
    \item Compute $ \sigma_\Delta  \cdot \frac{\eqoverZ}{t_{D_V,b}}(b)$ using \eqref{eq:NOT_computing_sigma_Delta_2} and replace it in \eqref{eq:tildej_b_quasi_easy_Q_p} to compute $\jbtilde(u)$ in naive form.
    \item Represent $\jbtilde(u)$ in Makdisi's form using the method presented in Section~\ref{sec:naive2Makdisi}.
\end{enumerate}

\begin{remark}\label{rmk:precision_for_jbtilde}
In practice, the points and the rational functions are known with finite precision. In particular, when computing the ratios appearing in \eqref{eq:NOT_computing_sigma_Delta_2} and \eqref{eq:tildej_b_quasi_easy_Q_p}, there may be precision loss. If the loss is too great, then we start over with higher precision. By our assumption on $(u_{\Q_p}, u_{\Q_p})$ and $(u_{\Q_p}, b_{\Q_p})$, to compute the ratios $t_1(x)/t_2(x)$ to a given precision $O(p^e)$, we need to compute the parameters to precision $e+v$, where $v$ is the valuation of $t_2(x)$. On the other hand, as explained in Section~\ref{sec:naive2Makdisi}, no loss of precision will occur in the conversion from naive to Makdisi form, which is fortunate as this is the most expensive part of the computation.
\end{remark}

\subsection{Interpolating \texorpdfstring{$\jbtilde$}{jbtilde}}
\label{sec:interpolate}

Fix a residue disc corresponding to a point $\ol u \in U(\F_p)$. We are now in a position to explain how to compute with diagram \eqref{diag:gqc_coordinates} from the introduction, which we reproduce here.
\begin{equation}
	\begin{tikzcd}
	& U(\Z)_{\ol u} \arrow[rr,"\wt{j_b}"]
     \arrow[d]
    &&\ol{\Mx(\Z)}_{\wt{j_b}(\ol u)} \arrow[d] 
    & \Z_p^{2g}  \arrow[ddr] \arrow[l,"\kappa",twoheadrightarrow]  \arrow[dd]
    \\
	& U(\Z_p)_{\ol u} \arrow[rr,"\wt{j_b}"] \arrow[ld,"\beta"]&& \Mx(\Z_p)_{\wt{j_b}(\ol u)} \arrow[rd," \Psi"] \\
\Z_p \arrow[rrrr,"{\theta = (\theta_1, \ldots, \theta_{2g+1})}"] \arrow[rrrrr,"\theta_{2g+1}", bend right=20] &&&&
    \Z_p^{2g+1} \arrow[r,twoheadrightarrow] & \Z_p
	\end{tikzcd}  
    \label{diag:extended_Zp}
\end{equation}

We define the coordinate system $\Psi$ to be the one introduced in Section \ref{subsec:analytic-coordinates}. It depends on the choice of points $t, P_{i,j}, R_{i,y}, S_{x,j}$, which we take to be $\Z$-points multiplied by $(p-1)$th roots of unity (see Section \ref{sec:kappa_general}). 
Theorem~\ref{thm:Mx(Z) bar} thus ensures that the rightmost diagonal arrow $\Z_p^{2g} \to \Z_p$ on \eqref{diag:extended_Zp} is the 0 map whenever $\Mx(\Z)_{\wt{j_b}(\ol u)} \neq \emptyset$. 
When $\Mx(\Z)_{\wt{j_b}(\ol u)}$ is empty the above diagram does not apply, since the map denoted $\kappa$ in the diagram cannot exist, but we still have a map $\kappa \colon \Z_p^{2g} \to \Mx(\Z_p)_{\jbtilde(\ol u )}$, see Definition \ref{definition kappa}, that parametrizes the points in the residue disk of $\jbtilde(\ol u)$ that, up to multiplying by a $(p{-}1)$th root of unity, are in $\ol{\Mx(\Z)}$.

 We choose a local parameter $w$ at $\ol u$ (see Remark \ref{rmk: local parameter}), and we take $\beta \coloneq\tilde w$ as in \eqref{eq:alg_param}.
As in the diagram, we then let $\theta$ be the composition
\[
\theta \coloneq \Psi \circ \jbtilde \circ \beta^{-1} : \Z_p \to \Z_p^{2g+1}
\]
and $\theta_{2g+1}$ be the composition of $\theta$ with the projection onto the last coordinate of $\Z_p^{2g+1}$. The goal of this section is to explain how to compute the map $\theta_{2g+1}$, irrespective of whether $\Mx(\Z)_{\jbtilde(\ol u)}$ is empty or not.

The coordinate system $\Psi$ is somewhat abstract, and only accessible computationally if we work at finite $p$-adic precision (namely, projecting from $\Z_p^{2g+1}$ to $(\Z/p^{e-1}\Z)^{2g+1}$ for some fixed $e \geq 2$). Proposition \ref{prop:psi-vs-psidash} shows that the reduction modulo $p^{e-1}$ of $\Psi$ can be computed as the map $\Psi'$ of Section \ref{sec:computing coordinates on mumford}. For this reason, we define
\[ \beta' \coloneq \beta \bmod p^e : U(\Z/p^e\Z)_{\ol u} \to \Z/p^{e-1}\Z, \]
\[
\theta' =(\theta_1', \ldots, \theta_{2g+1}') \coloneq \Psi' \circ \jbtilde \circ \beta'^{-1} : \Z/p^e\Z \to (\Z/p^{e-1}\Z)^{2g+1},
\]
and note that, by the aforementioned Proposition \ref{prop:psi-vs-psidash}, the  reduction modulo $p^{e-1}$ of the power series $\theta_{2g+1}$ agrees with $\theta'_{2g+1}$. We may compute $\theta'_{2g+1}(x)$ on each given $x \in \Z/p^{e-1}\Z$ by forming the corresponding point $u = \beta^{-1}(x) \in U(\Z/p^e\Z)_{\ol u}$, computing $\jbtilde(u)$ as explained in the previous section, and then computing $\Psi'$ as explained in Section \ref{sec:computing coordinates on mumford}.

By Corollary \ref{cor:jbtilde quadratic}, $\theta_{2g+1}$ reduces modulo $p$ to an at most quadratic polynomial. Thus, to determine $\theta_{2g+1} \bmod p =\theta_{2g+1}' \bmod p$, it suffices to interpolate its values over three points $u_1,u_2,u_3 \in U(\Z_p)_{\ol u}$ with different reductions modulo~$p^2$, so that the numbers $x_i \coloneq\beta(u_i)$ are all distinct modulo~$p$. We pick the $u_i$ so that all the points $(u_{i,\Q_p}, u_{i,\Q_p})$ and $(u_{i,\Q_p}, b_{\Q_p})$ are not in the Hecke correspondences $T_\ell, T_n$: we can do this since there are only finitely many $u\in U(\Z_p)$ violating this last condition. 
Then, we can compute $\jbtilde(u_i)$ as in Section \ref{sec:explicit_jbtilde}, hence the values $\theta'(\beta_i) = \Psi'(\jbtilde(u_i))$ modulo~$p$. We then interpolate the quadratic polynomial $\theta'_{2g+1}$ modulo~$p$.

\begin{remark}\label{rmk:computing_theta_arbitrary_precision}
    The method of this section allows us to compute $\theta_{2g+1}(x) \bmod p^e$ for every $x \in \Z_p$ and every fixed $e \geq 1$. Based on this, we could (in principle)  compute an arbitrary number of coefficients in the power series development of $\theta_{2g+1}(x) = \sum_{n \geq 0} a_n x^n$, up to any fixed precision. As explained, we can compute $a_0 = \theta_{2g+1}(0)$ up to arbitrary precision. Now suppose we want to determine $a_{n+1}$ up to some precision $p^e$, and we have algorithms to compute $a_0, \ldots, a_n$ up to arbitrary precision. Compute $a_0, \ldots, a_n$ up to respective precisions $p^{(n+2)e}, p^{(n+1)e}, \ldots, p^{2e}$. Recalling that the coefficients $a_i$ are in $\Z_p$, the congruence
    \[
    \theta_{2g+1}(p^e) \equiv \sum_{i=0}^n a_i p^{ei} + a_{n+1}p^{(n+1)e} \bmod p^{(n+2)e}
    \]
    shows that we can determine
    \[
    a_{n+1} \equiv \frac{\theta_{2g+1}(p^e)-\sum_{i=0}^n a_i p^{ei}}{p^{(n+1)e}} \bmod p^e.
    \]
    Finally, note that we can write $\Z_p$ as the union $\bigsqcup_{x_0=0}^{p-1} (x_0+p\Z_p)$; on each residue disc $x_0+p\Z_p$, the power series $\theta_{2g+1}(x_0+px)$ in the variable $x$ reduces modulo $p^e$ to a polynomial of degree at most $e-1$. Combined with the above remarks, this allows us to compute the polynomial $\theta_{2g+1}(x_0+px) \bmod p^e$ for every $x_0 = 0, \ldots, p-1$ and every fixed precision $e \geq 1$.
\end{remark}

\subsection{Intersecting the images of \texorpdfstring{$\jbtilde$}{jbtilde}  and \texorpdfstring{$\kappa$}{kappa}}\label{sec:intersection}

In the previous section, we have explained how to compute, to finite $p$-adic precision, a power series $\theta_{2g+1}(x)$ representing the map $\jbtilde$ on a residue disc.
Our objective in this section is to intersect the image of $\jbtilde$ with the $p$-adic closure of the integral points $\Mx(\Z)$ in a given residue disc $\Mx(\Z_p)_{\ol t}$. Recall from the previous section that we have fixed an analytic coordinate $\beta$ on $U$ at the point $\ol u$, and that $\ol t = \jbtilde(\ol u)$.
By Theorem \ref{thm:Mx(Z) bar}, any integral point in $\Mx(\Z)_{\ol t}$ lies in $\Psi^{-1}\left( \Z_p^g \times \Z_p^g \times \{0\} \right)$: this is trivially true if $\Mx(\Z)_{\ol t}$ is empty, and follows from the quoted theorem otherwise. 
In particular, if $P$ is a point lying in $\overline{\Mx(\Z)_{\ol t}} \cap  \jbtilde (U(\Z))$, we may write $P = \jbtilde(\beta^{-1}(x))$ for some $x \in \Z_p$ and get the equation $\theta_{2g+1}(x)=0$.
Now recall the approximate coordinate system $\Psi'$ introduced in Definition \ref{defi:psi-prime} and the fact that it agrees with $\Psi$ modulo $p^{e-1}$, see Proposition \ref{prop:psi-vs-psidash}. As in the previous section, the power series $\theta_{2g+1}(x)$ and $\theta_{2g+1}'(x)$ coincide modulo $p^{e-1}$, and we know how to compute the latter.
In principle, Remark \ref{rmk:computing_theta_arbitrary_precision} allows us to compute $\theta_{2g+1}(x_0+px)$ to precision $p^e$, deduce information about its Newton polygon and the reduction of $\theta_{2g+1}(x_0+px) \bmod p^e$, and therefore bound the number of zeros of $\theta_{2g+1}(x_0+px)$ in $\Z_p$. Doing this for every $x_0=0,\ldots,p-1$ gives a bound on the number of zeros of $\theta_{2g+1}(x)$ in $\Z_p$.

In the rest of this section, we explain a different and  more efficient method that works under the assumption that $\theta_{2g+1}(x) \bmod p$ is nonzero and squarefree \textit{for at least one choice} of trace zero endomorphism $f$. We expect that this will be satisfied with very high probability.

The power series $\theta_{2g+1} \in \Z_p[[x]]$ is at most quadratic mod $p$ by Corollary \ref{cor:jbtilde quadratic}. 
By Strassmann's theorem, the number $d$ of solutions in $\Z_p$ to the equation $\theta_{2g+1}(x) = 0$ is at most the number of solutions to $(\theta_{2g+1} \bmod p)(x)  = 0$ in $\F_p$, counted with multiplicity. This gives a first upper bound on the number of points in $U(\Z)_{\ol u}$, and a modulo-$p$ congruence for the coordinate $x=\beta(P)$ of any such point.

\begin{remark}
When $U(\Z)_{\ol u}$ contains precisely one integral point, we generically expect the number of solutions to be $d=2$: indeed, $\theta_{2g+1}(x) \bmod p$ will usually be of degree $2$, and since it has a root (corresponding to the integral point), it has two (counted with multiplicity).
\end{remark}

If the N\'eron--Severi rank $\rho \coloneq \rho(J_\Q)$ is strictly greater than $2$, the bound on the number of integral points in $U(\Z)_{\ol u}$ can be improved,  by repeating the method with another choice of trace zero endomorphism $f$. After this process, we have several (at most quadratic mod $p$) power series $\theta^{f_1}_{2g+1}, \cdots, \theta^{f_{\rho-1}}_{2g+1}$. The number of solutions of $\gcd( \theta^{f_1}_{2g+1} \bmod p, \cdots, \theta^{f_{\rho-1}}_{2g+1} \bmod p)  = 0$ is an upper bound for the number of integral points in the residue disc we are considering. Note that this $\gcd$ is squarefree under our assumption that some $\theta^{f_i}_{2g+1}(x) \bmod p$ is squarefree. If this upper bound is the same as the number of known integral points in $U(\Z)_{\ol u}$, we have provably determined $U(\Z)_{\ol u}$. Otherwise, we proceed as follows. 

Suppose that a common root $x_0 \in \F_p$ of the equations $ \theta^{f_1}_{2g+1} \bmod p = \dots = \theta^{f_{\rho-1}}_{2g+1} \bmod p  = 0$ does not correspond to a known integer point. Up to renumbering, we assume that the power series $\theta^{f_1}_{2g+1}(x)$ does not have a double root modulo $p$, hence in particular that $x_0$ is a simple root of $\theta^{f_1}_{2g+1} \bmod p$. This implies that $\frac{d\theta^{f_1}_{2g+1}}{dx}(x_0) \not \equiv 0 \pmod p$.
We can try to rule out the existence of integral points in the `higher residue disc' (as in \cite[Remark~9.9]{DRHS})
\begin{equation}\label{eq:higher-residue-disc-1}
 \{ P \in U(\Z_p)_{\ol u} : \beta(P) \equiv x_0 \bmod{p} \}
\end{equation}
consisting of points reducing to a specified $\Z/p^2\Z$-point in the residue disc of $\ol u$. 

Via the coordinate $\beta$, the above higher residue disc corresponds to the subset $x_0 + p\Z_p$ of $\Z_p$. For any trace zero endomorphism $f$, when restricted to $x_0 + p\Z_p$, the integral power series $\theta_{2g+1}^f(x)$ takes the form
\begin{equation}
\theta_{2g+1}^f (x_0 + px_1) = \theta_{2g+1}^f(x_0)  + \frac{d\theta_{2g+1}^f(x)}{dx} (x_0) \cdot px_1 + O(p^2); 
\end{equation}
by assumption, $\theta_{2g+1}^f(x_0) \equiv 0 \pmod{p}$, so we may divide by $p$ to obtain the (linear) congruence
\begin{equation}\label{eq:higher-congruence-1}
\frac{\theta_{2g+1}^f(x_0)}{p}  + \frac{d \theta^f_{2g+1}}{dx} (x_0) \cdot x_1 \equiv 0 \pmod{p}.
\end{equation}
We now observe that for $f=f_1$, the derivative $\frac{d \theta^f_{2g+1}}{dx} (x_0)$ is nonzero modulo $p$ by assumption, so this gives $x_1 \equiv - \frac{\theta_{2g+1}^{f_1}(x_0)}{p}  \cdot \left(\frac{d \theta^{f_1}_{2g+1}}{dx} (x_0)\right)^{-1} \bmod p$.

We then check whether, for $f=f_2, \ldots, f_{\rho-1}$, the value of $x_1 \bmod p$ thus determined satisfies \eqref{eq:higher-congruence-1}. If this fails for even one $f$, we have proven that the higher residue disc \eqref{eq:higher-residue-disc-1} does not contain any integral points. Otherwise, we have determined the value of $x_1 \bmod p$, and  hence of $x_0+p x_1 \bmod p^2$, and we can keep going, by considering an (even) higher residue disc.

We explain more precisely how we may iterate this procedure. Suppose that for some $e \geq 1$ and $x_0 \in \Z/p^e\Z$ the higher residue disc
\begin{equation}\label{eq:higher-residue-disc-e}
 \{ P \in U(\Z_p)_{\ol u} : \beta(P) \equiv x_0 \bmod{p^e} \}
\end{equation}
is suspected not to contain any integral points, but we haven't yet ruled this out. This means in particular that all the functions $\theta_{2g+1}$ produced so far vanish at $x_0$ modulo $p^e$.
We then fix an arbitrary lift of $x_0$ to $\Z_p$ and consider the equation
\[
0 = \theta_{2g+1}^f(x_0 + p^e x_1) = \theta_{2g+1}^f(x_0) + \frac{d\theta^f_{2g+1}(x)}{dx}(x_0) p^e x_1 + O(p^{2e}).
\]
By assumption, we have $\theta_{2g+1}^f(x_0) \equiv 0 \pmod{p^e}$, so we may divide by $p^e$ to obtain the linear congruence
\begin{equation}\label{eq:linear-congruence-higher-residue-disc}
\frac{\theta_{2g+1}^f(x_0)}{p^e} + \frac{d\theta_{2g+1}^f}{dx}(x_0) \cdot x_1 \equiv 0 \pmod p.
\end{equation}
Note that we only need $\frac{d\theta_{2g+1}^f}{dx}(x_0) \bmod p$, which we have already computed, and $\frac{\theta_{2g+1}^f(x_0)}{p^e} \bmod p$, which we can compute as in the previous section.
As before, taking $f=f_1$ determines $x_1 \bmod p$. If this value does not satisfy one of the equations \eqref{eq:linear-congruence-higher-residue-disc} corresponding to $f_2, \ldots, f_{\rho-1}$, we have shown that the higher residue disc \eqref{eq:higher-residue-disc-e} does not contain any integral points. Otherwise, we have determined one more $p$-adic digit of $\beta(P)$, where $P$ is our putative integral point.

In particular, if the common vanishing locus of the $\theta_{2g+1}^{f_1}, \dots, \theta_{2g+1}^{f_{\rho-1}} \in \Z_p[[x]]$ is the set of integral points $U(\Z)_{\ol u}$, then this describes a finite algorithm to determine this set of integral points.
If a zero corresponds to an unknown integer point, we can describe it to arbitrary $p$-adic precision and then eventually recognize it as an integer point (even though we do not know a priori the required precision for the computation to terminate, since that would require for instance some bound on the height of that unknown rational point).

\begin{remark}
The recognition of an unknown integral point $P$ may be done as follows. We may assume that $j(P) \neq 0, 1728$, because the CM points are easy to determine. The procedure described above computes increasingly more precise approximations of $\beta(P)$, hence of $j(P)$. Computing the rational numbers of smallest height compatible with these approximations gives some candidate $j' $ for the value of $j(P) \in \Q$. 
Given $j'$, we can construct an elliptic curve $E' /\Q$ with that $j$-invariant, and use known techniques for the computation of the images of Galois representations (see e.g.~\cite{ZywinaExplicitOpenImagesElliptic}) to determine 
if $E' [N]$ is indeed endowed with some
$\operatorname{Gal}(\ol \Q/\Q)$-stable Cartan structures. 
If it is not the case, we can compute $\beta (P)$ with higher precision, whence some new candidate $j''$ for $j(P)$, and so on.
This will describe the corresponding integral point(s) on $X_{\ns}^+(\Q)$. In fact, using known results on the mod-$N$ images of elliptic curves, it is not hard to show that an elliptic curve $E/\Q$ carries at most one normalizer of non-split Cartan structure of level $N \geq 13$ defined over $\Q$ (indeed, the existence of two different such structures forces the Galois action on $E[N]$ to have projective image of size at most $4$, contradicting \cite{zywina2015possibleimagesmodell}.  Alternatively, the  uniqueness follows from Proposition~\ref{GaloisOrbitsSS} by embedding $\Q$ in $\Q_N$.) 

Note however that
if the above algorithm does not succeed in finding a rational point after a certain number of steps, we are unable to  decide if that is because we do not have enough precision, or  because no such rational point exists.
\end{remark}

\section{Summary of the equationless method}
\label{sec: summary}

We now give an overview of the method for computing rational points using the equationless geometric Chabauty method  on  $X = X_{\ns}^+(N)$, under the hypothesis that $r=g$. Recall from~\eqref{eq:m} that $m$ is the lcm of the exponents of $(J_{\F_\ell}/J^0_{\F_\ell})(\overline{\F}_\ell)$ for $\ell$ prime. We pick a prime $p$ as in Assumption \ref{assump:properties-of-p}.
Let  $U \subset X$ be a simple open and $\ol u \in U(\F_p)$. 
The following steps describe the method to bound $U(\Z)_{\ol u}$. One then iterates Steps \ref{step:heckebasis} and \ref{step:higher-residue-discs}
over all choices of $U$ and $\ol u$ to bound $X_{\Q}(\Q) = X(\Z) = \bigcup_{U,\ol{u}} U(\Z)_{\ol u}$.

\begin{enumerate}
\item Precompute information about the Mordell--Weil group: As in Section~\ref{subsec:computing-MW}, find generators $\gamma_i$ for a suitable subgroup $\Gamma \subset J(\Q)$ (satisfying properties \ref{item:Gamma mod p}--\ref{item:Gamma-in-J0} in that section) and find a basis  of $\ker(\Gamma \to J(\F_p))$. Also find points in $\Mx(\Z)_{\gamma_i,\gamma_j}$ for all $i$ and $j$ as explained in Section~\ref{subsect: lifting to MxZ}.

\item \label{step:heckebasis} 
Let $f_1, \dots, f_{\rho-1}$ be linearly independent trace-zero endomorphisms of $J$ that are combinations of two Hecke operators. For each $f_j= f= a_1 T_\ell + a_2 T_n$, repeat the following steps:

\begin{enumerate}
 \item \label{item:computejbtilde} 
Choose three lifts $u_1, u_2, u_3 \in U(\Z_p)_{\ol u}$ with distinct images in $U(\Z/p^2\Z)_{\ol u}$ as in Section \ref{sec:enumerate} and compute $\jbtilde(u_i) \in \Mx(\Z/p^2 \Z)$ in Makdisi form as in Section~\ref{sec:explicit_jbtilde}.

\item \label{step5b}
\label{item:describecoords}  Describe coordinates on the residue disc $\Mx(\Z_p)_{\jbtilde(\ol u)}$: Apply the method of Section \ref{sec:computing coordinates on mumford} to obtain a map $\Psi'$ defining coordinates on the residue disc of $\tilde j_b(\ol u)$ (which is equal to the reduction modulo $p$ of $\jbtilde(u_1)$).

\item  This residue disc also contains $\tilde j_b(u_i)$ for all $i=1,\cdots,3$, so apply $\Psi'$ (with precision $e=2$) to obtain
\[ (a^i_1, \ldots, a^i_{2g+1}) = \Psi'(\tilde j_b(u_i)) \in \F_p^{2g+1}. \]

\item \label{item:intersectkapppajbtilde} 
    Discard the $a_1^i, \cdots, a_{2g}^i$, and keep $a_{2g+1}^i$. Compute the unique polynomial $\theta^f_{2g+1}  \in \F_p[x]$ of degree at most $2$ that interpolates $\beta(u_i) \mapsto a^i_{2g+1}$, $i=1,2,3$, for $\beta$ a parameter at $\ol u$ as in  Section \ref{sec:interpolate}.
\end{enumerate}

\item\label{step:higher-residue-discs} After iterating through all of the endomorphisms $f = f_1, \dots, f_{\rho-1}$ from Step \ref{step:heckebasis}, the number of roots in $\F_p$ of $\gcd(\theta^{f_1}_{2g+1}, \cdots, \theta^{f_{\rho-1}}_{2g+1})$, counted with multiplicity, gives an upper bound on the number of integral points in $U(\Z)_{\ol u}$. If this is not equal to the number of known integral points in the residue disc, proceed to the higher residue discs (with higher $p$-adic precision) as described in Section~\ref{sec:intersection}.
\end{enumerate}

\section{The example of level 13}\label{sec: example}
We now explain the method applied to the case $N = 13$. We choose $p = 5$. We take the non-square $\varepsilon = 2 \in \F_{13}^\times$ in the definition of $C_\ns^+(13)$, cf.~Section~\ref{subsect: modular curves}. 
In this case, there is only one irreducible component in the fiber above $N = 13$ and so $m = 1$, which simplifies the method described in Section \ref{sec:V}. In particular, we have $V_{13} = 0$ (see \eqref{eq:sigma_Delta_sigma_b}) 
and only one simple open which consists of the smooth locus of a regular model of $X = X_{\ns}^+(13)$.

First, we note that there are $7$ rational Heegner points on $X_{\ns}^+(13)$. We verify that their pairwise differences generate $J_\Q(\Q)$ up to finite index which is coprime to $p$ using the methods described in Section~\ref{subsec:computing-MW}. See Example \ref{eg:mwgroup13} for more details on these computations.

Since $\rho=3$, we have two independent trace zero endomorphisms, which we pick to be $f_1 = T_3 - T_2$ and  $f_2 = 2T_7 - 3T_2$. We apply the method of Section \ref{sec:intersection} to each endomorphism and each residue disc separately, and then combine the information to find the rational points.

In Table \ref{tab:example} we list the results of the computation. The top part of the table lists the 10 residue discs of $X$ along with their $j$-invariants and whether the residue disc contains a Heegner point $P$, along with the value of $x=\beta(P)$ mod $p$ when it does. For each disc, we then describe $\theta_{2g+1}$ modulo $p$ for $f_1$ and $f_2$ and their gcd.  From these polynomials we make the conclusion in the right hand column: if the gcd has the same number of roots as the expected number (0 if there are no Heegner points, 1 if there is one), then we are done. 
Otherwise, we proceed to the higher residue disc to rule out the existence of the extraneous root. For the higher residue disc corresponding to $x_0+p^ex_1$, the table gives the polynomial $\frac{\theta_{2g+1}^f(x_0+p^ex_1)}{p^e} \bmod p$. The calculation takes about 2 minutes per disc per trace zero endomorphism on one of the authors' laptops. 

\begin{table}[ht]
\footnotesize
\setlength{\tabcolsep}{3pt}
\renewcommand{\arraystretch}{1.3}

\begin{tabularx}{\textwidth}{|llllllX|}
\hline
\multicolumn{7}{|c|}{\textbf{Working in $\Mx$ mod $p^2$}} \\
\hline
\textbf{Residue disc} & $j \bmod p$ & \textbf{Heegner?} &
\textbf{$T_3-T_2$} & \textbf{$2T_7-3T_2$} &
\textbf{gcd} & \textbf{Conclusion} \\
\hline

1  & 0 & No & $3x^2$ & $4x$ & $x$ & Inspect $x=0+O(p)$ \\
2  & 0 & $\sqrt{-67}$, at $x \equiv 3$ & $x(x-3)$ & $3x(x-3)$ & $x(x-3)$ &Inspect $x=0+O(p)$ \\
3  & 0 & No & $4(x-1)^2$ & $3(x-1)$ & $x-1$ & Inspect $x=1+O(p)$ \\
4  & 0 & $\sqrt{-28}$, at $x \equiv 0$ & $2x$ & $x(x-4)$ & $x$ & \checkmark \\
5  & 0 & $\sqrt{-163}$, at $x \equiv 0$ & $3x(x-4)$ & $2x(x-1)$ & $x$ & \checkmark \\
6  & 0 & No & $4(x^2+x+2)$ & $x^2+x+2$ & $x^2+x+2$ & Irreducible $\Rightarrow$ \checkmark \\
7  & 0 & $\sqrt{-7}$, at $x \equiv 1$ & $2(x-1)(x-4)$ & $4(x-1)$ & $x-1$ & \checkmark \\
8  & 0 & $\sqrt{-8}$, at $x \equiv 3$ & $3(x-3)$ & $4(x-3)^2$ & $x-3$ & \checkmark \\
9  & 2 & $\sqrt{-11}$, at $x \equiv 0$ & $2x(x-3)$ & $2x(x-4)$ & $x$ & \checkmark \\
10 & 4 & $\sqrt{-19}$, at $x \equiv 0$ & $2x(x-2)$ & $x(x-1)$ & $x$ & \checkmark \\
\hline

\multicolumn{7}{|c|}{\textbf{Working in $\Mx$ mod $p^3$}} \\
\hline
\textbf{Higher disc} & $j \bmod p$ & \textbf{Heegner?} &
\textbf{$T_3-T_2$} & \textbf{$2T_7-3T_2$} &
\textbf{gcd} & \textbf{Conclusion} \\
\hline

$1$, $x = 0 + O(p)$ & 0 & No & $0$ & $4(x-4)$ & $x-4$ & Inspect $x=4p+O(p^2)$ \\
$2$, $x = 0+O(p)$ & 0 & No & $2(x-3)$ & $x-3$ & $x-3$ & Inspect $x=3p+O(p^2)$ \\
$3$, $x = 1 +O(p)$ & 0 & No & $1$ & $3(x-3)$ & $1$ & \checkmark \\
\hline

\multicolumn{7}{|c|}{\textbf{Working in $\Mx$ mod $p^4$}} \\
\hline
\textbf{Higher disc} & $j \bmod p$ & \textbf{Heegner?} &
\textbf{$T_3-T_2$} & \textbf{$2T_7-3T_2$} &
\textbf{gcd} & \textbf{Conclusion} \\
\hline

$1$, $x = 4p+O( p^2)$ & 0 & No & $4$ & $4(x-3)$ & $1$ & \checkmark \\
$2$, $x = 3p +O( p^2)$ & 0 & No & $2(x-3)$ & $x$ & $1$ & \checkmark \\
\hline

\end{tabularx}

\caption{Computation for $X_{\ns}^{+}(13)$ at $p=5$.}
\label{tab:example}
\end{table}

As an example of how to interpret the table, we describe in detail the computation for the first residue disc, which contains no known integral points (first line of the first section of Table~\ref{tab:example}). The polynomials $\theta_{2g+1}^{f_1}(x) \bmod p = 3x^2$ and $\theta_{2g+1}^{f_2}(x) \bmod p = 4x$ are not coprime, but their gcd is $x$, of degree $1$. This implies on the one hand that this residue disc contains at most one integral point, and on the other, that any integral point $P$ in this residue disc would have to lie in the subdisc where the coordinate $\beta$ is congruent to $0$ modulo $p$. We thus consider the higher residue disc $\beta(P)=px_1$ (first line of the second section in Table~\ref{tab:example}). Since the derivative of $\theta_{2g+1}^{f_1}(x) \bmod p$ vanishes at $x_0=0$, the polynomial $\frac{\theta_{2g+1}^{f_1}(px_1)}{p} \bmod p$ has degree $0$, as predicted by \eqref{eq:linear-congruence-higher-residue-disc}. The constant value of this polynomial is $0$, so congruence \eqref{eq:linear-congruence-higher-residue-disc} is automatically satisfied and we do not obtain any information about $x_1$. On the other hand, the derivative of $\theta_{2g+1}^{f_2}(x)$ at $x_0=0$ is equal to $4 \not \equiv 0 \pmod p$, so considering $\frac{\theta_{2g+1}^{f_2}(px_1)}{p} \bmod p = 4(x-4)$ determines $x_1 \equiv 4 \pmod p$. Thus, if the residue disc in question contains an integral point $P$, this point satisfies $\beta(P) = 4p + O(p^2)$. We then continue with the first line of the third section of Table \ref{tab:example}. We consider $\frac{\theta_{2g+1}^{f_1}(4p+p^2x_1)}{p^2} \bmod p$, which -- again as predicted by \eqref{eq:linear-congruence-higher-residue-disc} -- has degree $0$. However, this time the constant value of this polynomial is $4 \not \equiv 0 \pmod p$, so the congruence \eqref{eq:linear-congruence-higher-residue-disc} has no solutions, and we deduce that the higher residue disc $\beta(P) \equiv 4p \bmod p^2$, and hence the original residue disc, contain no integral points.

\bibliographystyle{myalpha}
\bibliography{bibliography.bib}
\end{document}